\def\eqref#1{\textcolor{blue}{(\ref{#1})}}
\Crefname{hypothesis}{hypothesis}{hypotheses}
\Crefname{equation}{}{}
\Crefname{item}{}{}
\newcommand{\Hom}[4][]{\operatorname{Hom}^{#1}_{#2}(#3,#4)}
\newcommand{\HomT}[3][]{\operatorname{Hom}^{#1}_{\mathsf{T}}(#2,#3)}
\newtheorem{theoremintro}{Theorem}
\theoremstyle{definition}
\newtheorem{definitionintro}{Definition}
\newtheorem{exampleintro}{Example}
\newcommand{\hocolim}{\operatornamewithlimits{\underset{\boldsymbol{\xrightarrow{\hspace{0.85cm}}}}{{Hocolim}}}}
\newcommand{\colim@}[2]{%
  \vtop{\m@th\ialign{##\cr
    \hfil$#1\operator@font colim$\hfil\cr
    \noalign{\nointerlineskip\kern1.5\ex@}#2\cr
    \noalign{\nointerlineskip\kern-\ex@}\cr}}%
}
\newcommand{\colim}{%
  \mathop{\mathpalette\colim@{\rightarrowfill@\textstyle}}\nmlimits@
}
\let\oldequation\equation
\let\oldendequation\endequation
\renewenvironment{equation}{\linenomathNonumbers\oldequation}{\oldendequation\endlinenomath}
\let\expandafter\oldequationstar\csname equation*\endcsname
\let\expandafter\oldendequationstar\csname endequation*\endcsname
\renewenvironment{equation*}{\linenomathNonumbers\oldequationstar}{\oldendequationstar\endlinenomath}
\let\oldalign\align
\let\oldendalign\endalign
\let\expandafter\oldalignstar\csname align*\endcsname
\let\expandafter\oldendalignstar\csname endalign*\endcsname
\renewenvironment{align*}{\linenomathNonumbers\oldalignstar}{\oldendalignstar\endlinenomath}
\newcounter{intro}
\newcounter{HypCounter}
\theoremstyle{plain}
\newtheorem{theorem}{Theorem}[section]
\newtheorem{lemma}[theorem]{Lemma}
\newtheorem{corollary}[theorem]{Corollary}
\newtheorem{proposition}[theorem]{Proposition}
\theoremstyle{definition}
\newtheorem{convention}[theorem]{Convention}
\newtheorem{definition}[theorem]{Definition}
\newtheorem{example}[theorem]{Example}
\newtheorem*{hypothesis*}{Hypothesis}
\newtheorem{notation}[theorem]{Notation}
\newtheorem{remark}[theorem]{Remark}
\numberwithin{equation}{section}
\numberwithin{theorem}{section}
\title{Admissible subcategories and metric techniques}
\author[K. ~Manali Rahul]{Kabeer Manali Rahul}
\address{K. ~Manali Rahul,
Center for Mathematics and its Applications, 
Mathematical Science Institute, Building 145, 
The Australian National University, 
Canberra, ACT 2601, Australia}
\email{kabeer.manalirahul@anu.edu.au}
\date{\today}
\keywords{Semiorthogonal decomposition, metric techniques, approximable triangulated categories, admissible subcategories}
\subjclass[2020]{14A30 (primary), 18G80, 14A22}
\begin{document}
    
\begin{abstract}
    In this work, we provide a way of constructing new semiorthogonal decompositions using metric techniques (\`{a} la Neeman). Given a semiorthogonal decomposition on a category with a special kind of metric, which we call a compressible metric, we can construct new semiorthogonal decomposition on a category constructed from the given one using the aforementioned metric. In the algebro-geometric setting, this gives us a way of producing new semiorthogonal decompositions on  various small triangulated categories associated to a scheme, if we are given one. In the general setting, the work is related to that of Sun-Zhang, while its applications to algebraic geometry are related to the work of Bondarko and Kuznetsov-Shinder.
\end{abstract}

\maketitle

\section{Introduction}
Triangulated categories appear in diverse areas of mathematics, from geometry, to topology, to representation theory. Hence, there has been a lot of work on trying to understand the structure of triangulated categories. One of the most important tools for the same are semiorthogonal decompositions. These allow us to understand a complicated triangulated category by breaking it up into simpler pieces. These have found a lot of applications, especially in algebraic geometry and representation theory, including applications on the connections between these areas. With this in mind, it becomes an interesting problem to discover new semiorthogonal decompositions.

There has been recent work in constructing new semiorthogonal decompositions from a given one with applications to algebraic geometry. The results are of the following form; we are given a semiorthogonal decomposition on a triangulated category $\mathsf{S}_1$, and we are able to produce a semiorthogonal decomposition on a different triangulated category $\mathsf{S}_2$ which is related to $\mathsf{S}_1$ in some manner. Two important recent papers which prove results of this form are \cite{Kuznetsov/Shinder:2023} and \cite{Bondarko:2023}. The main results of this work are also of this form. We will compare the results with the ones existing in the literature at the end of this section.

To give the reader a more concrete idea about what these statements look like, let us state some applications of the main abstract results proved in this paper to the context of algebraic geometry. We give a consolidated result for schemes, algebras, and stacks. The conditions imposed in the theorem below are more stringent than required in some cases for ease of exposition. See the corresponding results in the main body of this work for the more general statements.

\begin{theoremintro}\label{Introduction Theorem A}
    Let $X$ be a noetherian, separated, finite-dimensional scheme, and $\mathcal{A}$ a coherent $\mathcal{O}_X$-algebra. Let $\mathcal{X}$ be a concentrated noetherian stack (concentrated means that the canonical map $\mathcal{X} \to \operatorname{Spec}(\mathbb{Z})$ is concentrated, see \cite[Definition 2.4]{Hall/Rydh:2017}) such that one of the conditions below hold,
    \begin{itemize}
        \item $\mathcal{X}$ has quasi-finite and separated diagonal.
        \item $\mathcal{X}$ is a Deligne-Mumford stack of characteristic zero.
    \end{itemize}
    
    Then, for any semiorthogonal decomposition $\langle \mathsf{A}, \mathsf{B\rangle}$ on $\mathsf{S}_1$, we get a semiorthogonal decomposition $\langle\operatorname{Coprod}(\mathsf{A}) \cap \mathsf{S}_2,\operatorname{Coprod}(\mathsf{B}) \cap \mathsf{S}_2\rangle $ on $\mathsf{S}_2$ for any $(\mathsf{S}_1,\mathsf{S}_2)$ as in the table below. Note that  $\operatorname{Coprod}(\mathsf{A})$ denotes the localising subcategory generated by $\mathsf{A}$ inside the derived category of quasicoherent sheaves (over the scheme/algebra/stack in question).
    \begin{table}[h]
    \centering
    \renewcommand{\arraystretch}{1.5}
    \begin{tabular}{||c| c | c |  c||} 
        \hline
        \textbf{Extra conditions} & \textbf{$\mathsf{S}_1$} & \textbf{$\mathsf{S}_2$} & \textbf{Reference}\\
        \hline\hline
        -- &  $\mathbf{D}^{\operatorname{perf}}(X)$ & $\mathbf{D}^{-}_{\operatorname{coh}}(X)$ & \Cref{Remark Main result for Perf of schemes}\\
        \hline
        $\mathsf{B}$ is admissible & $\mathbf{D}^{\operatorname{perf}}(X)$ & $\mathbf{D}^{b}_{\operatorname{coh}}(X)$ & \Cref{Remark Main result for Perf of schemes}\\
        \hline
        $X$ is J-2 & $\mathbf{D}^{b}_{\operatorname{coh}}(X)$ & $\mathbf{D}^{+}_{\operatorname{coh}}(X)$ & \Cref{Theorem Main result for Dbcoh of schemes} \\
        \hline
        
        -- & $\mathbf{D}^{\operatorname{perf}}(\mathcal{A})$ & $\mathbf{D}^{-}_{\operatorname{coh}}(\mathcal{A})$ & \Cref{Theorem Main result for Perf of algebras}\\
        \hline
        $\mathsf{B}$ is admissible & $\mathbf{D}^{\operatorname{perf}}(\mathcal{A})$ & $\mathbf{D}^{b}_{\operatorname{coh}}(\mathcal{A})$ & \Cref{Theorem Main result for Perf of algebras}\\
        \hline
        $X$ is J-2 & $\mathbf{D}^{b}_{\operatorname{coh}}(\mathcal{A})$ & $\mathbf{D}^{+}_{\operatorname{coh}}(\mathcal{A})$ & \Cref{Theorem Main result for Dbcoh of algebras} \\
        \hline
        $X$ is J-2 \& $\mathsf{B}$ is admissible & $\mathbf{D}^{b}_{\operatorname{coh}}(\mathcal{A})$ & $\mathbf{D}^{b}_{\operatorname{coh}}(\operatorname{Inj}\mathcal{A})$ & \Cref{Theorem Main result for Dbcoh of algebras}\\
        \hline
        -- & $\mathbf{D}^{\operatorname{perf}}(\mathcal{X})$ & $\mathbf{D}^{-}_{\operatorname{coh}}(\mathcal{X})$ & \Cref{Theorem Main result for Perf of Stacks}\\
        \hline
        $\mathsf{B}$ is admissible & $\mathbf{D}^{\operatorname{perf}}(\mathcal{X})$ & $\mathbf{D}^{b}_{\operatorname{coh}}(\mathcal{X})$ & \Cref{Theorem Main result for Perf of Stacks}\\
        \hline
    \end{tabular}
    \end{table}
     
\end{theoremintro}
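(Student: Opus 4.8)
The plan is to deduce Theorem~\ref{Introduction Theorem A} from one abstract statement, applied once for each row of the table. Every pair $(\mathsf{S}_1,\mathsf{S}_2)$ fits the same template: there is a compactly generated triangulated category $\mathsf{T}$ attached to the scheme, algebra, or stack --- the derived category of quasicoherent sheaves, or (for noetherian inputs) the homotopy category of injective quasicoherent sheaves --- inside which $\mathsf{S}_1$ appears as the subcategory $\mathsf{T}^{c}$ of compact objects (or at least generates $\mathsf{T}$), and $\mathsf{S}_2$ appears as the subcategory that the preferred (compressible) metric cuts out from $\mathsf{S}_1$ by completion: $\mathbf{D}^{-}_{\operatorname{coh}}$ is the completion of $\mathbf{D}^{\operatorname{perf}}$, $\mathbf{D}^{b}_{\operatorname{coh}}$ its bounded part, $\mathbf{D}^{+}_{\operatorname{coh}}$ the completion of $\mathbf{D}^{b}_{\operatorname{coh}}$, and $\mathbf{D}^{b}_{\operatorname{coh}}(\operatorname{Inj}\mathcal{A})$ the bounded part of the corresponding injective model. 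So it suffices to prove the abstract theorem: if $\mathsf{T}$ is compactly generated with a compressible metric and $\mathsf{S}_1\subseteq\mathsf{S}_2\subseteq\mathsf{T}$ are determined in this way, then a semiorthogonal decomposition $\langle\mathsf{A},\mathsf{B}\rangle$ of $\mathsf{S}_1$ induces $\langle\operatorname{Coprod}(\mathsf{A})\cap\mathsf{S}_2,\operatorname{Coprod}(\mathsf{B})\cap\mathsf{S}_2\rangle$ on $\mathsf{S}_2$.

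The abstract theorem I would prove in three steps. First, extend the decomposition from $\mathsf{S}_1$ to $\mathsf{T}$: when $\mathsf{S}_1=\mathsf{T}^{c}$, the localising subcategories $\operatorname{Coprod}(\mathsf{A})$ and $\operatorname{Coprod}(\mathsf{B})$ are Bousfield (smashing, being generated by sets of compact objects), the vanishing $\operatorname{Hom}(\operatorname{Coprod}(\mathsf{A}),\operatorname{Coprod}(\mathsf{B}))=0$ is inherited from $\operatorname{Hom}(\mathsf{A},\mathsf{B})=0$ by closure under coproducts and shifts, and $\operatorname{Coprod}(\mathsf{B})^{\perp}=\operatorname{Coprod}(\mathsf{A})$ because $\mathsf{A}\cup\mathsf{B}$ generates $\mathsf{T}$ --- so $\langle\operatorname{Coprod}(\mathsf{A}),\operatorname{Coprod}(\mathsf{B})\rangle$ is a semiorthogonal decomposition of $\mathsf{T}$, with projection functors $\pi_{\mathsf{A}},\pi_{\mathsf{B}}$. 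Second, show that the metric restricts compatibly: a compressible metric on $\mathsf{T}$ induces compressible metrics on the two pieces under which the completion of $\mathsf{A}$ inside $\operatorname{Coprod}(\mathsf{A})$ equals $\operatorname{Coprod}(\mathsf{A})\cap\mathsf{S}_2$, and likewise for $\mathsf{B}$ --- equivalently, $\pi_{\mathsf{A}}$ and $\pi_{\mathsf{B}}$ preserve $\mathsf{S}_2$. Concretely, an object $F$ of $\mathsf{S}_2$ is the limit of a Cauchy sequence in $\mathsf{S}_1$, and compressibility is the quantitative input that lets one turn its image under $\pi_{\mathsf{A}}$ into a Cauchy sequence lying in $\mathsf{A}$ with limit $\pi_{\mathsf{A}}F$. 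Third, restrict the canonical triangle $\pi_{\mathsf{B}}F\to F\to\pi_{\mathsf{A}}F\to$ to objects of $\mathsf{S}_2$ and combine with the semiorthogonality inherited from $\mathsf{T}$ to obtain the decomposition of $\mathsf{S}_2$.

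It then remains to verify the hypotheses row by row. For $X$ noetherian, separated and finite-dimensional, $\mathsf{T}=\mathbf{D}_{\operatorname{qc}}(X)$ is approximable with a compressible metric and $\mathsf{T}^{c}=\mathbf{D}^{\operatorname{perf}}(X)$, whose completion is $\mathbf{D}^{-}_{\operatorname{coh}}(X)$ (row~1); the extra hypothesis in row~2, that $\mathsf{B}$ be admissible, is exactly what forces $\pi_{\mathsf{A}},\pi_{\mathsf{B}}$ to have finite cohomological amplitude and hence preserve the bounded part $\mathbf{D}^{b}_{\operatorname{coh}}(X)$. For row~3 one keeps $\mathsf{T}=\mathbf{D}_{\operatorname{qc}}(X)$ but with $\mathsf{S}_1=\mathbf{D}^{b}_{\operatorname{coh}}(X)$, which is no longer the subcategory of compacts, so the first step must be run with more care --- either by passing through the homotopy category of injectives, whose compacts are $\mathbf{D}^{b}_{\operatorname{coh}}(X)$ for noetherian $X$, or by invoking that the J-2 hypothesis equips $\mathbf{D}^{b}_{\operatorname{coh}}(X)$ with a strong generator; this same regularity is what lets $\mathbf{D}^{b}_{\operatorname{coh}}(X)$ carry a compressible metric, with completion $\mathbf{D}^{+}_{\operatorname{coh}}(X)$. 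The algebra rows repeat all of this with $\mathbf{D}_{\operatorname{qc}}(\mathcal{A})$, the last one also using $\mathsf{B}$ admissible to descend to the bounded injective model $\mathbf{D}^{b}_{\operatorname{coh}}(\operatorname{Inj}\mathcal{A})$. For the stack $\mathcal{X}$, the two listed alternatives are precisely the known conditions (work of Hall, Rydh, Neeman and others) under which $\mathbf{D}_{\operatorname{qc}}(\mathcal{X})$ is compactly generated with $\mathbf{D}_{\operatorname{qc}}(\mathcal{X})^{c}=\mathbf{D}^{\operatorname{perf}}(\mathcal{X})$, so the scheme argument carries over.

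The main obstacle is the second step: showing that a compressible metric restricts along a compactly generated semiorthogonal decomposition so that completion commutes with the projection functors. The difficulty is controlling the rate at which $\pi_{\mathsf{A}}$ and $\pi_{\mathsf{B}}$ converge on an approximating sequence while keeping the finiteness conditions --- coherence and boundedness --- under control, since these are destroyed by arbitrary coproducts and so cannot simply be checked inside $\operatorname{Coprod}(\mathsf{A})$ and $\operatorname{Coprod}(\mathsf{B})$; this is exactly what ``compressibility'', rather than an arbitrary good metric, is meant to supply. A secondary difficulty, smaller but genuine, is the bookkeeping for the rows whose $\mathsf{S}_1$ is $\mathbf{D}^{b}_{\operatorname{coh}}$ (where $\mathsf{S}_1$ is not the subcategory of compacts of $\mathbf{D}_{\operatorname{qc}}$) and for the stacky and $\operatorname{Inj}\mathcal{A}$ rows, where one must cite the correct generation and completion-identification theorems for the template above to apply at all.
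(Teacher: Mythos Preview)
Your plan is essentially the paper's own strategy: extend the semiorthogonal decomposition from $\mathsf{T}^c$ to $\mathsf{T}$ via Brown representability, then show the projection functors preserve the closure of compacts using compressibility of the metric, and finally identify the closures in each geometric example (using $\mathbf{K}(\operatorname{Inj})$ with its co-t-structure for the $\mathbf{D}^b_{\operatorname{coh}}\to\mathbf{D}^+_{\operatorname{coh}}$ rows, exactly as you suggest). Two small corrections: with the paper's convention for $\langle\mathsf{A},\mathsf{B}\rangle$ the vanishing is $\operatorname{Hom}(\mathsf{B},\mathsf{A})=0$, not the reverse; and the way admissibility of $\mathsf{B}$ enters the bounded case is not through ``finite cohomological amplitude'' of the projections but through the extra left adjoint $i^*$ it supplies, which lets one transport the boundedness condition via the adjunction $\operatorname{Hom}(h,i_*i^!t)\cong\operatorname{Hom}(i_*i^*h,t)$ combined with $\mathbb{N}$-compressibility.
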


This theorem follows from more general results proved in \Cref{Section Main Results}. We now get into the details of those results, but again, not in their full generality.

We begin with the following definition. Note that some of the notation below is only used in the introduction.

\begin{definitionintro}\label{Introduction Definition A}
    Let $\mathsf{T}$ be a compactly generated triangulated category with a single compact generator $G$. Let $\big({}^\mathrm{I}\mathsf{T}_G^{\leq 0}, {}^\mathrm{I}\mathsf{T}_G^{\geq 0}\big)$ and $\big({}^\mathrm{II}\mathsf{T}_G^{\geq 0}, {}^\mathrm{II}\mathsf{T}_G^{\leq 0}\big)$ denote the standard t-structure and co-t-structure compactly generated by G respectively, see \Cref{Definition t-structure} and \Cref{Definition co-t-structure/weight structure}. We define the following full subcategories of $\mathsf{T}$,
    \begin{itemize}
        \item $\mathsf{T}^-_c \colonequals \bigcap_{n \in \mathbb{Z}} \mathsf{T}^c \star {}^\mathrm{I}\mathsf{T}_G^{\leq -n}$, that is, $\mathsf{T}^-_c$ is the full subcategory of $\mathsf{T}$ consisting of all objects F such that for all  $n\in \mathbb{Z}$, there exists a triangle $E_n \to F \to D_n \to \Sigma E_n$ for a compact object $E_n$ and with $D_n \in {}^\mathrm{I}\mathsf{T}_G^{\leq -n}$.
        \item ${}^\mathrm{I}\mathsf{T}^{b}_c = \{F \in \mathsf{T}^-_c : \HomT{\Sigma^{i} G}{F} = 0 
        $\text{ for }$ i>>0\}$
        \item $\mathsf{T}^{+}_c \colonequals \bigcap_{n \in \mathbb{Z}} \mathsf{T}^c \star {}^\mathrm{II}\mathsf{T}_G^{\geq n}$, that is, $\mathsf{T}^{+}_c$ is the full subcategory of $\mathsf{T}$ consisting of all objects F such that for all  $n\in \mathbb{Z}$, there exists a triangle $E_n \to F \to D_n \to \Sigma E_n$ for a compact object $E_n$ and with $D_n \in {}^\mathrm{II}\mathsf{T}_G^{\geq n}$.
        \item ${}^\mathrm{II}\mathsf{T}^{b}_c = \{F \in \mathsf{T}^+_c : \HomT{\Sigma^i G}{F} = 0 $\text{ for }$ i<<0\}$
    \end{itemize}
\end{definitionintro}

With this in mind, we introduce the following hypotheses.

\begin{hypothesis*}
    Let $\mathsf{T}$ be a compactly generated triangulated category with a single compact generator $G$. Then,
    \begin{enumerate}[label=(\Roman*)]
        \item\label{Hypothesis I} $\mathsf{T}$ satisfies Hypothesis (\textrm{I}) if $\HomT{\Sigma^{-i} G}{G} = 0$ for $i >> 0$.
        \item\label{Hypothesis II} $\mathsf{T}$ satisfies Hypothesis (\textrm{II}) if $\HomT{\Sigma^{i} G}{G} = 0$ for $i >> 0$.
    \end{enumerate}
\end{hypothesis*}

We note here that there are plenty of examples where these hypotheses hold.

\begin{exampleintro}\label{Introduction Example A}
    Hypothesis \ref{Hypothesis I} holds for the following categories,
    \begin{itemize}
        \item $\mathbf{D}_{\operatorname{Qcoh}}(X)$ for a noetherian scheme $X$.
        \item $\mathbf{D}_{\operatorname{Qcoh}}(\mathcal{A})$ for a quasicoherent $\mathcal{O}_X$-algebra over a noetherian scheme $X$.
        \item $\mathbf{D}_{\operatorname{Qcoh}}(\mathcal{X})$ for a 1-Thomason stack, see \cite[Definition 8.1]{Hall/Rydh:2017}.
    \end{itemize}

    Hypothesis \ref{Hypothesis II} holds for the following categories,
    \begin{itemize}
        \item $\mathbf{K}(\operatorname{Inj}X)$ for a noetherian J-2 scheme $X$.
        \item $\mathbf{K}_m(\operatorname{Proj}X)$ for a noetherian, separated, J-2 scheme $X$, where $\mathbf{K}_m(\operatorname{Proj}X)$ denote the mock homotopy category of projectives as defined in \cite{Murfet:2008}.
        \item $\mathbf{K}(\operatorname{Inj}\mathcal{A})$ for a quasicoherent $\mathcal{O}_X$-algebra over a noetherian J-2 scheme $X$.
    \end{itemize}
\end{exampleintro}

With these hypotheses, we have the following general results on producing semiorthogonal decompositions.

\begin{theoremintro}[\Cref{Corollary admissible categories from Tc to T-c} \& \Cref{Corollary admissible subcategories from Tc to Tbc with pre-approximability}]\label{Introduction Theorem B}
    Let $\mathsf{T}$ satisfy Hypothesis \ref{Hypothesis I}. If $\langle \mathsf{A}, \mathsf{B} \rangle$ is a semiorthogonal decomposition on $\mathsf{T}^c$, then $\langle \operatorname{Coprod}(\mathsf{A}) \cap \mathsf{T}^{-}_c, \operatorname{Coprod}(\mathsf{B}) \cap \mathsf{T}^{-}_c\rangle$ is a semiorthogonal decomposition on $\mathsf{T}^{-}_c$.

    If $\mathsf{B}$ is further assumed to be admissible, then $\langle \operatorname{Coprod}(\mathsf{A}) \cap {}^\mathrm{I}\mathsf{T}^{b}_c, \operatorname{Coprod}(\mathsf{B}) \cap {}^\mathrm{I}\mathsf{T}^{b}_c\rangle$ is a semiorthogonal decomposition on ${}^\mathrm{I}\mathsf{T}^{b}_c$.
\end{theoremintro}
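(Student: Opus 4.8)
The plan is to first promote the given decomposition to the ambient category $\mathsf{T}$ and then intersect with $\mathsf{T}^{-}_c$. Since $\mathsf{A}$ and $\mathsf{B}$ consist of compact objects, $\operatorname{Coprod}(\mathsf{A})$ and $\operatorname{Coprod}(\mathsf{B})$ are compactly generated (by $\mathsf{A}$ and $\mathsf{B}$), the vanishing $\HomT{\operatorname{Coprod}(\mathsf{B})}{\operatorname{Coprod}(\mathsf{A})}=0$ is inherited from $\HomT{\mathsf{B}}{\mathsf{A}}=0$ by passing to localising closures (this uses compactness of $\mathsf{A}$ and $\mathsf{B}$), and the two gluing functors of $\langle\mathsf{A},\mathsf{B}\rangle$ extend, along $\mathsf{T}^c\hookrightarrow\mathsf{T}$, to exact coproduct-preserving functors $\pi_{\mathsf{A}}\colon\mathsf{T}\to\operatorname{Coprod}(\mathsf{A})$ and $\pi_{\mathsf{B}}\colon\mathsf{T}\to\operatorname{Coprod}(\mathsf{B})$; one checks in the usual way that $\langle\operatorname{Coprod}(\mathsf{A}),\operatorname{Coprod}(\mathsf{B})\rangle$ is then a semiorthogonal decomposition of $\mathsf{T}$, with gluing triangle $\pi_{\mathsf{B}}(F)\to F\to\pi_{\mathsf{A}}(F)\to\Sigma\pi_{\mathsf{B}}(F)$, and that $\pi_{\mathsf{A}},\pi_{\mathsf{B}}$ restrict on $\mathsf{T}^c$ to the gluing functors of $\langle\mathsf{A},\mathsf{B}\rangle$ — in particular they send compact objects to compact objects. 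Since $\mathsf{T}^{-}_c$ is a triangulated subcategory of $\mathsf{T}$ and $\operatorname{Coprod}(\mathsf{A})\cap\mathsf{T}^{-}_c$, $\operatorname{Coprod}(\mathsf{B})\cap\mathsf{T}^{-}_c$ are full triangulated subcategories of it, and the required semiorthogonality is immediate, the first assertion will follow once we show that $\pi_{\mathsf{A}}$ and $\pi_{\mathsf{B}}$ map $\mathsf{T}^{-}_c$ into itself: then the gluing triangle of any $F\in\mathsf{T}^{-}_c$ lies entirely in $\mathsf{T}^{-}_c$ and exhibits $F$ as glued from $\pi_{\mathsf{B}}(F)\in\operatorname{Coprod}(\mathsf{B})\cap\mathsf{T}^{-}_c$ and $\pi_{\mathsf{A}}(F)\in\operatorname{Coprod}(\mathsf{A})\cap\mathsf{T}^{-}_c$.

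The heart of the matter is a uniform upper bound on the cohomological amplitude of $\pi_{\mathsf{A}}$ and $\pi_{\mathsf{B}}$ relative to the standard $t$-structure, and this is where Hypothesis \ref{Hypothesis I} enters. The full subcategory of objects $C$ with $\HomT{\Sigma^{-i}G}{C}=0$ for $i\gg0$ — equivalently, with $C\in{}^\mathrm{I}\mathsf{T}_G^{\leq N}$ for some $N$ — is thick, and by Hypothesis \ref{Hypothesis I} it contains $G$, hence contains $\mathsf{T}^c=\operatorname{thick}(G)$. Write $\pi$ for either $\pi_{\mathsf{A}}$ or $\pi_{\mathsf{B}}$. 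Then $\pi(G)\in\mathsf{T}^c$, so $\pi(G)\in{}^\mathrm{I}\mathsf{T}_G^{\leq N}$ for some $N$. Since ${}^\mathrm{I}\mathsf{T}_G^{\leq 0}$ is by construction the smallest cocomplete pre-aisle containing $G$ and $\pi$ preserves coproducts, extensions and the suspension, we get $\pi({}^\mathrm{I}\mathsf{T}_G^{\leq 0})\subseteq{}^\mathrm{I}\mathsf{T}_G^{\leq N}$, whence $\pi({}^\mathrm{I}\mathsf{T}_G^{\leq -n})=\Sigma^{n}\pi({}^\mathrm{I}\mathsf{T}_G^{\leq 0})\subseteq{}^\mathrm{I}\mathsf{T}_G^{\leq N-n}$ for every $n$. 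Given $F\in\mathsf{T}^{-}_c$, pick for each $n$ a triangle $E_n\to F\to D_n\to\Sigma E_n$ with $E_n$ compact and $D_n\in{}^\mathrm{I}\mathsf{T}_G^{\leq -n}$; applying $\pi$ gives a triangle $\pi(E_n)\to\pi(F)\to\pi(D_n)\to\Sigma\pi(E_n)$ with $\pi(E_n)$ compact and $\pi(D_n)\in{}^\mathrm{I}\mathsf{T}_G^{\leq N-n}$, so $\pi(F)\in\mathsf{T}^c\star{}^\mathrm{I}\mathsf{T}_G^{\leq N-n}$. Letting $n\to\infty$ shows $\pi(F)\in\bigcap_{m}\mathsf{T}^c\star{}^\mathrm{I}\mathsf{T}_G^{\leq -m}=\mathsf{T}^{-}_c$, which proves the first assertion.

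For the second assertion, assume $\mathsf{B}$ admissible. Then $\langle\mathsf{B},{}^{\perp}\mathsf{B}\cap\mathsf{T}^c\rangle$ is also a semiorthogonal decomposition of $\mathsf{T}^c$, and applying the construction of the first paragraph to it shows that $\operatorname{Coprod}(\mathsf{B})$ is admissible inside $\mathsf{T}$; in particular $\mathsf{B}$ has a single compact generator, and $\operatorname{Coprod}(\mathsf{B})$, like $\operatorname{Coprod}(\mathsf{A})$, is again a compactly generated triangulated category with a single compact generator satisfying Hypothesis \ref{Hypothesis I}. As before the assertion reduces to showing $\pi_{\mathsf{A}}$ and $\pi_{\mathsf{B}}$ carry ${}^\mathrm{I}\mathsf{T}^{b}_c$ into itself; from the first assertion they carry it into $\mathsf{T}^{-}_c$, so the point is that $\pi_{\mathsf{A}}(F)$ and $\pi_{\mathsf{B}}(F)$ satisfy $\HomT{\Sigma^{i}G}{-}=0$ for $i\gg0$ whenever $F$ does. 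Applying $\HomT{\Sigma^{i}G}{-}$ to the gluing triangle $\pi_{\mathsf{B}}(F)\to F\to\pi_{\mathsf{A}}(F)$, one sees that if $F$ and one of $\pi_{\mathsf{A}}(F),\pi_{\mathsf{B}}(F)$ has this vanishing then so does the other, so it suffices to treat, say, $\pi_{\mathsf{B}}(F)$. Here admissibility of $\mathsf{B}$ (equivalently, of $\operatorname{Coprod}(\mathsf{B})$ in $\mathsf{T}$) is essential: it produces a second projection onto $\operatorname{Coprod}(\mathsf{B})$, namely the one attached to $\langle\mathsf{B},{}^{\perp}\mathsf{B}\cap\mathsf{T}^c\rangle$ (the left adjoint of the inclusion), again exact, coproduct- and compact-preserving and of finite amplitude above by the argument of the second paragraph; comparing it with $\pi_{\mathsf{B}}$ and using that $F$, lying in ${}^\mathrm{I}\mathsf{T}^{b}_c\subseteq\mathsf{T}^{-}_c$, is bounded below as well as above forces $\pi_{\mathsf{B}}(F)$ to be bounded below, hence to lie in ${}^\mathrm{I}\mathsf{T}^{b}_c$. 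Then $\pi_{\mathsf{A}}(F)\in{}^\mathrm{I}\mathsf{T}^{b}_c$ as well, the gluing triangle of $F$ lies in ${}^\mathrm{I}\mathsf{T}^{b}_c$, and together with semiorthogonality this gives the decomposition of ${}^\mathrm{I}\mathsf{T}^{b}_c$.

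I expect the genuinely delicate step to be this last one. For the first assertion only an \emph{upper} amplitude bound for the projection functors is needed, and that is a soft consequence of Hypothesis \ref{Hypothesis I} together with coproduct-preservation. For the second assertion one must in addition control the projections from \emph{below} on bounded objects, and it is precisely admissibility of $\mathsf{B}$ that makes this possible — without it the gluing of a bounded object can acquire cohomology in arbitrarily negative degrees while still lying in $\mathsf{T}^{-}_c$, so the decomposition would fail to restrict to ${}^\mathrm{I}\mathsf{T}^{b}_c$.
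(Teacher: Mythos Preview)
Your argument for the first assertion is correct and follows the same path as the paper. The paper packages the key step --- that any triangulated endofunctor of $\mathsf{T}^c$ sends $\langle G\rangle^{(-\infty,-n]}$ into $\langle G\rangle^{(-\infty,-n+N]}$ for some fixed $N$ --- into the abstract notion of a \emph{compressed metric}, but the underlying computation is exactly yours: $\pi(G)$ is compact, hence lies in some $\langle G\rangle^{[-N,N]}\subseteq{}^{\mathrm{I}}\mathsf{T}_G^{\leq N}$, and then $\pi$ preserves coproducts, extensions and positive shifts. One remark: the equivalence you assert between ``$C\in{}^{\mathrm{I}}\mathsf{T}_G^{\leq N}$ for some $N$'' and ``$\HomT{\Sigma^{-i}G}{C}=0$ for $i\gg 0$'' is false in general (membership in the aisle is not detected by such vanishing), and in any case is not needed --- $\pi(G)\in\mathsf{T}^c=\bigcup_N\langle G\rangle^{[-N,N]}$ already gives what you want, without invoking Hypothesis~\ref{Hypothesis I}. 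Where Hypothesis~\ref{Hypothesis I} actually enters is in ensuring $\mathsf{T}^-_c$ is triangulated, which you use but do not justify.

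For the second assertion your strategy is again the paper's, and you correctly isolate the left adjoint $\pi'_{\mathsf{B}}$ of the inclusion $\operatorname{Coprod}(\mathsf{B})\hookrightarrow\mathsf{T}$ (furnished by admissibility of $\mathsf{B}$) as the essential new ingredient. But the sentence ``comparing it with $\pi_{\mathsf{B}}$ and using that $F$ \ldots\ is bounded below as well as above forces $\pi_{\mathsf{B}}(F)$ to be bounded below'' is not an argument --- there is no direct comparison between $\pi_{\mathsf{B}}(F)$ and $\pi'_{\mathsf{B}}(F)$, and knowing that $\pi'_{\mathsf{B}}$ has finite upper amplitude says nothing by itself about $\pi_{\mathsf{B}}$. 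What actually works, and what the paper does, is to use the adjunctions $\pi'_{\mathsf{B}}\dashv\iota\dashv\pi_{\mathsf{B}}$ to compute
\[
\HomT{\Sigma^i G}{\pi_{\mathsf{B}}(F)} \;=\; \HomT{\pi'_{\mathsf{B}}(\Sigma^i G)}{F} \;=\; \HomT{\Sigma^i\pi'_{\mathsf{B}}(G)}{F}.
\]
Since $\pi'_{\mathsf{B}}$ preserves compacts, $\pi'_{\mathsf{B}}(G)\in\langle G\rangle^{[-M,M]}$ for some $M$, so $\Sigma^i\pi'_{\mathsf{B}}(G)\in\langle G\rangle^{(-\infty,M-i]}$; for $F\in\mathsf{T}^+$ this Hom therefore vanishes for $i\gg 0$, giving $\pi_{\mathsf{B}}(F)\in\mathsf{T}^+$. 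With this one line inserted, your proof of the second assertion is complete and coincides with the paper's.
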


\begin{theoremintro}[\Cref{Corollary admissible categories from Tc to T+c} \& \Cref{Corollary admissible subcategories from Tc to Tbc with co-approximability}]\label{Introduction Theorem C}
    Let $\mathsf{T}$ satisfy Hypothesis \ref{Hypothesis II}. If $\langle \mathsf{A}, \mathsf{B} \rangle$ is a semiorthogonal decomposition on $\mathsf{T}^c$, then $\langle \operatorname{Coprod}(\mathsf{A}) \cap \mathsf{T}^{+}_c, \operatorname{Coprod}(\mathsf{B}) \cap \mathsf{T}^{+}_c\rangle$ is a semiorthogonal decomposition on $\mathsf{T}^{+}_c$.

    If $\mathsf{B}$ is further assumed to be admissible, then $\langle \operatorname{Coprod}(\mathsf{A}) \cap {}^\mathrm{II}\mathsf{T}^{b}_c, \operatorname{Coprod}(\mathsf{B}) \cap {}^\mathrm{II}\mathsf{T}^{b}_c\rangle$ is a semiorthogonal decomposition on ${}^\mathrm{II}\mathsf{T}^{b}_c$
    .
\end{theoremintro}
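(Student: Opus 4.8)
The plan is to argue in parallel with \Cref{Introduction Theorem B}, using the standard co-t-structure ${}^{\mathrm{II}}\mathsf{T}_G$ of \Cref{Definition co-t-structure/weight structure} in place of the standard t-structure; in the language of the abstract results this comes down to checking that the metric on $\mathsf{T}^c$ whose completion is $\mathsf{T}^{+}_c$ (respectively ${}^{\mathrm{II}}\mathsf{T}^{b}_c$) is compressible and then invoking the machinery of \Cref{Section Main Results}. Concretely, I would first promote the decomposition of $\mathsf{T}^c$ to one of $\mathsf{T}$, and then prove that the two associated projection functors preserve $\mathsf{T}^{+}_c$ and ${}^{\mathrm{II}}\mathsf{T}^{b}_c$.

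For the promotion: $\mathsf{A}$ and $\mathsf{B}$ are thick subcategories of $\mathsf{T}^c$ with $\mathsf{A}\star\mathsf{B}=\mathsf{T}^c$ and, fixing the convention, $\HomT{\mathsf{B}}{\mathsf{A}}=0$, so the localising subcategories $\operatorname{Coprod}(\mathsf{A})$ and $\operatorname{Coprod}(\mathsf{B})$ are semiorthogonal and generate $\mathsf{T}$; hence $\langle\operatorname{Coprod}(\mathsf{A}),\operatorname{Coprod}(\mathsf{B})\rangle$ is a semiorthogonal decomposition of $\mathsf{T}$, with functorial triangle $\pi_{\mathsf{B}}F\to F\to\pi_{\mathsf{A}}F\to\Sigma\pi_{\mathsf{B}}F$. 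Both projections preserve coproducts (a coproduct of such triangles is again of the required form, and such triangles are unique), and both carry $\mathsf{T}^c$ into $\mathsf{T}^c$: for compact $E$ the triangle provided by the given decomposition of $\mathsf{T}^c$ is itself a decomposition triangle in $\mathsf{T}$, hence equals $\pi_{\mathsf{B}}E\to E\to\pi_{\mathsf{A}}E$, so $\pi_{\mathsf{A}}E\in\mathsf{A}$ and $\pi_{\mathsf{B}}E\in\mathsf{B}$.

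The crux is that $\pi_{\mathsf{A}}$ and $\pi_{\mathsf{B}}$ preserve $\mathsf{T}^{+}_c$. For this I would equip $\operatorname{Coprod}(\mathsf{A})$ and $\operatorname{Coprod}(\mathsf{B})$ with their own compactly generated co-t-structures (generated by $\pi_{\mathsf{A}}G$ and $\pi_{\mathsf{B}}G$) and check that the inclusions, the projections $\pi_{\mathsf{A}},\pi_{\mathsf{B}}$, and their adjoints are weight-exact up to a bounded shift; this is exactly where Hypothesis \ref{Hypothesis II} enters, since $\HomT{\Sigma^{i}G}{G}=0$ for $i\gg 0$ amounts to ${}^{\mathrm{II}}\mathsf{T}_G$ restricting to a bounded co-t-structure on $\mathsf{T}^c$, so the generators $\pi_{\mathsf{A}}G,\pi_{\mathsf{B}}G$ have bounded $\mathsf{T}$-weight and the shift is finite and uniform. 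Granting this, fix $F\in\mathsf{T}^{+}_c$ and $n\in\mathbb{Z}$ and choose a triangle $E\to F\to D\to\Sigma E$ with $E\in\mathsf{T}^c$ and $D\in{}^{\mathrm{II}}\mathsf{T}_G^{\geq n}$; applying the exact functor $\pi_{\mathsf{A}}$ gives $\pi_{\mathsf{A}}E\to\pi_{\mathsf{A}}F\to\pi_{\mathsf{A}}D\to\Sigma\pi_{\mathsf{A}}E$ with $\pi_{\mathsf{A}}E\in\mathsf{T}^c$ and, by weight-exactness up to a shift $c$, with $\pi_{\mathsf{A}}D\in{}^{\mathrm{II}}\mathsf{T}_G^{\geq n-c}$, so $\pi_{\mathsf{A}}F\in\mathsf{T}^c\star{}^{\mathrm{II}}\mathsf{T}_G^{\geq n-c}$; letting $n\to\infty$ places $\pi_{\mathsf{A}}F$, and likewise $\pi_{\mathsf{B}}F$, in $\bigcap_{m}\mathsf{T}^c\star{}^{\mathrm{II}}\mathsf{T}_G^{\geq m}=\mathsf{T}^{+}_c$. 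The first assertion now follows formally: semiorthogonality of the restricted pair is inherited from $\langle\operatorname{Coprod}(\mathsf{A}),\operatorname{Coprod}(\mathsf{B})\rangle$, and for $F\in\mathsf{T}^{+}_c$ the triangle $\pi_{\mathsf{B}}F\to F\to\pi_{\mathsf{A}}F$ has $\pi_{\mathsf{B}}F\in\operatorname{Coprod}(\mathsf{B})\cap\mathsf{T}^{+}_c$ and $\pi_{\mathsf{A}}F\in\operatorname{Coprod}(\mathsf{A})\cap\mathsf{T}^{+}_c$, which are triangulated subcategories of $\mathsf{T}^{+}_c$.

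For the second assertion one runs the same argument with ${}^{\mathrm{II}}\mathsf{T}^{b}_c$ in place of $\mathsf{T}^{+}_c$; the one extra point is that the boundedness condition $\HomT{\Sigma^{i}G}{F}=0$ for $i\ll 0$ must descend to $\pi_{\mathsf{A}}F$ and $\pi_{\mathsf{B}}F$, and applying $\HomT{\Sigma^{i}G}{-}$ to $\pi_{\mathsf{B}}F\to F\to\pi_{\mathsf{A}}F$ reduces this to vanishing of $\HomT{\Sigma^{i}G}{\pi_{\mathsf{B}}F}$ and $\HomT{\Sigma^{i}G}{\pi_{\mathsf{A}}F}$ in sufficiently negative degrees — this is precisely where the admissibility of $\mathsf{B}$ is used, as it supplies the co-approximability of $\operatorname{Coprod}(\mathsf{B})$ (and of the complementary factor) needed by \Cref{Section Main Results} to propagate the bounded-below condition through the decomposition triangle. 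I expect the main obstacle to be the weight bookkeeping in the crux step: a co-t-structure admits no truncation functors and is neither left nor right complete, so $\pi_{\mathsf{A}}D$ cannot simply be truncated, and the argument must be pushed through the compact generators, the delicate part being the construction of the induced co-t-structures on the two factors together with control of the uniform shift $c$ — it is exactly here that Hypothesis \ref{Hypothesis II}, and, for the bounded statement, the admissibility of $\mathsf{B}$, are indispensable.
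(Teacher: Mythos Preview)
Your plan for the first assertion is sound and close to the paper's, though you route through a slightly different technical device. The paper packages the argument as follows: it shows that the metric $\{\langle G\rangle^{[n,\infty)}\}_{n\in\mathbb{Z}}$ on $\mathsf{T}^c$ is \emph{$\mathbb{N}$-compressed} (any triangulated endofunctor of $\mathsf{T}^c$ shifts it by a fixed amount), and then, rather than arguing weight-exactness of $\pi_{\mathsf{A}},\pi_{\mathsf{B}}$ on all of $\mathsf{T}$, it describes $\mathsf{T}^{+}_c$ via \emph{sequences} $E_1\to E_2\to\cdots$ of compact objects whose successive cones lie in the metric and whose homotopy colimit is $F$; since the projections preserve coproducts (hence homotopy colimits) and restrict to endofunctors of $\mathsf{T}^c$, compression applied to the \emph{compact} cones does the job. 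Your direct weight-exactness claim for $\pi_{\mathsf{A}}$ on $\Sigma^{-n}\mathsf{U}_G$ can in fact be justified (use $\mathsf{U}_G\subseteq\overline{\langle G\rangle}^{[-1,\infty)}$, then $\pi_{\mathsf{A}}(\overline{\langle G\rangle}^{[-1,\infty)})\subseteq\overline{\langle\pi_{\mathsf{A}}G\rangle}^{[-1,\infty)}\subseteq\overline{\langle G\rangle}^{[-N-1,\infty)}\subseteq\Sigma^{N+1}\mathsf{U}_G$), so either route works. One imprecision: Hypothesis~\ref{Hypothesis II} is not what forces $\pi_{\mathsf{A}}G$ to have bounded weight---that is automatic from $\pi_{\mathsf{A}}G\in\mathsf{T}^c=\bigcup_N\langle G\rangle^{[-N,N]}$---rather, it is what guarantees $\HomT{\Sigma^nG}{\Sigma^{-i}\mathsf{U}_G}=0$ for $i\gg 0$, which is needed for $\mathsf{T}^{+}_c$ to be triangulated and for the approximating sequences to exist.

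Your second assertion has a genuine gap. Applying $\HomT{\Sigma^iG}{-}$ to the triangle $\pi_{\mathsf{B}}F\to F\to\pi_{\mathsf{A}}F$ and knowing the middle term vanishes for $i\ll 0$ gives you nothing about the outer terms individually; neither admissibility of $\mathsf{B}$ nor any ``co-approximability'' of $\operatorname{Coprod}(\mathsf{B})$ repairs this. The paper's mechanism is different and quite clean: one uses the \emph{other} adjoint supplied by the recollement on $\mathsf{T}$ to move the compact test object across. Concretely, if $i_*\colon\operatorname{Coprod}(\mathsf{B})\hookrightarrow\mathsf{T}$ has left adjoint $i^*$ (which it does, since any semiorthogonal decomposition of $\mathsf{T}^c$ induces a full recollement on $\mathsf{T}$), then
\[
\HomT{h}{\,i_*i^!t\,}\;=\;\HomT{\,i_*i^*h\,}{\,t\,}
\]
for every $h\in\mathsf{T}^c$. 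Now $i_*i^*$ is an endofunctor of $\mathsf{T}^c$, so compression gives $i_*i^*\big(\langle G\rangle^{[n,\infty)}\big)\subseteq\langle G\rangle^{[n-l,\infty)}$ for some fixed $l$, and hence $\HomT{\Sigma^iG}{i_*i^!t}=\HomT{i_*i^*(\Sigma^iG)}{t}$ vanishes for $i\ll 0$ whenever $t\in\mathcal{G}^{\perp}$. The admissibility of $\mathsf{B}$ enters because one must run this argument for the projection onto $\operatorname{Coprod}(\mathsf{B})$ from the \emph{right} (coming from the localisation sequence $\mathsf{B}\hookrightarrow\mathsf{T}^c\to\mathsf{A}$, which preserves $\mathsf{T}^{+}_c$) and simultaneously have the left adjoint $i^*$ restrict to $\mathsf{T}^c$; the latter uses the \emph{second} semiorthogonal decomposition $\langle\mathsf{B},{}^{\perp}\mathsf{B}\rangle$ on $\mathsf{T}^c$, which exists precisely when $\mathsf{B}$ is left admissible.
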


Then, note that to get \Cref{Introduction Theorem A} from \Cref{Introduction Theorem B} and \Cref{Introduction Theorem C}, we need to compute the subcategories defined in \Cref{Introduction Definition A}. It turns out that some of the work has already been done for us. For the examples related to Hypothesis \ref{Hypothesis I}, the computation of $\mathsf{T}^{-}_c$ and ${}^\mathrm{I}\mathsf{T}^{b}_c$ follows from \cite{Neeman:2022, DeDeyn/Lank/ManaliRahul:2024a, Hall/Lamarche/Lank/Peng:2025, DeDeyn/Lank/ManaliRahul/Peng:2025}.

For the examples related to Hypothesis \ref{Hypothesis II}, more work is required to compute $\mathsf{T}^{+}_c$ and ${}^\mathrm{II}\mathsf{T}^{b}_c$. The main issue is that a priori there is not much control on the co-t-structure compactly generated by a compact object which is used to define these categories, see \Cref{Introduction Definition A}. Let us take the case of $\mathsf{T} = \mathbf{K}(\operatorname{Inj} X)$ for a noetherian J-2 scheme $X$ to illustrates our main strategy to solve this issue. $\mathbf{K}(\operatorname{Inj} X)$ has a standard co-t-structure, given by the brutal truncation. This co-t-structure is much easier to work with, because of the simple form of the truncations. Hence, what we need is to connect these two co-t-structures. One way to do this is through the theory of co-approximable triangulated categories, as introduced in \cite{ManaliRahul:2025} which is a variation of the notion of approximable triangulated categories introduced by Neeman, see \cite{Neeman:2021a, Neeman:2021b, Neeman:2021c, Neeman:2022}. For more details on the connection, we refer the reader to the \Cref{Lemma closure of compacts for Co-approx} and \Cref{Theorem J-2 implies weak co-approximablility for schemes}.

For the reader familiar with the theory of approximable triangulated categories, we note here that we prove (weak) approximability like structures for the homotopy category of injectives associated to a large class of schemes, and algebras over schemes, see \Cref{Theorem J-2 implies weak co-approximablility for schemes} and \Cref{Theorem J-2 implies weak co-approximablility for algebras}. Similar result also for the mock homotopy category of projectives, see \Cref{Proposition Kmproj is Co-approx}. These results should be of independent interest with the growing theory surrounding approximable triangulated categories, and in general metric techniques for triangulated categories.

Let us finish by discussing the related results of \cite{Kuznetsov/Shinder:2023,Bondarko:2023}. Both of these works have some abstract results, but for the sake of brevity, we compare the applications to algebraic geometry which are related to \Cref{Introduction Theorem A}, see \cite[Theorem 3.2.7]{Bondarko:2023} and \cite[Corollary 6.5]{Kuznetsov/Shinder:2023}. In both these works, the assumptions on the scheme is stronger; both assume at least properness over a noetherian ring. But, the results proved are also stronger, in the sense that the need for the extra hypothesis on being admissible is not required. 

The overarching theme seems to be that either one assumes these extra conditions on the scheme, or one would need to assume the existence of an extra adjoint. In the first approach, which is taken by both \cite{Kuznetsov/Shinder:2023,Bondarko:2023}, the idea is to use the powerful representability theorems proved in \cite{Bondal/VanDenBergh:2003,Rouquier:2003,Neeman:2021b}. 
 
In this work, we take the second approach, which does not require these powerful representability theorems. Another recent paper which works on a similar problem in a very general setting is \cite{Sun/Zhang:2021}, see \Cref{Remark on paper by Sun and Zhang} for a discussion.

\subsection*{Acknowledgments} This work is part of the author's PhD work at the Australian National University under the supervision of Amnon Neeman. The author would like to thank him for
generously sharing his knowledge and his help throughout the PhD. The author would also like to thank Pat Lank and Timothy De Deyn for many useful discussions, especially regarding stacks and noncommutative algebras over schemes. Further, the author thanks BIREP group at Universit\"{a}t Bielefeld and the Algebraic Geometry group at Universit\`{a} degli Studi di Milano for their hospitality during the author's stay there.

Finally, the author was supported by an Australian Government Research Training Program scholarship, and was supported under the ERC Advanced
Grant 101095900-TriCatApp, the Deutsche Forschungsgemeinschaft (SFB-TRR 358/1 2023 - 491392403), and the Australian Research Council Grant DP200102537 at different points during the PhD.

\section{Background and notation}
Throughout this work, we will use $\Sigma$ to denote the shift functor in the definition of triangulated categories. For any functor $F : \mathsf{S} \to \mathsf{T}$, $F(\mathsf{S})$ will denote the essential image of $\mathsf{S}$ under $F$. Further, all the gradings involved in this work will be cohomological. For full subcategories $\mathsf{A}$ and $\mathsf{C}$ of a triangulated category $\mathsf{T}$, $\mathsf{A} \star \mathsf{C}$ will denote the full subcategory consisting of objects $B$ such that there exists a triangle $A \to B \to C \to \Sigma A$ with $A \in \mathsf{A}$ and $C \in \mathsf{C}$. For any full subcategory $\mathsf{A}$ of $\mathsf{T}$, $\mathsf{A}^{\perp}$ (resp.\ ${}^{\perp}\mathsf{A}$) will denote the full subcategory of all objects $B$ such that $\HomT{A}{B} = 0$ (resp.\ $\HomT{B}{A} = 0$) for all $A \in \mathsf{A}$. Finally, for a triangulated category $\mathsf{T}$ with coproducts, we denote the full subcategory of compact objects by $\mathsf{T}^c$.

We now recall some background material related to metrics on triangulated categories.

\subsection*{Metrics, generating sequences, and weak co-approximability}
We begin with some notation from \cite{Bondal/VanDenBergh:2003, Neeman:2021b}.

\begin{definition}\label{Notation from Neeman}
    Let $\mathsf{C}$ be a full subcategory of a triangulated category $\mathsf{T}$. Then,
    \begin{itemize}
        \item $\operatorname{smd}(\mathsf{C})$ (resp.\ $\operatorname{coprod}(\mathsf{C})$) denotes the closure of $\mathsf{C}$ under summands (resp.\ finite coproducts and extensions).
        \item If $\mathsf{T}$ has coproducts, then $\operatorname{Coprod}(\mathsf{C})$ denotes the closure of $\mathsf{C}$ under coproducts and extensions. Note that if $\mathsf{C} = \Sigma \mathsf{C}$, then $\operatorname{Coprod}(\mathsf{C})$ is the localising subcategory of $\mathsf{T}$ generated by $\mathsf{C}$.
        \item Let $G$ be an object of $\mathsf{T}$. Then, for any $a \leq b$, we define,
        \[\langle G \rangle^{[a,b]} \colonequals \operatorname{smd}(\operatorname{coprod}(\{\Sigma^{-i} G : a \leq i \leq b\}))\]
        We also extend this definition for $a=-\infty$ and $b=\infty$ in the obvious way. 
        \item We define $\langle G \rangle \colonequals \bigcup_{n \geq 0} \langle G \rangle^{[-n,n]}$. This is in fact the thick subcategory generated by $G$. If $G$ is a compact generator for a triangulated category $\mathsf{S}$, then the subcategory of compacts $\mathsf{S}^c = \langle G \rangle$.
        \item Let $G$ be an object of $\mathsf{T}$. If $\mathsf{T}$ has coproducts, we define,
        \[\overline{\langle G \rangle}^{[a,b]} = \operatorname{smd}(\operatorname{Coprod}(\{\Sigma^{-i} G : a \leq i \leq b\}))\]
        We also extend this definition for $a=-\infty$ and $b=\infty$ in the obvious way.
    \end{itemize}
    
\end{definition}

We begin by stating the definition of an extended good metric for a triangulated category, which is just the $\mathbb{Z}$-graded version of Neeman's notion of a good metric, see \cite[Definition 10]{Neeman:2020} and \cite[Definition 3.1]{ManaliRahul:2025}.

\begin{definition}\label{Definition Metric}
    Let $\mathsf{T}$ be a triangulated category. An \emph{extended good metric
    } is a decreasing sequence of strictly full subcategories $\{\mathcal{M}_n\}_{n \in \mathbb{Z}}$ each closed under extensions and containing $0$ such that,
    \[\Sigma^{-1} \mathcal{M}_{n+1} \cup \mathcal{M}_{n+1} \Sigma\mathcal{M}_{n+1} \subseteq \mathcal{M}_n\]
    for all integers $n$. From now on, when we say metric, we would always mean an extended good metric.
    
    We say an extended good metric $\{\mathcal{R}_n\}_{n \in \mathbb{Z}}$ is an \emph{orthogonal metric} if ${}^{\perp}[(\mathcal{R}_n)^{\perp}] = \mathcal{R}_n$ for all integers $n$. Finally, we say two metrics $\mathcal{M}$ and $\mathcal{N}$ are \emph{equivalent} if for all $i$, there exists $j \geq 0$ such that $\mathcal{M}_{i+j} \subseteq \mathcal{N}_j \subseteq \mathcal{M}_{i-j}$, and we say that they are \emph{$\mathbb{N}$-equivalent} if the $j$ can be chosen independent of $i$.
\end{definition}

We now recall a few notions from \cite{ManaliRahul:2025}.
\begin{definition}\label{Definition generating sequence}
    Let $\mathsf{T}$ be a compactly generated triangulated category. Then,
    \begin{itemize}
        \item A \emph{pre-generating sequence} is a sequence $\{\mathcal{G}_n\}_{n \in \mathbb{Z}}$ of full subcategories of compact objects such that $\operatorname{smd}(\operatorname{coprod}(\bigcup_{n \in \mathbb{Z}}\mathcal{G}_{n}))) = \mathsf{T}^c$. Given a pre-generating sequence $\mathcal{G}$, and $a \leq b$, we define the full subcategory $\mathcal{G}^{[a,b]} = \operatorname{smd}(\operatorname{coprod}(\bigcup_{i=a}^b)\mathcal{G}^{i})$. We also extend this definition for $a=-\infty$ and $b=\infty$ in the obvious way.
        \item A pre-generating sequence $\mathcal{G}$ is a \emph{generating sequence} if \[\Sigma^{-1}\mathcal{G}^n \cup \mathcal{G}^n \cup \Sigma\mathcal{G}^n \subseteq \mathcal{G}^{[n-1,n+1]}\]
        for all $n \in \mathbb{Z}$. It is further a \emph{finite generating sequence} if $\mathcal{G}^n$ consists of finitely many objects for each integer $n$.
        \item Given a generating sequence $\mathcal{G}$, we can define an extended good metric $\mathcal{M}^{\mathcal{G}}$ on $\mathsf{T}^c$ and an orthogonal metric $\mathcal{R}^{\mathcal{G}}$ on $\mathsf{T}$ defined by,
        \[\mathcal{M}^{\mathcal{G}}_n = \mathcal{G}^{(-\infty, -n]} =  \operatorname{smd}\Big(\operatorname{coprod}\Big(\bigcup_{i \leq -n}\mathcal{G}^{i}\Big)\Big), \ \ \mathcal{R}^{\mathcal{G}}_n = {}^{\perp}[(\mathcal{M}^{\mathcal{G}}_n)^{\perp}] \]
        for all $n \in \mathbb{Z}$.
    \end{itemize}
\end{definition}

\begin{example}\label{Example generating sequence approx/co-approx}
    Let $\mathsf{T}$ be a compactly generated triangulated category with a single compact generator $G$. In this case we can define the following two finite generating sequences (\Cref{Definition generating sequence}),
    \begin{itemize}
        \item $\mathcal{G}^n = \{\Sigma^{-n} G\}$. In this case $\mathcal{M}^{\mathcal{G}}_n = \langle G \rangle^{(-\infty,-n]}$, see \Cref{Notation from Neeman}. Further, $\mathcal{R}^{\mathcal{G}} = \mathsf{T}_G^{\leq -n}$ where $(\mathsf{T}_G^{\leq 0}, \mathsf{T}_G^{\geq 0})$ is the t-structure generated by $G$, see \Cref{Definition t-structure}. Note that the metric $\mathcal{R}^{\mathcal{G}} \cap \mathsf{T}^c$ is equal to the metric $\mathcal{M}^{\mathcal{G}}$ by the remark at the end of \Cref{Definition t-structure}.
        \item $\mathcal{G}^n = \{\Sigma^n G\}$. In this case $\mathcal{M}^{\mathcal{G}}_n = \langle G \rangle^{[n,\infty)}$, see \Cref{Notation from Neeman}. Further, $\mathcal{R}^{\mathcal{G}} = \Sigma^{-n}\mathsf{U}_G$ where $(\mathsf{U}_G, \mathsf{V}_G)$ is the co-t-structure generated by $G$, see \Cref{Definition co-t-structure/weight structure}. Note that the metric $\mathcal{R}^{\mathcal{G}} \cap \mathsf{T}^c$ is equivalent to the metric $\mathcal{M}^{\mathcal{G}}$ by the remark at the end of \Cref{Definition co-t-structure/weight structure}.
    \end{itemize}
\end{example}

For the sake of completeness, we recall the definitions of a (compactly generated) t-structure and a (compactly generated) co-t-structure.
\begin{definition}\label{Definition t-structure}\cite[D\'{e}finition 1.3.1]{Beilinson/Bernstein/Deligne:1982}
    A \emph{t-structure} on a triangulated category $\mathsf{T}$ is a pair $(\mathsf{T}^{\leq 0}, \mathsf{T}^{\geq 0})$ of strictly full subcategories such that,
    \begin{itemize}
        \item $\Sigma\mathsf{T}^{\leq 0} \subseteq \mathsf{T}^{\leq 0}$, $\Sigma^{-1}\mathsf{T}^{\geq 0} \subseteq \mathsf{T}^{\geq 0}$, and $\Hom{\mathsf{T}}{\Sigma\mathsf{T}^{\leq 0}}{\mathsf{T}^{\geq 0}} = 0$. We define $\Sigma^{-n}\mathsf{T}^{\leq 0} = \mathsf{T}^{\leq n}$ and $\Sigma^{-n}\mathsf{T}^{\geq 0} = \mathsf{T}^{\geq n}$.
        \item For all $Y \in \mathsf{T}$, there exists a triangle $X \to Y \to Z \to \Sigma X$ with $X \in \Sigma\mathsf{T}^{\leq 0}$ and $Z \in \mathsf{T}^{\geq 0}$.
    \end{itemize}
    Given a t-structure, we get an orthogonal metric associated to it on $\mathsf{T}$ which is given by $\{\mathsf{T}^{\leq -n}\}_{n \in \mathbb{Z}}$. Further, if $\mathsf{T}$ has coproducts, we also get a metric associated to it on $\mathsf{T}^c$ which is given by $\{\mathsf{T}^{\leq -n} \cap \mathsf{T}^c\}_{n \in \mathbb{Z}}$.
    
    If $\mathsf{T}$ has coproducts, then for any compact object $G \in \mathsf{T}^c$, $(\mathsf{T}_G^{\leq 0},\mathsf{T}_G^{\geq 0})$ is a t-structure by \cite[Theorem A.1]{AlonsoTarrio/JeremiasLopez/SoutoSalorio:2003} where,
    \[\mathsf{T}_G^{\leq 0} = \operatorname{Coprod}(\{\Sigma^i 
    G: i \geq 0\}), \mathsf{T}_G^{\geq 0} = (\Sigma \mathsf{T}_G^{\leq 0})^{\perp}\]
    This is said to be the \emph{t-structure generated by $G$}. We note here that $\mathsf{T}_G^{\leq 0} \cap \mathsf{T}^c = \langle G \rangle^{(-\infty,0]}$, which implies that the metric associated to it on $\mathsf{T}^c$ is the same as the metric $\{\langle G \rangle^{(-\infty,-n]}\}_{n \in \mathbb{Z}}$.
\end{definition}

\begin{remark}\label{Remark aisle and coaisle}
    Let $\mathsf{T}$  be a triangulated category. 
    \begin{itemize}
        \item A strictly full subcategory $\mathsf{U}$ is called a \emph{aisle} if $\Sigma\mathsf{U} \subseteq \mathsf{U}$, and the inclusion $\mathsf{U} \to \mathsf{T}$ has a right adjoint.
        \item A strictly full subcategory $\mathsf{U}$ is called a \emph{coaisle} if $\Sigma^{-1}\mathsf{V} \subseteq \mathsf{V}$, and the inclusion $\mathsf{V} \to \mathsf{T}$ has a left adjoint.
    \end{itemize}
    Then, we have the following,
    \begin{itemize}
        \item If $(\mathsf{U}, \mathsf{V})$ is a t-structure on $\mathsf{T}$ then $\mathsf{U}$ is an aisle and $\mathsf{V}$ is a coaisle.
        \item If $\mathsf{U}$ is an aisle, then, $(\mathsf{U}, \Sigma \mathsf{U}^{\perp})$ is a t-structure on $\mathsf{T}$.
        \item If $\mathsf{V}$ is an coaisle, then, $\big(\Sigma^{-1}({}^{\perp}\mathsf{V}),\mathsf{V}\big)$ is a t-structure on $\mathsf{T}$.
    \end{itemize}
    
\end{remark}

\begin{definition}\label{Definition co-t-structure/weight structure}
    \cite[Definition 1.1.1]{Bondarko:2010} and \cite[Definition 1.4]{Pauksztello:2008} A \emph{co-t-structure} on a triangulated category $\mathsf{T}$ is a pair $(\mathsf{T}^{\geq 0}, \mathsf{T}^{\leq 0})$ of strictly full subcategories such that,
    \begin{itemize}
        \item $\Sigma\mathsf{T}^{\leq 0} \subseteq \mathsf{T}^{\leq 0}$, $\Sigma^{-1}\mathsf{T}^{\geq 0} \subseteq \mathsf{T}^{\geq 0}$, and $\Hom{\mathsf{T}}{\mathsf{T}^{\geq 0}}{\Sigma\mathsf{T}^{\leq 0}} = 0$. We define $\Sigma^{-n}\mathsf{T}^{\leq 0} = \mathsf{T}^{\leq n}$ and $\Sigma^{-n}\mathsf{T}^{\geq 0} = \mathsf{T}^{\geq n}$.
        \item For all $Y \in \mathsf{T}$, there exists a triangle $X \to Y \to Z \to \Sigma X$ with $Z \in \Sigma\mathsf{T}^{\leq 0}$ and $X \in \mathsf{T}^{\geq 0}$.
    \end{itemize}
    Given a co-t-structure, we get an orthogonal metric associated to it on $\mathsf{T}$ which is given by $\{\mathsf{T}^{\geq n}\}_{n \in \mathbb{Z}}$. Further, if $\mathsf{T}$ has coproducts, we also get a metric associated to it on $\mathsf{T}^c$ which is given by $\{\mathsf{T}^{\geq n} \cap \mathsf{T}^c\}_{n \in \mathbb{Z}}$.
    
    If $\mathsf{T}$ has coproducts, then for any compact object $G \in \mathsf{T}^c$, $(\mathsf{U}_G,\mathsf{V}_G)$ is a co-t-structure by \cite[Theorem 5]{Pauksztello:2012} where,
    \[\mathsf{V}_G = (\{\Sigma^i 
    G: i \leq 0\})^{\perp}, \mathsf{U}_G = {}^{\perp}(\Sigma \mathsf{V}_G)\]
    This is said to be the \emph{co-t-structure generated by $G$}. We note here that $\mathsf{U}_G^{\leq 0} \cap \mathsf{T}^c \subseteq \langle G \rangle ^{[-1,\infty)}$ by \cite[Theorem 2.3.4]{Bondarko:2022}, see \Cref{Notation from Neeman}. This implies that the metric associated to it on $\mathsf{T}^c$ is the $\mathbb{N}$-equivalent to the metric $\{\langle G \rangle^{[n,\infty)}\}_{n \in \mathbb{Z}}$, see \Cref{Definition Metric}.
\end{definition}

\begin{definition}\label{Definition preferred equivalence class}
    For a triangulated category $\mathsf{T}$, two t-structures (resp.\ co-t-structures) are said to be equivalent if the corresponding metric on $\mathsf{T}$ are equivalent, see \Cref{Definition Metric}, \Cref{Definition t-structure}, and \Cref{Definition co-t-structure/weight structure}. Note that this is the same as the metrics being $\mathbb{N}$-equivalent.

    Further, let $\mathsf{T}$ be a compactly generated triangulated category with a compact generator $G$. Then, we say a t-structure (resp.\ co-t-structure) $(\mathsf{U},\mathsf{V})$ lies in the 
    \begin{itemize}
        \item \emph{preferred quasiequivalence class of t-structures (resp.\ co-t-structures)} if the metric corresponding it to it on $\mathsf{T}^c$ is equivalent to the metric corresponding to the t-structure (resp.\ co-t-structure) generated by $G$, see \Cref{Definition t-structure} (resp.\ \Cref{Definition co-t-structure/weight structure}). Note that this is the same as the existence of a positive integer $n$ such that $\langle G \rangle^{(-\infty,-n]} \subseteq \mathsf{U} \cap \mathsf{T}^c \subseteq \langle G \rangle^{(-\infty,n]}$ (resp.\ $\langle G \rangle^{[n,\infty)} \subseteq \mathsf{U} \cap \mathsf{T}^c \subseteq \langle G \rangle^{[-n,\infty)}$), see \Cref{Notation from Neeman}.
        \item \emph{preferred equivalence class of t-structures (resp.\ co-t-structures)} if the metric corresponding it to it on $\mathsf{T}$ is equivalent to the orthogonal metric corresponding to the t-structure (resp.\ co-t-structure) generated by $G$, see \Cref{Definition t-structure} (resp.\ \Cref{Definition co-t-structure/weight structure}).
    \end{itemize}
\end{definition}
We now define the closure of compacts in the specific context we are interested in. For a more general definition, see \cite[Definition 3.17]{ManaliRahul:2025}.
\begin{definition}\label{Definition closure of compacts}
    Let $\mathsf{T}$ be a compactly generated triangulated category with a generating sequence $\mathcal{G}$, and with an orthogonal metric $\mathcal{R}$ on $\mathsf{T}$. Then, we define,
    \begin{itemize}
        \item $\overline{\mathsf{T}^c} \colonequals \bigcap_{n \in \mathbb{Z}} \mathsf{T}^c \star \mathcal{R}_n$, that is, $\overline{\mathsf{T}^c}$ is the full subcategory of $\mathsf{T}$ consisting of all objects F such that for all  $n\in \mathbb{Z}$, there exists a triangle $E_n \to F \to D_n \to \Sigma E_n$ with $E_n \in \mathsf{T}^c$ and $D_n \in \mathcal{R}_n$. We call $\overline{\mathsf{T}^c}$ the \emph{closure of the compacts}.
        \item $\mathcal{G}^{\perp} \colonequals \bigcup_{n \in \mathbb{Z}}(\mathcal{M}_n^{\mathcal{G}})^{\perp}$, see \Cref{Definition generating sequence} for the definition of $\mathcal{M}_n^{\mathcal{G}}$.
        \item $\mathsf{T}^b_c \colonequals \overline{\mathsf{T}^c} \cap \mathcal{G}^{\perp}$. We call $\mathsf{T}^b_c$ the \emph{bounded objects in the closure of compacts}.
    \end{itemize}
    Note that these notions only depends on the equivalence class of the metric $\mathcal{R}$.
    
    If we are given a triangulated category with a generating sequence, and we do not specify an orthogonal metric on $\mathsf{T}$, by default we will compute the closure of compact with respect to the orthogonal metric $\mathcal{R}^{\mathcal{G}}$, see \Cref{Definition generating sequence}. 
\end{definition}

The following lemma follows from \cite[Lemma 6.2]{ManaliRahul:2025}, but we give a short proof for the sake of completeness.

\begin{lemma}\label{Lemma approximating sequence for closure of compacts}
    Let $\mathsf{T}$ be a compactly generated triangulated category with a generating sequence $\mathcal{G}$ (\Cref{Definition generating sequence}), and with an orthogonal metric $\mathcal{R}$ (\Cref{Definition Metric}) on $\mathsf{T}$ such that for all $n$, $\HomT{\mathcal{G}^n}{\mathcal{R}_i} = 0$ for $i > >0$. Then, an object $F$ lies in the closure of compacts $\overline{\mathsf{T}^c}$, if and only if there exists a sequence $E_1 \to E_2 \to E_3 \to E_4 \to \cdots$ in $\mathsf{T}^c$ mapping to $F$ such that $\operatorname{Cone}(E_n \to F) \in \mathcal{R}_{n}$. 
    
    Furthermore, for any such sequence, $\hocolim E_i \to F$ is an isomorphism.
\end{lemma}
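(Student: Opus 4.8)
The plan is to prove the two implications of the stated equivalence separately, and then the concluding sentence. The implication ``$\Leftarrow$'' is immediate: given such a sequence, for each $n$ the triangle $E_n\to F\to\operatorname{Cone}(E_n\to F)\to\Sigma E_n$ with $E_n\in\mathsf{T}^c$ and $\operatorname{Cone}(E_n\to F)\in\mathcal{R}_n$ witnesses $F\in\mathsf{T}^c\star\mathcal{R}_n$, and intersecting over $n$ gives $F\in\overline{\mathsf{T}^c}$. For ``$\Rightarrow$'' I would build the sequence by induction. Since $F\in\overline{\mathsf{T}^c}\subseteq\mathsf{T}^c\star\mathcal{R}_1$ there is a starting triangle $E_1\xrightarrow{f_1}F\to D_1\to\Sigma E_1$ with $E_1$ compact and $D_1\in\mathcal{R}_1$. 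The inductive step must, from $E_n\xrightarrow{f_n}F$ with $E_n$ compact and $\operatorname{Cone}(f_n)=D_n\in\mathcal{R}_n$, produce a compact $E_{n+1}$ together with maps $E_n\to E_{n+1}\xrightarrow{f_{n+1}}F$ composing to $f_n$ and with $\operatorname{Cone}(f_{n+1})\in\mathcal{R}_{n+1}$.

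The crux --- and the only place the hypothesis $\HomT{\mathcal{G}^n}{\mathcal{R}_i}=0$ for $i\gg0$ is needed --- is the following closure statement: \emph{if $F\in\overline{\mathsf{T}^c}$ and $E\in\mathsf{T}^c$, then $D:=\operatorname{Cone}(E\to F)$ again lies in $\overline{\mathsf{T}^c}$}. This is not formal, since $\star$ is not commutative and the $\mathcal{R}_n$ are not closed under arbitrary extensions by compact objects. To prove it, fix $m$ and choose $m'\geq m$ so large that $\HomT{E}{\mathcal{R}_{m'}}=0$; this is possible because the single compact object $E$ lies in $\mathcal{G}^{[-l,l]}$ for some $l$, and $\HomT{-}{\mathcal{R}_{m'}}$ kills finite coproducts, summands and extensions built from the finitely many categories $\mathcal{G}^{-l},\dots,\mathcal{G}^{l}$, each of which has $\HomT{\mathcal{G}^{\bullet}}{\mathcal{R}_{m'}}=0$ once $m'$ is large. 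Using $F\in\overline{\mathsf{T}^c}\subseteq\mathsf{T}^c\star\mathcal{R}_{m'}$, pick a triangle $A\xrightarrow{g}F\xrightarrow{q}B\to\Sigma A$ with $A$ compact and $B\in\mathcal{R}_{m'}$. Since $\HomT{E}{B}=0$ the composite $E\to F\xrightarrow{q}B$ vanishes, so the octahedral axiom applied to $E\to F\xrightarrow{q}B$ yields a triangle $A\to D\to B\oplus\Sigma E\to\Sigma A$. Now $B\oplus\Sigma E\in\mathsf{T}^c\star\mathcal{R}_{m'}$ (split off the compact summand $\Sigma E$), so by associativity of $\star$ together with $\mathsf{T}^c\star\mathsf{T}^c=\mathsf{T}^c$ we get $D\in\mathsf{T}^c\star\mathcal{R}_{m'}\subseteq\mathsf{T}^c\star\mathcal{R}_m$; as $m$ was arbitrary, $D\in\overline{\mathsf{T}^c}$.

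Granting this, the inductive step goes as follows. We have $D_n\in\overline{\mathsf{T}^c}\subseteq\mathsf{T}^c\star\mathcal{R}_{n+1}$, so fix a triangle $C\xrightarrow{h}D_n\to S\to\Sigma C$ with $C$ compact and $S\in\mathcal{R}_{n+1}$. Write the (rotated) triangle on $f_n$ as $\Sigma^{-1}D_n\xrightarrow{\partial}E_n\xrightarrow{f_n}F\to D_n$, set $\beta=\partial\circ\Sigma^{-1}h\colon\Sigma^{-1}C\to E_n$, and apply the octahedral axiom to $\Sigma^{-1}C\xrightarrow{\Sigma^{-1}h}\Sigma^{-1}D_n\xrightarrow{\partial}E_n$. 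This produces the object $E_{n+1}:=\operatorname{Cone}(\beta)$, which is compact since $\mathsf{T}^c$ is thick, a structure map $E_n\to E_{n+1}$, and a morphism $f_{n+1}\colon E_{n+1}\to F$ fitting into a triangle $\Sigma^{-1}S\to E_{n+1}\xrightarrow{f_{n+1}}F\to S$ with $f_{n+1}\circ(E_n\to E_{n+1})=f_n$ (the last identity being one of the built-in commutativities of Verdier's octahedron). Hence $\operatorname{Cone}(f_{n+1})\cong S\in\mathcal{R}_{n+1}$, which closes the induction. For the final sentence, given any such sequence the $f_k$ induce $\theta\colon\hocolim E_k\to F$; for $P\in\mathcal{G}^{j}$ and $i\in\mathbb{Z}$, compactness of $\Sigma^iP$ gives $\HomT{\Sigma^iP}{\hocolim E_k}=\operatorname{colim}_k\HomT{\Sigma^iP}{E_k}$, while the triangles $E_k\to F\to D_k\to\Sigma E_k$ with $D_k\in\mathcal{R}_k$, the containment $\Sigma^iP\in\mathcal{G}^{[j-|i|,j+|i|]}$, and the standing hypothesis give $\HomT{\Sigma^iP}{D_k}=\HomT{\Sigma^iP}{\Sigma^{-1}D_k}=0$ for $k\gg0$, so $\HomT{\Sigma^iP}{f_k}$ is an isomorphism for $k\gg0$, hence so is $\HomT{\Sigma^iP}{\theta}$; since such $\Sigma^iP$ generate $\mathsf{T}$, $\theta$ is an isomorphism.

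I expect the only genuinely non-formal step to be the closure statement above, namely that $\overline{\mathsf{T}^c}$ is stable under cones of morphisms out of compact objects: this is false for a general metric, and the hypothesis $\HomT{\mathcal{G}^n}{\mathcal{R}_i}=0$ for $i\gg0$ is used precisely to force the map $E\to B$ to vanish, replacing an uncontrolled extension by a split one so that the formal manipulation with $\star$ goes through. Everything else is routine use of the octahedral axiom and of compactness.
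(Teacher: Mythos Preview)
Your proof is correct and follows essentially the same route as the paper's: both establish first that for any compact $H$ one has $\HomT{H}{\mathcal{R}_i}=0$ for $i\gg 0$, use this to show that $D_n=\operatorname{Cone}(E_n\to F)$ stays in $\overline{\mathsf{T}^c}$, then pick a fresh compact approximation of $D_n$ and build $E_{n+1}$ via the octahedral axiom; the hocolim argument at the end is identical. The paper compresses your ``closure statement'' into the single line ``it is easy to show that the closure of compacts $\overline{\mathsf{T}^c}$ is triangulated'' and then applies the octahedron to $F\to D_n\to D_{n+1}$ rather than to $\Sigma^{-1}C\to\Sigma^{-1}D_n\to E_n$, but these are rotations of the same diagram, so there is no substantive difference.
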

\begin{proof}
    If an object $F$ has such a sequence mapping to it, then by \Cref{Definition closure of compacts} it lies in the closure of compacts.
    
    We now prove the converse statement.
    First, note that as $\mathcal{G}$ is a generating sequence, $\mathsf{T}^c = \bigcup_{n \geq 0}\mathcal{G}^{[-n,n]}$, see \Cref{Definition generating sequence}. So, by the hypothesis, we get that for any $H \in \mathsf{T}^c$, $\HomT{H}{\mathcal{R}_i} = 0$ for $i > >0$. Using this, it is easy to show that the closure of compacts $\overline{\mathsf{T}^c}$ is triangulated, see \Cref{Definition closure of compacts}. Now, let $F \in \overline{\mathsf{T}^c}$. We will construct the required sequence inductively. The base case holds by \Cref{Definition closure of compacts}. So, we just need to show the inductive step. Suppose we have the sequence $E_1 \to \cdots \to E_n$ satisfying the required properties. If $D_n \colonequals \operatorname{Cone}(E_n \to F)$, then $D_n$ lies in $\mathcal{R}_n \cap \overline{\mathsf{T}^c}$. By the argument at the beginning of this proof, there exists an integer $n_1$, which we can choose so that $n_1 > n$, such that $\HomT{E_n}{\mathcal{R}_i} = 0$ for all $i \geq n_1$. As $D_n \in \overline{\mathsf{T}^c}$, there exists a triangle $\widetilde{E}_n \to D_n  \to D_{n+1} \to \Sigma \widetilde{E}_n$ with $\widetilde{E}_n \in \mathsf{T}^c$ and $D_{n+1} \in \mathcal{R}_{n_1}$. Applying the octahedral axiom to the composible morphisms $F \to D_n \to D_{n+1}$ gives us the required object $E_{n+1} \in \mathsf{T}^c$, and the triangle $E_{n+1} \to F \to D_{n+1} \to \Sigma E_{n+1}$.

    By the previous paragraph, we have that for any compact object $H$, $\Hom{\mathsf{T}}{H}{-}$ sends $E_n \to F$ to an isomorphism for large enough $n$.
    Let $E = \hocolim E_i$. We have a map $E \to F$ as the sequence maps to $F$. For any integer $i$, the natural map $\colim \Hom{\mathsf{T}}{\Sigma^i G}{E_i}{F} \xrightarrow{\sim} \Hom{\mathsf{T}}{\Sigma^i G}{E}{F}$ is an isomorphism by \cite[Lemma 2.8]{Neeman:1996}. So, combined with the observation at the beginning of this paragraph, we have that for any $H \in \mathsf{T}^c$, $\Hom{\mathsf{T}}{H}{-}$ sends $E \to F$ to an isomorphism. As $\mathsf{T}^c$ is compactly generated, this implies that the map $E \to F$ is an isomorphism. 
\end{proof}

We will have special notation for the subcategories defined in \Cref{Definition closure of compacts} in some cases, partly to be consistent with the notation in the literature, and partly to highlight these cases as these will be the most important examples for the purpose of applications. 

\begin{convention}\label{Convention closure of compacts for approx/co-approx}
    Let $\mathsf{T}$ be a compactly generated triangulated category with a single compact generator $G$.
    \begin{itemize}
        \item Suppose we equip $\mathsf{T}$ with the generating sequence given by $\mathcal{G}^n = \{\Sigma^{-n} G\}$ and with the orthogonal metric corresponding to some t-structure on $\mathsf{T}$, see \Cref{Definition t-structure}. Then, the closure of compacts is denote by $\mathsf{T}^-_c$ and $\mathcal{G}^{\perp}$ is denoted by $\mathsf{T}^+$.
        \item Suppose we equip $\mathsf{T}$ with the generating sequence given by $\mathcal{G}^n = \{\Sigma^{n} G\}$ and with the orthogonal metric corresponding to some co-t-structure on $\mathsf{T}$, see \Cref{Definition co-t-structure/weight structure}. Then, the closure of compacts is denote by $\mathsf{T}^-_c$ and $\mathcal{G}^{\perp}$ is denoted by $\mathsf{T}^-$.
    \end{itemize}
    We will mostly be working with t-structures (resp.\ co-t-structures) lying in the preferred quasiequivalence class, see \Cref{Definition preferred equivalence class}. Note that by \Cref{Lemma approximating sequence for closure of compacts}, the closure of compacts will be same if computed with respect to any metric coming from a t-strcucture (resp.\ co-t-structure) in the preferred quasiequivalence class.
\end{convention}

Finally, we state the definition of weak co-approximbaility, see \cite[Definitions 7.8 \& 7.9]{ManaliRahul:2025}

\begin{definition}\label{Definition Co-approx}
    Let $\mathsf{T}$ be a compactly generated triangulated category with a single compact generator $G$ and a co-t-structure $(\mathsf{U},\mathsf{V})$ such that $\Hom{\mathsf{T}}{G}{\Sigma^{-n}G} = 0$ and $\Sigma^{-n}G \in \mathsf{U}$ for $n>>0$. Then, 
    \begin{itemize}
        \item We say $\mathsf{T}$ is \emph{weakly co-approximable} if there exists $N \geq 0$ such that for all $F \in \mathsf{U}$, there exists a triangle $E \to F \to D \to \Sigma F$ with $D \in \Sigma^{-1}\mathsf{U}$ and $E \in \overline{\langle G \rangle}^{[-N,N]}$, see \Cref{Notation from Neeman}.
        \item We say $\mathsf{T}$ is \emph{weakly co-quasiapproximable} if there exists $N \geq 0$ such that for all $F \in \mathsf{U} \cap \mathsf{T}^+_c$, there exists a triangle $E \to F \to D \to \Sigma F$ with $D \in \Sigma^{-1}\mathsf{U} \cap \mathsf{T}^+_c$ and $E \in \overline{\langle G \rangle}^{[-N,N]}$, where we define $\mathsf{T}^+_c$ with respect to the given co-t-structure, see \Cref{Convention closure of compacts for approx/co-approx}.
    \end{itemize}
      
\end{definition}

\begin{convention}
    For the sake of making statements less cumbersome, from now onward we will sometimes abbreviate weakly co-approximable (resp.\ weak co-approximability) to weakly co-approx (resp.\ weak co-approx). Similarly, we will abbreviate weakly co-quasiapproximable (resp.\ weak co-approximability) to weakly co-quasiapprox (resp.\ weak co-quasiapprox).
\end{convention}

\begin{remark}\label{Remark Co-approx arbitrarily good approximation}
    Let $\mathsf{T}$ be weakly co-approx (resp.\ weakly co-quasiapprox) with a compact generator $G$, co-t-structure $(\mathsf{U},\mathsf{V})$, and integer $N \geq 0$ satisfying the deginition of weak co-approx (resp.\ weak co-quasiapprox). Then, by a simple induction argument, we can get arbitrarily good approximations of objects in $\mathsf{U}$ (resp.\ $\mathsf{U} \cap \mathsf{T}^+_c$). That is,
    for all $F \in \mathsf{U}$ (resp.\ for all $F \in \mathsf{U} \cap \mathsf{T}^+_c$) and for all $n \geq 1$, there exists a triangle $E_n \to F \to D_n \to \Sigma E_n$ with $E_n \in \overline{\langle G \rangle}^{[-N,N+n-1]}$ and $D_n \in \Sigma^{-n-1} \mathsf{U}$ (resp.\ $D_n \in \Sigma^{-n-1} \mathsf{U} \cap \mathsf{T}^+_c$).
\end{remark}

Although the following result does follow from the work in \cite{ManaliRahul:2025}, we give a proof for the sake of completion.
\begin{lemma}\label{Lemma Coapprox implies preferred equivaence class}
    Let $\mathsf{T}$ be a weakly co-approx (resp.\ co-quasiapprox) triangulated category with a co-t-structure $(\mathsf{U},\mathsf{V})$ as in the definition of weak co-approx (resp.\ co-quasiapprox). Then, $(\mathsf{U},\mathsf{V})$ lies in the preferred quasiequivalence class of co-t-structures on $\mathsf{T}$, see \Cref{Definition preferred equivalence class}.
\end{lemma}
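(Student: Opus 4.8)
The plan is to unwind what it means for $(\mathsf{U},\mathsf{V})$ to lie in the preferred quasiequivalence class: by \Cref{Definition preferred equivalence class} we must produce a positive integer $n$ with $\langle G \rangle^{[n,\infty)} \subseteq \mathsf{U} \cap \mathsf{T}^c \subseteq \langle G \rangle^{[-n,\infty)}$, where $(\mathsf{U}_G,\mathsf{V}_G)$ is the co-t-structure generated by $G$. By \Cref{Definition co-t-structure/weight structure} the metric of the canonical co-t-structure on $\mathsf{T}^c$ is already $\mathbb{N}$-equivalent to $\{\langle G\rangle^{[n,\infty)}\}$, so it suffices to compare the metric of $(\mathsf{U},\mathsf{V})$ on $\mathsf{T}^c$ with $\{\langle G\rangle^{[n,\infty)}\}$, i.e.\ to establish both inclusions for a single well-chosen $n$.

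For the inclusion $\mathsf{U} \cap \mathsf{T}^c \subseteq \langle G \rangle^{[-n,\infty)}$: take a compact $F \in \mathsf{U}$. By \Cref{Remark Co-approx arbitrarily good approximation} applied to $F$ (note $F \in \mathsf{U}$, and if we are only in the co-quasiapprox case we first observe $F \in \mathsf{T}^c \subseteq \mathsf{T}^+_c$, so the co-quasiapprox hypothesis also applies to $F$), for each $m \geq 1$ there is a triangle $E_m \to F \to D_m \to \Sigma E_m$ with $E_m \in \overline{\langle G \rangle}^{[-N,N+m-1]}$ and $D_m \in \Sigma^{-m-1}\mathsf{U}$. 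Since $\Sigma^{-j}G \in \mathsf{U}$ for $j \gg 0$ and $\mathsf{U}$ is closed under the relevant operations (it is the ``$\mathsf{U}$'' of a co-t-structure), one checks $\Sigma^{-m-1}\mathsf{U}$ sits in a sufficiently negative piece of $\mathsf{U}_G$, which forces $\HomT{\Sigma^{i}G}{D_m} = 0$ for a fixed range of $i$ once $m$ is large; combined with the hypothesis $\HomT{G}{\Sigma^{-n}G} = 0$ for $n \gg 0$ this pins down $F$ up to the compact object $E_m$ living in a bounded band $\overline{\langle G\rangle}^{[-N, N+m-1]}$. The standard argument (as in Neeman's approximability papers) that a compact object which is approximated arbitrarily well by objects of $\overline{\langle G\rangle}^{[-N,\ast]}$ with the cones becoming highly co-connected must itself lie in $\langle G\rangle^{[-N',\infty)}$ for some fixed $N'$ then gives the desired $n = N'$; here one uses that $F$ is compact to reduce the big coproduct closure $\overline{\langle G\rangle}$ down to the small $\langle G\rangle$, via \cite[Lemma 2.8]{Neeman:1996}-type arguments on $\Hom$ out of compacts together with \Cref{Lemma approximating sequence for closure of compacts}.

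For the reverse inclusion $\langle G \rangle^{[n,\infty)} \subseteq \mathsf{U} \cap \mathsf{T}^c$: by the hypothesis in \Cref{Definition Co-approx}, $\Sigma^{-j}G \in \mathsf{U}$ for all $j \geq j_0$ for some $j_0$; since $\mathsf{U}$ is closed under extensions, summands, and positive shifts (it is the co-aisle-type half $\mathsf{T}^{\geq 0}$ of a co-t-structure, so $\Sigma^{-1}\mathsf{U} \subseteq \mathsf{U}$), it contains $\operatorname{smd}(\operatorname{coprod}(\{\Sigma^{-i}G : i \geq j_0\})) \supseteq \langle G\rangle^{[j_0,\infty)}$, and intersecting with $\mathsf{T}^c$ and enlarging $n$ to $\max(N', j_0)$ finishes. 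The main obstacle I expect is the first inclusion: converting the ``arbitrarily good approximation'' statement of \Cref{Remark Co-approx arbitrarily good approximation} into an actual \emph{membership} $F \in \langle G\rangle^{[-n,\infty)}$ with $n$ uniform. This requires the compactness of $F$ to collapse a homotopy colimit / eventually-constant tower argument, and care that the band width $N$ coming from weak co-approximability is genuinely uniform (independent of $F$) — which it is by definition — so that the resulting $n$ does not depend on $F$. The co-quasiapprox case needs the extra remark that $\mathsf{T}^c \subseteq \mathsf{T}^+_c$ so that the hypotheses apply to every compact object, and that the cones $D_m$ can be taken in $\mathsf{T}^+_c$ without affecting the $\Hom$-vanishing computation.
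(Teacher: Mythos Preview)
Your overall approach matches the paper's: verify both inclusions $\langle G\rangle^{[n,\infty)} \subseteq \mathsf{U}\cap\mathsf{T}^c$ (easy, from $\Sigma^{-j}G\in\mathsf{U}$ for large $j$) and $\mathsf{U}\cap\mathsf{T}^c \subseteq \langle G\rangle^{[-n,\infty)}$ (via the approximation triangles and compactness). There are, however, two places where your execution has genuine gaps.

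First, your Hom-vanishing step is circular. You write that ``$\Sigma^{-m-1}\mathsf{U}$ sits in a sufficiently negative piece of $\mathsf{U}_G$'', but any comparison of $\mathsf{U}$ with shifts of $\mathsf{U}_G$ is precisely the content of the lemma being proved. The paper does not route through $\mathsf{U}_G$ at this point; it asserts directly (citing \Cref{Definition Co-approx}) that $\HomT{\Sigma^i G}{\Sigma^{-n}\mathsf{U}} = 0$ for $n\gg 0$, and uses this to show the induced map $\hocolim E_m \to F$ is an isomorphism.

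Second, and more seriously, your passage from the big to the small category is misattributed. You claim compactness of $F$ reduces $\overline{\langle G\rangle}$ to $\langle G\rangle$ via \cite[Lemma~2.8]{Neeman:1996} together with \Cref{Lemma approximating sequence for closure of compacts}; neither of these accomplishes that. The paper's argument is: the tower $E_m \to F$ satisfies $\hocolim E_m \cong F$ (this is where \cite[Lemma~2.8]{Neeman:1996} enters, combined with the Hom-vanishing of the previous paragraph); since each $E_m \in \overline{\langle G\rangle}^{[-N,\infty)}$ one gets $F\in\overline{\langle G\rangle}^{[-N-1,\infty)}$; and then the crucial input is \cite[Proposition~1.9]{Neeman:2021a}, which says that a \emph{compact} object lying in $\overline{\langle G\rangle}^{[a,\infty)}$ already lies in $\langle G\rangle^{[a,\infty)}$. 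Without this last result the compactness of $F$ does not by itself collapse the coproduct closure, so your ``standard argument'' sketch is missing its key ingredient.
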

\begin{proof}
    Let $F \in \mathsf{U} \cap \mathsf{T}^+_c$.
    We begin by inductively producing a sequence $E_1 \to E_2 \to E_3 \to E_4 \to \cdots$ mapping to $F$ such that for all integers $n$, $E_n \in \overline{\langle G \rangle}^{[-N, \infty)}$ and $\operatorname{Cone}(E_n \to F) \in \Sigma^{-n-1} \mathsf{U}$ (resp.\ $\operatorname{Cone}(E_n \to F) \in \Sigma^{-n-1} \mathsf{U} \cap \mathsf{T}^+_c$) for the integer $N$ as in the definition of weak co-approx (resp.\ co-quasiapprox). The base case holds by \Cref{Definition Co-approx}. So, we just need to show the inductive step. Suppose we have the sequence $E_1 \to \cdots \to E_n$ satisfying the required properties. If $D_n \colonequals \operatorname{Cone}(E_n \to F)$, then $\Sigma^{n+1}D_n$ lies in $ \mathsf{U}$ (resp.\ $ \mathsf{U} \cap \mathsf{T}^+_c$). So, by \Cref{Definition Co-approx}, we get a triangle $\widetilde{E}_n \to D_n \to D_{n+1} \to \widetilde{E}_n$ with $\widetilde{E}_n \in \overline{\langle G \rangle}^{[-N, \infty)}$ and $D_{n+1} \in \Sigma^{-n-2}\mathsf{U}$ (resp.\ $D_{n+1} \in \Sigma^{-n-2}\mathsf{U} \cap \mathsf{T}^+_c$). By applying the octahedral axiom to $F \to D_n \to D_{n+1}$, we get the required triangle $E_{n+1} \to F \to D_{n+1} \to \Sigma E_{n+1}$ and a map $E_n \to E_{n+1}$ factoring $E_n \to F$.

    By \Cref{Definition Co-approx}, for any integer $i$, $\Hom{\mathsf{T}}{\Sigma^i{G}}{\Sigma^{-n}\mathsf{U}} = 0$ for $n>>0$. So, for any integer $i$, $\Hom{\mathsf{T}}{\Sigma^i G}{-}$ sends $E_j \to F$ to an isomorphism for large enough $j$.
    Let $E = \hocolim E_i$. We have a map $E \to F$ as the sequence maps to $F$. For any integer $i$, the natural map $\colim \Hom{\mathsf{T}}{\Sigma^i G}{E_i}{F} \xrightarrow{\sim} \Hom{\mathsf{T}}{\Sigma^i G}{E}{F}$ is an isomorphism by \cite[Lemma 2.8]{Neeman:1996}. So, combined with the observation at the beginning of this paragraph, we have that for any integer $i$, $\Hom{\mathsf{T}}{\Sigma^i G}{-}$ sends $E \to F$ to an isomorphism. As $G$ is a compact generator, this implies that the map $E \to F$ is an isomorphism. 

    Finally, as $F \cong E = \hocolim E_n$ and $E_n \in \overline{\langle G \rangle}^{[-N, \infty)}$ for all $n$, we get that $F \in \overline{\langle G \rangle}^{[-N-1, \infty)}$, and hence $\mathsf{U} \cap \mathsf{T}^+_c \subseteq \overline{\langle G \rangle}^{[-N-1, \infty)}$. By \cite[Proposition 1.9]{Neeman:2021a}, this implies that $\mathsf{U} \cap \mathsf{T}^c \subseteq \langle G \rangle^{[-N-1, \infty)}$. Conversely, by \Cref{Definition Co-approx}, $\Sigma^{-i} G \in \mathsf{U}$ for $i>>0$, and hence there exists $n \geq 0$ such that $\langle G \rangle^{[-n,\infty)} \subseteq \mathsf{U} \cap \mathsf{T}^c$, which completes the proof.
\end{proof}

We end by some recollection of standard facts and definitions related to admissible subcategories, semiorthogonal decompositions, and localisation sequences.

\subsection*{Localisations, recollements, admissible subcategories, and semiorthogonal decompositions}\label{Section recollements sods admissible categories}
In this subsection, we recall some concepts related to localisations of triangulated categories, and the existence of adjoints. 
We begin with the following well known result. For a reference, see \cite[Chapter 9, page 311-318]{Neeman:2001}.
\begin{theorem}\label{Theorem adjoint on left iff adjoint on right}
    Let $\mathsf{T}$ be a triangulated category, and $i_*: \mathsf{U} \to \mathsf{T}$ a fully faithful functor. Let the Verdier quotient of $\mathsf{T}$ by the essential image of $i_*$ be $\mathsf{V} \colonequals \mathsf{T}/i_*(\mathsf{U})$, with the localisation functor $j^* : \mathsf{T} \to \mathsf{V}$. Then,
    \begin{itemize}
        \item $i_*$ has a right adjoint $i^!$ if and only if $j^*$ has right adjoint $j_*$. Further, if the right adjoints exist, then $j_*$ is fully faithful, and $i^!$ is a Verdier localisation functor, identifying $\mathsf{U}$ with $\mathsf{T}/j_*(\mathsf{V})$.
        \item $i_*$ has a left adjoint $i^*$ if and only if $j^*$ has a left adjoint $j_!$. Further, if the left adjoints exist, then $j_!$ is fully faithful, and $i^*$ is a Verdier localisation functor, identifying $\mathsf{U}$ with $\mathsf{T}/j_!(\mathsf{V})$.
    \end{itemize}
\end{theorem}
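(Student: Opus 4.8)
The plan is to reduce the two bullets to a single statement and then run the standard Bousfield (co)localization argument. Forming Verdier quotients commutes with passing to opposite categories, and the second bullet for $(\mathsf{T},i_*,j^*)$ is exactly the first bullet for $(\mathsf{T}^{\mathrm{op}},(i_*)^{\mathrm{op}},(j^*)^{\mathrm{op}})$; so it suffices to prove the first. Throughout, the right orthogonal $\mathsf{U}^{\perp}=\{Y\in\mathsf{T}:\HomT{i_*U}{Y}=0\text{ for all }U\in\mathsf{U}\}$ plays the role of the essential image of the sought-for $j_*$.

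\emph{From $i^!$ to $j_*$.} Given a right adjoint $i^!$, full faithfulness of $i_*$ makes the unit $\mathrm{id}\to i^!i_*$ invertible. For $X\in\mathsf{T}$ complete the counit $i_*i^!X\to X$ to a triangle $i_*i^!X\to X\to X'\to\Sigma i_*i^!X$; applying $i^!$ and using the triangle identity $i^!(\varepsilon_X)\circ\eta_{i^!X}=\mathrm{id}$ shows $i^!(\varepsilon_X)$ is an isomorphism, hence $i^!X'=0$ and $X'\in\mathsf{U}^{\perp}$. Thus every object of $\mathsf{T}$ sits in a triangle with first term in $i_*(\mathsf{U})=\ker(j^*)$ and third term in $\mathsf{U}^{\perp}$. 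A routine calculus-of-fractions computation (using $\HomT{i_*\mathsf{U}}{\mathsf{U}^{\perp}}=0$) then shows $\mathsf{U}^{\perp}\hookrightarrow\mathsf{T}\xrightarrow{j^*}\mathsf{V}$ is an equivalence; taking a quasi-inverse $s$ and putting $j_*=(\mathsf{U}^{\perp}\hookrightarrow\mathsf{T})\circ s$ gives $j^*j_*\cong\mathrm{id}$, $j_*$ fully faithful, and --- applying $\HomT{-}{j_*v}$ to that triangle, whose outer terms vanish since $j_*v\in\mathsf{U}^{\perp}$ --- the natural isomorphism $\HomT{X}{j_*v}\cong\Hom{\mathsf{V}}{j^*X}{v}$, i.e.\ $j^*\dashv j_*$. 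Finally $j_*(\mathsf{V})=\mathsf{U}^{\perp}$, the functor $i^!$ vanishes precisely on $\mathsf{U}^{\perp}$ and is essentially surjective onto $\mathsf{U}$, and by the same triangle $i^!$ inverts exactly the morphisms with cone in $\mathsf{U}^{\perp}$, so $i^!\colon\mathsf{T}\to\mathsf{U}$ is a Verdier localization identifying $\mathsf{U}$ with $\mathsf{T}/j_*(\mathsf{V})$.

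\emph{From $j_*$ to $i^!$.} Conversely, given a right adjoint $j_*$, the counit $j^*j_*\to\mathrm{id}$ is invertible because $j^*$ is a Verdier localization, so $j_*$ is automatically fully faithful. For $X\in\mathsf{T}$ complete the unit $X\to j_*j^*X$ to a triangle $W_X\to X\to j_*j^*X\to\Sigma W_X$; applying $j^*$ gives $j^*W_X=0$, so $W_X\in\ker(j^*)=i_*(\mathsf{U})$. Applying $\HomT{i_*U}{-}$ to the triangle and using $\HomT{i_*U}{j_*(\mathsf{V})}=0$ (as $j^*i_*U\cong 0$) identifies $\HomT{i_*U}{W_X}$ naturally with $\HomT{i_*U}{X}$, so $X\mapsto W_X$, with values in $\mathsf{U}$, is right adjoint to $i_*$; the remaining claims are read off as before.

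The main obstacle is bookkeeping rather than a new idea: verifying carefully that $j^*$ restricts to an equivalence $\mathsf{U}^{\perp}\xrightarrow{\sim}\mathsf{V}$ --- i.e.\ that every morphism of $\mathsf{V}$ between images of objects of $\mathsf{U}^{\perp}$ lifts to a genuine morphism of $\mathsf{T}$ --- and then pinning down exactly which morphisms $i^!$ (resp.\ $i^*$) sends to isomorphisms so as to recognize it as a Verdier localization. A minor point worth a sentence is whether $i_*(\mathsf{U})$ should be assumed thick or replaced by its closure under direct summands inside $\ker(j^*)$; since a coreflective triangulated subcategory is automatically thick, this costs nothing.
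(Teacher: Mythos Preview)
The paper does not actually prove this theorem: it is presented as a well-known result, with a bare citation to \cite[Chapter 9, pages 311--318]{Neeman:2001} and no argument given. Your write-up is precisely the standard Bousfield localization argument that one finds in that reference (and in many other places), so there is nothing to compare beyond noting that your approach matches the cited source. The thickness caveat you raise at the end is the only genuine subtlety, and you have correctly identified both the issue and its resolution.
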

\begin{definition}\label{Definition of localisation sequence}
    Let $\mathsf{U}$, $\mathsf{T}$ and $\mathsf{V}$ be a triangulated category with functors,
    \begin{equation}\label{Diagram Quotient sequence}
         \mathsf{U} \xrightarrow{i_*} \mathsf{T} \xrightarrow{j^*} \mathsf{V}
    \end{equation}
    with $i_*$ a fully faithful functor and $j^*$ a Verdier localisation functor, identifying $\mathsf{V}$ with $\mathsf{T}/i_*(\mathsf{U})$. Then, we say, 
    \begin{itemize}
        \item \Cref{Diagram Quotient sequence} is a \emph{localisation sequence} if $i_*$ (or, equivalently $j^*$) has a right adjoint. This gives us the diagram,
        \[\begin{tikzcd}
	   \mathsf{U} && \mathsf{T} && \mathsf{V}
	   \arrow["i_*",from=1-1, to=1-3]
	   \arrow["i^!",bend left, from=1-3, to=1-1]
	   \arrow["j^*",from=1-3, to=1-5]
	   \arrow["j_*",bend left, from=1-5, to=1-3]
        \end{tikzcd}\]
        For any $t \in \mathsf{T}$, we get a triangle $i_*i^!t \to t \to j_*j^* t \to \Sigma i_*i^!t$ which show that $(i_*\mathsf{U}, \Sigma j_*\mathsf{V})$ is both a t-structure and a co-t-structure on $\mathsf{T}$.
        \item \Cref{Diagram Quotient sequence} is a \emph{colocalisation sequence} if $i_*$ (or, equivalently $j^*$) has a left adjoint. This gives us the diagram,
        \[\begin{tikzcd}
	   \mathsf{U} && \mathsf{T} && \mathsf{V}
	   \arrow["i_*"',from=1-1, to=1-3]
	   \arrow["i^*"',bend right, from=1-3, to=1-1]
	   \arrow["j^*"',from=1-3, to=1-5]
	   \arrow["j_!"',bend right, from=1-5, to=1-3]
        \end{tikzcd}\]
        For any $t \in \mathsf{T}$, we get a triangle, $j_!j^* t \to t \to i_*i^* t \to \Sigma j_!j^* t$ which show that $(j_! \mathsf{V}, \Sigma i_* \mathsf{U})$ is both a t-structure and a co-t-structure on $\mathsf{T}$.
        \item \Cref{Diagram Quotient sequence} is a recollement if it is both a localising and a colocalising sequence. We get the following diagram in such a case, 
   \[\begin{tikzcd}
	   \mathsf{U} && \mathsf{T} && \mathsf{V}
	   \arrow["i_*",from=1-1, to=1-3]
	   \arrow["i^!",bend left, from=1-3, to=1-1]
	   \arrow["i^*"',bend right, from=1-3, to=1-1]
	   \arrow["j^*",from=1-3, to=1-5]
	   \arrow["j_*",bend left, from=1-5, to=1-3]
	   \arrow["j_!"',bend right, from=1-5, to=1-3]
    \end{tikzcd}\]
   with each functor left adjoint to the one beneath it.
    \end{itemize}
\end{definition}

We now define admissible subcategories.
\begin{definition}
     Let $\mathsf{T}$ be a triangulated category with a strictly full triangulated subcategory $\mathsf{U}$. Then, 
         \begin{itemize}
             \item $\mathsf{U}$ is \emph{right admissible} if it is an aisle, see \Cref{Remark aisle and coaisle}. Note that this gives us the localisation sequence $\mathsf{U} \to \mathsf{T} \to \mathsf{U}^{\perp}$. Conversely, given a localisation sequence $\mathsf{U} \xrightarrow{i_*} \mathsf{T} \xrightarrow{j^*} \mathsf{V}$, the essential image of $i_*$ is a right admissible category.
             \item $\mathsf{U}$ is \emph{left admissible} if it is a coaisle, see 
             \Cref{Remark aisle and coaisle}. Note that this gives us the localisation sequence $^{\perp}\mathsf{U} \to \mathsf{T} \to \mathsf{U}$. Conversely, given a localisation sequence $\mathsf{U} \xrightarrow{i_*} \mathsf{T} \xrightarrow{j^*} \mathsf{V}$, the essential image of $j_*$ is a left admissible category, where $j_*$ is the right adjoint to $j^*$.
             \item $\mathsf{U}$ is \emph{admissible} if it is both left and right admissible. In this case, we get a recollement
             \[\begin{tikzcd}
	           \mathsf{U} && \mathsf{T} && \mathsf{V}
	           \arrow["i_*",from=1-1, to=1-3]
	           \arrow["i^!",bend left, from=1-3, to=1-1]
	             \arrow["i^*"',bend right, from=1-3, to=1-1]
	             \arrow["j^*",from=1-3, to=1-5]
	           \arrow["j_*",bend left, from=1-5, to=1-3]
	           \arrow["j_!"',bend right, from=1-5, to=1-3]
            \end{tikzcd}\]
            with $\mathsf{V} \cong \mathsf{U}^{\perp} \cong {}^{\perp}\mathsf{U}$. 
         \end{itemize}
\end{definition}
We now come to the related notion of semiorthogonal decompositions.
\begin{definition}
    Let $\mathsf{T}$ be a triangulated category. A semiorthogonal decomposition on $\mathsf{T}$ is a sequence of strictly full triangulated subcategories $\mathsf{U}_1,\cdots ,\mathsf{U}_n$ such that,
    \begin{itemize}
        \item $\HomT{\mathsf{U}_i}{\mathsf{U}_j} = 0$ for all $n \geq i > j \geq 1$.
        \item The smallest triangulated subcategory of $\mathsf{T}$ containing $\bigcup_{1\leq i \leq n}\mathsf{U}_i$ is $\mathsf{T}$ itself.
    \end{itemize}
    We denote a semiorthogonal decomposition by $\langle \mathsf{U}_1, \cdots , \mathsf{U}_n \rangle$.
\end{definition}
\begin{remark}
    We will mostly be interested in semiorthogonal decompositions with only two terms. The following are easy to check,
    \begin{itemize}
        \item If $\langle \mathsf{V},\mathsf{U}\rangle $ is a semiorthogonal decompositions, then $\mathsf{V}$ is left admissible and $\mathsf{U}$ is right admissible.
        \item $\mathsf{U}$ is right admissible if and only if $\langle \mathsf{U}^{\perp},\mathsf{U} \rangle$ is a semiorthogonal decomposition.
        \item $\mathsf{V}$ is left admissible if and only if $\langle\mathsf{V}, \ \hspace{-0.2cm}^{\perp}\mathsf{V} \rangle$ is a semiorthogonal decomposition.
    \end{itemize}
\end{remark}

\section{Main results}\label{Section Main Results}

We begin by recalling the following definition from \cite{Sun/Zhang:2021}.
\begin{definition}
    \cite[Definition 4.1]{Sun/Zhang:2021} 
    Let $\mathsf{S}$ and $\mathsf{T}$ be triangulated categories with good metrics (\cite[Definition 10]{Neeman:2020}) $\{\mathcal{M}_n\}$ and $\{\mathcal{N}_n\}$ respectively. Let $F : \mathsf{S} \to \mathsf{T}$ a triangulated functor. Then, we say $F$ is a \emph{compression functor} if for all $i > 0$, there exists a $n > 0$ such that $F(\mathcal{M}_n) \subseteq \mathcal{N}_i$.
\end{definition}

In similar spirit, we define the following notions.

\begin{definition}\label{Definition compressed metrics}
    Let $\mathsf{T}$ be a $R$-linear triangulated category for a commutative ring $R$. Then, 
    \begin{enumerate}
        \item  We say an extended good metric $\mathcal{M}$ (\Cref{Definition Metric}) is a \emph{compressed metric} if all triangulated functors $F : \mathsf{T} \to \mathsf{T}$ are compression functors with respect to $\mathcal{M}$. That is, for all $i \in \mathbb{Z}$, there exists $n \in \mathbb{Z}$ such that $F(\mathcal{M}_n)\subseteq \mathcal{M}_i$. 
        
        We further say an extended good metric $\mathcal{M}$ (\Cref{Definition Metric}) is a \emph{$\mathbb{N}$-compressed metric} if for  any triangulated functors $F : \mathsf{T} \to \mathsf{T}$, there exists an integer $l \geq 0$ such that $F(\mathcal{M}_i)\subseteq \mathcal{M}_{i+l}$ for all $i \in \mathbb{Z}$.

        \item A finite generating sequence (\Cref{Definition generating sequence}) $\mathcal{G}$ is \emph{compressed} (resp.\ \emph{$\mathbb{N}$-compressed}) if the metric $\mathcal{R}^{\mathcal{G}} \cap \mathsf{T}^c$ is a compressed metric (resp.\ $\mathbb{N}$-compressed metric), and is equivalent to the metric $\mathcal{M}^{\mathcal{G}}$ on $\mathsf{T}^c$, see \Cref{Definition generating sequence} for the definition of $\mathcal{R}^{\mathcal{G}}$ and $\mathcal{M}^{\mathcal{G}}$.
        \item We say that a Serre subcategory $\mathcal{C} \subseteq \prod_{i \in \mathbb{Z}}\operatorname{Mod}(R)$ is \emph{compressed} if it is stable under sending the indexing sequence $\{i\}_{i\in \mathbb{Z}}$ to $\{i+n\}_{i \in \mathbb{Z}}$ for any integer $n$. That is, if $\{M_i\}_{i \in \mathbb{Z}}$ is in $\mathcal{C}$ and $n \in \mathbb{Z}$, then $\{M_{i + n}\}_{i \in \mathbb{Z}}$ is also in $\mathcal{C}$.
    \end{enumerate}
   
\end{definition}
The following result gives an important class of generating sequences and metrics which are compressed.

\begin{theorem}\label{Theorem Examples of Compressed metrics}
    Let $\mathsf{S}$ be a triangulated category with a classical generator $G$, that is $\mathsf{S} = \langle G \rangle$. Then, the following finite generating filtrations are $\mathbb{N}$-compressed, see \Cref{Definition compressed metrics}(2),
    \begin{itemize}
        \item $\mathcal{G}^n = \{\Sigma^{-n}G \}$ for all $n \in \mathbb{Z}$.
        \item $\mathcal{G}'^n = \{\Sigma^n G \}$ for all $n \in \mathbb{Z}$.
    \end{itemize}
\end{theorem}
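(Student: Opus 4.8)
The plan is to reduce the statement to a purely combinatorial fact about how a triangulated functor $F : \mathsf{S} \to \mathsf{S}$ interacts with the subcategories $\langle G \rangle^{[a,b]}$. Since $\mathsf{S} = \langle G \rangle$ is classically generated by $G$, the object $F(G)$ already lies in $\langle G \rangle = \bigcup_{m \geq 0} \langle G \rangle^{[-m,m]}$, so there exists a fixed integer $m_0 \geq 0$ with $F(G) \in \langle G \rangle^{[-m_0, m_0]}$. First I would record the elementary stability properties of the operation $\langle G \rangle^{[a,b]}$: it is closed under $\operatorname{smd}$, finite coproducts and extensions by construction, and $\Sigma^{\pm 1}\langle G \rangle^{[a,b]} \subseteq \langle G \rangle^{[a\mp 1, b\mp 1]}$ (being careful about the cohomological sign convention, $\Sigma^{-i}G$ sitting in degree $i$). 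Moreover $\langle G \rangle^{[a,b]} \star \langle G \rangle^{[a',b']} \subseteq \langle G \rangle^{[\min(a,a'), \max(b,b')]}$.

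The key step is then: because $F$ is triangulated and additive, it sends $\operatorname{smd}$, finite coproducts, extensions and shifts to the same, so from $F(G) \in \langle G \rangle^{[-m_0,m_0]}$ one gets $F(\Sigma^{-i}G) \in \langle G \rangle^{[-m_0 - i, m_0 - i]}$ for the first sequence (and $F(\Sigma^{i}G) \in \langle G \rangle^{[-m_0 + i, m_0 + i]}$ for the second, shifting the other way), and hence by the stability properties $F(\langle G \rangle^{[a,b]}) \subseteq \langle G \rangle^{[a - m_0, b + m_0]}$ for all $a \leq b$. Specializing to the half-infinite intervals that define the metric: for the first sequence, $\mathcal{M}^{\mathcal{G}}_i = \langle G \rangle^{(-\infty, -i]}$ up to $\mathbb{N}$-equivalence with $\mathcal{R}^{\mathcal{G}} \cap \mathsf{S}^c$ (note $\mathsf{S}^c = \mathsf{S} = \langle G \rangle$ here since $G$ is a classical generator, so the distinction in \Cref{Definition compressed metrics}(2) between $\mathcal{R}^{\mathcal{G}} \cap \mathsf{T}^c$ and $\mathcal{M}^{\mathcal{G}}$ is automatic by \Cref{Example generating sequence approx/co-approx}), we get $F(\mathcal{M}^{\mathcal{G}}_i) \subseteq \langle G \rangle^{(-\infty, -i + m_0]} = \mathcal{M}^{\mathcal{G}}_{i - m_0} \subseteq \mathcal{M}^{\mathcal{G}}_{i - l}$ with $l := m_0$ independent of $i$, which is exactly the $\mathbb{N}$-compression inequality $F(\mathcal{M}_i) \subseteq \mathcal{M}_{i-l}$ after reindexing to match the form $F(\mathcal{M}_i) \subseteq \mathcal{M}_{i+l}$ in the definition (the sign of $l$ is harmless — what matters is the bound is uniform in $i$). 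The second sequence is handled identically with the intervals $[n, \infty)$.

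I expect the main obstacle to be purely bookkeeping: keeping the two opposite shift conventions straight between $\mathcal{G}^n = \{\Sigma^{-n}G\}$ and $\mathcal{G}'^n = \{\Sigma^n G\}$, and reconciling the indexing in \Cref{Definition compressed metrics}(2) (which demands $F(\mathcal{M}_i) \subseteq \mathcal{M}_{i+l}$, i.e.\ $F$ moves you \emph{deeper} into the filtration by a bounded amount) with the fact that our argument naturally produces $F(\mathcal{M}_i) \subseteq \mathcal{M}_{i - m_0}$ — one must check this is still the correct direction, which it is, since $\mathcal{M}$ is a \emph{decreasing} filtration so $\mathcal{M}_{i-m_0} \supseteq \mathcal{M}_i$ and the relevant content of "compressed" is that the loss $m_0$ is bounded uniformly; indeed $F(\mathcal{M}_i) \subseteq \mathcal{M}_{i - m_0} = \mathcal{M}_{(i - 2m_0) + m_0}$, so taking $l = 2m_0$ and re-indexing, or simply observing the definition only needs \emph{some} uniform $l$, finishes it. A secondary point to verify carefully is the claim $\langle G \rangle^{[a,b]} \star \langle G \rangle^{[a',b']} \subseteq \langle G \rangle^{[\min, \max]}$, which follows because $\langle G \rangle^{[\min(a,a'),\max(b,b')]}$ contains both factors and is closed under extensions. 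No representability or deep input is needed; the whole argument is formal from the definition of a classical generator plus the closure properties of $\operatorname{smd}\circ\operatorname{coprod}$.
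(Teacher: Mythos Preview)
Your argument is essentially the paper's: pick $m_0$ with $F(G)\in\langle G\rangle^{[-m_0,m_0]}$, use that a triangulated functor preserves shifts, finite sums, summands and extensions to get $F(\mathcal{M}^{\mathcal{G}}_{n+m_0})\subseteq\mathcal{M}^{\mathcal{G}}_n$ (and likewise for $\mathcal{G}'$), and then invoke \Cref{Example generating sequence approx/co-approx} for the equivalence of $\mathcal{R}^{\mathcal{G}}\cap\mathsf{T}^c$ with $\mathcal{M}^{\mathcal{G}}$. The paper does exactly this in three lines.

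Your closing paragraph, however, ties itself in knots over the direction of the inclusion. The manipulation $\mathcal{M}_{i-m_0}=\mathcal{M}_{(i-2m_0)+m_0}$ does not produce an inclusion into $\mathcal{M}_{i+l}$ for any $l\geq 0$; it is the same set. What you actually proved is $F(\mathcal{M}_j)\subseteq\mathcal{M}_{j-m_0}$, which is precisely what the paper records as $F(\mathcal{M}^{\mathcal{G}}_{n+i})\subseteq\mathcal{M}^{\mathcal{G}}_n$. The apparent mismatch with the literal wording of \Cref{Definition compressed metrics}(1) is a sign slip in that definition, not a gap in your argument: everywhere the notion is used (e.g.\ the proof of \Cref{Corollary Bondarko type result}) only a uniform shift is needed, and the paper's own proof of the present theorem writes the inclusion in the same direction you do. Drop the attempted reindexing and simply state $F(\mathcal{M}_{n+m_0})\subseteq\mathcal{M}_n$ for all $n$.
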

\begin{proof}
    Let $F:\mathsf{S} \to \mathsf{S}$ be any triangulated functor. Then, $F(G) \in \mathsf{S} = \langle G \rangle$. But, $\langle G \rangle = \bigcup_{i \geq 0} \langle G \rangle^{[-i,i]}$, and so, there exists $i \geq 0$ such that $F(G) \in \langle G \rangle^{[-i,i]}$, see \Cref{Notation from Neeman}. As $F$ is a triangulated functor, it preserves direct sums, summands, shifts, and extensions. So, for all $n \geq 0$, we have that
    \[ F(\mathcal{M}^{\mathcal{G}}_{n+i}) \subseteq \mathcal{M}^{\mathcal{G}}_n \text{ and } F(\mathcal{M}^{\mathcal{G}'}_{n+i}) \subseteq \mathcal{M}^{\mathcal{G}'}_n\]
    with notation as in \Cref{Definition generating sequence},
    which shows that both the metrics are $\mathbb{N}$-compressed. But, by \Cref{Example generating sequence approx/co-approx}, $\mathcal{R}^{\mathcal{G}} \cap \mathsf{T}^c$ is $\mathbb{N}$-equivalent (\Cref{Definition Metric}) to $\mathcal{M}^{\mathcal{G}}$ and $\mathcal{R}^{\mathcal{G}'} \cap \mathsf{T}^c$ is $\mathbb{N}$-equivalent to $\mathcal{M}^{\mathcal{G}'}$ and hence these metrics are also $\mathbb{N}$-compressed, which in turn implies that the generating sequences are $\mathbb{N}$-compressed by \Cref{Definition compressed metrics}.
\end{proof}

The following result is known in the literature, but we give a short proof.

\begin{theorem}\label{Theorem localisation on Tc gives recollement on T}
    Let $\mathsf{T}$ be a compactly generated triangulated category, and suppose there exists a semiorthogonal decomposition $\langle\mathsf{A},\mathsf{B}\rangle$ on $\mathsf{T}^c$. We know this gives a colocalisation sequence on $\mathsf{T}^c$ as follows,
      \[\begin{tikzcd}
	   \mathsf{A} && \mathsf{T}^c && \mathsf{B}
	   \arrow["i_*"',from=1-1, to=1-3]
	   \arrow["i^*"',bend right, from=1-3, to=1-1]
	   \arrow["j^*"',from=1-3, to=1-5]
	   \arrow["j_!"',bend right, from=1-5, to=1-3]
	\end{tikzcd}\]
    Then, this extends to a recollement on $\mathsf{T}$ as follows,
        \[\begin{tikzcd}
	   \mathsf{T}_{\mathsf{A}} && \mathsf{T} && \mathsf{T}_{\mathsf{B}}
	   \arrow["i_*",from=1-1, to=1-3]
	   \arrow["i^!",bend left, from=1-3, to=1-1]
	   \arrow["i^*"',bend right, from=1-3, to=1-1]
	   \arrow["j^*",from=1-3, to=1-5]
	   \arrow["j_*",bend left, from=1-5, to=1-3]
	   \arrow["j_!"',bend right, from=1-5, to=1-3]
    \end{tikzcd}\]
    where $\mathsf{T}_{\mathsf{A}} = \operatorname{Coprod}(\mathsf{A})$ and $\mathsf{T}_{\mathsf{B}} = \operatorname{Coprod}(\mathsf{B})$.
    In particular, $\langle\mathsf{A}^{\perp},\mathsf{B}^{\perp}\rangle$ is a semiorthogonal decomposition on $\mathsf{T}$.
\end{theorem}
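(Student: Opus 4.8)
The plan is to build the recollement on $\mathsf{T}$ out of the semiorthogonal decomposition $\langle \mathsf{A}, \mathsf{B}\rangle$ on $\mathsf{T}^c$ by invoking Neeman's theory of compactly generated Bousfield localisation, and then to read off the semiorthogonal decomposition $\langle \mathsf{A}^\perp, \mathsf{B}^\perp\rangle$ from it. Set $\mathsf{T}_{\mathsf{A}} = \operatorname{Coprod}(\mathsf{A})$ and $\mathsf{T}_{\mathsf{B}} = \operatorname{Coprod}(\mathsf{B})$. The first step is to observe that since $\mathsf{A}$ and $\mathsf{B}$ are triangulated subcategories of $\mathsf{T}^c$, both are closed under $\Sigma^{\pm 1}$, so $\mathsf{T}_{\mathsf{A}}$ and $\mathsf{T}_{\mathsf{B}}$ are localising subcategories of $\mathsf{T}$ generated by a set of compact objects of $\mathsf{T}$ (any small skeleton of $\mathsf{A}$, resp.\ $\mathsf{B}$). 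By Neeman's Bousfield localisation theorem, the inclusion $\mathsf{T}_{\mathsf{A}} \hookrightarrow \mathsf{T}$ has a right adjoint $i^!$ and the quotient $j^* : \mathsf{T} \to \mathsf{T}/\mathsf{T}_{\mathsf{A}}$ has a fully faithful right adjoint $j_*$; moreover $\mathsf{T}/\mathsf{T}_{\mathsf{A}}$ is again compactly generated, its compact objects being (up to summands) the image of $\mathsf{T}^c$, i.e.\ $(\mathsf{T}/\mathsf{T}_{\mathsf{A}})^c = (\mathsf{T}^c/\mathsf{A})^{\natural} = \mathsf{B}^{\natural}$ — here one uses Neeman's theorem that $\mathsf{T}^c/\mathsf{A} \to (\mathsf{T}/\mathsf{T}_{\mathsf{A}})^c$ is fully faithful with dense (idempotent-completing) image, together with the fact that $\langle \mathsf{A},\mathsf{B}\rangle$ being a semiorthogonal decomposition of $\mathsf{T}^c$ forces $\mathsf{B} \xrightarrow{\sim} \mathsf{T}^c/\mathsf{A}$ and, since $\mathsf{B}$ is already a summand-closed subcategory of $\mathsf{T}^c$, forces $\mathsf{B}$ to be idempotent complete. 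So $\mathsf{T}/\mathsf{T}_{\mathsf{A}}$ is compactly generated by $\mathsf{B}$.

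Next I would produce the left adjoint side. Because $\mathsf{B}$ is a set of compact generators of $\mathsf{T}_{\mathsf{B}}$ and $\mathsf{B} \subseteq {}^{\perp}\mathsf{A}$ inside $\mathsf{T}^c$ (from $\HomT{\mathsf{A}}{\mathsf{B}}=0$ in the decomposition, read in the direction $\HomT{\mathsf{B}}{\mathsf{A}}$ — careful with the convention: $\langle\mathsf{A},\mathsf{B}\rangle$ means $\HomT{\mathsf{A}}{\mathsf{B}}=0$, so actually $\mathsf{B}\subseteq \mathsf{A}^\perp$ and we get $\HomT{\mathsf{A}}{\mathsf{T}_{\mathsf{B}}}=0$ by passing to the localising subcategory generated by $\mathsf{B}$ and using that $\mathsf{A}\subseteq\mathsf{T}^c$ so $\HomT{\mathsf{A}}{-}$ commutes with coproducts). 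Thus $\mathsf{T}_{\mathsf{B}} \subseteq \mathsf{A}^{\perp} = \mathsf{T}_{\mathsf{A}}^{\perp}$, and the composite $\mathsf{T}_{\mathsf{B}} \hookrightarrow \mathsf{T} \xrightarrow{j^*} \mathsf{T}/\mathsf{T}_{\mathsf{A}}$ sends the compact generators $\mathsf{B}$ to the compact generators $\mathsf{B}$ of the target, is triangulated and coproduct-preserving, and is fully faithful on the generators (again since $\HomT{\mathsf{T}_{\mathsf{B}}}{\mathsf{T}_{\mathsf{A}}}$ controls the difference between Hom in $\mathsf{T}$ and Hom in the quotient). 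By a standard dévissage/Bousfield argument this composite is an equivalence $\mathsf{T}_{\mathsf{B}} \xrightarrow{\sim} \mathsf{T}/\mathsf{T}_{\mathsf{A}}$; its inverse, composed with the inclusion, is a fully faithful left adjoint $j_!$ to $j^*$ with essential image $\mathsf{T}_{\mathsf{B}}$. By \Cref{Theorem adjoint on left iff adjoint on right}, $i_*$ then also has a left adjoint $i^*$, so the sequence $\mathsf{T}_{\mathsf{A}} \to \mathsf{T} \to \mathsf{T}/\mathsf{T}_{\mathsf{A}}$ is a recollement, which under the identification $\mathsf{T}/\mathsf{T}_{\mathsf{A}} \cong \mathsf{T}_{\mathsf{B}}$ is the displayed diagram.

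Finally, to extract $\langle \mathsf{A}^{\perp}, \mathsf{B}^{\perp}\rangle$: from the recollement, $\mathsf{T}_{\mathsf{A}}$ is admissible in $\mathsf{T}$, hence right admissible, so $\langle \mathsf{T}_{\mathsf{A}}^{\perp}, \mathsf{T}_{\mathsf{A}}\rangle$ is a semiorthogonal decomposition of $\mathsf{T}$; but I claimed we want the decomposition reading $\langle \mathsf{A}^\perp,\mathsf{B}^\perp\rangle$. The point is that $\mathsf{A}^\perp$ (orthogonal taken in $\mathsf{T}$) equals $\mathsf{T}_{\mathsf{A}}^\perp$ since $\mathsf{A}$ generates $\mathsf{T}_{\mathsf{A}}$ as a localising subcategory and the objects of $\mathsf{A}$ are compact, and similarly $\mathsf{B}^\perp = \mathsf{T}_{\mathsf{B}}^\perp$. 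The recollement also exhibits $\mathsf{T}_{\mathsf{B}}$ (the essential image of $j_*$, or equivalently of $j_!$ on the other side) as an admissible subcategory, giving a semiorthogonal decomposition $\langle \mathsf{T}_{\mathsf{B}}, {}^{\perp}\mathsf{T}_{\mathsf{B}}\rangle$, and the two orthogonal complements fit together: one checks $\mathsf{A}^\perp$ and $\mathsf{B}^\perp$ are the two pieces, with $\HomT{\mathsf{A}^\perp}{\mathsf{B}^\perp}$ vanishing because $\mathsf{A}^\perp \supseteq \mathsf{T}_{\mathsf{B}}$ lands, under $i^*$... — more cleanly: $(\mathsf{A}^\perp, \mathsf{B}^\perp)$ is obtained from $(\mathsf{A},\mathsf{B})$ by applying the duality $\mathsf{C} \mapsto \mathsf{C}^\perp$, and the standard fact (third bullet of the \emph{Remark} after the definition of semiorthogonal decomposition, applied twice) is that a recollement $\mathsf{T}_{\mathsf{A}} \to \mathsf{T} \to \mathsf{T}_{\mathsf{B}}$ yields the two semiorthogonal decompositions $\langle\mathsf{T}_{\mathsf{A}},\mathsf{T}_{\mathsf{A}}^\perp\rangle$ and $\langle{}^\perp\mathsf{T}_{\mathsf{B}},\mathsf{T}_{\mathsf{B}}\rangle$ with $\mathsf{T}_{\mathsf{A}}^\perp \simeq \mathsf{T}_{\mathsf{B}}$ and ${}^\perp\mathsf{T}_{\mathsf{B}}\simeq\mathsf{T}_{\mathsf{A}}$; combining, $\langle \mathsf{T}_{\mathsf{A}}^\perp, \mathsf{T}_{\mathsf{B}}^\perp\rangle = \langle \mathsf{A}^\perp,\mathsf{B}^\perp\rangle$ is a semiorthogonal decomposition. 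The main obstacle is the middle step — verifying that the induced functor $\mathsf{T}_{\mathsf{B}} \to \mathsf{T}/\mathsf{T}_{\mathsf{A}}$ is an equivalence, i.e.\ that the quotient's compacts are exactly $\mathsf{B}$ and that no idempotent-completion discrepancy arises; this is where one must invoke Neeman's comparison of $\mathsf{T}^c/\mathsf{A}$ with $(\mathsf{T}/\mathsf{T}_{\mathsf{A}})^c$ and use that $\mathsf{B}$, being a semiorthogonal summand of the idempotent-complete category $\mathsf{T}^c$, is itself idempotent complete.
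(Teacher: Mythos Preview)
Your argument is correct and uses the same ingredients as the paper---compact generation and Neeman's Brown representability/adjoint functor theorem---but you assemble them in the reverse order: you first produce the \emph{localisation} sequence on $\mathsf{T}$ via Bousfield localisation along $\mathsf{T}_{\mathsf{A}}$ and then manufacture the left adjoints by explicitly identifying $\mathsf{T}/\mathsf{T}_{\mathsf{A}}\simeq\mathsf{T}_{\mathsf{B}}$ using Neeman's description of compacts in the quotient, whereas the paper takes the extension of the \emph{colocalisation} sequence as ``well known'' and then obtains the right adjoint $i^!$ in one line from the adjoint functor theorem (since $\mathsf{T}_{\mathsf{A}}$ is compactly generated and $i_*$ preserves coproducts). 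Your route has the virtue of making the identification $\mathsf{T}/\mathsf{T}_{\mathsf{A}}\simeq\mathsf{T}_{\mathsf{B}}$ explicit, including the idempotent-completion subtlety; the paper's route is shorter but hides exactly that step under ``well known''.

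Two minor points of sloppiness to clean up. First, the paper's convention is $\HomT{\mathsf{B}}{\mathsf{A}}=0$ for $\langle\mathsf{A},\mathsf{B}\rangle$, not $\HomT{\mathsf{A}}{\mathsf{B}}=0$ as you state in your parenthetical; fortunately your actual argument uses the correct direction ($\mathsf{T}_{\mathsf{B}}\subseteq{}^{\perp}\mathsf{T}_{\mathsf{A}}$), so nothing breaks. Second, in the last paragraph your bracket orderings drift (e.g.\ $\langle\mathsf{T}_{\mathsf{A}},\mathsf{T}_{\mathsf{A}}^\perp\rangle$ should be $\langle\mathsf{T}_{\mathsf{A}}^\perp,\mathsf{T}_{\mathsf{A}}\rangle$), and the $\simeq$'s should be equalities of subcategories; the clean statement is that the localisation half of the recollement gives the semiorthogonal decomposition $\langle j_*\mathsf{T}_{\mathsf{B}},\mathsf{T}_{\mathsf{A}}\rangle$ with $j_*\mathsf{T}_{\mathsf{B}}=\mathsf{T}_{\mathsf{A}}^\perp=\mathsf{A}^\perp$, while the colocalisation half gives $\langle\mathsf{T}_{\mathsf{A}},\mathsf{T}_{\mathsf{B}}\rangle$ forcing $\mathsf{T}_{\mathsf{A}}=\mathsf{T}_{\mathsf{B}}^\perp=\mathsf{B}^\perp$, whence $\langle\mathsf{A}^\perp,\mathsf{B}^\perp\rangle$.
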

\begin{proof}
    It is well known, and easy to show, that the colocalisation sequence on $\mathsf{T}^c$ extends to one on $\mathsf{T}$ as follows,
    \[\begin{tikzcd}
	   \mathsf{T}_{\mathsf{A}} && \mathsf{T} && \mathsf{T}_{\mathsf{B}}
	   \arrow["i_*"',from=1-1, to=1-3]
	   \arrow["i^*"',bend right, from=1-3, to=1-1]
	   \arrow["j^*"',from=1-3, to=1-5]
	   \arrow["j_!"',bend right, from=1-5, to=1-3]
	\end{tikzcd}\]
    We need to show that it is also a localisation sequence. It is enough to show that there exists a right adjoint to $i_*$. Note that $\mathsf{T}_{\mathsf{A}}$ is a compactly generated triangulated category, and $i_*$ preserves coproducts. So, by Neeman's adjoint functor theorem (see \cite[Theorem 4.1]{Neeman:1996}), $i_*$ has a right adjoint giving us the required recollement. 

     In particular, $\langle j_*(\mathsf{T}_{\mathsf{B}}),i_*(\mathsf{T}_{\mathsf{A}})\rangle$ is a semiorthogonal decomposition on $\mathsf{T}$. So, $j_*(\mathsf{T}_{\mathsf{B}}) = i_*(\mathsf{T}_{\mathsf{A}})^{\perp} = \operatorname{Coprod}(\mathsf{A})^{\perp} = \mathsf{A}^{\perp}$. As $\langle i_*(\mathsf{T}_{\mathsf{A}}),j_!(\mathsf{T}_{\mathsf{B}})\rangle$ is also a semiorthogonal decomposition on $\mathsf{T}$, we have that $i_*(\mathsf{T}_{\mathsf{A}}) = j_!(\mathsf{T}_{\mathsf{B}})^{\perp} = \operatorname{Coprod}(\mathsf{B})^{\perp} = \mathsf{B}^{\perp}$. And so, this shows that $\langle \mathsf{A}^{\perp},\mathsf{B}^{\perp} \rangle$ is a semiorthogonal decomposition on $\mathsf{T}$.
\end{proof}

Now, we get to the first main result of this section.
\begin{theorem}\label{Theorem extending localisation sequence from Tc to its closure}
    Let $\mathsf{T}$ a triangulated category with a generating sequence $\mathcal{G}$ (\Cref{Definition generating sequence}), and an orthogonal metric $\mathcal{R}$ (\Cref{Definition Metric}) such that for all $n \in \mathbb{Z}$, $\HomT{\mathcal{G}^n}{\mathcal{R}_i} = 0$ for all $i >> 0$. We further assume that the metric $\mathcal{R} \cap \mathsf{T}^c$ is compressed, see \Cref{Definition compressed metrics}(1). Suppose we are given a localisation sequence on $\mathsf{T}^c$ as follows,
    \[\begin{tikzcd}
	   \mathsf{A} && \mathsf{T}^c && \mathsf{B}
	   \arrow["i_*",from=1-1, to=1-3]
	   \arrow["i^!",bend left, from=1-3, to=1-1]
	   \arrow["j^*",from=1-3, to=1-5]
	   \arrow["j_*",bend left, from=1-5, to=1-3]
	\end{tikzcd}\]
    with $i_* : \mathsf{A} \to \mathsf{T}^c$ and $j_* : \mathsf{B} \to \mathsf{T}^c$ inclusions of strictly full subcategories.
    Let $\mathsf{T}_{\mathsf{A}}$ and $\mathsf{T}_{\mathsf{B}}$ be the localising subcategories generated by $\mathsf{A}$ and $\mathsf{B}$ respectively inside $\mathsf{T}$, that is, $\mathsf{T}_{\mathsf{A}} = \operatorname{Coprod}(\mathsf{A})$ and $\mathsf{T}_{\mathsf{B}} = \operatorname{Coprod}(\mathsf{B})$, see \Cref{Notation from Neeman}. These categories have generating sequences $\mathcal{G}_{\mathsf{A}} \colonequals i^!(\mathcal{G})$ and $\mathcal{G}_{\mathsf{B}} \colonequals j^*(\mathcal{G})$ and orthogonal metrics $\mathcal{R}^{\mathsf{A}} \colonequals i^!(\mathcal{R})$ and $\mathcal{R}^{\mathsf{B}} \colonequals j^*(\mathcal{R})$ on $\mathsf{T}_{\mathsf{A}}$ and $\mathsf{T}_{\mathsf{B}}$ respectively.

    Then, we get a localisation sequence on the closure of compacts $\overline{\mathsf{T}^c}$ (see \Cref{Definition closure of compacts}) as follows,
    \[\begin{tikzcd}
	   \overline{\mathsf{T}^{c}_{\mathsf{A}}} && \overline{\mathsf{T}^c}  && \overline{\mathsf{T}^{c}_{\mathsf{B}}}
	   \arrow["i_*",from=1-1, to=1-3]
	   \arrow["i^!",bend left, from=1-3, to=1-1]
	   \arrow["j^*",from=1-3, to=1-5]
	   \arrow["j_*",bend left, from=1-5, to=1-3]
	\end{tikzcd}\]
    with the closure of compacts on $\mathsf{T}_{\mathsf{A}}$, $\mathsf{T}$, and $\mathsf{T}_{\mathsf{B}}$ with respect to the metrics $\mathcal{R}^{\mathsf{A}}$, $\mathcal{R}$, and $\mathcal{R}^{\mathsf{B}}$ respectively. Further,  $\overline{\mathsf{T}^{c}_{\mathsf{A}}} = \mathsf{T}_{\mathsf{A}} \cap \overline{\mathsf{T}^c}$ and $\overline{\mathsf{T}^{c}_{\mathsf{B}}} = \mathsf{T}_{\mathsf{B}} \cap \overline{\mathsf{T}^c}$.
\end{theorem}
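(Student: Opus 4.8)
The plan is to first promote the given localisation sequence on $\mathsf{T}^c$ to a localisation sequence on all of $\mathsf{T}$, and then to cut that down to the closures of compacts; the real work is in the second step, and it is there that the hypothesis that $\mathcal{R}\cap\mathsf{T}^c$ is compressed gets used.

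\emph{Step 1: extending to $\mathsf{T}$.} Exactly as in \Cref{Theorem localisation on Tc gives recollement on T}, the localising subcategory $\mathsf{T}_{\mathsf{A}}=\operatorname{Coprod}(\mathsf{A})$ is generated by objects compact in $\mathsf{T}$, hence is compactly generated with $\mathsf{T}_{\mathsf{A}}^c=\mathsf{A}$; its inclusion $i_*$ has a right adjoint $i^!$ by Neeman's adjoint functor theorem, and one obtains a localisation sequence $\mathsf{T}_{\mathsf{A}}\xrightarrow{i_*}\mathsf{T}\xrightarrow{j^*}\mathsf{T}_{\mathsf{B}}$ with $\mathsf{T}_{\mathsf{B}}=\operatorname{Coprod}(\mathsf{B})\simeq\mathsf{T}/\mathsf{T}_{\mathsf{A}}$, $\mathsf{T}_{\mathsf{B}}^c=\mathsf{B}$, and $j_*$ the inclusion of the coaisle $\mathsf{T}_{\mathsf{A}}^{\perp}$. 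One checks that the restrictions of $i^!$ and $j^*$ to compact objects recover the functors of the given localisation sequence, so that $i_*,i^!,j^*,j_*$ all preserve compactness (in particular $i^!$ carries $\mathsf{T}^c$ into $\mathsf{A}$, $j^*$ carries $\mathsf{T}^c$ into $\mathsf{B}$, and $j_*$ carries $\mathsf{B}$ into $\mathsf{T}^c$) and all preserve coproducts; moreover $\mathcal{G}_{\mathsf{A}}=i^!(\mathcal{G})$ and $\mathcal{G}_{\mathsf{B}}=j^*(\mathcal{G})$ are finite generating sequences with the indicated orthogonal metrics $\mathcal{R}^{\mathsf{A}}=i^!(\mathcal{R})$, $\mathcal{R}^{\mathsf{B}}=j^*(\mathcal{R})$. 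Finally, using the adjunction identity $\operatorname{Hom}_{\mathsf{T}_{\mathsf{A}}}(i^!G,i^!H)=\operatorname{Hom}_{\mathsf{T}}(i_*i^!G,H)$ with $i_*i^!G$ compact --- and its analogue on the $\mathsf{B}$-side combined with Step 2 --- one verifies the Hom-vanishing hypothesis of \Cref{Lemma approximating sequence for closure of compacts} for $(\mathsf{T}_{\mathsf{A}},\mathcal{G}_{\mathsf{A}},\mathcal{R}^{\mathsf{A}})$ and $(\mathsf{T}_{\mathsf{B}},\mathcal{G}_{\mathsf{B}},\mathcal{R}^{\mathsf{B}})$, so that lemma (hence also the fact that each closure of compacts is triangulated) is available for all three categories.

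\emph{Step 2: the coreflection and reflection compress $\mathcal{R}$ --- the main obstacle.} Write $a=i_*i^!$ and $b=j_*j^*$, with functorial triangle $aF\to F\to bF\to\Sigma aF$, $aF\in\mathsf{T}_{\mathsf{A}}$, $bF\in\mathsf{T}_{\mathsf{B}}$. The crucial claim is that for every $k$ there is an $n$ with $a(\mathcal{R}_n)\subseteq\mathcal{R}_k$ and $b(\mathcal{R}_n)\subseteq\mathcal{R}_k$. Both $a$ and $b$ preserve compactness (Step 1) and coproducts, hence restrict to triangulated endofunctors of $\mathsf{T}^c$; since $\mathcal{R}\cap\mathsf{T}^c$ is a compressed metric, there is thus an $n$ with $a(\mathcal{R}_n\cap\mathsf{T}^c)\subseteq\mathcal{R}_k\cap\mathsf{T}^c$. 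As $a$ preserves coproducts and $\mathsf{T}$ is compactly generated, $a$ admits a right adjoint $R$; by adjunction $R$ then carries $(\mathcal{R}_k\cap\mathsf{T}^c)^{\perp}$ into $(\mathcal{R}_n\cap\mathsf{T}^c)^{\perp}$, and since the metric attached to a generating sequence is recovered from its compact part as $\mathcal{R}_m={}^{\perp}\big((\mathcal{R}_m\cap\mathsf{T}^c)^{\perp}\big)$, we conclude $a(\mathcal{R}_n)\subseteq{}^{\perp}\big((\mathcal{R}_k\cap\mathsf{T}^c)^{\perp}\big)=\mathcal{R}_k$. The statement for $b$ follows from that for $a$: if $F\in\mathcal{R}_n$ then $aF\in\mathcal{R}_k$ for suitable $k$, whence the triangle places $bF$ in $\mathcal{R}_n\star\Sigma\mathcal{R}_k\subseteq\mathcal{R}_{\min(n,k-1)}$ by the metric axioms. (To avoid the identity $\mathcal{R}_m={}^{\perp}((\mathcal{R}_m\cap\mathsf{T}^c)^{\perp})$ one may argue instead directly with \Cref{Lemma approximating sequence for closure of compacts}: any object of $\mathcal{R}_n\cap\overline{\mathsf{T}^c}$ is a homotopy colimit of compact objects lying in $\mathcal{R}_{n-1}$, and $a$, $b$ preserve homotopy colimits, coproducts and compactness, so the compressed hypothesis forces $a$, $b$ of such an object to be a homotopy colimit of compact objects lying in some $\mathcal{R}_{m(n)}$ with $m(n)\to\infty$, hence to lie in $\mathcal{R}_{m(n)-1}$.) This passage from ``$\mathcal{R}$ compresses on $\mathsf{T}^c$'' to ``$a$ and $b$ compress $\mathcal{R}$ on $\mathsf{T}$'' is the heart of the argument.

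\emph{Step 3: restricting the localisation sequence.} By \Cref{Lemma approximating sequence for closure of compacts}, objects of $\overline{\mathsf{T}^c}$ are exactly those admitting a sequence of compact approximations whose cones eventually lie in $\mathcal{R}_N$ for every $N$. Applying $i^!$ (resp.\ $j^*$) to such a sequence --- these functors preserve compactness, homotopy colimits, and carry $\mathcal{R}_n$ into $\mathcal{R}^{\mathsf{A}}_n$ (resp.\ $\mathcal{R}^{\mathsf{B}}_n$) --- shows $i^!(\overline{\mathsf{T}^c})\subseteq\overline{\mathsf{T}^{c}_{\mathsf{A}}}$ and $j^*(\overline{\mathsf{T}^c})\subseteq\overline{\mathsf{T}^{c}_{\mathsf{B}}}$; applying $a=i_*i^!$ (resp.\ $b=j_*j^*$) and invoking Step 2 shows that $a$ and $b$ preserve $\overline{\mathsf{T}^c}$, so in particular $i_*$ carries $\overline{\mathsf{T}^{c}_{\mathsf{A}}}$ into $\overline{\mathsf{T}^c}$ and $j_*$ carries $\overline{\mathsf{T}^{c}_{\mathsf{B}}}$ into $\overline{\mathsf{T}^c}$. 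Hence for $F\in\overline{\mathsf{T}^c}$ the triangle $i_*i^!F\to F\to j_*j^*F\to\Sigma i_*i^!F$ lies entirely in $\overline{\mathsf{T}^c}$, and the adjunctions $(i_*,i^!)$, $(j^*,j_*)$ restrict. Since $i^!i_*\cong\operatorname{id}$ and $i_*$ is the inclusion of $\mathsf{T}_{\mathsf{A}}$, any $F\in\mathsf{T}_{\mathsf{A}}\cap\overline{\mathsf{T}^c}$ satisfies $F\cong i_*i^!F$ with $i^!F\in\overline{\mathsf{T}^{c}_{\mathsf{A}}}$; together with the reverse inclusion just proved this gives $\overline{\mathsf{T}^{c}_{\mathsf{A}}}=\mathsf{T}_{\mathsf{A}}\cap\overline{\mathsf{T}^c}$, and symmetrically (using $j^*j_*\cong\operatorname{id}$) $\overline{\mathsf{T}^{c}_{\mathsf{B}}}=\mathsf{T}_{\mathsf{B}}\cap\overline{\mathsf{T}^c}$. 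In particular $\ker\big(j^*|_{\overline{\mathsf{T}^c}}\big)=\mathsf{T}_{\mathsf{A}}\cap\overline{\mathsf{T}^c}=\overline{\mathsf{T}^{c}_{\mathsf{A}}}$, and $j^*|_{\overline{\mathsf{T}^c}}\colon\overline{\mathsf{T}^c}\to\overline{\mathsf{T}^{c}_{\mathsf{B}}}$ has the fully faithful right adjoint $j_*|_{\overline{\mathsf{T}^{c}_{\mathsf{B}}}}$; since a triangulated functor with a fully faithful right adjoint is the Verdier quotient by its kernel, $\overline{\mathsf{T}^{c}_{\mathsf{A}}}\xrightarrow{i_*}\overline{\mathsf{T}^c}\xrightarrow{j^*}\overline{\mathsf{T}^{c}_{\mathsf{B}}}$ is the asserted localisation sequence, with $i^!$ and $j_*$ the right adjoints.

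I expect Step 2 to be the main obstacle: Steps 1 and 3 are bookkeeping with standard facts about compactly generated categories and \Cref{Lemma approximating sequence for closure of compacts}, whereas Step 2 is the only place the compressibility hypothesis intervenes, and the point (promoting compression of the metric on the compacts to compression along the reflection/coreflection on the whole category) is exactly the mechanism that makes the conclusion work.
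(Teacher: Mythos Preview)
Your overall strategy coincides with the paper's: extend the localisation sequence to $\mathsf{T}$, note that all four functors preserve coproducts (so homotopy colimits) and compactness, then use \Cref{Lemma approximating sequence for closure of compacts} to restrict to the closures. The paper argues the hard direction $i_*\big(\overline{\mathsf{T}^c_{\mathsf{A}}}\big)\subseteq\overline{\mathsf{T}^c}$ exactly via your parenthetical alternative: take $A\in\overline{\mathsf{T}^c_{\mathsf{A}}}$, write it as a homotopy colimit of compacts $A_i\in\mathsf{A}$ with compact inter-cones lying in $\mathcal{R}^{\mathsf{A}}_{i}$, push through $i_*$, and use that $i_*i^!$ is a triangulated endofunctor of $\mathsf{T}^c$ so the compressed hypothesis forces the images of the inter-cones into $\mathcal{R}_{m}$ for arbitrarily large $m$.

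There is, however, a genuine logical slip in your Step~3. You deduce ``$i_*$ carries $\overline{\mathsf{T}^c_{\mathsf{A}}}$ into $\overline{\mathsf{T}^c}$'' from ``$a=i_*i^!$ preserves $\overline{\mathsf{T}^c}$''. That implication is circular: $a(\overline{\mathsf{T}^c})\subseteq\overline{\mathsf{T}^c}$ only says $i_*\big(i^!(\overline{\mathsf{T}^c})\big)\subseteq\overline{\mathsf{T}^c}$, and you have not yet shown the equality $i^!(\overline{\mathsf{T}^c})=\overline{\mathsf{T}^c_{\mathsf{A}}}$ (indeed, proving that equality needs the very inclusion $i_*\big(\overline{\mathsf{T}^c_{\mathsf{A}}}\big)\subseteq\overline{\mathsf{T}^c}$ you are after). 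The fix is to use your \emph{strong} Step~2 claim $a(\mathcal{R}_n)\subseteq\mathcal{R}_k$ directly: for $A\in\overline{\mathsf{T}^c_{\mathsf{A}}}$ with approximating sequence $A_i$ and cones $C_i\in\mathcal{R}^{\mathsf{A}}_i=i^!(\mathcal{R}_i)$, write $C_i=i^!(D_i)$ with $D_i\in\mathcal{R}_i$, so that $i_*(C_i)=a(D_i)\in\mathcal{R}_{m(i)}$; then $i_*(A_i)$ is the approximating sequence for $i_*(A)$ in $\overline{\mathsf{T}^c}$. This is precisely the paper's route.

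A second, smaller point: your primary Step~2 argument (via the right adjoint $R$ to $a$) is a nice variation not in the paper, but it rests on the identity $\mathcal{R}_m={}^{\perp}\big((\mathcal{R}_m\cap\mathsf{T}^c)^{\perp}\big)$. This holds for $\mathcal{R}=\mathcal{R}^{\mathcal{G}}$ --- since $\mathcal{M}^{\mathcal{G}}_m\subseteq\mathcal{R}^{\mathcal{G}}_m\cap\mathsf{T}^c\subseteq\mathcal{R}^{\mathcal{G}}_m$ forces all three to have the same double orthogonal --- but the theorem is stated for an arbitrary orthogonal metric $\mathcal{R}$, for which the identity need not hold. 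Your parenthetical alternative avoids this, and that alternative is what the paper actually does.
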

 
\begin{proof}
    Note that, as can be easily shown, we get a localisation sequence on $\mathsf{T}$ as follows,
    \[\begin{tikzcd}
	   \mathsf{T}_{\mathsf{A}} && \mathsf{T}  && \mathsf{T}_{\mathsf{B}}
	   \arrow["i_*",from=1-1, to=1-3]
	   \arrow["i^!",bend left, from=1-3, to=1-1]
	   \arrow["j^*",from=1-3, to=1-5]
	   \arrow["j_*",bend left, from=1-5, to=1-3]
	\end{tikzcd}\]
    where we denote the functors by the same symbols as the localisation sequence on $\mathsf{T}^c$. 

    We observe now that all the four functors involved in the localisation sequence on $\mathsf{T}$ preserve coproducts, and hence respect homotopy colimits. The functors $i_\ast$ and $j^\ast$ preserve coproducts as they have right adjoints. Further, the functors $i^!$ and $j_\ast$ preserve coproducts by \cite[Theorem 5.1]{Neeman:1996} because their left adjoints preserve compact objects.

    We now show that this localisation sequence restricts to the closure of compacts.
    That is, for any $F \in \overline{\mathsf{T}^c}$, we want to show that $i^!(F) \in \overline{\mathsf{T}^c_{\mathsf{A}}}$ and $j^*(F) \in \overline{\mathsf{T}^c_{\mathsf{B}}}$. As $F \in \overline{\mathsf{T}^c}$, there is a sequence $E_i$ mapping to $F$ such that $\hocolim E_i \cong F$ and $\operatorname{Cone}(E_i \to E_{i+j}) \in \mathcal{R}_{i+1}$ for all $i,j \geq 0$ by \Cref{Lemma approximating sequence for closure of compacts}. So, we get that $i^!(F) = \hocolim i^!(E_i)$ and $j^\ast = \hocolim j^\ast(E_i)$ as $i^!$ and $j^\ast(F)$ respect homotopy colimits. Note that $\operatorname{Cone}(i^!(E_i) \to i^{!}(E_{i+j})) \in \mathcal{R}^{\mathsf{A}}_{i+1}$ and $\operatorname{Cone}(j^\ast(E_i) \to j^\ast(E_{i+j})) \in \mathcal{R}^{\mathsf{B}}_{i+1}$ for all $i,j \geq 0$ by definition. Therefore, by \Cref{Lemma approximating sequence for closure of compacts}, we get that $i^!(F) = \hocolim i^!(E_i) \in \overline{\mathsf{T}^c_{\mathsf{A}}}$ and $j^\ast(F) = \hocolim j^\ast(E_i) \in \overline{\mathsf{T}^c_{\mathsf{B}}}$, which is what we needed to show.
    
    Finally, we need to show that $i_\ast\big(\overline{\mathsf{T}^{c}_{\mathsf{A}}}\big) \subseteq \overline{\mathsf{T}^c} $ and $j_\ast\big(\overline{\mathsf{T}^{c}_{\mathsf{B}}}\big) \subseteq \overline{\mathsf{T}^c}$. We show that $i_\ast\big(\overline{\mathsf{T}^{c}_{\mathsf{A}}}\big) \subseteq \overline{\mathsf{T}^c}$, and the other inclusion follows similarly. Let $A \in \overline{\mathsf{T}^{c}_{\mathsf{A}}}$. Then, there exists a sequence $A_*$ mapping to $A$ such that $\hocolim A_i \cong A$, and $\operatorname{Cone}(A_i \to A_{i+j}) \in \mathcal{R}^{\mathsf{A}}_{i+1} \cap \mathsf{T}^c$ for all $i,j \geq 1$. But, $\mathcal{R} \cap \mathsf{T}^c$ is assumed to be a compressed metric, and $\mathcal{R}^{\mathsf{A}}_i = i_*i^!(\mathcal{R}_i)$, that is, it is the image of $\mathcal{R}$ under the endofunctor $i_*i^!$. So, by the definition of compressed metric (\Cref{Definition compressed metrics}(1)), for all $j \geq 0$, there exists a $n \geq 0$ such that  $i_*i^!(\mathcal{R}^{\mathsf{A}}_n \cap \mathsf{T}^c) \subseteq \mathcal{R}_j \cap \mathsf{T}^c$, which gives us that $\hocolim A_i \in \overline{\mathsf{T}^c}$ by \Cref{Lemma approximating sequence for closure of compacts}.
\end{proof}
This gives us the following corollaries.

\begin{corollary}\label{Corollary admissible categories from Tc to T-c}
    Let $\mathsf{T}$ be a compactly generated triangulated category, with a single compact generator $G$ such that $\HomT{G}{\Sigma^i G} = 0$ for all $i>>0$. Suppose we have the localisation sequence,  
     \[\begin{tikzcd}
	   \mathsf{A} && \mathsf{T}^c && \mathsf{B}
	   \arrow["i_*",from=1-1, to=1-3]
	   \arrow["i^!",bend left, from=1-3, to=1-1]
	   \arrow["j^*",from=1-3, to=1-5]
	   \arrow["j_*",bend left, from=1-5, to=1-3]
	\end{tikzcd}\]
    with $i_* : \mathsf{A} \to \mathsf{T}^c$ and $j_* : \mathsf{B} \to \mathsf{T}^c$ inclusions of strictly full subcategories.
    Let $\mathsf{T}_{\mathsf{A}}$ and $\mathsf{T}_{\mathsf{B}}$ be the localising subcategories of $\mathsf{T}$ 
    generated by $\mathsf{A}$ and $\mathsf{B}$ respectively, that is, $\mathsf{T}_{\mathsf{A}} = \operatorname{Coprod}(\mathsf{A})$ and $\mathsf{T}_{\mathsf{B}} = \operatorname{Coprod}(\mathsf{B})$, see \Cref{Notation from Neeman}. Then, we get the following localisation sequence,
     \[\begin{tikzcd}
	   (\mathsf{T}_{\mathsf{A}})^-_c && \mathsf{T}^-_c  && (\mathsf{T}_{\mathsf{B}})^-_c 
	   \arrow["i_*",from=1-1, to=1-3]
	   \arrow["i^!",bend left, from=1-3, to=1-1]
	   \arrow["j^*",from=1-3, to=1-5]
	   \arrow["j_*",bend left, from=1-5, to=1-3]
	\end{tikzcd}\]
    with the categories $\mathsf{T}^-_c$,  $(\mathsf{T}_{\mathsf{A}})^-_c$, and $(\mathsf{T}_{\mathsf{B}})^-_c$ defined as in \Cref{Convention closure of compacts for approx/co-approx} with respect to any t-structure in the preferred quasiequivalence class of t-structures on $\mathsf{T}$, $\mathsf{T}_{\mathsf{A}}$, and $\mathsf{T}_{\mathsf{B}}$ respectively (see \Cref{Definition preferred equivalence class}).
\end{corollary}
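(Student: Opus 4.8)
The plan is to obtain the statement by applying \Cref{Theorem extending localisation sequence from Tc to its closure} with the ``minus'' data attached to $G$. Concretely, equip $\mathsf{T}$ with the finite generating sequence $\mathcal{G}^n = \{\Sigma^{-n}G\}$ of \Cref{Example generating sequence approx/co-approx} and with the orthogonal metric $\mathcal{R} = \mathcal{R}^{\mathcal{G}}$, i.e.\ $\mathcal{R}_n = \mathsf{T}^{\leq -n}_G$ for the t-structure generated by $G$. With this choice the closure of compacts $\overline{\mathsf{T}^c}$ is, by definition, the category $\mathsf{T}^-_c$ of \Cref{Convention closure of compacts for approx/co-approx}, and by \Cref{Lemma approximating sequence for closure of compacts} it does not depend on which t-structure in the preferred quasiequivalence class one uses. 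So the two tasks are: (a) check the hypotheses of \Cref{Theorem extending localisation sequence from Tc to its closure}, and (b) recognise the three categories it outputs as $(\mathsf{T}_{\mathsf{A}})^-_c$, $\mathsf{T}^-_c$, and $(\mathsf{T}_{\mathsf{B}})^-_c$.

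For (a), the vanishing hypothesis $\HomT{\mathcal{G}^n}{\mathcal{R}_i} = 0$ for $i \gg 0$ unwinds as follows: since $\mathsf{T}^{\leq -i}_G = \operatorname{Coprod}(\{\Sigma^j G : j \geq i\})$, since $\Sigma^{-n}G$ is compact, and since $\HomT{\Sigma^{-n}G}{-}$ vanishes on coproducts and extensions of objects where it vanishes, it suffices that $\HomT{\Sigma^{-n}G}{\Sigma^j G} \cong \HomT{G}{\Sigma^{j+n}G}$ be zero for $j$ large, which is exactly the standing hypothesis. The compressedness hypothesis is immediate: $\mathcal{R}\cap\mathsf{T}^c = \mathcal{R}^{\mathcal{G}}\cap\mathsf{T}^c = \mathcal{M}^{\mathcal{G}}$ (\Cref{Example generating sequence approx/co-approx}), and since $\mathsf{T}^c = \langle G\rangle$ this metric is $\mathbb{N}$-compressed by \Cref{Theorem Examples of Compressed metrics}, hence compressed in the sense of \Cref{Definition compressed metrics}(1).

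For (b), \Cref{Theorem extending localisation sequence from Tc to its closure} produces a localisation sequence $\overline{\mathsf{T}^c_{\mathsf{A}}} \to \overline{\mathsf{T}^c} \to \overline{\mathsf{T}^c_{\mathsf{B}}}$ with the closures of compacts taken for the metrics $\mathcal{R}^{\mathsf{A}} = i^!(\mathcal{R})$, $\mathcal{R}$, $\mathcal{R}^{\mathsf{B}} = j^*(\mathcal{R})$ and the generating sequences $i^!(\mathcal{G})$, $\mathcal{G}$, $j^*(\mathcal{G})$; the middle term is $\mathsf{T}^-_c$ by the choice made in (a). That same theorem identifies $i^!(\mathcal{G})$, $j^*(\mathcal{G})$ as generating sequences of $\mathsf{T}_{\mathsf{A}}$, $\mathsf{T}_{\mathsf{B}}$, and since $i^!(\mathcal{G})^n = \{\Sigma^{-n}G_{\mathsf{A}}\}$ with $G_{\mathsf{A}}\colonequals i^!(G)$ (and likewise $G_{\mathsf{B}}\colonequals j^*(G)$), the categories $\mathsf{T}_{\mathsf{A}}$, $\mathsf{T}_{\mathsf{B}}$ are compactly generated by the single compact generators $G_{\mathsf{A}}$, $G_{\mathsf{B}}$. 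One then checks that $G_{\mathsf{A}}$, $G_{\mathsf{B}}$ inherit the Hom-vanishing (i.e.\ $\operatorname{Hom}_{\mathsf{T}_{\mathsf{A}}}(G_{\mathsf{A}},\Sigma^i G_{\mathsf{A}}) = 0$ for $i\gg 0$, and similarly for $\mathsf{T}_{\mathsf{B}}$) by running the localisation triangle $i_*i^!G \to G \to j_*j^*G$ against $G$ and using that $j_*j^*G$ is compact, hence lies in $\langle G\rangle^{[-m_0,m_0]}$ for some $m_0$. Finally, and this is the crux, one shows $\mathcal{R}^{\mathsf{A}}=i^!(\mathcal{R})$ is equivalent, as a metric on $\mathsf{T}_{\mathsf{A}}$, to the metric of the t-structure generated by $G_{\mathsf{A}}$: the inclusion $i^!(\mathsf{T}^{\leq -n}_G)\subseteq (\mathsf{T}_{\mathsf{A}})^{\leq -n}_{G_{\mathsf{A}}}$ is automatic because $i^!$ is triangulated, preserves coproducts, and sends $\Sigma^j G$ to $\Sigma^j G_{\mathsf{A}}$; the reverse inclusion up to a uniform shift follows by applying $i_*$ and using $i^!i_*\cong\operatorname{id}$ together with the fact that $i_*(\Sigma^j G_{\mathsf{A}}) = \Sigma^j i_*i^!G$ lies in $\mathsf{T}^{\leq m_0+1-j}_G$, which again comes from the localisation triangle and the compactness of $j_*j^*G$; the statement for $\mathcal{R}^{\mathsf{B}}$ and $G_{\mathsf{B}}$ is proved the same way with $j^*$, $j_*$. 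Since the closure of compacts depends only on the equivalence class of the metric (\Cref{Definition closure of compacts}), $\overline{\mathsf{T}^c_{\mathsf{A}}}$ is then the closure of compacts of $\mathsf{T}_{\mathsf{A}}$ for the $G_{\mathsf{A}}$-generated t-structure, i.e.\ $(\mathsf{T}_{\mathsf{A}})^-_c$, and likewise $\overline{\mathsf{T}^c_{\mathsf{B}}} = (\mathsf{T}_{\mathsf{B}})^-_c$; by \Cref{Lemma approximating sequence for closure of compacts}/\Cref{Convention closure of compacts for approx/co-approx} these are independent of the chosen representative of the preferred quasiequivalence class.

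I expect the main obstacle to be the last part of (b): verifying that transporting the $G$-generated t-structure along $i^!$ and $j^*$ keeps one inside the preferred quasiequivalence class on $\mathsf{T}_{\mathsf{A}}$ and $\mathsf{T}_{\mathsf{B}}$. Once that metric comparison is in place, everything else — the hypothesis checks in (a), compact generation of $\mathsf{T}_{\mathsf{A}}$, $\mathsf{T}_{\mathsf{B}}$, and preservation of compactness and coproducts by the four functors — is routine bookkeeping with adjoints and Neeman's theorems.
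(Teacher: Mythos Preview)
Your approach is essentially the same as the paper's: choose the generating sequence $\mathcal{G}^n=\{\Sigma^{-n}G\}$ and the orthogonal metric $\mathcal{R}=\mathcal{R}^{\mathcal{G}}$, verify the vanishing and compressedness hypotheses, and apply \Cref{Theorem extending localisation sequence from Tc to its closure}. The paper's proof stops there and simply asserts ``we get the required result''; your part (b) --- showing that the transported metrics $i^!(\mathcal{R})$ and $j^*(\mathcal{R})$ are equivalent to the metrics of the t-structures generated by $G_{\mathsf{A}}=i^!G$ and $G_{\mathsf{B}}=j^*G$, so that the outputs of the theorem really are the intrinsic $(\mathsf{T}_{\mathsf{A}})^-_c$ and $(\mathsf{T}_{\mathsf{B}})^-_c$ --- is correct and fills in a step the paper leaves implicit.
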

\begin{proof}
    We are working with the generating sequence $\mathcal{G}$ given by $\mathcal{G}^n = \{\Sigma^{-n}G\}$, which is compressed, see \Cref{Theorem Examples of Compressed metrics}. Hence the metric $\mathcal{R}^{\mathcal{G}} \cap \mathsf{T}^c$ is compressed by \Cref{Definition compressed metrics}.
    By \Cref{Convention closure of compacts for approx/co-approx}, $\mathsf{T}^-_c$ is the closure of compacts $\overline{\mathsf{T}^c}$, see \Cref{Definition closure of compacts}.  Further, for any $n \in \mathbb{Z}$, we have that $ \HomT{\mathcal{G}^n}{\mathcal{R}^{\mathcal{G}}_i} = \HomT{\Sigma^{-n} G}{\overline{\langle G \rangle}^{(-\infty,-i]}} = 0$ for $i > > 0$ as $G$ is compact, and we have $\HomT{G}{\Sigma^i G} = 0$ for $i > > 0$ by hypothesis. So, we get the required result from \Cref{Theorem extending localisation sequence from Tc to its closure}.
\end{proof}    

\begin{corollary}\label{Corollary admissible categories from Tc to T+c}
    Let $\mathsf{T}$ be a triangulated category with a single compact generator $G$ such that $\HomT{G}{G[-i]} = 0$ for all $i>>0$. Suppose we have the localisation sequence, 
     \[\begin{tikzcd}
	   \mathsf{A} && \mathsf{T}^c && \mathsf{B}
	   \arrow["i_*",from=1-1, to=1-3]
	   \arrow["i^!",bend left, from=1-3, to=1-1]
	   \arrow["j^*",from=1-3, to=1-5]
	   \arrow["j_*",bend left, from=1-5, to=1-3]
	\end{tikzcd}\]
    with $i_* : \mathsf{A} \to \mathsf{T}^c$ and $j_* : \mathsf{B} \to \mathsf{T}^c$ inclusions of strictly full subcategories.
    Let $\mathsf{T}_{\mathsf{A}}$ and $\mathsf{T}_{\mathsf{B}}$ be the localising subcategories of $\mathsf{T}$ generated by $\mathsf{A}$ and $\mathsf{B}$ respectively, that is, $\mathsf{T}_{\mathsf{A}} = \operatorname{Coprod}(\mathsf{A})$ and $\mathsf{T}_{\mathsf{B}} = \operatorname{Coprod}(\mathsf{B})$, see \Cref{Notation from Neeman}. Then, we get the following localisation sequence 
    \[\begin{tikzcd}
	   (\mathsf{T}_{\mathsf{A}})^+_c && \mathsf{T}^+_c  && (\mathsf{T}_{\mathsf{B}})^+_c 
	   \arrow["i_*",from=1-1, to=1-3]
	   \arrow["i^!",bend left, from=1-3, to=1-1]
	   \arrow["j^*",from=1-3, to=1-5]
	   \arrow["j_*",bend left, from=1-5, to=1-3]
	\end{tikzcd}\]
    with the categories $\mathsf{T}^+_c$,  $(\mathsf{T}_{\mathsf{A}})^+_c$, and $(\mathsf{T}_{\mathsf{B}})^+_c$ defined as in \Cref{Convention closure of compacts for approx/co-approx} with respect to any co-t-structure in the preferred quasiequivalence class of co-t-structures on $\mathsf{T}$, $\mathsf{T}_{\mathsf{A}}$, and $\mathsf{T}_{\mathsf{B}}$ respectively (see \Cref{Definition preferred equivalence class}).
\end{corollary}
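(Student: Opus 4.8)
The plan is to run the argument of \Cref{Corollary admissible categories from Tc to T-c} with t-structures replaced by co-t-structures throughout. Equip $\mathsf{T}$ with the finite generating sequence $\mathcal{G}$ given by $\mathcal{G}^{n} = \{\Sigma^{n}G\}$; by \Cref{Theorem Examples of Compressed metrics} this sequence is $\mathbb{N}$-compressed, so in particular the metric $\mathcal{R}^{\mathcal{G}} \cap \mathsf{T}^{c}$ is compressed in the sense of \Cref{Definition compressed metrics}(1). By \Cref{Example generating sequence approx/co-approx} the associated orthogonal metric on $\mathsf{T}$ is $\mathcal{R}^{\mathcal{G}}_{n} = \Sigma^{-n}\mathsf{U}_{G}$ with $(\mathsf{U}_{G},\mathsf{V}_{G})$ the co-t-structure generated by $G$, and by \Cref{Convention closure of compacts for approx/co-approx} the closure of compacts for this data is $\mathsf{T}^{+}_{c}$; by the remark there this is unchanged if one uses the orthogonal metric of any co-t-structure in the preferred quasiequivalence class (\Cref{Definition preferred equivalence class}). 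The same applies to $\mathsf{T}_{\mathsf{A}} = \operatorname{Coprod}(\mathsf{A})$ and $\mathsf{T}_{\mathsf{B}} = \operatorname{Coprod}(\mathsf{B})$, which are compactly generated by $i^{!}(G)$ and $j^{*}(G)$ respectively, with generating sequences $i^{!}(\mathcal{G})$ and $j^{*}(\mathcal{G})$ of the same shape.

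With this in place the corollary follows from \Cref{Theorem extending localisation sequence from Tc to its closure}, applied to the given localisation sequence on $\mathsf{T}^{c}$, as soon as its running hypothesis is verified: for each $n$, $\HomT{\mathcal{G}^{n}}{\mathcal{R}^{\mathcal{G}}_{i}} = 0$ for $i \gg 0$. Since $\mathcal{R}^{\mathcal{G}}_{i} = \Sigma^{-i}\mathsf{U}_{G}$, this says exactly that $\HomT{\Sigma^{m}G}{\mathsf{U}_{G}} = 0$ for all $m \gg 0$. I would prove this by using \cite[Theorem 2.3.4]{Bondarko:2022} --- the same input that controls $\mathsf{U}_{G}$ up to $\mathbb{N}$-equivalence on $\mathsf{T}^{c}$ in \Cref{Definition co-t-structure/weight structure} --- to reduce the vanishing to $\operatorname{Hom}$-groups between shifts of $G$, and then feed in the standing hypothesis $\HomT{G}{\Sigma^{-i}G} = 0$ for $i \gg 0$ together with the compactness of $G$ (so that $\HomT{\Sigma^{m}G}{-}$ commutes with the coproducts occurring there); this is the analogue of the computation $\HomT{\Sigma^{-n}G}{\overline{\langle G\rangle}^{(-\infty,-i]}} = 0$ carried out in the proof of \Cref{Corollary admissible categories from Tc to T-c}. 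The same theorem then also gives $(\mathsf{T}_{\mathsf{A}})^{+}_{c} = \mathsf{T}_{\mathsf{A}} \cap \mathsf{T}^{+}_{c}$ and $(\mathsf{T}_{\mathsf{B}})^{+}_{c} = \mathsf{T}_{\mathsf{B}} \cap \mathsf{T}^{+}_{c}$.

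I expect the hypothesis check of the previous paragraph to be the only real obstacle. In \Cref{Corollary admissible categories from Tc to T-c} the orthogonal metric $\mathcal{R}^{\mathcal{G}}_{i}$ is the explicit aisle $\overline{\langle G\rangle}^{(-\infty,-i]}$ of the t-structure generated by $G$, and the required vanishing is immediate from compactness. Here the co-t-structure $\mathsf{U}_{G}$ generated by a compact object is a priori far less explicit --- this is precisely the lack of control flagged in the introduction --- so one first has to extract enough structure from \cite[Theorem 2.3.4]{Bondarko:2022} for the shift computation to close up. The remaining ingredients --- compressedness of the metric and the descent of the localisation sequence from $\mathsf{T}^{c}$ to the closures of compacts --- are formal, exactly as in the proof of \Cref{Corollary admissible categories from Tc to T-c} and as in \Cref{Theorem localisation on Tc gives recollement on T}.
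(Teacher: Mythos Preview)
Your proposal is correct and follows essentially the same route as the paper: equip $\mathsf{T}$ with the generating sequence $\mathcal{G}^n=\{\Sigma^n G\}$, note it is $\mathbb{N}$-compressed by \Cref{Theorem Examples of Compressed metrics}, use \cite[Theorem 2.3.4]{Bondarko:2022} to get $\Sigma^{-i}\mathsf{U}_G\subseteq\overline{\langle G\rangle}^{[i-1,\infty)}$ so that the vanishing $\HomT{\Sigma^m G}{\mathcal{R}^{\mathcal{G}}_i}=0$ for $i\gg 0$ follows from compactness of $G$ and the hypothesis $\HomT{G}{\Sigma^{-i}G}=0$ for $i\gg 0$, and then invoke \Cref{Theorem extending localisation sequence from Tc to its closure}. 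The paper's proof is terser but the ingredients and their order of use are the same.
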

\begin{proof}
     We are working with the compressed generating sequence $\mathcal{G}$ given by $\mathcal{G}^n = \{\Sigma^{n}G\}$.
    Hence the metric $\mathcal{R}^{\mathcal{G}} \cap \mathsf{T}^c$ is compressed by \Cref{Definition compressed metrics}.
    By \Cref{Convention closure of compacts for approx/co-approx}, $\mathsf{T}^+_c$ is the closure of compacts $\overline{\mathsf{T}^c}$, see \Cref{Definition closure of compacts}.  Further, for any $n \in \mathbb{Z}$, we have that $\HomT{\Sigma^{-n} G}{\overline{\langle G \rangle}^{[i,\infty)}} = 0$ for $i>>0$ as $G$ is compact, and we have $\HomT{G}{\Sigma^i G} = 0$ for $i > > 0$ by hypothesis. As $\mathcal{R}_n^{\mathcal{G}} = \Sigma^{-n} \mathsf{U}_G \subseteq \overline{\langle G \rangle}^{[n-1,\infty)}$ by \cite[Theorem 2.3.4]{Bondarko:2022} where $(\mathsf{U}_G, \mathsf{V}_G)$ is the co-t-structure generated by $G$, $\HomT{\mathcal{G}^n}{\mathcal{R}^{\mathcal{G}}_i}$ for $i > > 0$. So, we get the required result from \Cref{Theorem extending localisation sequence from Tc to its closure}.
\end{proof}
We now want to prove the corresponding results about $\mathsf{T}^b_c$, see \Cref{Definition closure of compacts}. For that, we need a few more ingredients. We start with some notation.

\begin{notation}\label{Notation for relative orthogonal}
        Let $\mathsf{U}$ and $\mathsf{S}$ be full subcategory of $\mathsf{T}$, then the full subactegory of $\mathsf{S}$ of objects which are right (resp. left) orthogonal to $\mathsf{U}$ is denoted by $\mathsf{U}^{\perp}_{\mathsf{S}}$ (resp. $^{\perp} \mathsf{U}_{\mathsf{S}}$). If we write $\mathsf{U}^{\perp}$ or $^{\perp} \mathsf{U}$ without a subscript, it is assumed that $\mathsf{S} = \mathsf{T}$.
\end{notation}

We now give a definition, and then prove a result which is similar in spirit to \cite[Theorem 3.1.1.II.2]{Bondarko:2023}.

\begin{definition}\label{Definition Categories from compressed sequence and category}
     Let $\mathsf{T}$ be a compactly generated $R$-linear triangulated category with a generating sequence $\mathcal{G}$, and a compressed abelian subcategory $\mathcal{S}\subseteq \prod_{i \in \mathbb{Z}}\operatorname{Mod}(R)$, see \Cref{Definition compressed metrics}. We define the following thick subcategory of $\mathsf{T}$,
         \[\mathsf{T}_{\mathcal{S}} \colonequals \Big\{t \in \mathsf{T} : \big\{\HomT{h_i}{t}\big\}_{i \in \mathbb{Z}} \in \mathcal{S} \text{ for all sequences } \{h_i\}_{i \in \mathbb{Z}} \text{ with } h_i \in \mathcal{M}_{i}^{\mathcal{G}}\Big\}\]
     see \Cref{Definition generating sequence}.
\end{definition}

\begin{theorem}\label{Corollary Bondarko type result}
     Let $\mathsf{T}$ be a compactly generated $R$-linear triangulated category with a $\mathbb{N}$-compressed generating sequence $\mathcal{G}$, and a compressed abelian subcategory $\mathcal{S}\subseteq \prod_{i \in \mathbb{Z}}\operatorname{Mod}(R)$, see \Cref{Definition compressed metrics}. 
     Then, for any semiorthogonal decomposition $\langle \mathsf{A},\mathsf{B} \rangle$ of $\mathsf{T}^c$, we have that $\langle\mathsf{A}^{\perp}_{\mathsf{T}_{\mathcal{S}}},\mathsf{B}^{\perp}_{\mathsf{T}_{\mathcal{S}}}\rangle$ is a semiorthogonal decompositions of $\mathsf{T}_{\mathcal{S}}$, see \Cref{Notation for relative orthogonal} and \Cref{Definition Categories from compressed sequence and category}. Further, $\mathsf{B}^{\perp}_{\mathsf{T}_{\mathcal{S}}} = \operatorname{Coprod}(\mathsf{A}) \cap \mathsf{T}_{\mathcal{S}}$.
\end{theorem}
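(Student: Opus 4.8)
The plan is to transport the semiorthogonal decomposition of $\mathsf{T}$ furnished by \Cref{Theorem localisation on Tc gives recollement on T} down to the thick subcategory $\mathsf{T}_{\mathcal{S}}$, the only substantive point being that the relevant truncation functor carries $\mathsf{T}_{\mathcal{S}}$ into itself.

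First I would set up the ambient data. By \Cref{Theorem localisation on Tc gives recollement on T}, $\langle \mathsf{A}, \mathsf{B} \rangle$ extends to a semiorthogonal decomposition $\langle \mathsf{A}^{\perp}, \mathsf{B}^{\perp} \rangle$ of $\mathsf{T}$ with $\mathsf{B}^{\perp} = \operatorname{Coprod}(\mathsf{A})$; thus $\operatorname{Coprod}(\mathsf{A})$ is right admissible in $\mathsf{T}$, and every $t \in \mathsf{T}$ has a functorial triangle $\rho(t) \to t \to \sigma(t) \to \Sigma \rho(t)$ with $\rho(t) \in \mathsf{B}^{\perp} = \operatorname{Coprod}(\mathsf{A})$ and $\sigma(t) \in (\mathsf{B}^{\perp})^{\perp} = \mathsf{A}^{\perp}$, where $\rho = i_* i^!$ in the notation of that theorem. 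Since $\mathsf{T}_{\mathcal{S}}$ is thick (\Cref{Definition Categories from compressed sequence and category}) and $\mathsf{A}^{\perp}, \mathsf{B}^{\perp}$ are triangulated, $\mathsf{A}^{\perp}_{\mathsf{T}_{\mathcal{S}}} = \mathsf{A}^{\perp} \cap \mathsf{T}_{\mathcal{S}}$ and $\mathsf{B}^{\perp}_{\mathsf{T}_{\mathcal{S}}} = \mathsf{B}^{\perp} \cap \mathsf{T}_{\mathcal{S}}$ are strictly full triangulated subcategories of $\mathsf{T}_{\mathcal{S}}$ with $\HomT{\mathsf{B}^{\perp}_{\mathsf{T}_{\mathcal{S}}}}{\mathsf{A}^{\perp}_{\mathsf{T}_{\mathcal{S}}}} \subseteq \HomT{\mathsf{B}^{\perp}}{\mathsf{A}^{\perp}} = 0$. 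So the whole statement will follow once I prove: $t \in \mathsf{T}_{\mathcal{S}} \Rightarrow \rho(t) \in \mathsf{T}_{\mathcal{S}}$. Indeed, granting that, for $t \in \mathsf{T}_{\mathcal{S}}$ the triangle above has $\rho(t) \in \mathsf{B}^{\perp}_{\mathsf{T}_{\mathcal{S}}}$ and (by thickness of $\mathsf{T}_{\mathcal{S}}$, since $\sigma(t) = \operatorname{Cone}(\rho(t)\to t)$) $\sigma(t) \in \mathsf{A}^{\perp}_{\mathsf{T}_{\mathcal{S}}}$, which is the required gluing triangle for $\langle \mathsf{A}^{\perp}_{\mathsf{T}_{\mathcal{S}}}, \mathsf{B}^{\perp}_{\mathsf{T}_{\mathcal{S}}} \rangle$ on $\mathsf{T}_{\mathcal{S}}$; and $\mathsf{B}^{\perp}_{\mathsf{T}_{\mathcal{S}}} = \operatorname{Coprod}(\mathsf{A}) \cap \mathsf{T}_{\mathcal{S}}$ drops out of $\mathsf{B}^{\perp} = \operatorname{Coprod}(\mathsf{A})$.

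For the key claim I would fix $t \in \mathsf{T}_{\mathcal{S}}$ and a sequence $\{h_n\}_{n \in \mathbb{Z}}$ with $h_n \in \mathcal{M}^{\mathcal{G}}_n$ (\Cref{Definition generating sequence}), and show $\{\HomT{h_n}{\rho(t)}\}_{n} \in \mathcal{S}$. The semiorthogonal decomposition $\langle \mathsf{A}, \mathsf{B} \rangle$ of $\mathsf{T}^c$ gives a triangulated endofunctor $P$ of $\mathsf{T}^c$ (projection onto the $\mathsf{A}$-factor) with functorial triangles $Q(h_n) \to h_n \to P(h_n) \to \Sigma Q(h_n)$, $Q(h_n) \in \mathsf{B}$, $P(h_n) \in \mathsf{A}$. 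Applying $\HomT{-}{\rho(t)}$ and using $\rho(t) \in \mathsf{B}^{\perp}$ to kill $\HomT{\Sigma^{j} Q(h_n)}{\rho(t)}$ for every $j$, I get $\HomT{h_n}{\rho(t)} \cong \HomT{P(h_n)}{\rho(t)}$; and since $P(h_n) \in \mathsf{A} \subseteq \operatorname{Coprod}(\mathsf{A})$, the counit $\rho(t)\to t$ of the coreflection onto $\operatorname{Coprod}(\mathsf{A})$ induces $\HomT{P(h_n)}{\rho(t)} \cong \HomT{P(h_n)}{t}$. Hence $\HomT{h_n}{\rho(t)} \cong \HomT{P(h_n)}{t}$, naturally and $R$-linearly in $n$. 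Now $\mathcal{G}$ being $\mathbb{N}$-compressed (\Cref{Definition compressed metrics}) means in particular that the triangulated endofunctor $P$ of $\mathsf{T}^c$ is an $\mathbb{N}$-compression functor for the metric $\mathcal{M}^{\mathcal{G}}$ — using that $\mathbb{N}$-compression is preserved under passage to the $\mathbb{N}$-equivalent metric $\mathcal{R}^{\mathcal{G}} \cap \mathsf{T}^c$ — so there is a fixed $l \geq 0$ with $P(\mathcal{M}^{\mathcal{G}}_{n+l}) \subseteq \mathcal{M}^{\mathcal{G}}_n$, i.e.\ $P(h) \in \mathcal{M}^{\mathcal{G}}_{n-l}$ whenever $h \in \mathcal{M}^{\mathcal{G}}_n$. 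Then $g_m \colonequals P(h_{m+l}) \in \mathcal{M}^{\mathcal{G}}_m$, so $\{\HomT{g_m}{t}\}_{m} \in \mathcal{S}$ since $t \in \mathsf{T}_{\mathcal{S}}$, while $\{\HomT{P(h_n)}{t}\}_{n}$ is exactly this sequence re-indexed by $n \mapsto n-l$ and hence lies in $\mathcal{S}$ because $\mathcal{S}$ is compressed (\Cref{Definition compressed metrics}). Combining with the isomorphism above gives $\{\HomT{h_n}{\rho(t)}\}_{n} \in \mathcal{S}$, completing the claim.

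I expect the third paragraph to carry all the weight, and its delicate point to be the handling of $\HomT{-}{\rho(t)}$: naively trying to control $\rho(t) = i_* i^! t$ directly in terms of $t$ runs in a circle, and the two successive reductions — across $Q(h_n) \to h_n \to P(h_n)$ via the $\mathsf{B}$-orthogonality of $\rho(t)$, then across $\rho(t) \to t$ via $P(h_n) \in \operatorname{Coprod}(\mathsf{A})$ — are precisely what dissolve it, after which the only quantitative ingredient needed is the $\mathbb{N}$-compressedness of the generating sequence feeding into the index-shift stability of $\mathcal{S}$. The supporting facts (exactness and functoriality of $P$, thickness of $\mathsf{T}_{\mathcal{S}}$, the standard properties of coreflections and of two-term semiorthogonal decompositions) are routine and I would only touch them briefly.
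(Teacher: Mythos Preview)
Your proposal is correct and follows essentially the same route as the paper. Both arguments extend the semiorthogonal decomposition to $\mathsf{T}$ via \Cref{Theorem localisation on Tc gives recollement on T} and then check that the truncation $i_*i^!$ preserves $\mathsf{T}_{\mathcal{S}}$ by reducing $\HomT{h_n}{i_*i^!(t)}$ to $\HomT{i_*i^*(h_n)}{t}$ and invoking the $\mathbb{N}$-compressedness of $\mathcal{G}$ together with the shift-stability of $\mathcal{S}$; the only cosmetic differences are that the paper obtains that isomorphism in one adjunction step (rather than your two reductions through $Q(h_n)\to h_n\to P(h_n)$ and then through $\rho(t)\to t$), and that the paper verifies both $i_*i^!(t)$ and $j_*j^*(t)$ lie in $\mathsf{T}_{\mathcal{S}}$ directly, whereas you do only the first and deduce the second from thickness.
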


\begin{proof}
    By \Cref{Theorem localisation on Tc gives recollement on T}, we get the recollement,
    \[\begin{tikzcd}
	   \mathsf{T}_{\mathsf{A}} && \mathsf{T} && \mathsf{T}_{\mathsf{B}}
	   \arrow["i_*",from=1-1, to=1-3]
	   \arrow["i^!",bend left, from=1-3, to=1-1]
	   \arrow["i^*"',bend right, from=1-3, to=1-1]
	   \arrow["j^*",from=1-3, to=1-5]
	   \arrow["j_*",bend left, from=1-5, to=1-3]
	   \arrow["j_!"',bend right, from=1-5, to=1-3]
    \end{tikzcd}\]
    where $\mathsf{T}_{\mathsf{A}} = \operatorname{Coprod}(\mathsf{A})$ and $\mathsf{T}_{\mathsf{B}} = \operatorname{Coprod}(\mathsf{B})$, see \Cref{Notation from Neeman}.
    
    We know that $\mathsf{B}^{\perp} = j_!(\mathsf{T}_{\mathsf{B}})^{\perp} = i_*(\mathsf{T}_{\mathsf{A}})$ is a right admissible category of $\mathsf{T}$, with the corresponding left admissible subcategory $\mathsf{A}^{\perp} = i_*(\mathsf{T}_{\mathsf{A}})^{\perp} = j_*(\mathsf{T}_{\mathsf{B}})$. We need to show that the corresponding localisation sequence restricts to $\mathsf{T}_{\mathcal{S}}$.
    
    That is, we need to show that for any $t \in$ $\mathsf{T}_{\mathcal{S}}$, we have that $i_*i^!(t), j_*j^*(t) \in \mathsf{T}_{\mathcal{S}}$. So, let  $\{h_n\}_{n \in \mathbb{Z}}$ be any collection of objects such that $h_n \in \mathcal{M}_{n}^{\mathcal{G}}$ for all $n \in \mathbb{Z}$. As $\mathcal{G}$ is a $\mathbb{N}$-compressed generating sequence, see \Cref{Definition compressed metrics}, we get that there exists an integer $l \geq 0$ such that $i_*i^*(h_n) \in \mathcal{M}_{n+l}^{\mathcal{G}}$ for each $n \in \mathbb{Z}$.

    By adjunction, we have that $\HomT{h_n}{i_*i^!(t)} = \HomT{i_*i^*(h_n)}{t}$ for any $t \in \mathsf{T}$.
    In particular, for any $t \in \mathsf{T}_{\mathcal{S}}$, we get that, $\{\HomT{h_n}{i_*i^!(t)}\}_{n \in \mathbb{Z}} = \{\HomT{i_*i^*(h_n)}{t}\}_{n \in \mathbb{Z}}$. But the latter belongs to $\mathcal{S}$ by the above paragraph, the fact that $\mathcal{S}$ is compressed (see \Cref{Definition compressed metrics}(3)), and the definition of $\mathsf{T}_{\mathcal{S}}$ (\Cref{Definition Categories from compressed sequence and category}). And therefore we get that, $\{\HomT{h_n}{i_*i^!(t)}\}_{n \in \mathbb{Z}} \in \mathcal{S}$ for any sequence $\{h_n\}_{n \in \mathbb{Z}}$ with $h_n \in \mathcal{M}_{n}^{\mathcal{G}}$. This shows that $i_*i^!(t) \in \mathsf{T}_{\mathcal{S}}$. Similarly, we get that $j_*j^*(t) \in \mathsf{T}_{\mathcal{S}}$, which is what we needed to show.

    Finally, note that $\mathsf{B}^{\perp} = \operatorname{Coprod}(\mathsf{A})$. And so, we get that $\mathsf{B}^{\perp}_{\mathsf{T}_{\mathcal{S}}} = \operatorname{Coprod}(\mathsf{A}) \cap \mathsf{T}_{\mathcal{S}}$.
\end{proof}

\begin{corollary}\label{Corollary admissable subcategories perpendicular}
    Let $\mathsf{T}$ be a triangulated category with a $\mathbb{N}$-compressed generating sequence $\mathcal{G}$, see \Cref{Definition compressed metrics}(2). Let $\langle\mathcal{W}, \mathcal{V} \rangle$ be a semiorthogonal decomposition on $\mathsf{T}^c$. Then, $\langle \mathcal{W}^{\perp}_{\mathcal{G}^{\perp}}, \mathcal{V}^{\perp}_{\mathcal{G}^{\perp}} \rangle$ is a semiorthogonal decomposition on $\mathcal{G}^{\perp}$, see \Cref{Notation for relative orthogonal}. Further, $\mathcal{V}^{\perp}_{\mathcal{G}^{\perp}} = \operatorname{Coprod}(\mathcal{W}) \cap \mathcal{G}^{\perp}$, see \Cref{Notation from Neeman}.
\end{corollary}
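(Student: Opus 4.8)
The plan is to derive this as a special case of \Cref{Corollary Bondarko type result}, by recognising $\mathcal{G}^{\perp}$ as the subcategory $\mathsf{T}_{\mathcal{S}}$ attached to a suitably chosen compressed Serre subcategory $\mathcal{S}$. Regard $\mathsf{T}$ as $\mathbb{Z}$-linear (so $\operatorname{Mod}(R) = \operatorname{Mod}(\mathbb{Z})$ is the category of abelian groups), and let $\mathcal{S} \subseteq \prod_{i \in \mathbb{Z}} \operatorname{Mod}(\mathbb{Z})$ be the full subcategory consisting of those sequences $\{M_i\}_{i \in \mathbb{Z}}$ with $M_i = 0$ for $i \gg 0$. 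This $\mathcal{S}$ is closed under subobjects, quotients and extensions (all checked coordinatewise), so it is a Serre, in particular abelian, subcategory; and it is compressed in the sense of \Cref{Definition compressed metrics}(3), since a sequence that eventually vanishes still eventually vanishes after any reindexing $\{M_i\} \mapsto \{M_{i+n}\}$.

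The crux is the identification $\mathsf{T}_{\mathcal{S}} = \mathcal{G}^{\perp}$. Recall from \Cref{Definition generating sequence} that $\{\mathcal{M}^{\mathcal{G}}_n\}_n$ is a decreasing sequence of subcategories, so $\{(\mathcal{M}^{\mathcal{G}}_n)^{\perp}\}_n$ is increasing and, by \Cref{Definition closure of compacts}, $\mathcal{G}^{\perp} = \bigcup_n (\mathcal{M}^{\mathcal{G}}_n)^{\perp}$ consists precisely of the objects $t$ with $t \in (\mathcal{M}^{\mathcal{G}}_n)^{\perp}$ for all sufficiently large $n$. If $t \in \mathcal{G}^{\perp}$, fix $n_0$ with $t \in (\mathcal{M}^{\mathcal{G}}_{n_0})^{\perp}$; then for any sequence $\{h_i\}_{i \in \mathbb{Z}}$ with $h_i \in \mathcal{M}^{\mathcal{G}}_i$ we have $h_i \in \mathcal{M}^{\mathcal{G}}_i \subseteq \mathcal{M}^{\mathcal{G}}_{n_0}$ whenever $i \geq n_0$, whence $\HomT{h_i}{t} = 0$ for $i \geq n_0$ and $\{\HomT{h_i}{t}\}_i \in \mathcal{S}$; so $t \in \mathsf{T}_{\mathcal{S}}$. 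Conversely, if $t \notin \mathcal{G}^{\perp}$ then for every $n$ there exists $h_n \in \mathcal{M}^{\mathcal{G}}_n$ with $\HomT{h_n}{t} \neq 0$; setting $h_n = 0$ for $n \leq 0$ assembles a sequence $\{h_i\}_i$ with $h_i \in \mathcal{M}^{\mathcal{G}}_i$ for which $\{\HomT{h_i}{t}\}_i \notin \mathcal{S}$, so $t \notin \mathsf{T}_{\mathcal{S}}$. This proves $\mathsf{T}_{\mathcal{S}} = \mathcal{G}^{\perp}$.

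It then remains only to invoke \Cref{Corollary Bondarko type result} with this $\mathcal{S}$ and with $\langle \mathsf{A}, \mathsf{B} \rangle = \langle \mathcal{W}, \mathcal{V} \rangle$: all its hypotheses hold (the generating sequence $\mathcal{G}$ is $\mathbb{N}$-compressed by assumption, $\mathsf{T}$ is compactly generated since it carries a generating sequence, and $\mathcal{S}$ is a compressed abelian subcategory), so it yields that $\langle \mathcal{W}^{\perp}_{\mathsf{T}_{\mathcal{S}}}, \mathcal{V}^{\perp}_{\mathsf{T}_{\mathcal{S}}} \rangle$ is a semiorthogonal decomposition of $\mathsf{T}_{\mathcal{S}}$ and that $\mathcal{V}^{\perp}_{\mathsf{T}_{\mathcal{S}}} = \operatorname{Coprod}(\mathcal{W}) \cap \mathsf{T}_{\mathcal{S}}$. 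Substituting $\mathsf{T}_{\mathcal{S}} = \mathcal{G}^{\perp}$ gives both assertions. The only place demanding any care is the diagonal argument in the second paragraph — stitching the per-$n$ witnesses $h_n$ into a single sequence — but this is entirely routine, and I do not anticipate any genuine obstacle.
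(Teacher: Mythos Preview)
Your proposal is correct and follows essentially the same approach as the paper: both reduce to \Cref{Corollary Bondarko type result} with $R = \mathbb{Z}$ and $\mathcal{S}$ the sequences that eventually vanish, and both verify $\mathsf{T}_{\mathcal{S}} = \mathcal{G}^{\perp}$. The only minor difference is that the paper exploits the finiteness of each $\mathcal{G}^n$ (part of the definition of $\mathbb{N}$-compressed) to write down a single explicit test sequence $h_n = \bigoplus_i G_{-n,i}$, whereas your diagonal argument picks a separate witness for each $n$; both are equally valid, and your version in fact avoids invoking finiteness at all.
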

\begin{proof}
    This follows from \Cref{Corollary Bondarko type result} by taking $R = \mathbb{Z}$ and $\mathcal{S}\subseteq \prod_{i \in \mathbb{Z}}\operatorname{Mod}(\mathbb{Z})$ to be all the sequences of modules $\{M_i\}_{i \in \mathbb{Z}}$ such that $M_i = 0$ for all $i >> 0$. Then, as $\mathcal{G}$ is a $\mathbb{N}$-compressed generating sequence, in particular it is a finite generating sequence, see \Cref{Definition compressed metrics}. So, $\mathcal{G}^n = \{G_{n,i}\}_{i=1}^{j_n}$ for some objects $G_{n,i} \in \mathsf{T}^c$ and $j_n \geq 0$ for all $n \in \mathbb{Z}$. Hence, $t \in \mathsf{T}_{\mathcal{S}}$ (\Cref{Definition Categories from compressed sequence and category}) if and only if $t \in \mathcal{G}^{\perp}$, as we can take the sequence $\{h_n\} \in \mathcal{S}$ with $h_n = \oplus_{i=1}^{j_{-n}} G_{-n,i}$ for all $n\in \mathbb{Z}$. And so, the required result follows from \Cref{Corollary Bondarko type result}.
\end{proof}
As an immediate corollary, we get the following result.
\begin{corollary}\label{For T^+ and T^-}
    Let $\mathsf{T}$ be triangulated category with a single compact generator $G$. Let $\langle \mathcal{W}, \mathcal{V} \rangle$ be a semiorthoginal decomposition on $\mathsf{T}^c$. Then,
    \begin{itemize}
        \item $\langle \mathcal{W}_{\mathsf{T}^+}^{\perp}, \mathcal{V}_{\mathsf{T}^+}^{\perp} \rangle$ is a semiorthogonal decomposition on $\mathsf{T}^+ \colonequals \bigcup_{n \in \mathbb{Z}}\Sigma^n\mathsf{V}$, where $(\mathsf{U}, \mathsf{V})$ is a t-structure in the preferred equivalence class of t-structures on $\mathsf{T}$, see \Cref{Definition preferred equivalence class}.
        \item $\langle\mathcal{W}_{\mathsf{T}^-}^{\perp}, \mathcal{V}_{\mathsf{T}^-}^{\perp} \rangle$ restricts to a semiorthogonal decomposition on $\mathsf{T}^- \colonequals \bigcup_{n \in \mathbb{Z}}\Sigma^n\mathsf{V}$, where $(\mathsf{U}, \mathsf{V})$ is a co-t-structure in the preferred equivalence class of co-t-structures on $\mathsf{T}$, see \Cref{Definition preferred equivalence class}.
    \end{itemize}
\end{corollary}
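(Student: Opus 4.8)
The plan is to derive both bullet points as instances of \Cref{Corollary admissable subcategories perpendicular}. For each one, the steps are: choose the correct finite generating sequence on $\mathsf{T}^{c}$, verify it is $\mathbb{N}$-compressed, and identify its right orthogonal $\mathcal{G}^{\perp}$ with the category $\mathsf{T}^{+}$ (resp.\ $\mathsf{T}^{-}$) appearing in the statement. Once the identification is in place, \Cref{Corollary admissable subcategories perpendicular} applies verbatim.

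For the first bullet I would take $\mathcal{G}^{n} = \{\Sigma^{-n}G\}$. Since $G$ is a compact generator, $\mathsf{T}^{c} = \langle G \rangle$ (\Cref{Notation from Neeman}), so $\mathsf{T}^{c}$ has a classical generator and \Cref{Theorem Examples of Compressed metrics} tells us that $\mathcal{G}$ is $\mathbb{N}$-compressed. By \Cref{Example generating sequence approx/co-approx} we have $\mathcal{M}^{\mathcal{G}}_{n} = \langle G \rangle^{(-\infty,-n]}$, whence $(\mathcal{M}^{\mathcal{G}}_{n})^{\perp} = (\{\Sigma^{i}G : i \geq n\})^{\perp} = (\mathsf{T}_{G}^{\leq -n})^{\perp} = \mathsf{T}_{G}^{\geq 1-n}$ for the t-structure $(\mathsf{T}_{G}^{\leq 0}, \mathsf{T}_{G}^{\geq 0})$ generated by $G$ (\Cref{Definition t-structure}); taking the union over $n$ gives $\mathcal{G}^{\perp} = \bigcup_{n}\Sigma^{n}\mathsf{T}_{G}^{\geq 0}$, which is exactly $\mathsf{T}^{+}$ in the sense of \Cref{Convention closure of compacts for approx/co-approx}. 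Finally, if $(\mathsf{U},\mathsf{V})$ is any t-structure in the preferred equivalence class, then its orthogonal metric $\{\Sigma^{n}\mathsf{U}\}$ on $\mathsf{T}$ is $\mathbb{N}$-equivalent to $\{\mathsf{T}_{G}^{\leq -n}\}$ (\Cref{Definition preferred equivalence class}); applying right orthogonals to the resulting chain of inclusions and taking the union over all $n$, where a fixed finite shift is absorbed, yields $\bigcup_{n}\Sigma^{n}\mathsf{V} = \bigcup_{n}\Sigma^{n}\mathsf{T}_{G}^{\geq 0} = \mathcal{G}^{\perp}$. Now \Cref{Corollary admissable subcategories perpendicular} gives that $\langle \mathcal{W}^{\perp}_{\mathsf{T}^{+}}, \mathcal{V}^{\perp}_{\mathsf{T}^{+}} \rangle$ is a semiorthogonal decomposition on $\mathsf{T}^{+}$, with moreover $\mathcal{V}^{\perp}_{\mathsf{T}^{+}} = \operatorname{Coprod}(\mathcal{W}) \cap \mathsf{T}^{+}$.

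For the second bullet the argument is parallel, with $\mathcal{G}^{n} = \{\Sigma^{n}G\}$, again $\mathbb{N}$-compressed by \Cref{Theorem Examples of Compressed metrics}: here $\mathcal{M}^{\mathcal{G}}_{n} = \langle G \rangle^{[n,\infty)}$ and a direct computation with the co-t-structure $(\mathsf{U}_{G},\mathsf{V}_{G})$ generated by $G$ (\Cref{Definition co-t-structure/weight structure}) gives $(\mathcal{M}^{\mathcal{G}}_{n})^{\perp} = \Sigma^{-n}\mathsf{V}_{G}$, so $\mathcal{G}^{\perp} = \bigcup_{n}\Sigma^{n}\mathsf{V}_{G} = \mathsf{T}^{-}$. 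Using that in a co-t-structure each half is an appropriate orthogonal of the other, together with the $\mathbb{N}$-equivalence of the metrics $\{\mathsf{T}^{\geq n}\}$ attached to two co-t-structures in the same preferred equivalence class, the same finite-shift absorption argument identifies $\bigcup_{n}\Sigma^{n}\mathsf{V}$ with $\mathcal{G}^{\perp}$, and \Cref{Corollary admissable subcategories perpendicular} finishes the proof.

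The only step requiring genuine (but entirely routine) care is the index bookkeeping in the two identifications $\mathcal{G}^{\perp} = \bigcup_{n}\Sigma^{n}\mathsf{V}$: one must be sure to take the right orthogonals against the correct side, and to check that the choice of representative of the preferred equivalence class really becomes irrelevant after passing to the union over all integer shifts. I do not anticipate any real obstacle beyond this.
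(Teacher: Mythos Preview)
Your proposal is correct and follows essentially the same approach as the paper: choose the generating sequence $\mathcal{G}^n=\{\Sigma^{-n}G\}$ (resp.\ $\{\Sigma^nG\}$), invoke \Cref{Theorem Examples of Compressed metrics} for $\mathbb{N}$-compressedness, identify $\mathcal{G}^\perp$ with $\mathsf{T}^+$ (resp.\ $\mathsf{T}^-$), and apply \Cref{Corollary admissable subcategories perpendicular}. You in fact spell out the identification $\mathcal{G}^\perp=\bigcup_n\Sigma^n\mathsf{V}$ and its independence of the chosen representative in the preferred equivalence class more carefully than the paper, which simply asserts it.
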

\begin{proof}
    We only prove (1), as (2) follows exactly the same way. Let $G$ be a compact generator for $\mathsf{T}$, which exists by assumption. Let $\mathcal{G}$ be the finite generating filtration (\Cref{Definition generating sequence}) given by $\mathcal{G}^n = \{\Sigma^{-n}G\}$.  By \Cref{Theorem Examples of Compressed metrics}, $\mathcal{G}$ is $\mathbb{N}$-compressed. Further, the category $\mathsf{T}^+ = \mathcal{G}^{\perp} = \bigcup_{n \in \mathbb{Z}} (\mathcal{M}^{\mathcal{G}}_{n})^{\perp}$, see \Cref{Definition generating sequence} for the definition of $\mathcal{M}^{\mathcal{G}}$. So, (1) follows from \Cref{Corollary admissable subcategories perpendicular}.
\end{proof}

Finally, combining the results, we get the following result for the bounded objects in the closure of compacts $\mathsf{T}^b_c$, see \Cref{Definition closure of compacts}.
\begin{theorem}\label{Theorem admissible categories and T^b_c}
     Let $\mathsf{T}$ be a triangulated category with a $\mathbb{N}$-compressed generating sequence $\mathcal{G}$ (\Cref{Definition compressed metrics}), and a orthogonal good metric (\Cref{Definition Metric}) $\mathcal{R}$ such that for all $n\in \mathbb{Z}$, $\HomT{\mathcal{G}^n}{\mathcal{R}_i} = 0$ for all $i >> 0 $. Then, for any admissible category $\mathsf{A}$ of $\mathsf{T}^c$, we have that $\operatorname{Coprod}(\mathsf{A})\cap{\mathsf{T}^b_c}$ is a right admissible subcategory of $\mathsf{T}^b_c$, see \Cref{Notation from Neeman} and \Cref{Definition closure of compacts}. 
\end{theorem}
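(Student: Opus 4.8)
The plan is to realise $\operatorname{Coprod}(\mathsf{A})\cap \mathsf{T}^b_c$ as the second term of a semiorthogonal decomposition of $\mathsf{T}^b_c$, obtained by restricting a single recollement of $\mathsf{T}$ to the subcategory $\mathsf{T}^b_c=\overline{\mathsf{T}^c}\cap\mathcal{G}^{\perp}$ (see \Cref{Definition closure of compacts}). The recollement of $\mathsf{T}$ comes from \Cref{Theorem localisation on Tc gives recollement on T}; the fact that it restricts to $\overline{\mathsf{T}^c}$ is \Cref{Theorem extending localisation sequence from Tc to its closure}, and the fact that it restricts to $\mathcal{G}^{\perp}$ is (the proof of) \Cref{Corollary admissable subcategories perpendicular}. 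Since $\mathsf{T}^b_c$ is the intersection of these two subcategories, the two restrictions will combine.

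First I would fix the recollement. As $\mathsf{A}$ is admissible in $\mathsf{T}^c$ we have ${}^{\perp}\mathsf{A}=\mathsf{A}^{\perp}$ inside $\mathsf{T}^c$; call this $\mathsf{B}$, so that $\langle \mathsf{A},\mathsf{B}\rangle$ is a semiorthogonal decomposition of $\mathsf{T}^c$ and $\mathsf{A}\xrightarrow{i_*}\mathsf{T}^c\xrightarrow{j^*}\mathsf{B}$ is a localisation sequence. By \Cref{Theorem localisation on Tc gives recollement on T} this extends to a recollement of $\mathsf{T}$ with $\operatorname{Coprod}(\mathsf{A})$ on the left and $\operatorname{Coprod}(\mathsf{B})$ on the right; in particular $\operatorname{Coprod}(\mathsf{A})$ is right admissible in $\mathsf{T}$, and every $F\in\mathsf{T}$ sits in a truncation triangle $i_*i^!F\to F\to j_*j^*F\to\Sigma i_*i^!F$ with $i_*i^!F\in\operatorname{Coprod}(\mathsf{A})$ and $j_*j^*F\in\operatorname{Coprod}(\mathsf{A})^{\perp}$ (the latter being the essential image of $j_*$, identified in \Cref{Theorem localisation on Tc gives recollement on T}).

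Next I would show this triangle stays inside $\mathsf{T}^b_c$ once $F\in\mathsf{T}^b_c$. Applying \Cref{Theorem extending localisation sequence from Tc to its closure} to $\mathsf{A}\to\mathsf{T}^c\to\mathsf{B}$ (its hypotheses hold: $\mathcal{G}$ is a generating sequence, $\HomT{\mathcal{G}^n}{\mathcal{R}_i}=0$ for $i\gg 0$ is assumed, and $\mathcal{R}\cap\mathsf{T}^c$ is compressed, which is where the $\mathbb{N}$-compressedness of $\mathcal{G}$ is used) gives that $i_*i^!F, j_*j^*F\in\overline{\mathsf{T}^c}$ whenever $F\in\overline{\mathsf{T}^c}$. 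The proof of \Cref{Corollary admissable subcategories perpendicular}, which passes through \Cref{Corollary Bondarko type result} with $R=\mathbb{Z}$ and $\mathcal{S}$ the sequences of abelian groups vanishing in large index, shows that the \emph{same} recollement restricts to $\mathcal{G}^{\perp}$, so $i_*i^!F, j_*j^*F\in\mathcal{G}^{\perp}$ whenever $F\in\mathcal{G}^{\perp}$. Intersecting, for $F\in\mathsf{T}^b_c$ the whole triangle lies in $\mathsf{T}^b_c$. Now $\mathsf{T}^b_c$ is a triangulated subcategory of $\mathsf{T}$ ($\overline{\mathsf{T}^c}$ is triangulated by the argument in \Cref{Lemma approximating sequence for closure of compacts}, using $\HomT{\mathcal{G}^n}{\mathcal{R}_i}=0$, and $\mathcal{G}^{\perp}$ is a union of extension- and shift-closed subcategories), hence so is $\operatorname{Coprod}(\mathsf{A})\cap\mathsf{T}^b_c$. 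For $F\in\mathsf{T}^b_c$ the triangle has $i_*i^!F\in\operatorname{Coprod}(\mathsf{A})\cap\mathsf{T}^b_c$ and $j_*j^*F\in\operatorname{Coprod}(\mathsf{A})^{\perp}\cap\mathsf{T}^b_c\subseteq(\operatorname{Coprod}(\mathsf{A})\cap\mathsf{T}^b_c)^{\perp}_{\mathsf{T}^b_c}$, so every object of $\mathsf{T}^b_c$ is an extension of an object of $(\operatorname{Coprod}(\mathsf{A})\cap\mathsf{T}^b_c)^{\perp}_{\mathsf{T}^b_c}$ by an object of $\operatorname{Coprod}(\mathsf{A})\cap\mathsf{T}^b_c$; together with the defining $\operatorname{Hom}$-vanishing of the relative orthogonal (see \Cref{Notation for relative orthogonal}), this says $\langle(\operatorname{Coprod}(\mathsf{A})\cap\mathsf{T}^b_c)^{\perp}_{\mathsf{T}^b_c},\,\operatorname{Coprod}(\mathsf{A})\cap\mathsf{T}^b_c\rangle$ is a semiorthogonal decomposition of $\mathsf{T}^b_c$, whence $\operatorname{Coprod}(\mathsf{A})\cap\mathsf{T}^b_c$ is right admissible in $\mathsf{T}^b_c$.

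The step I expect to be most delicate is not a single deduction but the compatibility bookkeeping: one must make sure that \Cref{Theorem extending localisation sequence from Tc to its closure} and \Cref{Corollary admissable subcategories perpendicular} are invoked for literally the same recollement of $\mathsf{T}$, so that ``the truncation triangle restricts'' refers to one and the same triangle in both cases, and one must confirm that the compressedness hypothesis of \Cref{Theorem extending localisation sequence from Tc to its closure} is genuinely available here --- this is exactly the point at which the $\mathbb{N}$-compressedness of $\mathcal{G}$ (rather than mere finiteness) is needed, via the observation, as in the proof of \Cref{Theorem Examples of Compressed metrics}, that the coreflection endofunctor $i_*i^!$ of $\mathsf{T}^c$ onto $\mathsf{A}$ is a compression functor for $\mathcal{R}\cap\mathsf{T}^c$. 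Everything else is formal, given the machinery already set up in \Cref{Section Main Results}.
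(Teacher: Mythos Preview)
Your overall strategy is the same as the paper's: show that the truncation triangle for the aisle $\operatorname{Coprod}(\mathsf{A})\subseteq\mathsf{T}$ restricts both to $\overline{\mathsf{T}^c}$ (via \Cref{Theorem extending localisation sequence from Tc to its closure}) and to $\mathcal{G}^{\perp}$ (via \Cref{Corollary admissable subcategories perpendicular}), hence to their intersection $\mathsf{T}^b_c$.

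There is, however, a genuine error in your setup. For an admissible subcategory $\mathsf{A}\subseteq\mathsf{T}^c$ it is \emph{not} true that ${}^{\perp}\mathsf{A}=\mathsf{A}^{\perp}$ as subcategories of $\mathsf{T}^c$; they are only abstractly equivalent, both being identified with the Verdier quotient $\mathsf{T}^c/\mathsf{A}$. (Already for $\mathsf{T}^c=\mathbf{D}^b(\mathbb{P}^1)$ and $\mathsf{A}=\langle\mathcal{O}\rangle$ one has ${}^{\perp}\mathsf{A}=\langle\mathcal{O}(1)\rangle$ while $\mathsf{A}^{\perp}=\langle\mathcal{O}(-1)\rangle$.) Your single $\mathsf{B}$ therefore cannot play both roles at once: $\langle\mathsf{A},\mathsf{B}\rangle$ is a semiorthogonal decomposition only when $\mathsf{B}={}^{\perp}\mathsf{A}$, whereas $\mathsf{A}\xrightarrow{i_*}\mathsf{T}^c\xrightarrow{j^*}\mathsf{B}$ is a localisation sequence only when $\mathsf{B}=\mathsf{A}^{\perp}_{\mathsf{T}^c}$.

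The repair, which is exactly what the paper does, is to invoke the two ingredients with the two \emph{different} decompositions that admissibility of $\mathsf{A}$ provides. Apply \Cref{Corollary admissable subcategories perpendicular} to the semiorthogonal decomposition $\langle\mathsf{A},{}^{\perp}\mathsf{A}\rangle$ to obtain that $\operatorname{Coprod}(\mathsf{A})\cap\mathcal{G}^{\perp}$ is right admissible in $\mathcal{G}^{\perp}$; and apply \Cref{Theorem extending localisation sequence from Tc to its closure} to the localisation sequence $\mathsf{A}\to\mathsf{T}^c\to\mathsf{A}^{\perp}_{\mathsf{T}^c}$ to obtain that $\operatorname{Coprod}(\mathsf{A})\cap\overline{\mathsf{T}^c}$ is right admissible in $\overline{\mathsf{T}^c}$. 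The compatibility you rightly flagged as the delicate point then amounts to observing that both right-admissible structures come from one and the same truncation on $\mathsf{T}$, namely the triangle $i_*i^!F\to F\to j_*j^*F$ associated to the aisle $\operatorname{Coprod}(\mathsf{A})$; this is the localisation half of the recollement of \Cref{Theorem localisation on Tc gives recollement on T}, and it does not depend on which of the two semiorthogonal decompositions of $\mathsf{T}^c$ one started from. With this correction your argument goes through and coincides with the paper's.
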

\begin{proof}
    As $\mathsf{A}$ is admissible, we get that $\mathsf{B} = {}^{\perp}\mathsf{A}_{\mathsf{T}^c}$ is right admissible, see \Cref{Notation for relative orthogonal}. 
    So, by \Cref{Corollary admissable subcategories perpendicular}, $\mathsf{B}^{\perp}_{\mathcal{G}^{\perp}} = \operatorname{Coprod}(\mathsf{A})\cap \mathcal{G}^{\perp}$ is a right admissible subcategory of $\mathcal{G}^{\perp}$. As $\mathsf{A}$ is admissible, we have that $\operatorname{Coprod}(\mathsf{A})\cap \overline{\mathsf{T}^c}$ is also admissible by \Cref{Theorem extending localisation sequence from Tc to its closure}. As these are compatible, we get that $\operatorname{Coprod}(\mathsf{A})\cap{\mathsf{T}^b_c}$ is a right admissible subcategory of $\mathsf{T}^b_c$.
\end{proof}

\begin{remark}\label{Remark on paper by Sun and Zhang}
    We should remark here that results similar \Cref{Theorem admissible categories and T^b_c} have appeared in the literature, in particular \cite[Theorem 1.2]{Sun/Zhang:2021}. To compare the results, note that an admissible subcategory gives a recollement, and that $\mathsf{T}^b_c$ is the completion of $\mathsf{T}^c$ if for example $\mathsf{T}$ is weakly $\mathcal{G}$-approximable by 
    \cite[Corollary 8.4]{ManaliRahul:2025}. The result in \cite{Sun/Zhang:2021} has much weaker conditions on the category, but assumes that all the functors involved in the recollement are compression functors, which is not obvious if we are in the setting of  \Cref{Theorem admissible categories and T^b_c}. Further, \Cref{Theorem admissible categories and T^b_c} gives a very concrete description of the new right admissible subcategory we get, which again is not obvious to get from the result in \cite{Sun/Zhang:2021}. In any case, the reader is encouraged to look at the general result in \cite{Sun/Zhang:2021}, and make their own comparisons.
\end{remark}

As immediate corollaries, we get the following results.

\begin{corollary}\label{Corollary admissible subcategories from Tc to Tbc with pre-approximability}
     Let $\mathsf{T}$ be a  triangulated category with a single compact generator $G$ such that $\HomT{G}{\Sigma^{n}G} = 0$ for all $n >> 0$. We equip it with the $\mathbb{N}$-compressed generating sequence (\Cref{Definition compressed metrics}) $\mathcal{G}$ given by $\mathcal{G}^n = \{\Sigma^{-n}G\}$ for all $n \in \mathbb{Z}$, see \Cref{Theorem Examples of Compressed metrics}. Then, for any admissible category $\mathsf{A}$ of $\mathsf{T}^c$, we have that $\operatorname{Coprod}(\mathsf{A})\cap{\mathsf{T}^b_c}$ (see \Cref{Notation from Neeman}) is a right admissible subcategory of $\mathsf{T}^b_c$ (\Cref{Definition closure of compacts}).
\end{corollary}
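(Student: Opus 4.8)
The plan is to derive this as a direct specialization of \Cref{Theorem admissible categories and T^b_c}; essentially all that is required is to check that the data $(\mathsf{T}, G, \mathcal{G})$ of the corollary fits the hypotheses of that theorem, with the correct choice of orthogonal metric. So there are three things to line up: that $\mathcal{G}^n = \{\Sigma^{-n}G\}$ is an $\mathbb{N}$-compressed generating sequence, that there is an orthogonal good metric $\mathcal{R}$ on $\mathsf{T}$ with $\HomT{\mathcal{G}^n}{\mathcal{R}_i} = 0$ for $i \gg 0$, and that the symbol $\mathsf{T}^b_c$ in the conclusion is computed with respect to this same $\mathcal{G}$ and $\mathcal{R}$.

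First I would note that, since $G$ is a compact generator of $\mathsf{T}$, we have $\mathsf{T}^c = \langle G \rangle$, so $G$ is a classical generator of $\mathsf{T}^c$ and \Cref{Theorem Examples of Compressed metrics} applies: the generating sequence $\mathcal{G}^n = \{\Sigma^{-n}G\}$ is $\mathbb{N}$-compressed (and in particular finite). For the metric, I would take $\mathcal{R} = \mathcal{R}^{\mathcal{G}}$, which by \Cref{Example generating sequence approx/co-approx} is the orthogonal metric $\{\mathsf{T}_G^{\leq -i}\}_{i \in \mathbb{Z}}$ attached to the t-structure generated by $G$. This is precisely the default metric with which $\mathsf{T}^b_c$ is defined for this generating sequence --- see \Cref{Convention closure of compacts for approx/co-approx} and the closing remark of \Cref{Definition closure of compacts} --- so the $\mathsf{T}^b_c$ appearing in the hypotheses we verify is literally the one in the conclusion.

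The only computational point is the Hom-vanishing $\HomT{\mathcal{G}^n}{\mathcal{R}_i} = 0$ for $i \gg 0$, and this is exactly the computation already performed in the proof of \Cref{Corollary admissible categories from Tc to T-c}: we have $\mathcal{R}^{\mathcal{G}}_i = \mathsf{T}_G^{\leq -i} = \overline{\langle G \rangle}^{(-\infty,-i]} = \operatorname{Coprod}(\{\Sigma^k G : k \geq i\})$ (see \Cref{Definition t-structure} and \Cref{Notation from Neeman}); since $\Sigma^{-n}G$ is compact, $\HomT{\Sigma^{-n}G}{-}$ is homological and commutes with coproducts, so the vanishing reduces to $\HomT{\Sigma^{-n}G}{\Sigma^k G} = \HomT{G}{\Sigma^{k+n}G} = 0$ for all $k \geq i$, which holds once $i$ is large enough by the hypothesis $\HomT{G}{\Sigma^m G} = 0$ for $m \gg 0$. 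With all hypotheses in place, \Cref{Theorem admissible categories and T^b_c} gives that $\operatorname{Coprod}(\mathsf{A}) \cap \mathsf{T}^b_c$ is right admissible in $\mathsf{T}^b_c$, finishing the argument. I do not anticipate a genuine obstacle here; the only thing demanding care is the bookkeeping of which metric is implicit in the notation $\mathsf{T}^b_c$, so that the invoked theorem outputs verbatim the statement to be proved.
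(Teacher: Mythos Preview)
Your proposal is correct and follows exactly the paper's approach: the paper's proof is the single line ``This is immediate from \Cref{Theorem admissible categories and T^b_c},'' and you have simply spelled out the verification of that theorem's hypotheses (the $\mathbb{N}$-compressedness of $\mathcal{G}$ via \Cref{Theorem Examples of Compressed metrics}, the choice $\mathcal{R} = \mathcal{R}^{\mathcal{G}}$, and the Hom-vanishing from compactness of $G$ together with the assumed vanishing of $\HomT{G}{\Sigma^m G}$ for $m \gg 0$).
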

\begin{proof}
    This is immediate from \Cref{Theorem admissible categories and T^b_c}.
\end{proof}

\begin{corollary}\label{Corollary admissible subcategories from Tc to Tbc with co-approximability}
    Let $\mathsf{T}$ be a  triangulated category with a single compact generator $G$ such that $\HomT{G}{\Sigma^{n}G} = 0$ for all $n << 0$. We equip it with the $\mathbb{N}$-compressed generating sequence (\Cref{Definition compressed metrics}) $\mathcal{G}$ given by $\mathcal{G}^n = \{\Sigma^n G\}$ for all $n \in \mathbb{Z}$, see \Cref{Theorem Examples of Compressed metrics}. Then, for any admissible category $\mathsf{A}$ of $\mathsf{T}^c$, we have that $\operatorname{Coprod}(\mathsf{A})\cap{\mathsf{T}^b_c}$ (see \Cref{Notation from Neeman}) is a right admissible subcategory of $\mathsf{T}^b_c$ (\Cref{Definition closure of compacts}).
\end{corollary}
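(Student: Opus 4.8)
The plan is to recognize this corollary as the direct specialization of \Cref{Theorem admissible categories and T^b_c} obtained from the co-t-structure side of the constructions, exactly as \Cref{Corollary admissible subcategories from Tc to Tbc with pre-approximability} is the specialization coming from the t-structure side. Thus the only real work is to verify the two hypotheses of \Cref{Theorem admissible categories and T^b_c}: that $\mathcal{G}$ is an $\mathbb{N}$-compressed generating sequence, and that there is an orthogonal good metric $\mathcal{R}$ on $\mathsf{T}$ with $\HomT{\mathcal{G}^n}{\mathcal{R}_i} = 0$ for $i \gg 0$ and each fixed $n$.

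For the first point, I would cite the second bullet of \Cref{Theorem Examples of Compressed metrics}: since $G$ is a compact generator of $\mathsf{T}$ we have $\mathsf{T}^c = \langle G \rangle$ (see \Cref{Notation from Neeman}), so the finite generating sequence $\mathcal{G}^n = \{\Sigma^n G\}$ is indeed $\mathbb{N}$-compressed. For the second point, I would take $\mathcal{R} = \mathcal{R}^{\mathcal{G}}$, the orthogonal metric attached to $\mathcal{G}$, which is also the default metric with respect to which $\mathsf{T}^b_c$ is computed here (see the remark at the end of \Cref{Definition closure of compacts}); by \Cref{Example generating sequence approx/co-approx} this is $\mathcal{R}^{\mathcal{G}}_i = \Sigma^{-i}\mathsf{U}_G$, where $(\mathsf{U}_G,\mathsf{V}_G)$ is the co-t-structure generated by $G$. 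The key input is \cite[Theorem 2.3.4]{Bondarko:2022}, giving $\mathcal{R}^{\mathcal{G}}_i = \Sigma^{-i}\mathsf{U}_G \subseteq \overline{\langle G \rangle}^{[i-1,\infty)}$. Since $G$ is compact, $\HomT{\Sigma^n G}{-}$ commutes with coproducts and respects summands, so it is enough to observe that for $j \geq i-1$ one has $\HomT{\Sigma^n G}{\Sigma^{-j}G} \cong \HomT{G}{\Sigma^{-j-n}G}$, and the right-hand side vanishes once $i$ (hence $j$) is large, because $\HomT{G}{\Sigma^m G} = 0$ for $m \ll 0$ by hypothesis. This is the same vanishing computation already carried out in the proof of \Cref{Corollary admissible categories from Tc to T+c}.

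Having checked both hypotheses, \Cref{Theorem admissible categories and T^b_c} applies verbatim and yields that $\operatorname{Coprod}(\mathsf{A}) \cap \mathsf{T}^b_c$ is a right admissible subcategory of $\mathsf{T}^b_c$ for every admissible subcategory $\mathsf{A}$ of $\mathsf{T}^c$. I do not expect any genuine obstacle here: the argument is a mirror image of the preceding corollary, and the only step that is not purely formal is the use of Bondarko's bound on the co-t-structure generated by $G$ to make $\mathcal{R}^{\mathcal{G}}$ tractable — which is precisely why the vanishing hypothesis is imposed in the form $\HomT{G}{\Sigma^n G} = 0$ for $n \ll 0$ rather than for $n \gg 0$ as in the $\mathsf{T}^-_c$ case.
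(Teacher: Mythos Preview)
Your proposal is correct and follows exactly the intended route: the paper's own proof is the single line ``This is immediate from \Cref{Theorem admissible categories and T^b_c}'', and your argument supplies precisely the verification of its hypotheses, mirroring the detailed checks given in the proof of \Cref{Corollary admissible categories from Tc to T+c}. The use of \cite[Theorem 2.3.4]{Bondarko:2022} to control $\mathcal{R}^{\mathcal{G}}_i$ and the compactness of $G$ to pass the vanishing through $\operatorname{Coprod}$ and $\operatorname{smd}$ are exactly the right ingredients.
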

\begin{proof}
     This is immediate from \Cref{Theorem admissible categories and T^b_c}.
\end{proof}

\section{Examples of co-approximability}\label{Section examples of co-approximability}
In this section we will give some examples of weakly co-approximable and weakly co-quasiapproximable triangulated categories coming from algebraic geometry. The main source of examples for (weak) co-approximability is the homotopy category of injectives for a nice enough noetherian scheme algebra, or stack. As mentioned in the introduction
, this will help us compute the closure of compacts for the homotopy category of injectives, which we will do in \Cref{Section Examples from Algebraic Geometry}.

We begin in a more general setting. Recall that a locally noetherian Grothendieck abelian category is a Grothendieck category $\mathcal{C}$, which has a set of noetherian object $\mathcal {D}$ which generate it, that is, every object is a quotient of coproduct of objects in $\mathcal{D}$. We will denote the full subcategory of noetherian objects by $\operatorname{noeth} \mathcal{C}$. The category we will be interested in is the homotopy category of injectives $\mathbf{K}(\operatorname{Inj} \mathcal{C})$. By \cite[Proposition 2.3]{Krause:2004}, $\mathbf{K}(\operatorname{Inj} \mathcal{C})$ is compactly generated, and the full subcategory of compacts is equivalent canonically to the bounded derived category of noetherian objects $\mathbf{D}^b(\operatorname{noeth} \mathcal{C})$.

\begin{notation}
    Let $\mathcal{C}$ be a locally noetherian Grothendieck abelian category. Then, 
    \begin{itemize}
        \item $(\mathbf{D}(\mathcal{C})^{\leq 0},\mathbf{D}(\mathcal{C})^{\geq 0})$ denotes the standard t-structure on $\mathbf{D}(\mathcal{C})$, where the truncation triangles are given by the soft or the canonical truncation of complexes.
        \item $(\mathbf{K}(\operatorname{Inj} \mathcal{C})^{\geq 0},\mathbf{K}(\operatorname{Inj} \mathcal{C})^{\leq 0})$ denotes the standard co-t-structure on $\mathbf{K}(\operatorname{Inj} \mathcal{C})$, where the truncation triangles are given by the hard or the brutal truncation of complexes.
        
    \end{itemize}
\end{notation}
For a noetherian scheme $X$, we will work with the locally noetherian Grothendieck abelian category of quasicoherent sheaves $\operatorname{Qcoh} X$.

We start with a couple of easy and well known lemmas.

\begin{lemma}\label{Lemma coaisle of derived category and "aisle" of homotopy category}
    Let $\mathcal{C}$ be a locally noetherian Grothendieck abelian category and we denote by $p : \mathbf{K}(\operatorname{Inj} \mathcal{C}) \to \mathbf{D}(\mathcal{C})$ the Verdier quotient map, see \cite[Proposition 3.6]{Krause:2004}. Then, this restricts to an equivalence $ p : \mathbf{K}(\operatorname{Inj} \mathcal{C})^{\geq 0} \to \mathbf{D}(\mathcal{C})^{\geq 0} $.
\end{lemma}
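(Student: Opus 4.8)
The plan is to show that $p$ sends $\mathbf{K}(\operatorname{Inj}\mathcal{C})^{\geq 0}$ into $\mathbf{D}(\mathcal{C})^{\geq 0}$ and that the resulting functor is fully faithful and essentially surjective. First I would record the description of $\mathbf{K}(\operatorname{Inj}\mathcal{C})^{\geq 0}$ implicit in the notation above: since its truncation triangles come from the brutal truncation, an object lies in $\mathbf{K}(\operatorname{Inj}\mathcal{C})^{\geq 0}$ exactly when it is homotopy equivalent to a complex of injectives concentrated in cohomological degrees $\geq 0$, and this is in turn equivalent to the vanishing $H^i(I) = 0$ for all $i < 0$. For that last equivalence I would use that a bounded-below complex of injectives in a Grothendieck category is K-injective, so that the quasi-isomorphism $\tau^{\geq 0}I \to I$ together with an injective resolution of $\tau^{\geq 0}I$ realises $I$, up to homotopy, by a complex of injectives supported in degrees $\geq 0$. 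Since $p$ leaves cohomology unchanged, it visibly carries $\mathbf{K}(\operatorname{Inj}\mathcal{C})^{\geq 0}$ into $\mathbf{D}(\mathcal{C})^{\geq 0}$.

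For full faithfulness I would argue as follows. Given $I, J \in \mathbf{K}(\operatorname{Inj}\mathcal{C})^{\geq 0}$, replace $J$ by a homotopy-equivalent complex concentrated in degrees $\geq 0$; then $J$ is a bounded-below complex of injectives, hence K-injective, so the canonical map $\operatorname{Hom}_{\mathbf{K}(\mathcal{C})}(I, J) \to \operatorname{Hom}_{\mathbf{D}(\mathcal{C})}(I, J)$ is an isomorphism. The left-hand group equals $\operatorname{Hom}_{\mathbf{K}(\operatorname{Inj}\mathcal{C})}(I, J)$ because $\mathbf{K}(\operatorname{Inj}\mathcal{C})$ is a full subcategory of $\mathbf{K}(\mathcal{C})$, and the right-hand group equals $\operatorname{Hom}_{\mathbf{D}(\mathcal{C})}(pI, pJ)$ because $p$ acts as the identity on underlying complexes; hence $p$ is fully faithful on $\mathbf{K}(\operatorname{Inj}\mathcal{C})^{\geq 0}$.

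For essential surjectivity I would take $M \in \mathbf{D}(\mathcal{C})^{\geq 0}$, replace it by its canonical truncation $\tau^{\geq 0}M$ (isomorphic to $M$ in $\mathbf{D}(\mathcal{C})$) so that $M$ is a complex concentrated in degrees $\geq 0$, and then choose an injective resolution $M \to I$ with $I$ a complex of injectives concentrated in degrees $\geq 0$, available since $\mathcal{C}$ has enough injectives. Then $I \in \mathbf{K}(\operatorname{Inj}\mathcal{C})^{\geq 0}$ and $p(I) \cong M$ in $\mathbf{D}(\mathcal{C})$. Combined with the previous step, this shows that $p$ restricts to an equivalence $\mathbf{K}(\operatorname{Inj}\mathcal{C})^{\geq 0} \xrightarrow{\sim} \mathbf{D}(\mathcal{C})^{\geq 0}$.

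The only step that requires genuine care rather than bookkeeping is pinning down $\mathbf{K}(\operatorname{Inj}\mathcal{C})^{\geq 0}$ as the complexes of injectives with no cohomology in negative degrees, which rests on the K-injectivity of bounded-below complexes of injectives; once that is in hand everything reduces to the standard comparison between $\mathbf{K}$ and $\mathbf{D}$. If one prefers to avoid even this, an alternative is to invoke the right adjoint $q$ of $p$ (the injective-resolution functor of Krause), check directly that $q$ sends $\mathbf{D}(\mathcal{C})^{\geq 0}$ into $\mathbf{K}(\operatorname{Inj}\mathcal{C})^{\geq 0}$, and note that the unit and counit of the adjunction are isomorphisms on these subcategories.
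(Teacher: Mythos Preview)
Your argument is correct and proceeds along the same lines as the paper's one-line proof, which simply invokes the isomorphism $\operatorname{Hom}_{\mathbf{K}(\operatorname{Inj}\mathcal{C})}(E,F) \cong \operatorname{Hom}_{\mathbf{D}(\mathcal{C})}(E,F)$ for $F$ a bounded-below complex of injectives; you have spelled out full faithfulness and essential surjectivity explicitly rather than leaving them implicit.

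One correction, though it does not affect the proof: the equivalence you assert in the first paragraph, that membership in $\mathbf{K}(\operatorname{Inj}\mathcal{C})^{\geq 0}$ is equivalent to $H^i(I) = 0$ for all $i < 0$, is false in general. Any non-contractible acyclic complex of injectives has vanishing cohomology everywhere yet is not homotopy equivalent to a complex of injectives concentrated in degrees $\geq 0$, since a bounded-below acyclic complex of injectives is necessarily contractible. Your sketch of this reverse implication breaks down because the quasi-isomorphism $\tau^{\geq 0}I \to I$ need not lift to a homotopy equivalence when $I$ is not itself K-injective. Fortunately you never use this direction: the remainder of your argument only requires that objects of $\mathbf{K}(\operatorname{Inj}\mathcal{C})^{\geq 0}$ have no negative cohomology, which is immediate.
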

\begin{proof}
    This follows trivially as $\Hom{\mathbf{K}(\operatorname{Inj} \mathcal{C})}{E}{F} \cong \Hom{\mathbf{D}(\mathcal{C})}{E}{F}$ if $F$ is a bounded below complex of injective objects, and $E$ is an arbitrary complex of injective objects, see \cite[Tag 05TG]{StacksProject}.
\end{proof}
\begin{remark}
    In particular \Cref{Lemma coaisle of derived category and "aisle" of homotopy category} implies that for any $G \in \mathbf{D}^{b}(\operatorname{noeth} \mathcal{C})$, the subcategories $\overline{\langle G \rangle}^{[A,B]}_C$ (see \Cref{Notation from Neeman}) for any integers $A,B$ and positive integer $C$ remain the same viewed in either $\mathbf{K}(\operatorname{Inj} \mathcal{C})$ or $\mathbf{D}(\mathcal{C})$, that is they are equivalent under the functor $p$. We will confuse these equivalent categories freely throughout the rest of this document.
\end{remark}
\begin{lemma}\label{Lemma similar to Lemma 2.1 Krause:2004}\cite[Lemma 2.1]{Krause:2004}
    Let $\mathcal{C}$ be a locally noetherian Grothendieck abelian category. Let $F$ be a bounded below complex, and $I$ an arbitrary complex of injective sheaves. Then, $\Hom{\mathbf{K}(\mathcal{C})}{F}{I} \cong \Hom{\mathbf{K}(\operatorname{Inj} \mathcal{C})}{I_F}{I}$, where $I_F$ is an injective resolution of $F$.
\end{lemma}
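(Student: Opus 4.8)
The plan is to reduce the statement to the single fact that the injective resolution map $q\colon F\to I_F$ induces an isomorphism on $\Hom{\mathbf{K}(\mathcal{C})}{-}{I}$. First observe that $\mathbf{K}(\operatorname{Inj}\mathcal{C})$ is, by definition, the full subcategory of $\mathbf{K}(\mathcal{C})$ on complexes of injectives, so $\Hom{\mathbf{K}(\operatorname{Inj}\mathcal{C})}{I_F}{I}=\Hom{\mathbf{K}(\mathcal{C})}{I_F}{I}$ and it suffices to show that precomposition with $q$ is a bijection $\Hom{\mathbf{K}(\mathcal{C})}{I_F}{I}\to\Hom{\mathbf{K}(\mathcal{C})}{F}{I}$. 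Since $\mathcal{C}$ is a Grothendieck category it has enough injectives, and as $F$ is bounded below we may take $I_F$ bounded below as well; then $C\colonequals\operatorname{Cone}(q)$ is a bounded-below \emph{acyclic} complex, and $q$ fits into the triangle $\Sigma^{-1}C\to F\xrightarrow{q} I_F\to C$.

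The key lemma I would isolate and prove is: if $A$ is a bounded-below acyclic complex and $I$ is an arbitrary complex of injective objects of $\mathcal{C}$, then $\Hom{\mathbf{K}(\mathcal{C})}{A}{I}=0$. I would prove this by showing that every chain map $f\colon A\to I$ is null-homotopic, constructing a contracting homotopy $s^n\colon A^n\to I^{n-1}$ by induction on $n$: for $n$ below the (bounded-below) support of $A$ set $s^n=0$; given $s^k$ for $k\le n$ with $f^k=d_I^{k-1}s^k+s^{k+1}d_A^k$ for $k<n$, one checks, using the chain-map relation for $f$ together with the homotopy relation already established in degree $n-1$, that $f^n-d_I^{n-1}s^n$ vanishes on $\ker d_A^n=\operatorname{im} d_A^{n-1}$ (this is precisely where acyclicity of $A$ enters), hence factors through $\operatorname{im} d_A^n\hookrightarrow A^{n+1}$; injectivity of $I^n$ then allows one to extend along this monomorphism to define $s^{n+1}$.

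Granting the key lemma, apply it to $A=C$ and to $A=\Sigma^{-1}C$, both of which are bounded below and acyclic. Applying $\Hom{\mathbf{K}(\mathcal{C})}{-}{I}$ to the triangle above yields the exact sequence $\Hom{\mathbf{K}(\mathcal{C})}{C}{I}\to\Hom{\mathbf{K}(\mathcal{C})}{I_F}{I}\to\Hom{\mathbf{K}(\mathcal{C})}{F}{I}\to\Hom{\mathbf{K}(\mathcal{C})}{\Sigma^{-1}C}{I}$ with both outer terms zero, so $q^\ast$ is an isomorphism; together with the first paragraph this is exactly the assertion. The main obstacle is the homotopy construction in the key lemma, in particular stating the induction hypothesis precisely enough that the factorization step goes through cleanly; but this is a standard diagram chase, and one could equally well simply cite \cite[Lemma 2.1]{Krause:2004} or the corresponding statement in \cite{StacksProject}.
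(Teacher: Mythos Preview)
Your proposal is correct and follows essentially the same approach as the paper: both reduce to showing that $\Hom_{\mathbf{K}(\mathcal{C})}$ from the acyclic cone of $F\to I_F$ (and its shift) into $I$ vanishes, then conclude via the long exact sequence of the triangle. The only cosmetic difference is that the paper cites \cite[Tag 013R]{StacksProject} for the vanishing of $\Hom$ from an acyclic complex into a \emph{bounded-below} complex of injectives and then uses the bounded-below-ness of the cone to replace $I$ by a brutal truncation $\sigma^{\geq N}I$, whereas you prove directly that $\Hom$ from a \emph{bounded-below} acyclic complex into an arbitrary complex of injectives vanishes by constructing the null-homotopy inductively; these are the same argument with the roles of ``which side is bounded below'' swapped.
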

\begin{proof}
     We complete the map that we get from the injective resolution to a triangle $F \to I_F \to \operatorname{Cone}(F \to I_F) \to \Sigma F$ in $\mathbf{K}(\mathcal{C})$. Note that $\operatorname{Cone}(F \to I_F)$ is an acyclic complex. So, $\Hom{}{\operatorname{Cone}(F \to I_F)}{\sigma^{\geq n}I} = 0$ by \cite[Tag 013R]{StacksProject} for any $n \in \mathbb{Z}$, where $\sigma^{\geq n}I$ denote the brutal truncation of $I$. But, as $F$ is a bounded below complex, there exists an integer $N$ such that $F^i = 0$ for all $i \leq N$. So,
     \[\Hom{}{\Sigma^n\operatorname{Cone}(F \to I_F)}{I} = \Hom{}{\operatorname{Cone}(F \to I_F)}{\sigma^{\geq N-2}I} = 0\]
     And so, $\Hom{\mathbf{K}(\mathcal{C})}{F}{I} \cong \Hom{\mathbf{K}(\operatorname{Inj}\mathcal{C})}{I_F}{I}$
\end{proof}

We now come to a result on weak co-approximability.

\begin{proposition}\label{Proposition weak co-approx for Kinj}
    Let $\mathcal{C}$ be a locally noetherian Grothendieck abelian category. Suppose there exists an object $\hat{G} \in \mathbf{D}^b(\operatorname{noeth}(\mathcal{C}))$ and a positive integer $N$ such that,
    \[\mathbf{D}(\mathcal{C})^{\geq 0} = \overline{\langle \hat{G} \rangle}^{[-N,N]} \star \mathbf{D}(\mathcal{C})^{\geq 1}\] see \Cref{Notation from Neeman}.
    Then, 
    \begin{enumerate}[label=(\roman*)]
        \item $\hat{G}$ is a compact generator for $\mathbf{K}(\operatorname{Inj} \mathcal{C})$.
        \item $\mathbf{K}(\operatorname{Inj} \mathcal{C})$ is weakly co-approximable.
        \item The co-t-structure $(\mathbf{K}(\operatorname{Inj} \mathcal{C})^{\geq 0},\mathbf{K}(\operatorname{Inj} \mathcal{C})^{\leq 0})$ lies in the preferred quasiequivalence class, see \Cref{Definition preferred equivalence class}.
    \end{enumerate}   
\end{proposition}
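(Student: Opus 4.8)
The plan is to prove (i), (ii), (iii) in that order, with (iii) following formally from (ii) via \Cref{Lemma Coapprox implies preferred equivaence class}. The two transfer tools I will use repeatedly are the remark following \Cref{Lemma coaisle of derived category and "aisle" of homotopy category} (which lets me move the subcategories $\overline{\langle \hat G\rangle}^{[a,b]}$ freely between $\mathbf{D}(\mathcal{C})$ and $\mathbf{K}(\operatorname{Inj}\mathcal{C})$) and the fact that bounded-below complexes of injectives are $K$-injective, so that $\operatorname{Hom}$-groups out of and into such complexes are the same whether computed in $\mathbf{K}(\operatorname{Inj}\mathcal{C})$ or $\mathbf{D}(\mathcal{C})$ (cf.\ \Cref{Lemma similar to Lemma 2.1 Krause:2004}). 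For (i): since $\mathbf{K}(\operatorname{Inj}\mathcal{C})^{c}=\mathbf{D}^b(\operatorname{noeth}\mathcal{C})\ni\hat G$, the object $\hat G$ is compact, so it remains to show that the localising subcategory $\overline{\langle\hat G\rangle}$ is all of $\mathbf{K}(\operatorname{Inj}\mathcal{C})$, for which it suffices to check $\mathbf{D}^b(\operatorname{noeth}\mathcal{C})\subseteq\overline{\langle\hat G\rangle}$. Iterating the hypothesis ($\Sigma^{-1}$-shifting and composing) gives $\mathbf{D}(\mathcal{C})^{\geq 0}=\overline{\langle\hat G\rangle}^{[-N,N+n]}\star\mathbf{D}(\mathcal{C})^{\geq n+1}$ for all $n\geq 0$; given $F\in\mathbf{D}^b(\operatorname{noeth}\mathcal{C})$, shift it into $\mathbf{D}(\mathcal{C})^{\geq 0}$ and choose $n$ with $F\in\mathbf{D}(\mathcal{C})^{\leq n}$, so that in the triangle $E\to F\to D\to\Sigma E$ with $E\in\overline{\langle\hat G\rangle}^{[-N,N+n]}$ and $D\in\mathbf{D}(\mathcal{C})^{\geq n+1}$ one has $\operatorname{Hom}_{\mathbf{D}(\mathcal{C})}(F,D)=0$; the triangle splits, exhibiting $F$ as a direct summand of $E$, and lifting $E$ to $\mathbf{K}(\operatorname{Inj}\mathcal{C})$ and the splitting along $p$ (using $K$-injectivity of $E$ and of the injective resolution $I_F$) shows $F=I_F\in\overline{\langle\hat G\rangle}$.

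For (ii) I take $\hat G$ as compact generator and the standard co-t-structure $(\mathsf{U},\mathsf{V})=(\mathbf{K}(\operatorname{Inj}\mathcal{C})^{\geq 0},\mathbf{K}(\operatorname{Inj}\mathcal{C})^{\leq 0})$ coming from brutal truncation; the two preconditions of \Cref{Definition Co-approx}, namely $\operatorname{Hom}_{\mathbf{K}(\operatorname{Inj}\mathcal{C})}(\hat G,\Sigma^{-n}\hat G)=0$ and $\Sigma^{-n}\hat G\in\mathsf{U}$ for $n\gg 0$, are immediate from the cohomological boundedness of $\hat G$ and from boundedness-below of a chosen injective resolution of $\hat G$. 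Now let $F\in\mathsf{U}$; replacing $F$ by the injective resolution of $p(F)$ (legitimate since both are $K$-injective and represent the same object of $\mathbf{D}(\mathcal{C})$) I may assume $F$ is a bounded-below complex of injectives with $p(F)\in\mathbf{D}(\mathcal{C})^{\geq 0}$. The hypothesis supplies a triangle $E'\to p(F)\to D'\to\Sigma E'$ in $\mathbf{D}(\mathcal{C})$ with $E'\in\overline{\langle\hat G\rangle}^{[-N,N]}$ and $D'\in\mathbf{D}(\mathcal{C})^{\geq 1}$; I lift $E'$ to $\hat E\in\overline{\langle\hat G\rangle}^{[-N,N]}\subseteq\mathbf{K}(\operatorname{Inj}\mathcal{C})$ and the morphism $E'\to p(F)$ to $\hat E\to F$ (possible since $\hat E$ is bounded below and $F$ is $K$-injective), and complete to a triangle $\hat E\to F\to\hat D\to\Sigma\hat E$ in $\mathbf{K}(\operatorname{Inj}\mathcal{C})$. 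Applying $p$ gives $H^i(\hat D)=0$ for $i\leq 0$, and $\hat D=\operatorname{Cone}(\hat E\to F)$ is bounded below; a Gaussian-elimination argument (peeling off, in each lowest degree $\leq 0$, the injective term, which is a split monomorphism because the cohomology there vanishes) shows $\hat D$ is homotopy equivalent to a complex concentrated in degrees $\geq 1$, i.e.\ $\hat D\in\mathbf{K}(\operatorname{Inj}\mathcal{C})^{\geq 1}=\Sigma^{-1}\mathsf{U}$. Hence $\mathbf{K}(\operatorname{Inj}\mathcal{C})$ is weakly co-approximable, with constant $N$.

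Finally, (iii) is immediate: by (ii) the co-t-structure $(\mathbf{K}(\operatorname{Inj}\mathcal{C})^{\geq 0},\mathbf{K}(\operatorname{Inj}\mathcal{C})^{\leq 0})$ realises weak co-approximability, so \Cref{Lemma Coapprox implies preferred equivaence class} places it in the preferred quasiequivalence class. The step I expect to be the main obstacle is establishing that the cone $\hat D$ genuinely lies in $\Sigma^{-1}\mathsf{U}=\mathbf{K}(\operatorname{Inj}\mathcal{C})^{\geq 1}$, not merely that it has vanishing cohomology in degrees $\leq 0$ — an unbounded complex of injectives can have the latter without being coconnective for the brutal-truncation co-t-structure. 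This is precisely what forces the reduction to bounded-below representatives at the start of (ii) and makes the $K$-injectivity bookkeeping in matching $\operatorname{Hom}$-groups along $p$ indispensable.
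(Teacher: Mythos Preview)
Your proof is correct. The main difference from the paper lies in part (i): you show $\mathbf{D}^b(\operatorname{noeth}\mathcal{C})\subseteq\overline{\langle\hat G\rangle}$ by a splitting argument (iterate the hypothesis, then use $\operatorname{Hom}_{\mathbf{D}(\mathcal{C})}(F,D)=0$ to split the triangle and exhibit $F$ as a summand of $E$), which is shorter and more direct than the paper's approach. The paper instead builds, for every $F\in\mathbf{K}(\operatorname{Inj}\mathcal{C})^{\geq 0}$, an explicit tower $F_1\to F_2\to\cdots$ with $F_i\in\overline{\langle\hat G\rangle}^{[-N,\infty)}$ and $\operatorname{Cone}(F_i\to F)\in\mathbf{K}(\operatorname{Inj}\mathcal{C})^{\geq i}$, then uses $\hocolim F_i\cong F$ and a further brutal-truncation hocolim to reach arbitrary $F$. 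Your route avoids the hocolim machinery entirely; the paper's route yields the slightly stronger by-product $\mathbf{K}(\operatorname{Inj}\mathcal{C})^{\geq 0}\subseteq\overline{\langle\hat G\rangle}^{[-N-1,\infty)}$, though this is not needed for the statement.

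For (ii) your argument and the paper's coincide in substance, but the paper handles your flagged ``main obstacle'' more cleanly: since the whole triangle $E\to F\to D$ lives in $\mathbf{D}(\mathcal{C})^{\geq m}$ for some $m$, and $p$ restricts to an \emph{equivalence of triangulated categories} $\mathbf{K}(\operatorname{Inj}\mathcal{C})^{\geq m}\xrightarrow{\sim}\mathbf{D}(\mathcal{C})^{\geq m}$ (\Cref{Lemma coaisle of derived category and "aisle" of homotopy category}), the triangle lifts wholesale and $D\in\mathbf{D}(\mathcal{C})^{\geq 1}$ corresponds exactly to $\hat D\in\mathbf{K}(\operatorname{Inj}\mathcal{C})^{\geq 1}$. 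This replaces your hand-lifted map and Gaussian-elimination step by a single invocation of the equivalence, and makes the reduction to bounded-below representatives at the start of (ii) superfluous (any $F\in\mathbf{K}(\operatorname{Inj}\mathcal{C})^{\geq 0}$ is already such). Part (iii) is identical in both.
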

\begin{proof} 
    We first show that $\hat{G}$ is a compact generator for $\mathbf{K}(\operatorname{Inj} \mathcal{C})$.

     Let $F \in \mathbf{K}(\operatorname{Inj} \mathcal{C})^{\geq 0} \cong \mathbf{D}(\mathcal{C})^{\geq 0}$, see \Cref{Lemma coaisle of derived category and "aisle" of homotopy category}. Then, by induction we will construct a sequence $F_1 \to F_2 \to F_3 \to F_4 \to \cdots $ mapping to $F$ such that for some integer $N \geq 0$,
    \begin{itemize}
        \item $F_i \in \overline{\langle \hat{G} \rangle}^{[-N,\infty)}$ for all $i \geq 1$.
        \item $D_{n} \colonequals \operatorname{Cone}(F_i \to F) \in \mathbf{K}(\operatorname{Inj} \mathcal{C})^{\geq i}$ for all $i \geq 1$.
    \end{itemize}
    We get $F_1$ by \Cref{Theorem on approximating on J-2 schemes}. Now, suppose we have the sequence $F_1 \to \cdots \to F_n$. By the induction hypothesis, we know that $D_n \colonequals \operatorname{Cone}(F_n \to F) \in \mathbf{K}(\operatorname{Inj} \mathcal{C})^{\geq n} \cong \mathbf{D}(\mathcal{C})^{\geq n}$. Applying \Cref{Theorem on approximating on J-2 schemes} to $\Sigma^n D_n$, we get a triangle $\widetilde{F} \to D_n \to D_{n+1} \to \Sigma \widetilde{F}$ such that $\widetilde{F} \in \overline{\langle \hat{G} \rangle}^{[-N+n,N+n]}$ and $D_{n+1} \in \mathbf{K}(\operatorname{Inj} \mathcal{C})^{\geq n + 1}$. Applying the octahedral axiom to $F \to D_n \to D_{n+1}$, we get,
    \[\begin{tikzcd}
	   F_n & F_{n+1} & \widetilde{F} \\
	   F_n & F & D_n \\
	     & D_{n+1} & D_{n+1}
	   \arrow[from=1-1, to=1-2]
	   \arrow[from=1-1, to=2-1]
          \arrow[from=1-2, to=1-3]
	   \arrow[from=1-2, to=2-2]
          \arrow[from=1-3, to=2-3]
	   \arrow[from=2-1, to=2-2]
	   \arrow[from=2-2, to=2-3]
	   \arrow[from=2-2, to=3-2]
	   \arrow[from=2-3, to=3-3]
	   \arrow[from=3-2, to=3-3]
    \end{tikzcd}\]
    The middle column gives us the required triangle, as, 
    \[F_{n+1} \in \overline{\langle \hat{G} \rangle}^{[-N,\infty)} \star \overline{\langle \hat{G} \rangle}^{[-N+n,N+n]} \subseteq  \overline{\langle \hat{G} \rangle}^{[-N,\infty)}\]
    see \Cref{Notation from Neeman}. This gives us the required sequence mapping to $F$ by induction.
    
    Consider the non-canonical map $\hocolim F_i \to F$. For any object $H \in \mathbf{D}^{b}(\operatorname{noeth} \mathcal{C})$, we have the map given by the composite 
    \[\colim \Hom{}{H}{F_i} \overset{\thicksim}{\longrightarrow}\Hom{}{H}{\hocolim F_i} \to \Hom{}{H}{F}\] 
    where the first map is an isomorphism by \cite[Lemma 2.8]{Neeman:1996}. In the triangle $\Sigma^{-1}D_i \to F_i \to F \to D_i$, with $D_i \in \mathbf{K}(\operatorname{Inj} \mathcal{C})^{\geq i}$ for all $i\geq 1$, we get that,
    \[\Hom{}{H}{\Sigma^{-1} D_i} = \Hom{}{H}{D_i} = 0 \text{ for all } i>>0 \] by \Cref{Lemma similar to Lemma 2.1 Krause:2004}. So, by applying the functor $\Hom{}{H}{-}$ to the triangle, we get that $ \Hom{}{H}{F_i} \cong \Hom{}{H}{F}$ for $i >> 0$. And so, we get that $\colim \Hom{}{H}{F_i} \cong \Hom{}{H}{F}$ for all $H \in \mathbf{D}^{b}(\operatorname{noeth} \mathcal{C})$. Hence, as $\mathbf{D}^{b}(\operatorname{noeth} \mathcal{C})$ compactly generates $\mathbf{K}(\operatorname{Inj} \mathcal{C})$, by \cite[Lemma 2.8]{Neeman:1996} we get that $\hocolim F_i \cong F \in \overline{\langle \hat{G} \rangle}^{[-N-1,\infty)}$. So, $\bigcup_{n \in \mathbb{Z}}\mathbf{K}(\operatorname{Inj} \mathcal{C})^{\geq n} \subseteq \overline{\langle \hat{G} \rangle}$.

    Now, let $F \in \mathbf{K}(\operatorname{Inj} \mathcal{C})$. Let us choose a representative complex of injectives for it, which we will also denote $F$. Let its brutal truncations be denoted by $\sigma^{\geq -i}F$, where we have $\sigma^{\geq -i}F \in \mathbf{K}(\operatorname{Inj} \mathcal{C})^{\geq -i}$. Then, it is easy to see that $\hocolim \sigma^{\geq -i} F = F$, which proves that $\mathbf{K}(\operatorname{Inj} \mathcal{C}) = \overline{\langle \hat{G} \rangle}$, which proves $(i)$.

    Now we prove $(ii)$.
    For any object $H \in \mathbf{D}^{b}(\operatorname{noeth} \mathcal{C})$, there exists some $N_H > 0$ such that $\Sigma^{-N_H}H \in \mathbf{K}(\operatorname{Inj} \mathcal{C})^{\geq 0}$ and $\HomT{\Sigma^{i}H}{\mathbf{K}(\operatorname{Inj} \mathcal{C})^{\geq 0}} = 0$ for all $i \geq N_H$. In particular, this is true for $\hat{G} \in \mathbf{D}^{b}(\operatorname{noeth} \mathcal{C})$. Finally, let $F \in \mathbf{K}(\operatorname{Inj} \mathcal{C})^{\geq 0} \cong \mathbf{D}(\mathcal{C})^{\geq 0}$, see \Cref{Lemma coaisle of derived category and "aisle" of homotopy category}. Then, by our hypothesis, there exists a triangle $E \to F \to D \to \Sigma E$ with $E \in \overline{\langle \hat{G} \rangle}^{[N,N]}$ and $D \in \mathbf{D}(\mathcal{C})^{\geq 1} \cong \mathbf{K}(\operatorname{Inj} \mathcal{C})^{\geq 1}$. So, we have proven the weak co-approximability of $\mathbf{K}(\operatorname{Inj} \mathcal{C})$ with the standard co-t-structure, and the integer $\max(N_{\hat{G}},N)$.

    Finally, $(iii)$ immediately follows from \Cref{Lemma Coapprox implies preferred equivaence class} from the proof of $(ii)$.
\end{proof}

We now move to the setting of algebraic geometry.
We begin by recalling certain notions related to the regular locus of a scheme to state some of the later results.
\begin{definition}
    Let $X$ be a noetherian scheme. Recall that the regular locus $\operatorname{reg} X$ is the collection of points $p \in X$ such that the stalk $\mathcal{O}_{X,p}$ is a regular local ring. Then,
    \begin{itemize}
        \item $X$ is J-0 if there is a non-empty open subset contained in $\operatorname{reg} X$.
        \item $X$ is J-1 if $\operatorname{reg} X$ is open. Note that this does not imply in general that $X$ is J-0 as $\operatorname{reg} X$ can be empty. But, if $X$ is reduced, then it being J-1 implies being J-0 as the regular locus of a noetherian reduced scheme is non-empty. The non-emptiness of the regular locus follows from the fact that for a reduced scheme, the local rings at the generic points of each irreducible component is regular.
        \item $X$ is J-2 if for every morphism $f : Y \to X$ of finite type, the regular locus $\operatorname{reg} Y$ is open in $Y$. 
    \end{itemize}
    
\end{definition}

We now prove the weak approximability of $\mathbf{K}(\operatorname{Inj} X)$ under a mild hypothesis. For that, the main theorem we need to prove in preparation is the following. We state it here, and then prove it using a sequence of lemmas.

\begin{theorem}\label{Lemma on approximating on J-2 schemes}
    Let $X$ be a finite dimensional noetherian scheme such that each integral closed subscheme is J-0 and let $\mathsf{T}$ be any triangulated subcategory of $\mathbf{D}(\operatorname{Qcoh} X)$ . Then, there is an object $\hat{G}$ in $ \mathbf{D}^{b}(\operatorname{coh} X)$ such that, 
    \begin{enumerate}
        \item There exists an integer $N \geq 0$ with $\mathsf{T} \cap \operatorname{Qcoh} X \subseteq \overline{\langle \hat{G} \rangle}^{[-N,N]}$.
        \item There exist integers $N_{p,q} \geq 0$ for each pair of integers $p \leq q $ such that 
        \[\mathsf{T} \cap \mathbf{D}(\operatorname{Qcoh} X)^{\geq p} \cap \mathbf{D}(\operatorname{Qcoh} X)^{\leq q} \subseteq \overline{\langle \hat{G} \rangle}^{[-N_{p,q},N_{p,q}]}\]
    
    \end{enumerate}
    see \Cref{Notation from Neeman}.
\end{theorem}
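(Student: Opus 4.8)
Since the object $\hat{G}$ is allowed to depend on $\mathsf{T}$ but is produced before any sheaf is considered, it is harmless to prove the statement for $\mathsf{T}=\mathbf{D}(\operatorname{Qcoh} X)$; the case of a general $\mathsf{T}$ follows by intersecting. Moreover (2) is a formal consequence of (1). Indeed, if $F\in\mathbf{D}(\operatorname{Qcoh} X)^{\geq p}\cap\mathbf{D}(\operatorname{Qcoh} X)^{\leq q}$, its Postnikov tower for the standard $t$-structure exhibits $F$ as an iterated extension of the objects $\Sigma^{-i}H^{i}(F)$ for $p\leq i\leq q$. Each $H^{i}(F)$ is a quasicoherent sheaf, so by (1) it lies in $\overline{\langle\hat{G}\rangle}^{[-N,N]}$ and hence $\Sigma^{-i}H^{i}(F)\in\overline{\langle\hat{G}\rangle}^{[-N+i,\,N+i]}\subseteq\overline{\langle\hat{G}\rangle}^{[-N_{p,q},\,N_{p,q}]}$ with $N_{p,q}:=N+\max(|p|,|q|)$; since the subcategories $\overline{\langle\hat{G}\rangle}^{[a,b]}$ with $a<b$ are closed under extensions, the iterated extension stays inside $\overline{\langle\hat{G}\rangle}^{[-N_{p,q},\,N_{p,q}]}$. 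So everything reduces to (1): for suitable $\hat{G}\in\mathbf{D}^{b}(\operatorname{coh} X)$ and $N\geq 0$ one has $\operatorname{Qcoh} X\subseteq\overline{\langle\hat{G}\rangle}^{[-N,N]}$.

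I would prove (1) by induction on $\dim X$, with an inner Noetherian induction on closed subschemes, and I would prove along the way the analogous statement for the locally Noetherian Grothendieck category $\operatorname{Qcoh}_{Z}(X)$ of quasicoherent sheaves set-theoretically supported on a closed subset $Z\subsetneq X$ (its Noetherian objects being the coherent sheaves on infinitesimal thickenings of $Z$, so it has dimension $\dim Z$, and it inherits the J-0 hypothesis since that only concerns integral closed subschemes). If $X=\emptyset$ there is nothing to prove. For the inductive step, two dévissages reduce to the integral case at the cost of a uniformly bounded number of extension steps and of replacing $\hat{G}$ by a finite direct sum of closed pushforwards: first filter a quasicoherent sheaf by the powers of the nilradical ideal (finite by Noetherianity) to reduce to $X$ reduced, then filter $\mathcal{O}_{X}$ by the ideals cutting out unions of irreducible components to reduce to $X$ integral. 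Now $X$ is integral and finite-dimensional, so being J-0 it has a nonempty regular open $U\subseteq X$; set $Z:=X\setminus U$ with its reduced structure, a proper closed subscheme, so $\dim Z<\dim X$, to which the induction applies (with generator $G_{Z}$ and window $N_{Z}$). Fix a perfect compact generator $\widetilde{G}$ of $\mathbf{D}(\operatorname{Qcoh} X)$, available since $X$ is Noetherian hence quasi-compact and quasi-separated.

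The remaining work is the patching. Because $U$ is regular of finite Krull dimension, $\operatorname{Qcoh} U\subseteq\overline{\langle\widetilde{G}|_{U}\rangle}^{[-n_{1},n_{1}]}$ for some $n_{1}$: pass to a finite affine open cover of $U$, where finite global dimension and the fact that projectives are summands of free modules give the claim directly, and reassemble over the cover at the cost of finitely many more extension steps. Given $\mathcal{F}\in\operatorname{Qcoh} X$, the restriction $j^{\ast}\mathcal{F}$ then lies in $\overline{\langle\widetilde{G}|_{U}\rangle}^{[-n_{1},n_{1}]}$, and since $j^{\ast}\colon\mathbf{D}(\operatorname{Qcoh} X)\to\mathbf{D}(\operatorname{Qcoh} U)$ is a coproduct-preserving Verdier localization (with kernel the objects supported on $Z$), there is, up to an object supported on $Z$, some $\mathcal{E}\in\overline{\langle\widetilde{G}\rangle}^{[-n_{1},n_{1}]}$ with $j^{\ast}\mathcal{E}\cong j^{\ast}\mathcal{F}$. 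A calculus-of-fractions comparison produces an object mapping to both $\mathcal{E}$ and $\mathcal{F}$ whose two cones restrict to $0$ on $U$, hence lie in $\mathbf{D}_{\operatorname{Qcoh},Z}(X)$, and have cohomological amplitude bounded in terms of $\dim X$, $n_{1}$ and $\widetilde{G}$ — this is where finite-dimensionality enters, via the vanishing of the local cohomology sheaves $\underline{H}^{k}_{Z}$ for $k>\dim X$. By the inductive statement for $\operatorname{Qcoh}_{Z}(X)$ (a strictly lower-dimensional category), together with the telescope presentation of a torsion sheaf as a sequential colimit of its $\mathcal{I}^{m}$-torsion subsheaves, these cones lie in a bounded thickening of a closed pushforward of $G_{Z}$. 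Taking $\hat{G}$ to be the direct sum of $\widetilde{G}$, the pushforward of $G_{Z}$, and the finitely many generators produced in the dévissages, and adding the finitely many bounded window contributions, yields the required $N$. The main obstacle is precisely this bookkeeping: keeping every window uniformly bounded through the dévissages and the $U$/$Z$ patching, which is what forces the induction on dimension (so the sheaves supported on $Z$ are already controlled in a smaller category) and the finite-dimensionality hypothesis (to bound the local-cohomology amplitude of the comparison cones).
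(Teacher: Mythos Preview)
Your overall strategy --- reduce to $\mathsf{T}=\mathbf{D}(\operatorname{Qcoh} X)$ and to statement (1), then induct on $\dim X$ after d\'evissage to the integral case, use J-0 to find a regular open $U$, bound $j^*\mathcal{F}$ via finite global dimension, and handle $Z=X\setminus U$ by the inductive hypothesis --- is exactly the paper's.

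Where you are imprecise is the patching from $U$ back to $X$. Your claim that one can find $\mathcal{E}\in\overline{\langle\widetilde{G}\rangle}^{[-n_1,n_1]}$ with $j^*\mathcal{E}\cong j^*\mathcal{F}$ is not justified as stated: the Verdier localization $j^*$ is not full, so a construction of $j^*\mathcal{F}$ from shifts of $j^*\widetilde{G}$ via coproducts, extensions and summands in $\mathbf{D}(\operatorname{Qcoh} U)$ need not lift to one from $\widetilde{G}$ in $\mathbf{D}(\operatorname{Qcoh} X)$, and the subsequent calculus-of-fractions sentence does not explain how the amplitude of the comparison cones is bounded uniformly in $\mathcal{F}$. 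The paper sidesteps lifting by pushing forward instead. It takes $U$ affine from the start (so your cover argument is unnecessary), notes $j^*\mathcal{F}\in\overline{\langle j^*\mathcal{O}_X\rangle}^{[-n,n]}$, and uses that $\mathbb{R}j_*$ preserves coproducts to obtain $\mathbb{R}j_*j^*\mathcal{F}\in\overline{\langle\mathbb{R}j_*j^*\mathcal{O}_X\rangle}^{[-n,n]}$. The single object $\mathbb{R}j_*j^*\mathcal{O}_X$ is then controlled by its own unit triangle $Q\to\mathcal{O}_X\to\mathbb{R}j_*j^*\mathcal{O}_X$, with $Q$ supported on $Z$ of bounded amplitude and $\mathcal{O}_X$ lying in a bounded window built from a perfect compact generator. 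Plugging this into the unit triangle $F'\to\mathcal{F}\to\mathbb{R}j_*j^*\mathcal{F}$ (where $F'=\mathbb{R}\Gamma_Z\mathcal{F}$ has exactly the local-cohomology amplitude bound you cite; this is the same content as the paper's citation of Lipman for the amplitude of $\mathbb{R}j_*$) finishes the argument. So the fix for your gap is to replace the attempted lift $\mathcal{E}$ by the pushforward $\mathbb{R}j_*j^*\mathcal{F}$ and then run one extra unit triangle on the generator.
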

\begin{remark}\label{Remark on approximating on J-2 schemes}
    It is clear that it is enough to show \Cref{Lemma on approximating on J-2 schemes} for $\mathsf{T} = \mathbf{D}(\operatorname{Qcoh} X)$. In fact, we are only interested in the statement for $\mathsf{T} = \mathbf{D}(\operatorname{Qcoh} X)$, and we introduce $\mathsf{T}$ just as a placeholder as it makes it easier to state and use the aforementioned sequence of lemmas.

    Further, note that \Cref{Lemma on approximating on J-2 schemes}$(1) \implies (2)$. This can be shown by an induction on $(q-p)$ as follows. Suppose $F \in \mathbf{D}(\operatorname{Qcoh} X)^{\geq p} \cap \mathbf{D}(\operatorname{Qcoh} X)^{\leq q}$ for integers $p$ and $q$ such that $q-p \geq 1$. Then, we have the triangle $\tau^{\leq q-1}F \to F \to \tau^{\geq q}F \to \Sigma \tau^{\leq q-1}F$ with $\tau^{\leq q-1}F \in \mathbf{D}(\operatorname{Qcoh} X)^{\geq p} \cap \mathbf{D}(\operatorname{Qcoh} X)^{\leq q-1}$ and $\tau^{\geq q}F \in \mathbf{D}(\operatorname{Qcoh} X)^{\geq q} \cap \mathbf{D}(\operatorname{Qcoh} X)^{\leq q}$ given by the canonical truncation. But, 
    \[\mathbf{D}(\operatorname{Qcoh} X)^{\geq q} \cap \mathbf{D}(\operatorname{Qcoh} X)^{\leq q} = \Sigma^{-q} (\mathbf{D}(\operatorname{Qcoh} X)^{\geq 0} \cap \mathbf{D}(\operatorname{Qcoh} X)^{\leq 0}\]
    so this shows the inductive step.
\end{remark}

\begin{lemma}\label{Lemma 1 on approximating on J-2 schemes}
    Let $X$ be noetherian scheme and $i : Z \to X$ a closed immersion such that the conclusion of \Cref{Lemma on approximating on J-2 schemes} holds for the scheme $Z$. Then, the conclusion of \Cref{Lemma on approximating on J-2 schemes} holds for $X$ with $\mathsf{T} = \mathbf{D}_{Z}(\operatorname{Qcoh} X)$, which is the thick subcategory of $\mathbf{D}(\operatorname{Qcoh} X)$ consisting of complexes with cohomology supported on $Z$.
\end{lemma}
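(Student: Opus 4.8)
The plan is to transport the generator supplied by the hypothesis across the closed immersion. Write $i_\ast\colon \mathbf{D}(\operatorname{Qcoh} Z)\to\mathbf{D}(\operatorname{Qcoh} X)$ for the direct image; since $i$ is a closed immersion, $i_\ast$ is exact (hence triangulated), preserves arbitrary coproducts and summands, commutes with $\Sigma$, and carries $\mathbf{D}^b(\operatorname{coh} Z)$ into $\mathbf{D}^b(\operatorname{coh} X)$. By assumption the conclusion of \Cref{Lemma on approximating on J-2 schemes} holds for $Z$; applying it with the ``placeholder'' $\mathsf{T}=\mathbf{D}(\operatorname{Qcoh} Z)$ (the essential case, by \Cref{Remark on approximating on J-2 schemes}) we get $\hat G_Z\in\mathbf{D}^b(\operatorname{coh} Z)$ and an integer $N_Z\geq 0$ with $\operatorname{Qcoh} Z\subseteq\overline{\langle\hat G_Z\rangle}^{[-N_Z,N_Z]}$, see \Cref{Notation from Neeman}. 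Set $\hat G\colonequals i_\ast\hat G_Z\in\mathbf{D}^b(\operatorname{coh} X)$. Because $i_\ast$ respects coproducts, extensions, summands and shifts, it sends $\overline{\langle\hat G_Z\rangle}^{[a,b]}$ into $\overline{\langle\hat G\rangle}^{[a,b]}$ for all $a\leq b$; in particular $i_\ast(\operatorname{Qcoh} Z)\subseteq\overline{\langle\hat G\rangle}^{[-N_Z,N_Z]}$. By \Cref{Remark on approximating on J-2 schemes} (whose inductive argument applies, since $\mathbf{D}_{Z}(\operatorname{Qcoh} X)$ is closed under the canonical truncations) it suffices to prove part (1), that is, to produce an integer $N$ with $\mathbf{D}_{Z}(\operatorname{Qcoh} X)\cap\operatorname{Qcoh} X\subseteq\overline{\langle\hat G\rangle}^{[-N,N]}$; note that $\mathbf{D}_{Z}(\operatorname{Qcoh} X)\cap\operatorname{Qcoh} X$ is exactly the category of quasi-coherent sheaves on $X$ with support contained in $Z$.

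So let $\mathcal{I}$ be the ideal sheaf of $Z$ and let $\mathcal{F}$ be a quasi-coherent sheaf on $X$ with $\operatorname{Supp}\mathcal{F}\subseteq Z$. Since $X$ is noetherian, $\mathcal{F}$ is $\mathcal{I}$-power torsion: writing $\mathcal{F}[\mathcal{I}^n]$ for the quasi-coherent subsheaf annihilated by $\mathcal{I}^n$, one has $\mathcal{F}[\mathcal{I}^1]\subseteq\mathcal{F}[\mathcal{I}^2]\subseteq\cdots$ and $\mathcal{F}=\varinjlim_n\mathcal{F}[\mathcal{I}^n]$, a \emph{countable} increasing union. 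For each $n$ the sheaf $\mathcal{F}[\mathcal{I}^n]$ carries the finite filtration $\mathcal{F}[\mathcal{I}^n]\supseteq\mathcal{I}\mathcal{F}[\mathcal{I}^n]\supseteq\cdots\supseteq\mathcal{I}^n\mathcal{F}[\mathcal{I}^n]=0$ whose successive quotients are annihilated by $\mathcal{I}$, hence lie in $i_\ast(\operatorname{Qcoh} Z)\subseteq\overline{\langle\hat G\rangle}^{[-N_Z,N_Z]}$; as $\overline{\langle\hat G\rangle}^{[-N_Z,N_Z]}$ is closed under extensions, $\mathcal{F}[\mathcal{I}^n]\in\overline{\langle\hat G\rangle}^{[-N_Z,N_Z]}$ for every $n$ (the length $n$ of the filtration is irrelevant). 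Presenting the sequential colimit as a mapping cone, $\mathcal{F}\cong\operatorname{Cone}\!\big(\bigoplus_n\mathcal{F}[\mathcal{I}^n]\xrightarrow{\,1-\mathrm{shift}\,}\bigoplus_n\mathcal{F}[\mathcal{I}^n]\big)$, and both coproducts lie in $\overline{\langle\hat G\rangle}^{[-N_Z,N_Z]}$ since that category is closed under coproducts. Using $\overline{\langle\hat G\rangle}^{[-N_Z,N_Z]}\star\Sigma\overline{\langle\hat G\rangle}^{[-N_Z,N_Z]}\subseteq\overline{\langle\hat G\rangle}^{[-N_Z-1,N_Z]}$ we conclude $\mathcal{F}\in\overline{\langle\hat G\rangle}^{[-(N_Z+1),N_Z+1]}$. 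Thus part (1) holds with $N=N_Z+1$, and part (2) follows from \Cref{Remark on approximating on J-2 schemes}.

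The argument is essentially formal once the hypothesis for $Z$ is available; the two points needing care are the window bookkeeping — that $i_\ast$ preserves the width $2N_Z+1$ and that a sequential colimit enlarges the window by only one step, so that the final $N$ is genuinely independent of $\mathcal{F}$ (this rests on $\overline{\langle\hat G\rangle}^{[-N_Z,N_Z]}$ being closed under arbitrary coproducts and under extensions) — and the commutative-algebra input that a quasi-coherent sheaf on the noetherian scheme $X$ supported on $Z$ is the \emph{countable} union of its $\mathcal{I}^n$-torsion subsheaves, which is what lets us avoid any transfinite or uncountable colimit. I do not expect a serious obstacle beyond keeping this bookkeeping honest.
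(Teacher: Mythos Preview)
Your proof is correct and follows essentially the same approach as the paper: push forward the generator $\hat G_Z$ from $Z$, use the $\mathcal{I}$-power torsion filtration to write any $\mathcal{F}\in\operatorname{Qcoh}_Z(X)$ as a countable hocolim of sheaves built from $i_\ast(\operatorname{Qcoh} Z)$ by extensions, and absorb the Milnor triangle into a window enlargement of one. The only cosmetic difference is that the paper observes directly that the successive quotients $\mathcal{F}[\mathcal{I}^{n+1}]/\mathcal{F}[\mathcal{I}^n]$ are already annihilated by $\mathcal{I}$, so your secondary $\mathcal{I}$-adic filtration on each $\mathcal{F}[\mathcal{I}^n]$ is not needed.
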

\begin{proof}
    By \Cref{Remark on approximating on J-2 schemes}, it is enough to show that \Cref{Lemma on approximating on J-2 schemes}(1) holds for $X$ with $\mathsf{T} = \mathbf{D}_{Z}(\operatorname{Qcoh} X)$. Let $\mathcal{I}$ be the sheaf of ideals corresponding to the closed immersion $i : Z \to X$. Let $F \in \operatorname{Qcoh}_{Z}(X) = \mathbf{D}_{Z}(\operatorname{Qcoh} X) \cap \operatorname{Qcoh} X$. For each $n \geq 0$, let $F_n$ denote the subsheaf of $F$ which is annihilated by $\mathcal{I}^n$. That is, we define the sections of $F_n$ for each open subset $U \subseteq X$ by, $F_n(U) \colonequals \{s \in F(U) : \mathcal{I}(U)^n \cdot s = 0\}$. Then, we get an exhaustive filtration of $F$ as follows,
     \[0 = F_0 \subseteq F_1 \subseteq F_2 \subseteq F_3 \subseteq \cdots \subseteq F\]
     So, $F = \colim F_n$ in $\operatorname{Qcoh} X$, which gives us that in the derived category $F \cong \hocolim F_n$ by \cite[Remark 2.2]{Bkstedt/Neeman:1993}.

     Note that each successive quotient, $F_{n+1}/F_n$ is annihilated by $\mathcal{I}$, and so is an $\mathcal{O}_X/\mathcal{I}$-module, that is, $F_{i+1}/F_i \in i_*(\operatorname{Qcoh} Z) $. Now, from the short exact sequences $0 \to F_n \to F_{n+1} \to F_{n+1}/F_n \to 0$, we get triangles $F_n \to F_{n+1} \to F_{n+1}/F_n \to \Sigma F_n$ for each $n \geq 0$. This gives us that $F_{n+1} \in F_n \star (F_{n+1}/F_n)$ for each $n \geq 0$. 
     
     As $F \cong \hocolim F_n$, we get that $F \in \overline{\langle \{F_{n+1}/F_n\}_{n \geq 0}}\rangle^{[-1,1]}$, see \Cref{Notation from Neeman}. But, note that $\overline{\langle\{F_{n+1}/F_n\}_{n \geq 0}}\rangle^{[-1,1]} \subseteq \overline{\langle i_*\mathbf{D}(\operatorname{Qcoh} Z})\rangle^{[-1,1]}$. By the hypothesis there exists an object $\widetilde{G} \in \mathbf{D}^b({\operatorname{coh}}(Z))$ and $N \geq 1$ such that,
     $\operatorname{Qcoh} Z \subseteq \overline{\langle \widetilde{G}\rangle}^{[-N,N]}$. Define $\hat{G} \colonequals i_*(\widetilde{G})$. Then, 
     \[\operatorname{Qcoh}_{Z}(X) \subseteq \overline{\langle i_*(\operatorname{Qcoh} Z\rangle}^{[-1,1]} \subseteq i_*\Big( \overline{\langle \widetilde{G}\rangle}^{[-N-1,N+1]} \Big) \subseteq \overline{\langle \hat{G} \rangle}^{[-N-1,N+1]} \]
     which gives us the required result.
\end{proof}

\begin{lemma}\label{Lemma 2 on approximating on J-2 schemes}
    Let $X$ be a noetherian scheme with closed subschemes $Z$ and $Z^{\prime}$ such that $X = Z \cup Z^{\prime}$ topologically. If the conclusion of \Cref{Lemma on approximating on J-2 schemes} holds for $Z$ and $Z^{\prime}$, then it holds for $X$.
\end{lemma}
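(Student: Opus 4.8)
The plan is to reduce, via \Cref{Remark on approximating on J-2 schemes}, to proving part (1) of \Cref{Lemma on approximating on J-2 schemes} for $X$ with $\mathsf{T}=\mathbf{D}(\operatorname{Qcoh}X)$; that is, to produce an object $\hat{G}\in\mathbf{D}^{b}(\operatorname{coh}X)$ and an integer $N\geq0$ with $\operatorname{Qcoh}X\subseteq\overline{\langle\hat{G}\rangle}^{[-N,N]}$, see \Cref{Notation from Neeman}. Write $i\colon Z\to X$ and $i'\colon Z'\to X$ for the two closed immersions, let $\mathcal{I},\mathcal{I}'$ be their ideal sheaves, and set $\mathcal{J}\colonequals\mathcal{I}\mathcal{I}'$. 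Since $X=Z\cup Z'$ topologically we have $V(\mathcal{J})=X$, so $\mathcal{J}$ is contained in the nilradical of $\mathcal{O}_{X}$; as $X$ is noetherian this nilradical is nilpotent, hence $\mathcal{J}^{m}=0$ for some $m\geq1$.

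The next step is a finite filtration argument. Given $F\in\operatorname{Qcoh}X$, the $\mathcal{J}$-adic filtration $F\supseteq\mathcal{J}F\supseteq\cdots\supseteq\mathcal{J}^{m}F=0$ is a finite filtration by quasicoherent subsheaves, so from the short exact sequences $0\to\mathcal{J}^{j+1}F\to\mathcal{J}^{j}F\to G_{j}\to0$ with $G_{j}\colonequals\mathcal{J}^{j}F/\mathcal{J}^{j+1}F$, together with associativity of $\star$, we obtain $F\in G_{m-1}\star G_{m-2}\star\cdots\star G_{0}$. Each $G_{j}$ is annihilated by $\mathcal{J}=\mathcal{I}\mathcal{I}'$, and for such a sheaf the subsheaf $\mathcal{I}'G_{j}$ is annihilated by $\mathcal{I}$ (since $\mathcal{I}\cdot\mathcal{I}'G_{j}=\mathcal{J}G_{j}=0$), so it lies in $i_{*}(\operatorname{Qcoh}Z)$, while the quotient $G_{j}/\mathcal{I}'G_{j}$ is annihilated by $\mathcal{I}'$, so it lies in $i'_{*}(\operatorname{Qcoh}Z')$. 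Hence $G_{j}\in i_{*}(\operatorname{Qcoh}Z)\star i'_{*}(\operatorname{Qcoh}Z')$, and therefore $F$ lies in the $m$-fold iterated $\star$-product of $i_{*}(\operatorname{Qcoh}Z)\star i'_{*}(\operatorname{Qcoh}Z')$ with itself.

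Finally I would push the hypotheses forward. By assumption the conclusion of \Cref{Lemma on approximating on J-2 schemes} holds for $Z$ and for $Z'$, so there are objects $\widetilde{G}_{Z}\in\mathbf{D}^{b}(\operatorname{coh}Z)$, $\widetilde{G}_{Z'}\in\mathbf{D}^{b}(\operatorname{coh}Z')$ and integers $N_{Z},N_{Z'}\geq0$ with $\operatorname{Qcoh}Z\subseteq\overline{\langle\widetilde{G}_{Z}\rangle}^{[-N_{Z},N_{Z}]}$ and $\operatorname{Qcoh}Z'\subseteq\overline{\langle\widetilde{G}_{Z'}\rangle}^{[-N_{Z'},N_{Z'}]}$. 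Since the pushforward along a closed immersion is exact, commutes with $\Sigma$, and preserves coproducts and coherent objects, it carries $\overline{\langle\widetilde{G}_{Z}\rangle}^{[-N_{Z},N_{Z}]}$ into $\overline{\langle i_{*}\widetilde{G}_{Z}\rangle}^{[-N_{Z},N_{Z}]}$, and similarly for $Z'$. Put $\hat{G}\colonequals i_{*}\widetilde{G}_{Z}\oplus i'_{*}\widetilde{G}_{Z'}\in\mathbf{D}^{b}(\operatorname{coh}X)$ and $N\colonequals\max(N_{Z},N_{Z'})$; then both $i_{*}(\operatorname{Qcoh}Z)$ and $i'_{*}(\operatorname{Qcoh}Z')$ are contained in $\overline{\langle\hat{G}\rangle}^{[-N,N]}$. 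Because $\overline{\langle\hat{G}\rangle}^{[-N,N]}$ is closed under extensions (it equals $\operatorname{smd}(\operatorname{Coprod}(\cdots))$, and both $\operatorname{Coprod}(-)$ and passing to the summand closure preserve closure under extensions), the $\star$-description of $F$ above forces $F\in\overline{\langle\hat{G}\rangle}^{[-N,N]}$. As $F$ was arbitrary this proves part (1) for $X$, and hence, by \Cref{Remark on approximating on J-2 schemes}, the full conclusion of \Cref{Lemma on approximating on J-2 schemes} for $X$.

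I expect the only delicate point to be the observation that for a general (non-coherent) quasicoherent sheaf the $\mathcal{I}$-adic or $\mathcal{I}'$-adic filtration need not terminate, so one must filter instead by the nilpotent ideal $\mathcal{J}=\mathcal{I}\mathcal{I}'$; this is exactly what makes the filtration finite and the resulting bound $N$ uniform over all of $\operatorname{Qcoh}X$. The remaining steps are routine manipulations of $\star$-products and of the closure properties of the subcategories $\overline{\langle-\rangle}^{[a,b]}$.
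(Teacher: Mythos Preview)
Your argument is correct and takes a genuinely different, more elementary route than the paper. The paper first invokes \Cref{Lemma 1 on approximating on J-2 schemes} to pass from the hypothesis on $Z$ and $Z'$ to the conclusion for $X$ with $\mathsf{T}=\mathbf{D}_{Z}(\operatorname{Qcoh} X)$ and $\mathsf{T}=\mathbf{D}_{Z'}(\operatorname{Qcoh} X)$, and then, for $F\in\operatorname{Qcoh} X$, uses the unit triangle $F'\to F\to\mathbb{R}j_*j^*F$ for the open immersion $j\colon X\setminus Z\hookrightarrow X$, together with Lipman's bound on the cohomological amplitude of $\mathbb{R}j_*$ and flat base change, to exhibit $F$ as an extension of an object in $\mathbf{D}_Z(\operatorname{Qcoh} X)$ with bounded amplitude by one in $\mathbf{D}_{Z'}(\operatorname{Qcoh} X)$ with bounded amplitude. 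Your approach bypasses all of this: by observing that $\mathcal{J}=\mathcal{I}\mathcal{I}'$ is nilpotent you obtain a \emph{finite} filtration of any $F\in\operatorname{Qcoh} X$ whose graded pieces are extensions of sheaves in $i_*(\operatorname{Qcoh} Z)$ by sheaves in $i'_*(\operatorname{Qcoh} Z')$, and then push forward the hypothesis directly. This avoids derived pushforwards, flat base change, the Lipman bound, and even the hocolim argument hidden in \Cref{Lemma 1 on approximating on J-2 schemes}; the resulting bound is also completely explicit in terms of $m$, $N_Z$, and $N_{Z'}$. The paper's approach, on the other hand, reuses a decomposition technique (via $\mathbb{R}j_*j^*$) that recurs in the inductive proof of \Cref{Lemma on approximating on J-2 schemes}, so it fits a uniform pattern, but for this particular lemma your filtration argument is cleaner.
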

\begin{proof}
    First of all, we immediately get that the result holds for $X$ with $\mathsf{T} = \mathbf{D}_{Z}(\operatorname{Qcoh} X)$ and $\mathsf{T} = \mathbf{D}_{Z^\prime}(\operatorname{Qcoh} X)$ by \Cref{Lemma 1 on approximating on J-2 schemes}.
    By \Cref{Remark on approximating on J-2 schemes}, it is enough to show that \Cref{Lemma on approximating on J-2 schemes}(1) holds for $X$ with $\mathsf{T} = \mathbf{D}(\operatorname{Qcoh} X)$. Let $U = X \setminus Z$ and $U' = X \setminus Z^{\prime}$ with corresponding open immersions $j : U \to X$ and $j' : U' \to X$.  
    
    Let $F \in \operatorname{Qcoh} X$. Consider the triangle $F' \to F \to \mathbb{R}j_*j^*F \to \Sigma F'$, where the second map is the unit of adjunction. Note that when restricted to $U$, we get that the unit of adjunction is an isomorphism, and so $j^*(F') \cong 0$. And so, $F' \in \mathbf{D}_{Z}(\operatorname{Qcoh} X) $. By \cite[Proposition 3.9.2]{Lipman:2009}, $\mathbb{R}j_*j^*F \in \mathbf{D}(\operatorname{Qcoh} X)^{\geq 0} \cap \mathbf{D}(\operatorname{Qcoh} X)^{\leq t-1}$ for some positive integer $t$. So, as $F' \in (\Sigma^{-1} \mathbb{R}j_*j^*F) \star F $, we get that, 
    \[F' \in \mathbf{D}_{Z}(\operatorname{Qcoh} X)^{\geq 0} \cap \mathbf{D}_{Z}(\operatorname{Qcoh} X)^{\leq t}\]
    This in turn implies that,
    \[F \in \big(\mathbf{D}_{Z}(\operatorname{Qcoh} X)^{\geq 0} \cap \mathbf{D}_{Z}(\operatorname{Qcoh} X)^{\leq t}\big) \star \mathbb{R}j_*\operatorname{Qcoh} U\]
    for some positive integer $t$. 
    
    Further, note that $j'^{,*}\mathbb{R}j_*\mathbf{D}(\operatorname{Qcoh} U) = 0$ from a simple application of flat base change. This immediately implies that $\mathbb{R}j_*\operatorname{Qcoh} U \subseteq \mathbf{D}_{Z^\prime}(\operatorname{Qcoh} X)^{\geq 0} \cap \mathbf{D}_{Z^\prime}(\operatorname{Qcoh} X)^{\leq t-1}$. So, we get that,
    \[\operatorname{Qcoh} X = \big(\mathbf{D}_{Z}(\operatorname{Qcoh} X)^{\geq 0} \cap \mathbf{D}_{Z}(\operatorname{Qcoh} X)^{\leq t}\big) \star \big(\mathbf{D}_{Z^\prime}(\operatorname{Qcoh} X)^{\geq 0} \cap \mathbf{D}_{Z^\prime}(\operatorname{Qcoh} X)^{\leq t-1}\big)\]
    which immediately gives us the required result as the conclusion of \Cref{Lemma on approximating on J-2 schemes} holds for $X$ with $\mathsf{T} = \mathbf{D}_{Z}(\operatorname{Qcoh} X)$ and $\mathsf{T} = \mathbf{D}_{Z^\prime}(\operatorname{Qcoh} X)$.
\end{proof}
Now we can prove \Cref{Lemma on approximating on J-2 schemes}.
\begin{proof}[Proof of \Cref{Lemma on approximating on J-2 schemes}]
    Let $X$ be a noetherian scheme such that each integral closed subscheme is J-0.
    We start by observing some reductions we can do. First of all, by \Cref{Remark on approximating on J-2 schemes}, it is enough to prove \Cref{Lemma on approximating on J-2 schemes}(1) for $\mathsf{T} = \mathbf{D}(\operatorname{Qcoh} X)$. Further, if the result holds for all the irreducible components of $X$, then it holds for $X$ using \Cref{Lemma 2 on approximating on J-2 schemes}. Finally, let $X_{\operatorname{red}}$ be the reduced scheme corresponding to $X$ with the corresponding closed immersion $i : X_{\operatorname{red}} \to X$. If we know that the conclusion of \Cref{Lemma on approximating on J-2 schemes} holds for $X_{\operatorname{red}}$, then it also holds for $X$ by \Cref{Lemma 1 on approximating on J-2 schemes}. So we can assume that the scheme is reduced and irreducible, that is, the scheme is integral.

    We now proceed by induction on the dimension of the scheme $X$. For the base case, let $X$ be a integral scheme such that $\dim(X)=0$. Then $X$ is just $\operatorname{Spec}(k)$ for a field $k$. Suppose $F \in \operatorname{Qcoh} X\cong \operatorname{Mod} k$. As $F$ is a vector space over $k$, $F \in \overline{\langle k\rangle}^{[0,0]}$ (\Cref{Notation from Neeman}), and hence we get the required result in this case with $\hat{G} \colonequals k$.

    Now, we assume the result is known for dimension smaller than dim$(X)$.
    As the scheme is integral, there exists a non-empty affine open set $U = \operatorname{Spec}(R) \subseteq \operatorname{reg} X$ by hypothesis. Let the corresponding open immersion be $j : U \to X$. Let $F \in \operatorname{Qcoh} X$. Then, the restriction to the open set $U$, $j^*F \in \operatorname{Qcoh} U$. As $U = \operatorname{Spec}(R) \subseteq \operatorname{reg} X$, the ring $R$ is regular. So $R$ is a regular ring of Krull dimensional less than or equal to $\dim(X) = n$, and hence the global dimension is less than or equal to $n$. So, $j^*F$ must have a projective resolution of length less than or equal to $n$. This gives us that $j^*F \in \overline{\langle R \rangle}^{[- n, n]} = \overline{\langle j^*\mathcal{O}_X \rangle}^{[- n, n]}$.
    
    Consider the triangle $F' \to F \to \mathbb{R}j_*j^*F \to \Sigma F'$, where the second map is the unit of adjunction. Note that when restricted to $U$, we get that the unit of adjunction is an isomorphism, and so $j^*(F') \cong 0$. Let $Z=X-U$. Then, $F' \in \mathbf{D}_{Z}(\operatorname{Qcoh} X)$. By \cite[Proposition 3.9.2]{Lipman:2009}, there exists $t \geq 0$  such that 
    \[\mathbb{R}j_*j^*F \in \mathbf{D}(\operatorname{Qcoh} X)^{\geq 0} \cap \mathbf{D}(\operatorname{Qcoh} X)^{\leq t-1}\]
    So, as $F' \in (\Sigma^{-1} \mathbb{R}j_*j^*F) \star F $ we get that, 
    \[F' \in \mathbf{D}_{Z}(\operatorname{Qcoh} X)^{\geq 0} \cap \mathbf{D}_{Z}(\operatorname{Qcoh} X)^{\leq t} \]
    This gives us that,
    \[F \in \big(\mathbf{D}_{Z}(\operatorname{Qcoh} X)^{\geq 0} \cap \mathbf{D}_{Z}(\operatorname{Qcoh} X)^{\leq t} \big)\star \overline{\langle \mathbb{R}j_*j^*\mathcal{O}_X \rangle}^{[- n, n]}\]
    as $F \in F' \star \mathbb{R}j_*j^*F$, and as $\mathbb{R}j_*j^*F \in \mathbb{R}j_*\big(\overline{\langle j^*\mathcal{O}_X \rangle}^{[- n, n]}\big) \subseteq \overline{\langle \mathbb{R}j_*j^*\mathcal{O}_X \rangle}^{[- n, n]}$ by the previous paragraph. Note that the integer $t$ can be chosen independent of the choice of $F$ in $\operatorname{Qcoh} X$.

    Consider the triangle $Q \to \mathcal{O}_X \to \mathbb{R}j_*j^*\mathcal{O}_X \to \Sigma Q$ in $\mathbf{D}(\operatorname{Qcoh} X)$ coming from the unit of adjunction. Note that when restricted to $U$, we get that the unit of adjunction is an isomorphism, and so $j^*(Q) \cong 0$. Therefore, $\Sigma Q \in \mathbf{D}_{Z}(\operatorname{Qcoh} X)^{\geq -1} \cap \mathbf{D}_{Z}(\operatorname{Qcoh} X)^{\leq t}$. Further, $\mathcal{O}_X \in \langle G \rangle^{[-C,C]}$ for a compact generator $G \in \mathbf{D}^{\operatorname{perf}}(X)$ and some positive integer $C$. So, \[\mathbb{R}j_*j^*\mathcal{O}_X \in \langle G \rangle^{[-C,C]} \star \big(\mathbf{D}_{Z}(\operatorname{Qcoh} X)^{\geq -1} \cap \mathbf{D}_{Z}(\operatorname{Qcoh} X)^{\leq t}\big)\] 

    Now, let $i : Z \to X$ be the closed immersion where $Z$ is given the reduced induced closed subscheme structure. First of all, note that as $\dim(Z) < \dim(X)$, $Z$ satisfies the conclusion of \Cref{Lemma on approximating on J-2 schemes}. We define $\hat{G} = i_* \hat{H} \oplus G$, where $\hat{H} \in \mathbf{D}^b({\operatorname{coh}}(Z))$ is the object coming from $Z$ satisfying \Cref{Lemma on approximating on J-2 schemes}. As $F \in \big(\mathbf{D}_{Z}(\operatorname{Qcoh} X)^{\geq 0} \cap \mathbf{D}_{Z}(\operatorname{Qcoh} X)^{\leq t} \big)\star \overline{\langle \mathbb{R}j_*j^*\mathcal{O}_X \rangle}^{[- n, n]}$ and $\mathbb{R}j_*j^*\mathcal{O}_X \in \overline{\langle G \rangle}^{[-C,C]} \star \big(\mathbf{D}_{Z}(\operatorname{Qcoh} X)^{\geq -1} \cap \mathbf{D}_{Z}(\operatorname{Qcoh} X)^{\leq t}\big)$ from the above paragraph, we would be done if we can show that for all $p \leq q$, there exists an integer $L_{p,q}$ such that,
    \[\mathbf{D}_{Z}(\operatorname{Qcoh} X)^{\geq p} \cap \mathbf{D}_{Z}(\operatorname{Qcoh} X)^{\leq q} \subseteq \overline{\langle \hat{G} \rangle}^{[-L_{p,q},L_{p,q}]}\]
    But, as the conclusion of \Cref{Lemma on approximating on J-2 schemes} holds for $Z$, we get this from \Cref{Lemma 1 on approximating on J-2 schemes}, completing the proof.
\end{proof}

\begin{corollary}\label{Theorem on approximating on J-2 schemes}
    Let $X$ be a noetherian finite-dimensional scheme such that each integral closed subscheme is J-0. Then, there exists an object $\hat{G} \in \mathbf{D}^{b}(\operatorname{coh} X)$ and a positive integer $N$ such that $\mathbf{D}(\operatorname{Qcoh} X)^{\geq 0} = \overline{\langle \hat{G} \rangle}^{[-N,N]} \star \mathbf{D}(\operatorname{Qcoh} X)^{\geq 1}$, see \Cref{Notation from Neeman}.
\end{corollary}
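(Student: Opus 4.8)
The plan is to read this off directly from Lemma~\ref{Lemma on approximating on J-2 schemes}, all of whose hypotheses hold here: $X$ is noetherian, finite dimensional, and each of its integral closed subschemes is J-0. By Remark~\ref{Remark on approximating on J-2 schemes} the conclusion of that lemma for $\mathsf{T}=\mathbf{D}(\operatorname{Qcoh} X)$ reduces to part (1), so I take the object $\hat{G}\in\mathbf{D}^{b}(\operatorname{coh} X)$ and the integer $N\geq 0$ it produces, enlarged so that $N\geq 1$, for which every quasi-coherent sheaf on $X$, regarded in cohomological degree $0$, lies in $\overline{\langle\hat{G}\rangle}^{[-N,N]}$.

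The substantive point is the inclusion ``$\subseteq$''. Given $F\in\mathbf{D}(\operatorname{Qcoh} X)^{\geq 0}$, I would use the canonical (soft) truncation triangle
\[\tau^{\leq 0}F\longrightarrow F\longrightarrow\tau^{\geq 1}F\longrightarrow\Sigma\tau^{\leq 0}F.\]
Since $F$ has no cohomology in negative degrees, $\tau^{\leq 0}F=H^{0}(F)$ is a quasi-coherent sheaf placed in degree $0$, hence lies in $\overline{\langle\hat{G}\rangle}^{[-N,N]}$ by Lemma~\ref{Lemma on approximating on J-2 schemes}(1); and $\tau^{\geq 1}F\in\mathbf{D}(\operatorname{Qcoh} X)^{\geq 1}$ by construction. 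Therefore $F\in\overline{\langle\hat{G}\rangle}^{[-N,N]}\star\mathbf{D}(\operatorname{Qcoh} X)^{\geq 1}$, which is what is needed.

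For the reverse inclusion, note that $\mathbf{D}(\operatorname{Qcoh} X)^{\geq 0}$ is closed under extensions and contains $\mathbf{D}(\operatorname{Qcoh} X)^{\geq 1}$; moreover, in any triangle $E\to F\to D\to\Sigma E$ realising $F$ as an extension of $D\in\mathbf{D}(\operatorname{Qcoh} X)^{\geq 1}$ by $E\in\overline{\langle\hat G\rangle}^{[-N,N]}$, the long exact sequence gives $H^{i}(E)\cong H^{i}(F)$ for $i\leq -1$, so that $E$ (equivalently, the connective part of $\overline{\langle\hat G\rangle}^{[-N,N]}$) suffices and the extension stays in $\mathbf{D}(\operatorname{Qcoh} X)^{\geq 0}$. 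This gives the claimed equality; downstream, in Proposition~\ref{Proposition weak co-approx for Kinj}, it is only the inclusion ``$\subseteq$'' that is actually invoked.

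I do not expect a genuine obstacle at this stage: all of the real work — the dévissage to integral reduced schemes, the use of generic regularity (an affine open $\operatorname{Spec} R\subseteq\operatorname{reg} X$ over which every sheaf has projective dimension at most $\dim X$), and the induction on $\dim X$ assembled via Lemmas~\ref{Lemma 1 on approximating on J-2 schemes} and~\ref{Lemma 2 on approximating on J-2 schemes} — is already carried out in the proof of Lemma~\ref{Lemma on approximating on J-2 schemes}. The present statement is then just the truncation argument above, with the only mild care being the cohomological normalisation of $\hat{G}$ needed to phrase the statement as an equality rather than the one-sided containment.
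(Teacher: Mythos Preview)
Your argument for the inclusion $\mathbf{D}(\operatorname{Qcoh} X)^{\geq 0}\subseteq\overline{\langle\hat G\rangle}^{[-N,N]}\star\mathbf{D}(\operatorname{Qcoh} X)^{\geq 1}$ is correct and is exactly the paper's proof: take the $\hat G$ and $N$ from \Cref{Lemma on approximating on J-2 schemes}, then apply the standard truncation triangle to any $F\in\mathbf{D}(\operatorname{Qcoh} X)^{\geq 0}$ so that $\tau^{\leq 0}F\in\operatorname{Qcoh} X\subseteq\overline{\langle\hat G\rangle}^{[-N,N]}$ and $\tau^{\geq 1}F\in\mathbf{D}(\operatorname{Qcoh} X)^{\geq 1}$. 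The paper's proof likewise only establishes this direction, and as you correctly observe, it is the only one invoked in \Cref{Proposition weak co-approx for Kinj}. Your paragraph on the reverse inclusion does not actually prove it (indeed, $\overline{\langle\hat G\rangle}^{[-N,N]}$ need not sit in $\mathbf{D}(\operatorname{Qcoh} X)^{\geq 0}$, so the equality as literally stated can fail), but you already flag that this is immaterial.
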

\begin{proof}
    Let $F \in \mathbf{D}(\operatorname{Qcoh} X)^{\geq 0}$. Then, we have the triangle $F^{\leq 0} \to F \to F^{\geq 1} \to \Sigma F^{\leq 0}$ we get from the standard t-structure on $\mathbf{D}(\operatorname{Qcoh} X)$. So, we have that $F^{\leq 0} \in \operatorname{Qcoh} X$ and $F^{\geq 1} \in \mathbf{D}(\operatorname{Qcoh} X)^{\geq 1}$. But, by \Cref{Lemma on approximating on J-2 schemes}, there exists $N \geq 0$ such that $\operatorname{Qcoh} X \subseteq \overline{\langle \hat{G} \rangle}^{[-N,N]}$. So, $F \in \overline{\langle \hat{G} \rangle}^{[-N,N]} \star \mathbf{D}(\operatorname{Qcoh} X)^{\geq 1}$.
\end{proof}

The following result is known, see for example \cite[Theorem 1.1]{Lank/Dey:2024} and \cite[Theorem 4.15]{Elagin/Lunts/Schnurer:2020}, but we give an alternate proof using \Cref{Theorem on approximating on J-2 schemes}.

\begin{theorem}\label{Theorem Compact Generator for Kinj}
    Let $X$ be a noetherian finite-dimensional scheme such that each integral closed subscheme is J-0. Then, $\mathbf{K}(\operatorname{Inj} X)$ has a single compact generator. In fact, the object $\hat{G}$ of \Cref{Theorem on approximating on J-2 schemes} is a compact generator of $\mathbf{K}(\operatorname{Inj} X)$. 
    
    In particular, $\hat{G}$ is a classical generator for $\mathbf{D}^b({\operatorname{coh} X})$.
\end{theorem}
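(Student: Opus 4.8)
The plan is to assemble the statement from two results already established in the excerpt: \Cref{Theorem on approximating on J-2 schemes} and \Cref{Proposition weak co-approx for Kinj}. First I would record the structural setup: since $X$ is noetherian, $\mathcal{C} = \operatorname{Qcoh} X$ is a locally noetherian Grothendieck abelian category with $\operatorname{noeth}\mathcal{C} = \operatorname{coh} X$, so by \cite[Proposition 2.3]{Krause:2004} the homotopy category $\mathbf{K}(\operatorname{Inj} X)$ is compactly generated and $\mathbf{K}(\operatorname{Inj} X)^c \simeq \mathbf{D}^b(\operatorname{coh} X)$. This places us exactly in the framework where \Cref{Proposition weak co-approx for Kinj} can be applied.

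Next, I would observe that the hypothesis of \Cref{Proposition weak co-approx for Kinj} asks for precisely an object $\hat{G} \in \mathbf{D}^b(\operatorname{coh} X) = \mathbf{D}^b(\operatorname{noeth}\mathcal{C})$ and a positive integer $N$ with $\mathbf{D}(\operatorname{Qcoh} X)^{\geq 0} = \overline{\langle \hat{G} \rangle}^{[-N,N]} \star \mathbf{D}(\operatorname{Qcoh} X)^{\geq 1}$, and that this is exactly the conclusion of \Cref{Theorem on approximating on J-2 schemes}, whose hypotheses ($X$ noetherian, finite-dimensional, every integral closed subscheme J-0) are what we are assuming. So I would invoke \Cref{Theorem on approximating on J-2 schemes} to obtain such a $\hat{G}$ and $N$, then feed them into \Cref{Proposition weak co-approx for Kinj}; conclusion (i) of that proposition states precisely that $\hat{G}$ is a compact generator of $\mathbf{K}(\operatorname{Inj} X)$, which gives the first two assertions of the theorem.

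For the final clause, I would appeal to the fact recorded in \Cref{Notation from Neeman}: if $G$ is a compact generator of a triangulated category $\mathsf{S}$, then the thick subcategory $\langle G \rangle$ it generates is all of $\mathsf{S}^c$. Applying this with $\mathsf{S} = \mathbf{K}(\operatorname{Inj} X)$ and $G = \hat{G}$, and using $\mathbf{K}(\operatorname{Inj} X)^c \simeq \mathbf{D}^b(\operatorname{coh} X)$, yields $\langle \hat{G} \rangle = \mathbf{D}^b(\operatorname{coh} X)$, i.e.\ $\hat{G}$ is a classical generator of $\mathbf{D}^b(\operatorname{coh} X)$.

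There is no real obstacle here: the proof is an assembly of previously proved statements, and all the genuine content — the comparison $\mathbf{K}(\operatorname{Inj}\mathcal{C})^{\geq 0} \simeq \mathbf{D}(\mathcal{C})^{\geq 0}$ of \Cref{Lemma coaisle of derived category and "aisle" of homotopy category}, the homotopy-colimit bookkeeping inside \Cref{Proposition weak co-approx for Kinj}, and the dévissage on $\dim X$ carried out in \Cref{Lemma on approximating on J-2 schemes} — has already been handled. The only point deserving a sentence of care is checking that $\operatorname{noeth}(\operatorname{Qcoh} X) = \operatorname{coh} X$, so that the object $\hat{G} \in \mathbf{D}^b(\operatorname{coh} X)$ really is a legitimate input for \Cref{Proposition weak co-approx for Kinj}, and that the two occurrences of the integer $N$ (in the $\star$-decomposition and in the definition of weak co-approximability) are correctly matched when citing the proposition.
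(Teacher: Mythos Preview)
Your proposal is correct and matches the paper's own proof, which simply says the result is immediate from \Cref{Proposition weak co-approx for Kinj}(i) using \Cref{Theorem on approximating on J-2 schemes}. Your additional remarks about $\operatorname{noeth}(\operatorname{Qcoh} X) = \operatorname{coh} X$ and the deduction of classical generation from \Cref{Notation from Neeman} are accurate elaborations of what the paper leaves implicit.
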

\begin{proof}
    Immediate from \Cref{Proposition weak co-approx for Kinj}$(i)$ using \Cref{Theorem on approximating on J-2 schemes}.
\end{proof}

\begin{theorem}\label{Theorem J-2 implies weak co-approximablility for schemes}
    Let $X$ be a noetherian finite-dimensional scheme such that each integral closed subscheme is J-0. Then $\mathbf{K}(\operatorname{Inj} X)$ is a weakly co-approximable triangulated category, see \Cref{Definition Co-approx}. Further, the standard co-t-structure lies in the preferred equivalence class, see \Cref{Definition preferred equivalence class}. 
    
    Note that the hypothesis holds in particular if $X$ is J-2.
\end{theorem}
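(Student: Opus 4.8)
The plan is to read the statement off \Cref{Proposition weak co-approx for Kinj}, applied to the locally noetherian Grothendieck abelian category $\mathcal{C}=\operatorname{Qcoh} X$; here $\mathbf{K}(\operatorname{Inj}\mathcal{C})=\mathbf{K}(\operatorname{Inj} X)$ and the compacts are $\mathbf{D}^b(\operatorname{noeth}\mathcal{C})=\mathbf{D}^b(\operatorname{coh} X)$. Its only hypothesis --- an object $\hat{G}\in\mathbf{D}^b(\operatorname{coh} X)$ and an integer $N>0$ with $\mathbf{D}(\operatorname{Qcoh} X)^{\geq 0}=\overline{\langle\hat{G}\rangle}^{[-N,N]}\star\mathbf{D}(\operatorname{Qcoh} X)^{\geq 1}$ --- is exactly the conclusion of \Cref{Theorem on approximating on J-2 schemes}, whose assumptions ($X$ noetherian, finite-dimensional, every integral closed subscheme J-0) match ours. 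So \Cref{Proposition weak co-approx for Kinj} already gives that $\hat{G}$ is a compact generator of $\mathbf{K}(\operatorname{Inj} X)$, that $\mathbf{K}(\operatorname{Inj} X)$ is weakly co-approximable for the standard brutal-truncation co-t-structure, and that this co-t-structure lies in the preferred quasiequivalence class. The parenthetical remark is immediate: if $X$ is J-2, then an integral closed subscheme $Z\hookrightarrow X$ is of finite type over $X$, so $\operatorname{reg} Z$ is open, and it is non-empty since $Z$ is reduced --- hence $Z$ is J-0.

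It remains to promote the last assertion from the preferred quasiequivalence class (a statement about the metric on $\mathbf{K}(\operatorname{Inj} X)^c=\mathbf{D}^b(\operatorname{coh} X)$) to the preferred equivalence class, i.e.\ to show that the orthogonal metric $\{\mathbf{K}(\operatorname{Inj} X)^{\geq n}\}_{n\in\mathbb{Z}}$ is $\mathbb{N}$-equivalent to the orthogonal metric $\{\Sigma^{-n}\mathsf{U}_{\hat{G}}\}_{n\in\mathbb{Z}}$ of the co-t-structure $(\mathsf{U}_{\hat{G}},\mathsf{V}_{\hat{G}})$ generated by $\hat{G}$. One inclusion is essentially free: the proof of \Cref{Proposition weak co-approx for Kinj}(i) shows $\mathbf{K}(\operatorname{Inj} X)^{\geq 0}\subseteq\overline{\langle\hat{G}\rangle}^{[-N-1,\infty)}$, and since $\Sigma^i\hat{G}\in\mathsf{U}_{\hat{G}}$ for all $i\leq 1$ while $\mathsf{U}_{\hat{G}}$ is closed under coproducts, extensions, summands and $\Sigma^{-1}$, one has $\overline{\langle\hat{G}\rangle}^{[-1,\infty)}\subseteq\mathsf{U}_{\hat{G}}$ and hence $\mathbf{K}(\operatorname{Inj} X)^{\geq n+N}\subseteq\Sigma^{-n}\mathsf{U}_{\hat{G}}$ for every $n$. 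For the reverse inclusion, fix an injective resolution of $\hat{G}$ concentrated in degrees $\geq s_0$ (possible because $\hat{G}$ is a bounded complex of coherent sheaves): then $\Sigma^i\hat{G}$ is concentrated in degrees $\geq s_0$ for every $i\leq 0$, so any complex of injectives concentrated in degrees $\leq s_0-1$ lies in $\mathsf{V}_{\hat{G}}$; thus $\Hom{\mathbf{K}(\operatorname{Inj} X)}{U}{J}=0$ for $U\in\mathsf{U}_{\hat{G}}={}^{\perp}(\Sigma\mathsf{V}_{\hat{G}})$ and any complex of injectives $J$ concentrated in degrees $\leq s_0-2$, and testing $J$ against the injective hull of a single nonzero cohomology sheaf of $U$ forces $U\in\mathbf{K}(\operatorname{Inj} X)^{\geq s_0-1}$. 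Taking $m=\max(N,|s_0-1|)$ gives the $\mathbb{N}$-equivalence, so the standard co-t-structure lies in the preferred equivalence class.

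Essentially all the substantive work is upstream: \Cref{Theorem on approximating on J-2 schemes} produces the bounded approximating object $\hat{G}$ via the d\'evissage along closed subschemes in \Cref{Lemma 1 on approximating on J-2 schemes,Lemma 2 on approximating on J-2 schemes} together with the induction on $\dim X$ reducing to an integral scheme with dense regular open locus, and \Cref{Proposition weak co-approx for Kinj} packages it into weak co-approximability. The only genuinely new point here is the refinement to the preferred equivalence class, and the one step needing care is the containment $\mathsf{U}_{\hat{G}}\subseteq\mathbf{K}(\operatorname{Inj} X)^{\geq s_0-1}$ --- precisely where the boundedness of $\hat{G}$, not merely its perfection, is used.
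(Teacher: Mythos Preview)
Your first paragraph is exactly the paper's proof: the paper writes one line, ``Immediate from \Cref{Proposition weak co-approx for Kinj}$(ii),(iii)$ using \Cref{Theorem on approximating on J-2 schemes},'' and you reproduce precisely this reduction, together with the standard J-2 $\Rightarrow$ J-0 remark.

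You then go further than the paper. You correctly observe that the statement asserts the standard co-t-structure lies in the preferred \emph{equivalence} class, whereas \Cref{Proposition weak co-approx for Kinj}$(iii)$ only delivers the preferred \emph{quasi}equivalence class (a comparison of metrics on $\mathsf{T}^c$, not on $\mathsf{T}$). The paper's one-line proof does not address this discrepancy --- very possibly it is a typo in the statement, since every downstream use (e.g.\ \Cref{Proposition closure of compacts for Kinj}) only needs quasiequivalence --- but you take the statement at face value and try to supply the missing argument. Your inclusion $\mathbf{K}(\operatorname{Inj} X)^{\geq n+N}\subseteq\Sigma^{-n}\mathsf{U}_{\hat G}$ is fine.

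The reverse inclusion, however, has a gap. Testing against the injective hull of a nonzero cohomology sheaf of $U$ only shows $H^k(U)=0$ for $k\leq s_0-2$; it does \emph{not} force $U\in\mathbf{K}(\operatorname{Inj} X)^{\geq s_0-1}$. In the homotopy category of injectives these are different conditions: an acyclic but non-contractible complex of injectives has no nonzero cohomology yet need not lie in $\mathbf{K}(\operatorname{Inj} X)^{\geq n}$ for any $n$. The repair is easy and bypasses cohomology entirely: from $\mathbf{K}(\operatorname{Inj} X)^{\leq s_0-1}\subseteq\mathsf{V}_{\hat G}$ you get $\mathsf{U}_{\hat G}={}^{\perp}(\Sigma\mathsf{V}_{\hat G})\subseteq{}^{\perp}\big(\mathbf{K}(\operatorname{Inj} X)^{\leq s_0-2}\big)$; for $U$ in the right-hand side the brutal-truncation map $U\to\sigma^{\leq s_0-2}U$ vanishes, so $U$ is a summand of $\sigma^{\geq s_0-1}U$, and since $\operatorname{Inj} X$ is idempotent complete the aisle $\mathbf{K}(\operatorname{Inj} X)^{\geq s_0-1}$ is closed under summands, giving $U\in\mathbf{K}(\operatorname{Inj} X)^{\geq s_0-1}$.
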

\begin{proof}
    Immediate from \Cref{Proposition weak co-approx for Kinj}$(ii),(iii)$ using \Cref{Theorem on approximating on J-2 schemes}.
\end{proof}

We also have a noncommutative version of the above results.

\begin{theorem}\label{Theorem J-2 implies weak co-approximablility for algebras}
    Let $X$ be a noetherian finite-dimensional J-2 scheme, and $\mathcal{A}$ any coherent $\mathcal{O}_X$-algebra. Then $\mathbf{K}(\operatorname{Inj} \mathcal{A})$ is a weakly co-approximable triangulated category, see \Cref{Definition Co-approx}. Further, the standard co-t-structure lies in the preferred equivalence class, see \Cref{Definition preferred equivalence class}. 
\end{theorem}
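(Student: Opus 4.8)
The plan is to reduce everything to \Cref{Proposition weak co-approx for Kinj} applied to the category $\mathcal{C}=\operatorname{Qcoh}\mathcal{A}$. Since $\mathcal{A}$ is a coherent algebra over the noetherian scheme $X$, the category $\operatorname{Qcoh}\mathcal{A}$ is locally noetherian Grothendieck with $\operatorname{noeth}\mathcal{C}=\operatorname{coh}\mathcal{A}$, so by \cite[Proposition 2.3]{Krause:2004} $\mathbf{K}(\operatorname{Inj}\mathcal{A})$ is compactly generated with $\mathbf{K}(\operatorname{Inj}\mathcal{A})^c=\mathbf{D}^b(\operatorname{coh}\mathcal{A})$. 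Thus it suffices to produce an object $\hat G\in\mathbf{D}^b(\operatorname{coh}\mathcal{A})$ and an integer $N>0$ with $\mathbf{D}(\operatorname{Qcoh}\mathcal{A})^{\geq 0}=\overline{\langle\hat G\rangle}^{[-N,N]}\star\mathbf{D}(\operatorname{Qcoh}\mathcal{A})^{\geq 1}$: once this is known, all three conclusions (that $\hat G$ is a compact generator, that $\mathbf{K}(\operatorname{Inj}\mathcal{A})$ is weakly co-approximable, and that the standard co-t-structure lies in the preferred equivalence class) follow exactly as in the proof of \Cref{Theorem J-2 implies weak co-approximablility for schemes}. Moreover, by the argument of \Cref{Theorem on approximating on J-2 schemes} (apply the canonical truncation triangle), this displayed identity is in turn a formal consequence of the single inclusion $\operatorname{Qcoh}\mathcal{A}\subseteq\overline{\langle\hat G\rangle}^{[-N,N]}$.

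So the real content is an algebra analogue of \Cref{Lemma on approximating on J-2 schemes}: for $X$ noetherian, finite-dimensional, with each integral closed subscheme $J$-$0$, and $\mathcal{A}$ a coherent $\mathcal{O}_X$-algebra, there are $\hat G\in\mathbf{D}^b(\operatorname{coh}\mathcal{A})$ and $N\geq 0$ with $\operatorname{Qcoh}\mathcal{A}\subseteq\overline{\langle\hat G\rangle}^{[-N,N]}$, together with the bounded refinement $\mathbf{D}(\operatorname{Qcoh}\mathcal{A})^{\geq p}\cap\mathbf{D}(\operatorname{Qcoh}\mathcal{A})^{\leq q}\subseteq\overline{\langle\hat G\rangle}^{[-N_{p,q},N_{p,q}]}$. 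I would prove this by retracing the proofs of \Cref{Lemma 1 on approximating on J-2 schemes}, \Cref{Lemma 2 on approximating on J-2 schemes}, and \Cref{Lemma on approximating on J-2 schemes} with $\operatorname{Qcoh}(-)$ replaced by $\operatorname{Qcoh}$ of the relevant coherent algebra. The key points are that a closed immersion $i\colon Z\to X$ carries $\mathcal{A}$ to the coherent $\mathcal{O}_Z$-algebra $\mathcal{A}|_Z=\mathcal{A}\otimes_{\mathcal{O}_X}\mathcal{O}_Z$; that the filtration by powers of the ideal sheaf $\mathcal{I}$ of $Z$ makes sense for $\mathcal{A}$-modules because $\mathcal{I}$ is central, with successive quotients being $\mathcal{A}|_Z$-modules; and that $\mathbb{R}j_*$ for an open immersion $j\colon U\to X$ commutes with the forgetful functor $\operatorname{Qcoh}\mathcal{A}\to\operatorname{Qcoh} X$, so that Lipman's cohomological-dimension bound \cite[Proposition 3.9.2]{Lipman:2009} still applies. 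This reduces us, as in \Cref{Lemma on approximating on J-2 schemes}, to $X$ integral, and then by induction on $\dim X$ to: (i) $\dim X=0$, i.e.\ $X=\operatorname{Spec} k$ and $\mathcal{A}=A$ a finite-dimensional $k$-algebra; and (ii) the ``regular open'' part of the inductive step, where $U=\operatorname{Spec} R\subseteq X$ is a regular affine open ($R$ regular of Krull dimension at most $\dim X$) and $A:=\mathcal{A}|_U$ is a module-finite $R$-algebra, the complement $Z=X\setminus U$ having strictly smaller dimension and hence being covered by the inductive hypothesis for $\mathcal{A}|_Z$.

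Case (i) is handled by the Jacobson radical filtration $A\supseteq\operatorname{rad}(A)\supseteq\operatorname{rad}(A)^2\supseteq\cdots\supseteq\operatorname{rad}(A)^m=0$: every $A$-module is an iterated extension of length at most $m$ of modules over the semisimple algebra $A/\operatorname{rad}(A)$, and the latter all lie in $\overline{\langle S\rangle}^{[0,0]}$, where $S$ is the (finite) direct sum of the simple $A$-modules; hence $\operatorname{Qcoh} A\subseteq\overline{\langle S\rangle}^{[0,0]}$ and we take $\hat G=S$. Case (ii) is where something genuinely beyond the scheme case is needed: one must show that a module-finite algebra $A$ over a regular ring $R$ of finite Krull dimension admits $\hat G_U\in\mathbf{D}^b(\operatorname{coh} A)$ and $N$ with $\operatorname{Qcoh} A\subseteq\overline{\langle\hat G_U\rangle}^{[-N,N]}$ (one then extends $\hat G_U$ to a coherent $\mathcal{A}$-module on $X$ and glues with the $Z$-datum exactly as in \Cref{Lemma on approximating on J-2 schemes}). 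When $A$ is commutative this is immediate from the already-proved scheme result applied to the finite $X$-scheme $\operatorname{Spec}_X\mathcal{A}$, which is noetherian, finite-dimensional and $J$-$2$. In general I expect this regular, noncommutative base case to be the main obstacle: the naive attempt of taking $\hat G_U=A$ fails, since relative syzygies need not stabilize (already $k$ over $k[\epsilon]/\epsilon^2$). The plan is a further dévissage: the nilradical $\mathfrak n$ of $A$ is nilpotent (as $A$ is noetherian), reducing to $A/\mathfrak n$; then, after shrinking $U$ to a dense affine open, one spreads out the semisimple algebra $A\otimes_R\operatorname{Frac}(R)$ to an Azumaya algebra over a regular ring, which has finite global dimension, handling the remaining lower-dimensional locus by one more application of the inductive hypothesis. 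Alternatively, and more cleanly, one may simply invoke the strong-generation results for derived categories of coherent modules over noncommutative algebras over $J$-$2$ bases in \cite{DeDeyn/Lank/ManaliRahul:2024a}, which directly supply the object $\hat G$ and bound $N$ required above, after which the theorem follows from \Cref{Proposition weak co-approx for Kinj} as indicated.
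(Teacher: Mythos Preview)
Your proposal is correct and follows essentially the same route as the paper: reduce to \Cref{Proposition weak co-approx for Kinj} via the algebra analogue of \Cref{Lemma on approximating on J-2 schemes}, proved by noetherian induction with the same d\'evissage lemmas, the Jacobson radical handling the zero-dimensional base case. For the key step (ii) the paper invokes \cite[Proposition 4.10]{Elagin/Lunts/Schnurer:2020} and \cite[Theorems 1.8 and 2.1]{Auslander/Goldman:1960} to produce, after shrinking $U$, a nilpotent two-sided ideal $I$ with $\mathcal{A}(U)/I$ a product of Azumaya algebras over regular centers of bounded global dimension---exactly the Azumaya spread-out you sketch, made precise (your phrase ``nilradical'' needs care noncommutatively, but the intended ideal is the one these references supply).
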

\begin{proof}
    Immediate from \Cref{Proposition weak co-approx for Kinj}$(ii),(iii)$ using \Cref{Theorem on approximating on J-2 algebra}.
\end{proof}

We now prove the weak co-quasiapproximability for the mock homotopy category of projectives $\mathbf{K}_m(\operatorname{Proj} X)$. We will follow the notational conventions from \cite{Murfet:2008}.

\begin{proposition}\label{Proposition Kmproj is Co-approx}
    Let $X$ be a noetherian, separated, finite dimensional scheme such that every integral closed subscheme is J-0. 
    Then, the mock homotopy category of projectives $\mathbf{K}_m(\operatorname{Proj} X)$ is weakly co-quasiapproximable, see \Cref{Definition Co-approx}. 
\end{proposition}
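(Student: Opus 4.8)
The strategy is to mimic, for $\mathbf{K}_m(\operatorname{Proj}X)$, the argument establishing weak co-approximability of $\mathbf{K}(\operatorname{Inj}X)$ in \Cref{Proposition weak co-approx for Kinj} and \Cref{Theorem J-2 implies weak co-approximablility for schemes}, again using \Cref{Theorem on approximating on J-2 schemes} as the source of approximations in $\mathbf{D}(\operatorname{Qcoh}X)$. The structural difference is that $\mathbf{K}(\operatorname{Inj}X)$ maps to $\mathbf{D}(\operatorname{Qcoh}X)$ by a quotient that is an equivalence on the whole co-aisle, whereas $\mathbf{K}_m(\operatorname{Proj}X)$ — with no dualizing complex available, hence no equivalence with $\mathbf{K}(\operatorname{Inj}X)$ — only matches $\mathbf{D}(\operatorname{Qcoh}X)$ well on a bounded part; this is exactly why one lands on weak co-\emph{quasi}approximability rather than weak co-approximability. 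Throughout I would use Murfet's work \cite{Murfet:2008}: for $X$ separated and noetherian, $\mathbf{K}_m(\operatorname{Proj}X)$ is compactly generated with $\mathbf{K}_m(\operatorname{Proj}X)^{c}\simeq\bigl(\mathbf{D}^{b}(\operatorname{coh}X)\bigr)^{\mathrm{op}}$, it carries a standard co-t-structure given by hard truncation of complexes of flats, and it is related to $\mathbf{D}(\operatorname{Qcoh}X)$ by a coproduct-preserving comparison functor together with its fully faithful mock flat resolution section.

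First I would record the data for \Cref{Definition Co-approx}. Since $X$ is finite-dimensional noetherian with every integral closed subscheme J-0, the object $\hat{G}$ of \Cref{Theorem on approximating on J-2 schemes} is a classical generator of $\mathbf{D}^{b}(\operatorname{coh}X)$ by \Cref{Theorem Compact Generator for Kinj}; as classical generators of a category and of its opposite correspond, the object it determines in $\bigl(\mathbf{D}^{b}(\operatorname{coh}X)\bigr)^{\mathrm{op}}\simeq\mathbf{K}_m(\operatorname{Proj}X)^{c}$ — call it $G$ — is a single compact generator of $\mathbf{K}_m(\operatorname{Proj}X)$. I would take the co-t-structure to be the one generated by $G$ (\Cref{Definition co-t-structure/weight structure}); the standard one lies in the same preferred quasiequivalence class once the relevant boundedness of compacts is checked. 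The two preliminary conditions then hold almost for free: $\Sigma^{-n}G$ lies in the aisle for all $n\geq0$ by construction of the generated co-t-structure, and $\operatorname{Hom}(G,\Sigma^{-n}G)=0$ for $n\gg0$ is a cohomological-amplitude fact computed among compact objects, i.e.\ inside $\mathbf{D}^{b}(\operatorname{coh}X)$ — the same mechanism behind the vanishing recorded for $\mathbf{K}(\operatorname{Inj}X)$ in \Cref{Introduction Example A}.

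The substantive step is to produce, for each $F\in\mathsf{U}\cap\mathsf{T}^{+}_{c}$ and with a single $N$, a triangle $E\to F\to D\to\Sigma E$ with $E\in\overline{\langle G\rangle}^{[-N,N]}$ and $D\in\Sigma^{-1}\mathsf{U}\cap\mathsf{T}^{+}_{c}$. I would transport $F$ to $\mathbf{D}(\operatorname{Qcoh}X)$ along the comparison functor, apply \Cref{Theorem on approximating on J-2 schemes} in the form $\mathbf{D}(\operatorname{Qcoh}X)^{\geq0}=\overline{\langle\hat{G}\rangle}^{[-N,N]}\star\mathbf{D}(\operatorname{Qcoh}X)^{\geq1}$, pull the resulting truncation triangle back through mock flat resolution, and iterate just as in the proof of \Cref{Proposition weak co-approx for Kinj}(ii): build a tower $E_{1}\to E_{2}\to\cdots$ over $F$ whose cones lie in ever deeper shifts of $\mathsf{U}$ and with each $E_{i}\in\overline{\langle G\rangle}^{[-N,\infty)}$, then check $\hocolim E_{i}\cong F$ using that $\mathbf{D}^{b}(\operatorname{coh}X)$ compactly generates together with \cite[Lemma 2.8]{Neeman:1996}. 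The hypothesis $F\in\mathsf{T}^{+}_{c}$ enters precisely to guarantee that $F$ is represented by a bounded mock complex on which the comparison functor and mock flat resolution are mutually inverse up to a uniformly bounded shift, so the truncations produced downstairs pull back with a range independent of $F$; for an arbitrary object of $\mathsf{U}$ no such uniformity is available, in contrast to \Cref{Theorem J-2 implies weak co-approximablility for schemes}.

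The part I expect to cost real work is the bookkeeping relating the co-t-structure on $\mathbf{K}_m(\operatorname{Proj}X)$ to the passage to $\mathbf{D}(\operatorname{Qcoh}X)$: one must show that on $\mathsf{T}^{+}_{c}$ the truncations of a chosen complex-of-flats representative correspond, up to a uniformly bounded shift, to the canonical truncations in $\mathbf{D}(\operatorname{Qcoh}X)$, so that \Cref{Theorem on approximating on J-2 schemes} applies with one $N$; one must pin down how the compact generator $G$ — which corresponds to $\hat{G}$ only through the opposite-category identification, not directly — relates to the objects pulled back along mock flat resolution, so that the $\overline{\langle-\rangle}^{[-N,N]}$-counting stays consistent; and one must check that the representatives and their truncations descend through the Verdier quotient $\mathbf{K}(\operatorname{Flat}X)\to\mathbf{K}_m(\operatorname{Proj}X)$ defining the mock category. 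Murfet's explicit analysis of the mock category, of flat resolutions, and of the comparison with $\mathbf{D}(\operatorname{Qcoh}X)$ in \cite{Murfet:2008} is what this bookkeeping draws on; once it is settled, the rest is a faithful transcription of the $\mathbf{K}(\operatorname{Inj}X)$ argument.
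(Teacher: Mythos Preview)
Your approach diverges from the paper's and the point you flag as ``bookkeeping'' is in fact the obstruction. The paper does not pass through the covariant quotient $\mathbf{K}_m(\operatorname{Proj} X)\to\mathbf{D}(\operatorname{Qcoh} X)$ at all. It works entirely on the contravariant side, via $U_\lambda(-)^{\circ}:\mathbf{D}^b(\operatorname{coh} X)^{\operatorname{op}}\xrightarrow{\sim}\mathbf{K}_m(\operatorname{Proj} X)^c$, taking the co-t-structure compactly generated by $U_\lambda(\operatorname{coh} X)^{\circ}$. An external input, \cite[Lemma 7.23]{ManaliRahul:2025}, identifies $\mathbf{K}_m(\operatorname{Proj} X)^+_c=U_\lambda(\mathbf{D}^-(\operatorname{coh} X))^{\circ}$; combined with \cite[Theorem 2.3.4]{Bondarko:2022} and \cite[Proposition 1.9]{Neeman:2021a} this gives $\mathsf{U}\cap\mathsf{T}^+_c\subseteq U_\lambda(\mathbf{D}(\operatorname{coh} X)^{\leq 1})^{\circ}$. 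The approximating triangle for $U_\lambda(F)^{\circ}$ is then simply the image under $U_\lambda(-)^{\circ}$ of the standard truncation $F^{\leq -1}\to F\to F^{\geq 0}$ in $\mathbf{D}^-(\operatorname{coh} X)$: the bounded piece $F^{\geq 0}$ has cohomology in $[0,1]$, so \Cref{Lemma on approximating on J-2 schemes} places it in $\overline{\langle G\rangle}^{[-N,N]}$ for a uniform $N$. One triangle suffices; no tower or hocolim is needed.

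Your route has a genuine mismatch: the compact generator $G$ is linked to $\hat{G}$ through the \emph{contravariant} $U_\lambda(-)^{\circ}$, whereas you construct approximations via the \emph{covariant} quotient to $\mathbf{D}(\operatorname{Qcoh} X)$ and its flat-resolution section. These two passages are unrelated in general (they become compatible only via Grothendieck duality when a dualizing complex exists, which you explicitly do not assume), so an object in $\overline{\langle\hat{G}\rangle}^{[-N,N]}\subseteq\mathbf{D}(\operatorname{Qcoh} X)$ has no reason to pull back into $\overline{\langle G\rangle}^{[-N,N]}\subseteq\mathbf{K}_m(\operatorname{Proj} X)$. Likewise, there is no reason for the aisle $\mathsf{U}$ of the co-t-structure generated by $G$ to correspond under the quotient to $\mathbf{D}(\operatorname{Qcoh} X)^{\geq 0}$, so \Cref{Theorem on approximating on J-2 schemes} cannot be applied in the form you propose. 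The fix is to abandon the covariant comparison entirely and work through $U_\lambda(-)^{\circ}$ on $\mathbf{D}^-(\operatorname{coh} X)$, where both the co-t-structure and the compact generator are transparently visible.
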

\begin{proof}
    We will be working with the co-t-structure $(\mathsf{U},\mathsf{V})$ compactly generated by the set $U_{\lambda}(\operatorname{coh} X)^{\circ}$, where $U_{\lambda}(-)^{\circ}: \mathbf{D}^b(\operatorname{coh} X)^{\operatorname{op}} \xrightarrow{\sim} \mathbf{K}_m(\operatorname{Proj} X)^c $, see \cite[Theorem 7.4]{Murfet:2008}. By \cite[Lemma 7.23]{ManaliRahul:2025}, the closure of compacts with respect to this co-t-structure is given by $\mathbf{K}_m(\operatorname{Proj}\text{-}X)^+_c = U_\lambda(D^{-}(\operatorname{coh} X)^{\circ}$, see \Cref{Definition closure of compacts} and \Cref{Convention closure of compacts for approx/co-approx}.

    Note that $\mathbf{K}_m(\operatorname{Proj}\text{-}X)^+_c \cap \mathsf{U}  \subseteq U_\lambda(\mathbf{D}(\operatorname{coh} X)^{\leq 1})^{\circ}$ by \cite[Theorem 2.3.4]{Bondarko:2022} and \cite[Proposition 1.9]{Neeman:2021a}. But, for any $F \in \mathbf{D}(\operatorname{coh} X)^{\leq 1}$, we have the truncation triangle $F^{\leq -1} \to F \to F^{\geq 0} \to \Sigma F^{\leq -1}$ in which $F^{\leq -1} \in \mathbf{D}(\operatorname{coh} X)^{\leq -1}$ and $F^{\geq 0} \in \mathbf{D}(\operatorname{coh} X)^{\leq 1} \cap \mathbf{D}(\operatorname{coh} X)^{\geq 0} \subseteq \overline{\langle G \rangle}^{[-N,N]}$ for some object $G \in \mathbf{D}^b(\operatorname{coh} X)$ and some integer $N \geq 0$ by \Cref{Lemma on approximating on J-2 schemes}.

    So, applying $U_{\lambda}(-)^{\circ}$ to the above triangle gives us the required approximating triangle. As the rest of the axioms of weak co-quasiapproximability are easily verified, we are done with the proof.
\end{proof}

\section{Examples from algebraic geometry}\label{Section Examples from Algebraic Geometry}

In this section, we apply the results in to categories coming from algebraic geometry. To apply the results, we first need to compute the closure of compacts in these situations.
\subsection*{Closure of compacts}
In this section, we will be working with compactly generated triangulated categories with a single compact generator $G$. We will equip the category with one of the two metrics of \Cref{Example generating sequence approx/co-approx}. Throughout the section after \Cref{Lemma closure of compacts for Co-approx}, we will only be considering the closure of compacts (\Cref{Definition closure of compacts}, \Cref{Convention closure of compacts for approx/co-approx}) with respect to a metric coming from a t-structure or a co-t-structure in the preferred quasiequivalence class, see \Cref{Definition preferred equivalence class}.  

We begin with the cases where we work with the generating sequence $\mathcal{G}$ given by $\mathcal{G}^i = \{\Sigma^i G \}$, see \Cref{Definition generating sequence}. The following lemma helps us in computing the closure of compacts.

\begin{lemma}\label{Lemma closure of compacts for Co-approx}
    Let $\mathsf{T}$ be a compactly generated triangulated category with a single compact generator $G$ and a co-t-structure $(\mathsf{U},\mathsf{V})$ on $\mathsf{T}$ such that $\Hom{}{\Sigma^iG}{\mathsf{U}} = 0$ for $i >> 0$. Then,
    \[\mathsf{T}^+_c = \{\hocolim E_i : E_i \in \mathsf{T}^c, \{E_i \to E_{i+1}\}_{i \geq 1} \text{ such that }\operatorname{Cone}(E_i \to E_{i+1}) \in \Sigma^{-i-1}\mathsf{U}\}\]
    where the closure of compacts is with respect to the metric $\{\Sigma^{-n}\mathsf{U}\}$, see \Cref{Convention closure of compacts for approx/co-approx}.
    
    In particular, the closure of compacts with respect to any other co-t-structure $(\mathsf{U}',\mathsf{V}')$ such that $\Sigma^{-N} \mathsf{U} \cap \mathsf{T}^c \subseteq \mathsf{U}'\cap \mathsf{T}^c \subseteq \Sigma^N \mathsf{U}\cap \mathsf{T}^c$ for some $N \geq 0$ is the same.  
\end{lemma}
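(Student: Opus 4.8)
The plan is to deduce everything from \Cref{Lemma approximating sequence for closure of compacts}. First I would fix the data it requires: the generating sequence $\mathcal{G}^i=\{\Sigma^iG\}$, which is a generating sequence by \Cref{Example generating sequence approx/co-approx}, together with the orthogonal metric $\mathcal{R}_n=\Sigma^{-n}\mathsf{U}$ attached to the co-t-structure $(\mathsf{U},\mathsf{V})$ (\Cref{Definition co-t-structure/weight structure}), so that by \Cref{Convention closure of compacts for approx/co-approx} the category $\mathsf{T}^+_c$ is the closure of compacts for this data. Then I would verify the hypothesis of \Cref{Lemma approximating sequence for closure of compacts}: for each fixed $n$ one has $\HomT{\Sigma^nG}{\Sigma^{-i}\mathsf{U}}\cong\HomT{\Sigma^{n+i}G}{\mathsf{U}}$, which vanishes for $i\gg 0$ by the standing hypothesis $\HomT{\Sigma^iG}{\mathsf{U}}=0$ for $i\gg 0$. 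Hence that lemma applies and tells us: $F\in\mathsf{T}^+_c$ if and only if $F$ admits a sequence $E_1\to E_2\to\cdots$ in $\mathsf{T}^c$ mapping to it with $\operatorname{Cone}(E_i\to F)\in\Sigma^{-i}\mathsf{U}$, and then $F\cong\hocolim E_i$.

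For the inclusion $\subseteq$, I would take such a sequence, with the compatible transition maps $E_i\to E_{i+1}$ coming from its construction, and apply the octahedral axiom to $E_i\to E_{i+1}\to F$. This exhibits $\operatorname{Cone}(E_i\to E_{i+1})$ inside a triangle with $\Sigma^{-1}\operatorname{Cone}(E_{i+1}\to F)\in\Sigma^{-i-2}\mathsf{U}$ and $\operatorname{Cone}(E_i\to F)\in\Sigma^{-i}\mathsf{U}$; using $\Sigma^{-1}\mathsf{U}\subseteq\mathsf{U}$ and that $\mathsf{U}$ is closed under extensions (a standard feature of co-t-structures) we get $\operatorname{Cone}(E_i\to E_{i+1})\in\Sigma^{-i}\mathsf{U}$. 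Re-indexing the sequence (replacing $E_i$ by $E_{i+1}$), which does not change $\hocolim E_i=F$, sharpens this to $\operatorname{Cone}(E_i\to E_{i+1})\in\Sigma^{-i-1}\mathsf{U}$, so $F$ lies in the right-hand side.

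For $\supseteq$, I would start from a sequence $E_1\to E_2\to\cdots$ of compacts with $\operatorname{Cone}(E_i\to E_{i+1})\in\Sigma^{-i-1}\mathsf{U}$ and set $F\colonequals\hocolim E_i$. An induction on $j-i$ using the octahedral axiom (again with $\Sigma^{-1}\mathsf{U}\subseteq\mathsf{U}$ and extension-closedness) gives $\operatorname{Cone}(E_i\to E_j)\in\Sigma^{-i-1}\mathsf{U}$ for all $j>i$. As $\hocolim$ sends a triangle of $\mathbb{N}$-indexed sequences to a triangle, $\operatorname{Cone}(E_i\to F)\cong\hocolim_{j\geq i}\operatorname{Cone}(E_i\to E_j)$, and its Milnor triangle $\bigoplus_{j\geq i}\operatorname{Cone}(E_i\to E_j)\to\bigoplus_{j\geq i}\operatorname{Cone}(E_i\to E_j)\to\operatorname{Cone}(E_i\to F)\to\Sigma(-)$ places $\operatorname{Cone}(E_i\to F)$ in a triangle with an object of $\Sigma^{-i-1}\mathsf{U}$ and an object of $\Sigma^{-i}\mathsf{U}$, hence in $\Sigma^{-i}\mathsf{U}=\mathcal{R}_i$. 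The triangles $E_i\to F\to\operatorname{Cone}(E_i\to F)\to\Sigma E_i$ then show $F\in\bigcap_n\mathsf{T}^c\star\mathcal{R}_n=\mathsf{T}^+_c$ by \Cref{Definition closure of compacts}. I expect the Milnor-triangle step to be the real obstacle: it needs $\mathsf{U}$ to absorb the coproduct $\bigoplus_{j\geq i}\operatorname{Cone}(E_i\to E_j)$. This is automatic for the co-t-structures that occur in the applications (the brutal-truncation co-t-structure on $\mathbf{K}(\operatorname{Inj}\mathcal{C})$, and the co-t-structure generated by $G$, whose left-hand class is a left orthogonal, hence closed under coproducts), so I would either add closure of $\mathsf{U}$ under coproducts to the hypotheses or record it as a remark.

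Finally, the last assertion follows formally from the description obtained above: the defining condition $\operatorname{Cone}(E_i\to E_{i+1})\in\Sigma^{-i-1}\mathsf{U}$ only involves $\mathsf{U}\cap\mathsf{T}^c$, because the cone of a map between compact objects is compact. So the collection of these homotopy colimits depends only on the sequence $\{\Sigma^{-n}\mathsf{U}\cap\mathsf{T}^c\}_n$ of subcategories of $\mathsf{T}^c$; and when $\Sigma^{-N}\mathsf{U}\cap\mathsf{T}^c\subseteq\mathsf{U}'\cap\mathsf{T}^c\subseteq\Sigma^{N}\mathsf{U}\cap\mathsf{T}^c$, shifting the index of a sequence by $N$ turns a sequence good for $\mathsf{U}$ into one good for $\mathsf{U}'$ and conversely, without altering the homotopy colimit, so the two closures of compacts coincide. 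Apart from that coproduct point, the proof is octahedral bookkeeping plus the reindexing.
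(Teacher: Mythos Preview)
Your proof is correct and follows the paper's approach: the paper's proof is the single line ``This is immediate from \Cref{Lemma approximating sequence for closure of compacts}.'' You have filled in the translation between the characterisation of \Cref{Lemma approximating sequence for closure of compacts} (in terms of $\operatorname{Cone}(E_n\to F)$) and the one in the statement (in terms of $\operatorname{Cone}(E_i\to E_{i+1})$), which the paper leaves implicit; your octahedral and Milnor-triangle bookkeeping is exactly what is needed.

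Your one hesitation --- closure of $\mathsf{U}$ under coproducts --- is not an issue here: the paper records in \Cref{Definition co-t-structure/weight structure} that the metric $\{\Sigma^{-n}\mathsf{U}\}$ attached to a co-t-structure is an \emph{orthogonal} metric, i.e.\ $\Sigma^{-n}\mathsf{U}={}^{\perp}\big[(\Sigma^{-n}\mathsf{U})^{\perp}\big]$ for all $n$, and a left orthogonal is automatically closed under arbitrary coproducts. So there is no missing hypothesis to add.
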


\begin{proof}
    This is immediate from \Cref{Lemma approximating sequence for closure of compacts}.
\end{proof}

\begin{proposition}\label{Proposition closure of compacts for Kinj}
    Let $\mathcal{C}$ be a locally noetherian Grothendieck abelian category such that $\mathbf{K}(\operatorname{Inj} \mathcal{C})$ is weakly co-approximable and the standard co-t-structure lies in the preferred quasiequivalence class, see \Cref{Definition preferred equivalence class}. Recall that the full subcategory of compact objects $\mathbf{K}(\operatorname{Inj} \mathcal{C})^c \cong  \mathbf{D}^b(\operatorname{noeth}(\mathcal{C}))$ by \cite[Proposition 2.3]{Krause:2004}. Then,
    \begin{itemize}
        \item $\mathbf{K}(\operatorname{Inj} \mathcal{C})^+_c = \mathbf{D}^{+}(\operatorname{noeth}(X))$
        \item $\mathbf{K}(\operatorname{Inj} \mathcal{C})^- = \bigcup_{n \in \mathbb{Z}}\mathbf{K}(\operatorname{Inj} \mathcal{C})^{\leq n}$
        \item $\mathbf{K}(\operatorname{Inj} \mathcal{C})^b_c = \mathbf{D}^{b}(\operatorname{noeth} \mathcal{C}) \cap \mathbf{K}^{b}(\operatorname{Inj} \mathcal{C})$
    \end{itemize}
    with notation as in \Cref{Convention closure of compacts for approx/co-approx}.
\end{proposition}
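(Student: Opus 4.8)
The plan is to transport all three computations to the \emph{standard} co-t-structure $(\mathbf{K}(\operatorname{Inj}\mathcal{C})^{\geq 0},\mathbf{K}(\operatorname{Inj}\mathcal{C})^{\leq 0})$: by hypothesis it lies in the preferred quasiequivalence class, and by \Cref{Definition closure of compacts} and \Cref{Lemma approximating sequence for closure of compacts} the subcategories $\mathbf{K}(\operatorname{Inj}\mathcal{C})^{+}_c$, $\mathcal{G}^{\perp}$ and $\mathbf{K}(\operatorname{Inj}\mathcal{C})^{b}_c$ only depend on the equivalence class of the metric. I will use throughout: Krause's identification $\mathbf{K}(\operatorname{Inj}\mathcal{C})^c\cong\mathbf{D}^b(\operatorname{noeth}\mathcal{C})$ (\cite[Proposition 2.3]{Krause:2004}); the compact generator $\hat G\in\mathbf{D}^b(\operatorname{noeth}\mathcal{C})$ of \Cref{Proposition weak co-approx for Kinj}, represented by a bounded-below (hence K-injective) complex of injectives; and the identification from \Cref{Lemma coaisle of derived category and "aisle" of homotopy category} under which $\mathbf{K}(\operatorname{Inj}\mathcal{C})^{\geq n}$ is exactly the complexes of injectives with cohomology in degrees $\geq n$, so that $\mathcal{R}_n=\mathbf{K}(\operatorname{Inj}\mathcal{C})^{\geq n}$.

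\emph{First bullet.} For ``$\supseteq$'': given $F\in\mathbf{D}^{+}(\operatorname{noeth}\mathcal{C})$, represent it by a bounded-below complex of injectives; for each $n$ the canonical truncation, after replacing $\tau^{\leq n}F$ by a bounded injective resolution, gives a genuine triangle $\tau^{\leq n}F\to F\to\tau^{>n}F$ in $\mathbf{K}(\operatorname{Inj}\mathcal{C})$ with $\tau^{\leq n}F\in\mathbf{D}^b(\operatorname{noeth}\mathcal{C})=\mathbf{K}(\operatorname{Inj}\mathcal{C})^c$ and $\tau^{>n}F\in\mathbf{K}(\operatorname{Inj}\mathcal{C})^{\geq n+1}\subseteq\mathcal{R}_n$, so $F\in\bigcap_n\mathbf{K}(\operatorname{Inj}\mathcal{C})^c\star\mathcal{R}_n$. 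For ``$\subseteq$'': given $F\in\mathbf{K}(\operatorname{Inj}\mathcal{C})^{+}_c$, the defining approximation $E_0\to F\to D_0$ with $D_0\in\mathcal{R}_0=\mathbf{K}(\operatorname{Inj}\mathcal{C})^{\geq 0}$ has $H^k(D_0)=0$ for $k<0$, so the long exact sequence gives $H^k(F)\cong H^k(E_0)$ for $k<0$; since $E_0$ is compact it has bounded cohomology, whence $F$ is bounded below. Running the same argument with $D_n\in\mathbf{K}(\operatorname{Inj}\mathcal{C})^{\geq n}$ gives $H^k(F)\cong H^k(E_n)$ for $k<n$, so every $H^k(F)$ is noetherian; hence $F\in\mathbf{D}^{+}(\operatorname{noeth}\mathcal{C})$.

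\emph{Second bullet.} By \Cref{Definition closure of compacts}, $\mathcal{G}^{\perp}=\bigcup_n(\langle\hat G\rangle^{[n,\infty)})^{\perp}$, orthogonals taken in $\mathbf{K}(\operatorname{Inj}\mathcal{C})$. The preferred quasiequivalence class hypothesis gives a fixed $m$ with $\langle\hat G\rangle^{[n+m,\infty)}\subseteq\mathbf{K}(\operatorname{Inj}\mathcal{C})^{\geq n}\cap\mathbf{K}(\operatorname{Inj}\mathcal{C})^c\subseteq\langle\hat G\rangle^{[n-m,\infty)}$ for all $n$; taking orthogonals and unions sandwiches $\mathcal{G}^{\perp}$ between $\bigcup_n(\mathbf{K}(\operatorname{Inj}\mathcal{C})^{\geq n})^{\perp}$ and $\bigcup_n(\mathbf{K}(\operatorname{Inj}\mathcal{C})^{\geq n}\cap\mathbf{K}(\operatorname{Inj}\mathcal{C})^c)^{\perp}$. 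The co-t-structure axioms give $(\mathbf{K}(\operatorname{Inj}\mathcal{C})^{\geq n+1})^{\perp}=\mathbf{K}(\operatorname{Inj}\mathcal{C})^{\leq n}$, so the first union is $\bigcup_n\mathbf{K}(\operatorname{Inj}\mathcal{C})^{\leq n}$, the asserted answer. It remains to see the second union is contained in it: let $X$ be a complex of injectives with $\operatorname{Hom}_{\mathbf{K}(\operatorname{Inj}\mathcal{C})}(W,X)=0$ for all compact $W\in\mathbf{K}(\operatorname{Inj}\mathcal{C})^{\geq n}$. Testing against $\Sigma^{-j}N$ for $N\in\operatorname{noeth}\mathcal{C}$ (absorbing the level of $N$ in $\langle\hat G\rangle$ into $n$), \Cref{Lemma similar to Lemma 2.1 Krause:2004} gives $\operatorname{Hom}_{\mathbf{K}(\mathcal{C})}(N,X[j])=0$ for all large $j$; a noetherian-subobject argument on the cocycles $Z^j(X)$ then forces $H^k(X)=0$ for $k\geq n$, and applying the same vanishing to the brutal truncation $\sigma^{\geq n}X$ — a K-injective resolution of its single cohomology $B=Z^n(X)$ in degree $n$ — yields $\operatorname{Ext}^i_{\mathcal{C}}(N,B)=0$ for all noetherian $N$ and all $i\geq 0$, hence $B=0$; so $\sigma^{\geq n}X$ is acyclic and bounded below, therefore contractible, and $X\cong\sigma^{\leq n-1}X\in\mathbf{K}(\operatorname{Inj}\mathcal{C})^{\leq n-1}$.

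\emph{Third bullet.} Combining the first two bullets, $\mathbf{K}(\operatorname{Inj}\mathcal{C})^{b}_c=\mathbf{K}(\operatorname{Inj}\mathcal{C})^{+}_c\cap\mathcal{G}^{\perp}=\mathbf{D}^{+}(\operatorname{noeth}\mathcal{C})\cap\bigcup_n\mathbf{K}(\operatorname{Inj}\mathcal{C})^{\leq n}$. An $F$ in this intersection has cohomology bounded below, bounded above and noetherian, so $F\in\mathbf{D}^{b}(\operatorname{noeth}\mathcal{C})$; representing $F$ by a bounded-below complex of injectives and using a homotopy equivalence $F\cong F'$ with $F'$ concentrated in degrees $\leq n$, the inclusion $\sigma^{\geq n+1}F\to F$ becomes a chain map between complexes with disjoint supports, hence vanishes in $\mathbf{K}(\operatorname{Inj}\mathcal{C})$, so $F$ is a direct summand of the bounded complex $\sigma^{\leq n}F$ and thus lies in $\mathbf{K}^{b}(\operatorname{Inj}\mathcal{C})$; the reverse inclusion $\mathbf{D}^{b}(\operatorname{noeth}\mathcal{C})\cap\mathbf{K}^{b}(\operatorname{Inj}\mathcal{C})\subseteq\mathbf{D}^{+}(\operatorname{noeth}\mathcal{C})\cap\bigcup_n\mathbf{K}(\operatorname{Inj}\mathcal{C})^{\leq n}$ is immediate. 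I expect the main obstacle to be the ``$\subseteq$'' half of the second bullet, namely that membership in the co-aisle $\mathbf{K}(\operatorname{Inj}\mathcal{C})^{\leq n}$ is detected by orthogonality to the \emph{compact} part of $\mathbf{K}(\operatorname{Inj}\mathcal{C})^{\geq n}$; this is where weak co-approximability (via \Cref{Lemma Coapprox implies preferred equivaence class} and \Cref{Proposition weak co-approx for Kinj}) is used essentially, and where exotic acyclic complexes of injectives must be ruled out — handled above by the observation that a bounded-below acyclic complex of injectives is contractible.
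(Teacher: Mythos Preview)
Your first and third bullets are correct and match the paper's (very terse) argument; the paper simply says the first follows from the truncation triangles and the third ``follows from the above computations.''

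For the second bullet the paper dismisses it as ``trivially follows from the definitions'', but you are right that this is where weak co-approximability is actually used. However, your route through noetherian subobjects of cocycles and $\operatorname{Ext}$-vanishing is tangled and has a gap. The bound ``$\operatorname{Hom}_{\mathbf{K}(\mathcal{C})}(N,X[j])=0$ for all large $j$'' depends on the particular noetherian object $N$ (through the integer $c_N$ with $N\in\langle\hat G\rangle^{[-c_N,c_N]}$), so you cannot conclude $H^{k}(X)=0$ uniformly for $k\geq n$: to kill $H^{k}(X)$ you would pick a noetherian $N\subseteq Z^{k}(X)$ not in $B^{k}(X)$, but then you only know vanishing for $j\geq n+c_N$, and $c_N$ depends on $k$ through $N$. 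The subsequent step ``applying the same vanishing to $\sigma^{\geq n}X$'' inherits the same problem.

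The clean argument, which is presumably what the paper has in mind, uses directly the inclusion $\mathbf{K}(\operatorname{Inj}\mathcal{C})^{\geq m}\subseteq\overline{\langle\hat G\rangle}^{[m-N-1,\infty)}$ established in the proof of \Cref{Proposition weak co-approx for Kinj}. If $X\in(\langle\hat G\rangle^{[n,\infty)})^{\perp}$, then since $\hat G$ is compact this gives $X\in\big(\overline{\langle\hat G\rangle}^{[n,\infty)}\big)^{\perp}\subseteq\big(\mathbf{K}(\operatorname{Inj}\mathcal{C})^{\geq m}\big)^{\perp}$ for any $m\geq n+N+1$. Now the brutal-truncation map $\sigma^{\geq m}X\to X$ has source in $\mathbf{K}(\operatorname{Inj}\mathcal{C})^{\geq m}$, hence vanishes, so $X$ is a summand of $\sigma^{\leq m-1}X\in\mathbf{K}(\operatorname{Inj}\mathcal{C})^{\leq m-1}$. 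This replaces your cocycle analysis entirely and avoids any dependence on a varying test object.
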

\begin{proof}
    We can compute the closure of compacts \Cref{Lemma closure of compacts for Co-approx} with respect to the standard co-t-structure as it lies in the preferred equivalence class.

    Let $F \in \mathbf{K}(\operatorname{Inj} \mathcal{C})^+_c$. Then, by definition for each integer $n$ there exists a triangle $E_n \to F \to D_n \to \Sigma E_n$ with $E_n \in \mathbf{D}^{b}(\operatorname{noeth} \mathcal{C})$ and $\mathbf{K}(\operatorname{Inj} \mathcal{C})^{\geq n}$. This immediately implies that $F$ has bounded below and coherent cohomology, that is, it belongs to $\mathbf{D}^{+}(\operatorname{noeth}(\mathcal{C}))$. Conversely, for any $F \in \mathbf{D}^{+}(\operatorname{noeth}(\mathcal{C}))$, the truncation triangles easily give us that $F \in \mathbf{K}(\operatorname{Inj} \mathcal{C})^+_c$.

    It trivially follows from the definitions that $\mathbf{K}(\operatorname{Inj} \mathcal{C})^- = \bigcup_{n \in \mathbb{Z}}\mathbf{K}(\operatorname{Inj} \mathcal{C})^{\leq n}$. Finally, from the above computations, it follows that $\mathbf{K}(\operatorname{Inj} \mathcal{C})^b_c = \mathbf{D}^{b}(\operatorname{noeth} \mathcal{C}) \cap \mathbf{K}^{b}(\operatorname{Inj} \mathcal{C})$. 
\end{proof}

\begin{corollary}\label{Coroallary closure of compacts Kinj schemes and algebras}
    Let $X$ be a noetherian, finite dimension scheme such that every integral closed subscheme is J-0. Then, with notation as in \Cref{Convention closure of compacts for approx/co-approx},
    \begin{itemize}
        \item $\mathbf{K}(\operatorname{Inj} X)^+_c = \mathbf{D}^{+}(\operatorname{coh} X)$
        \item $\mathbf{K}(\operatorname{Inj} X)^- = \bigcup_{n \in \mathbb{Z}}\mathbf{K}(\operatorname{Inj} X)^{\leq n}$
        \item $\mathbf{K}(\operatorname{Inj} X)^b_c = \mathbf{D}^{b}(\operatorname{coh} X) \cap \mathbf{K}^{b}(\operatorname{Inj} X)$
    \end{itemize}
    Suppose that $X$ is further J-2, then, for any coherent $\mathcal{O}_X$-algebra $\mathcal{A}$, we have that,
    \begin{itemize}
        \item $\mathbf{K}(\operatorname{Inj} \mathcal{A})^+_c = \mathbf{D}^{+}(\operatorname{coh} \mathcal{A})$
        \item $\mathbf{K}(\operatorname{Inj} \mathcal{A})^- = \bigcup_{n \in \mathbb{Z}}\mathbf{K}(\operatorname{Inj} \mathcal{A})^{\leq n}$
        \item $\mathbf{K}(\operatorname{Inj} \mathcal{A})^b_c = \mathbf{D}^{b}(\operatorname{coh} \mathcal{A}) \cap \mathbf{K}^{b}(\operatorname{Inj} \mathcal{A})$
    \end{itemize}
\end{corollary}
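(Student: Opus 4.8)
The plan is to read off the corollary directly from \Cref{Proposition closure of compacts for Kinj}, once the weak co-approximability hypotheses of that proposition are verified from the results already proved in this section. First I would record the standard fact that for a noetherian scheme $X$ the category $\operatorname{Qcoh} X$ is a locally noetherian Grothendieck abelian category with $\operatorname{noeth}(\operatorname{Qcoh} X) = \operatorname{coh} X$, and similarly for a coherent $\mathcal{O}_X$-algebra $\mathcal{A}$ the category $\operatorname{Qcoh}\mathcal{A}$ is locally noetherian with $\operatorname{noeth}(\operatorname{Qcoh}\mathcal{A}) = \operatorname{coh}\mathcal{A}$. This is exactly the input format expected by \Cref{Proposition closure of compacts for Kinj}.

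For the scheme case, \Cref{Theorem J-2 implies weak co-approximablility for schemes} applies under precisely the stated hypotheses (noetherian, finite-dimensional, every integral closed subscheme J-0), and it gives both that $\mathbf{K}(\operatorname{Inj} X)$ is weakly co-approximable and that the standard co-t-structure lies in the preferred equivalence class. Since an $\mathbb{N}$-equivalence of the orthogonal metrics on $\mathsf{T}$ restricts to an $\mathbb{N}$-equivalence of the induced metrics on $\mathsf{T}^c$, the preferred equivalence class sits inside the preferred quasiequivalence class (see \Cref{Definition preferred equivalence class}), so the hypotheses of \Cref{Proposition closure of compacts for Kinj} are met with $\mathcal{C} = \operatorname{Qcoh} X$; the three identities for $\mathbf{K}(\operatorname{Inj} X)^+_c$, $\mathbf{K}(\operatorname{Inj} X)^-$, and $\mathbf{K}(\operatorname{Inj} X)^b_c$ then follow verbatim, using $\operatorname{noeth}(\operatorname{Qcoh} X)=\operatorname{coh} X$. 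The algebra statements follow the same way, with \Cref{Theorem J-2 implies weak co-approximablility for algebras} in place of \Cref{Theorem J-2 implies weak co-approximablility for schemes} (this is where the extra J-2 hypothesis on $X$ is used) and $\mathcal{C} = \operatorname{Qcoh}\mathcal{A}$.

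I do not expect any substantive obstacle here: the corollary is a specialization of \Cref{Proposition closure of compacts for Kinj} to the geometric setting, and the only point needing a word of care is the (immediate) passage from the preferred equivalence class to the preferred quasiequivalence class, which is all that \Cref{Proposition closure of compacts for Kinj} actually requires. Everything else is a direct citation of results proved earlier in this section.
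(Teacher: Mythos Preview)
Your proposal is correct and matches the paper's own proof exactly: the paper simply writes ``Immediate from \Cref{Proposition closure of compacts for Kinj} using \Cref{Theorem J-2 implies weak co-approximablility for schemes} and \Cref{Theorem J-2 implies weak co-approximablility for algebras}.'' Your additional remarks (identifying $\operatorname{noeth}(\operatorname{Qcoh} X)=\operatorname{coh} X$ and noting that the preferred equivalence class lies in the preferred quasiequivalence class) are correct elaborations of what the paper leaves implicit.
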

\begin{proof}
    Immediate from \Cref{Proposition closure of compacts for Kinj} using \Cref{Theorem J-2 implies weak co-approximablility for schemes} and \Cref{Theorem J-2 implies weak co-approximablility for algebras}.
\end{proof}

\begin{corollary}\label{Corollary closure of compacts Kmproj schemes}
    Let $X$ be a noetherian, separated, finite dimensional scheme such that every integral closed subscheme is J-0. Theen, with notation as in \Cref{Convention closure of compacts for approx/co-approx},
    \begin{itemize}
        \item $\mathbf{K}_m(\operatorname{Proj} X)^+_c = U_\lambda(\mathbf{D}^{-}(\operatorname{coh} X))^{\circ}$
        \item $\mathbf{K}_m(\operatorname{Proj} X)^b_c = U_\lambda(\mathbf{D}^{\operatorname{perf}}(X))^{\circ}$
    \end{itemize}
    
\end{corollary}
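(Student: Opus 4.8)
The plan is to derive both identities from facts already established for $\mathbf{K}_m(\operatorname{Proj} X)$. For the first bullet, recall that in the proof of \Cref{Proposition Kmproj is Co-approx} it is shown, via \cite[Lemma 7.23]{ManaliRahul:2025}, that the closure of compacts of $\mathbf{K}_m(\operatorname{Proj} X)$ with respect to the co-t-structure $(\mathsf{U},\mathsf{V})$ compactly generated by $U_\lambda(\operatorname{coh} X)^{\circ}$ equals $U_\lambda(\mathbf{D}^{-}(\operatorname{coh} X))^{\circ}$; the same proposition shows $\mathbf{K}_m(\operatorname{Proj} X)$ is weakly co-quasiapproximable for $(\mathsf{U},\mathsf{V})$, so \Cref{Lemma Coapprox implies preferred equivaence class} places $(\mathsf{U},\mathsf{V})$ in the preferred quasiequivalence class of co-t-structures. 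This is meaningful since $\mathbf{K}_m(\operatorname{Proj} X)$ carries a single compact generator, e.g.\ $G_0 \colonequals U_\lambda(\hat G)^{\circ}$ for $\hat G$ a classical generator of $\mathbf{D}^b(\operatorname{coh} X)$ (which exists by \Cref{Theorem Compact Generator for Kinj}). By \Cref{Lemma closure of compacts for Co-approx} the closure of compacts is then unchanged if computed with respect to any co-t-structure in the preferred quasiequivalence class, so it agrees with the $\mathbf{K}_m(\operatorname{Proj} X)^+_c$ of \Cref{Convention closure of compacts for approx/co-approx}, giving the first identity.

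For the second bullet I would start from $\mathbf{K}_m(\operatorname{Proj} X)^b_c = \mathbf{K}_m(\operatorname{Proj} X)^+_c \cap \mathcal{G}^{\perp}$ (\Cref{Definition closure of compacts}), where $\mathcal{G}^n = \{\Sigma^n G_0\}$, so that $\mathcal{G}^{\perp} = \{F : \HomT{\Sigma^i G_0}{F} = 0 \text{ for } i \ll 0\}$. Combined with the first bullet, it remains to determine, for $F' \in \mathbf{D}^{-}(\operatorname{coh} X)$, when $U_\lambda(F')^{\circ}$ lies in $\mathcal{G}^{\perp}$. Using the contravariant equivalence $U_\lambda(-)^{\circ}$ of \cite[Theorem 7.4]{Murfet:2008} on compacts together with Murfet's identification of maps out of a compact object into the closure of compacts with maps in $\mathbf{D}(\operatorname{Qcoh} X)$ (what underlies \cite[Lemma 7.23]{ManaliRahul:2025}), I would identify the graded group $\{\HomT{\Sigma^i G_0}{U_\lambda(F')^{\circ}}\}_i$ with a graded $\operatorname{Ext}$-group between $F'$ and $\hat G$ computed in $\mathbf{D}(\operatorname{Qcoh} X)$ (up to a reindexing, and up to replacing $F'$ by its derived $\mathcal{O}_X$-dual should $(-)^{\circ}$ be set up with a duality — harmless, since perfectness is preserved by $\mathcal{O}_X$-duality). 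This would show $U_\lambda(F')^{\circ} \in \mathcal{G}^{\perp}$ if and only if $\operatorname{Ext}^j_{\mathbf{D}(\operatorname{Qcoh} X)}(F', \hat G) = 0$ for $j \gg 0$. Since $\hat G$ classically generates $\mathbf{D}^b(\operatorname{coh} X)$, a d\'evissage reduces this to $\operatorname{Ext}^j(F', E) = 0$ for $j \gg 0$ for all $E \in \mathbf{D}^b(\operatorname{coh} X)$; specialising $E$ to the residue fields $k(x)$ and using that $X$ is noetherian and finite-dimensional would force $F'$ to have bounded cohomology and finite Tor-dimension, i.e.\ $F' \in \mathbf{D}^{\operatorname{perf}}(X)$, while the converse is clear since a perfect $F'$ has perfect, hence bounded, $\operatorname{RHom}(F', \hat G)$. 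This yields $\mathbf{K}_m(\operatorname{Proj} X)^b_c = U_\lambda(\mathbf{D}^{\operatorname{perf}}(X))^{\circ}$.

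The hard part will be the middle step of the second bullet: pinning down exactly which $\operatorname{Ext}$-group in $\mathbf{D}(\operatorname{Qcoh} X)$ the Hom in $\mathbf{K}_m(\operatorname{Proj} X)$ corresponds to, being careful with Murfet's conventions for $U_\lambda$ and $(-)^{\circ}$ and with the variance, and then checking that the high-degree vanishing of these groups is precisely the characterisation of perfect complexes inside $\mathbf{D}^{-}(\operatorname{coh} X)$ on a noetherian finite-dimensional scheme — finite-dimensionality being what upgrades pointwise finite projective dimension to a uniform global bound. A possible shortcut worth checking is whether Murfet already identifies $U_\lambda(\mathbf{D}^{\operatorname{perf}}(X))^{\circ}$ with the objects of $\mathbf{K}_m(\operatorname{Proj} X)$ that become compact under the Verdier quotient onto $\mathbf{D}(\operatorname{Qcoh} X)$, which would bypass the $\operatorname{Ext}$ bookkeeping.
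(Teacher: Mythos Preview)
Your argument for the first bullet is exactly the paper's: combine \Cref{Proposition Kmproj is Co-approx}, \cite[Lemma 7.23]{ManaliRahul:2025}, and \Cref{Lemma Coapprox implies preferred equivaence class}.

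For the second bullet you take a different route from the paper, and it is worth contrasting them. Both of you translate, via the contravariant equivalence $U_\lambda(-)^{\circ}$, the condition $U_\lambda(F')^{\circ} \in \mathcal{G}^{\perp}$ into a vanishing condition of the form $\Hom_{\mathbf{D}}(F',\Sigma^k\hat G)=0$ for $k\gg 0$, equivalently $\Hom(F',\mathbf{D}^b(\operatorname{coh} X)^{\leq -n})=0$ for some $n$. The divergence is in showing this forces $F'\in\mathbf{D}^{\operatorname{perf}}(X)$. The paper does it in one stroke using approximability of $\mathbf{D}_{\operatorname{Qcoh}}(X)$: for any $F'\in\mathbf{D}^{-}(\operatorname{coh} X)$ there are triangles $E_n\to F'\to D_n$ with $E_n$ perfect and $D_n\in\mathbf{D}(\operatorname{coh} X)^{\leq -n}$ (via \cite[Theorem 3.3]{Neeman:2022} and \cite[Tag 0FDA]{StacksProject}); the vanishing condition kills $F'\to D_n$ for large $n$, so $F'$ is a retract of $E_n$. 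Your d\'evissage-and-residue-fields approach is also viable, but carries more bookkeeping: you must restrict to closed points (skyscrapers at non-closed points are not coherent), and the passage from pointwise perfectness to global perfectness on a noetherian scheme is via quasi-compactness and semicontinuity of Tor-amplitude rather than finite-dimensionality as you suggest. Also, your hedge about an $\mathcal{O}_X$-duality hiding in $(-)^{\circ}$ is unneeded: $U_\lambda(-)^{\circ}$ is a purely formal contravariant equivalence, no derived dual enters. The paper's approach buys you a cleaner, more conceptual argument that stays inside the metric/approximability framework; yours is more hands-on and would recover a classical characterisation of perfect complexes, but at the cost of the extra local-to-global verification.
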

\begin{proof}
    By \Cref{Lemma Coapprox implies preferred equivaence class}, the co-t-structure defined in the proof of \Cref{Proposition Kmproj is Co-approx} lies in the preferred quasiequivalence class, see \Cref{Definition preferred equivalence class}. Hence, the closure of compacts can be computed using that co-t-structure. Then, the fact that $\mathbf{K}_m(\operatorname{Proj} X)^+_c = U_\lambda(\mathbf{D}^{-}(\operatorname{coh} X))^{\circ}$ follows from \cite[Lemma 7.23]{ManaliRahul:2025}.

    Finally, it is clear that $\mathbf{K}_m(\operatorname{Proj} X)^b_c = U_\lambda(\mathsf{S})^{\circ}$, where $\mathsf{S}$ is the full subcategory of $\mathbf{D}^{-}(\operatorname{coh} X)$ defined by,
    \[\mathsf{S} =  \{ F \in \mathbf{D}^{-}(\operatorname{coh} X) : \HomT{F}{\mathbf{D}(\operatorname{coh} X)^{\leq -n}} = 0 \text{ for all } n > > 0 \}\]
    But for any $F \in \mathbf{D}^{-}(\operatorname{coh} X)$, there exist triangles $E_n \to F \to D_n \to \Sigma E_n$ by \cite[Theorem 3.3]{Neeman:2022} and \cite[\href{https://stacks.math.columbia.edu/tag/0FDA}{Tag 0FDA}]{StacksProject}. But, if $F$ is further in $\mathsf{S}$, then the map $F \to D_n$ vanishes for large $n$, which implies $F \in \mathbf{D}^{\operatorname{perf}}(X)$. The other inclusion is standard.
\end{proof}

Now, we come to those cases where we work with the generating sequence $\mathcal{G}$ given by $\mathcal{G}^i = \{\Sigma^{-i} G \}$. We begin with the following result from \cite{Neeman:2022}.

\begin{proposition}\label{Proposition Closure of compacts DQcoh schemes}\cite[page 281]{Neeman:2022} Let $X$ be a quasicompact, quasiseparated scheme, and $Z$ a closed subset with quasicompact complement. Then, the derived category of sheaves with quasicoherent cohomology supported on $Z$, denoted $\mathbf{D}_{\operatorname{Qcoh}}(X)$, has a single compact generator and the full subcategry of compact objects is given by $\mathbf{D}^{\operatorname{perf}}_{Z}(X)$, which is the derived category of perfect complexes supported on $Z$. The closure of compact is given by,
\begin{itemize}
    \item $\mathbf{D}_{\operatorname{Qcoh},Z}(X)^-_c = \mathbf{D}^{p}_{\operatorname{Qcoh},Z}(X)$
    \item $\mathbf{D}_{\operatorname{Qcoh},Z}(X)^b_c = \mathbf{D}^{p,b}_{\operatorname{Qcoh},Z}(X)$
\end{itemize}
    with notation as in \Cref{Convention closure of compacts for approx/co-approx}. 

    $\mathbf{D}^{p}_{\operatorname{Qcoh},Z}(X)$ here denotes the full subcategory of $\mathbf{D}_{\operatorname{Qcoh},Z}(X)$ consisting of the pseudocohrent complexes, while $\mathbf{D}^{p,b}_{\operatorname{Qcoh},Z}(X)$ denotes the full subcategory of pseudocoherent complexes with bounded cohomology. If $X$ is noetherian, then these categories agree with the derived category of sheaves with bounded below and coherent cohomology supported on $Z$, denoted $\mathbf{D}^{-}_{\operatorname{coh},Z}(X)$, and the bounded derived category of sheaves with coherent cohomology supported on $Z$, which is denoted by $\mathbf{D}^{b}_{\operatorname{coh},Z}(X)$, respectively.
\end{proposition}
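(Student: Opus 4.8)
The plan is to deduce this from the general theory of approximable triangulated categories together with the classical structure theory of $\mathbf{D}_{\operatorname{Qcoh}}(X)$; indeed the assertions are exactly those worked out in \cite{Neeman:2022}, and I would organise the argument as follows.

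\emph{Compact generation.} The first claim — that $\mathbf{D}_{\operatorname{Qcoh},Z}(X)$ is compactly generated by a single object with $\mathbf{D}_{\operatorname{Qcoh},Z}(X)^c = \mathbf{D}^{\operatorname{perf}}_Z(X)$ — is classical (Thomason–Trobaugh, \cite{Bondal/VanDenBergh:2003}, Rouquier, Neeman). Concretely, for $X$ quasicompact and quasiseparated and $Z$ with quasicompact complement one produces a perfect complex $G$ whose support is exactly $Z$ and which generates $\mathbf{D}_{\operatorname{Qcoh},Z}(X)$; the identification of the compacts then follows from the fact that perfect complexes are the compacts of $\mathbf{D}_{\operatorname{Qcoh}}(X)$ and that a perfect complex is supported on $Z$ iff it is acyclic on $X \setminus Z$. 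I would simply invoke these results.

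\emph{The closure of compacts.} The essential input is that $\mathbf{D}_{\operatorname{Qcoh},Z}(X)$ is an approximable triangulated category (Neeman) and that, for the generator $G$ above, the standard t-structure on $\mathbf{D}_{\operatorname{Qcoh},Z}(X)$ lies in the preferred equivalence class of t-structures; by \Cref{Convention closure of compacts for approx/co-approx} and \Cref{Lemma approximating sequence for closure of compacts} we may then compute $\mathbf{D}_{\operatorname{Qcoh},Z}(X)^-_c$ and $\mathbf{D}_{\operatorname{Qcoh},Z}(X)^b_c$ using the standard t-structure. I would prove
\[
\mathbf{D}_{\operatorname{Qcoh},Z}(X)^-_c = \mathbf{D}^{p}_{\operatorname{Qcoh},Z}(X)
\]
by a two-sided inclusion. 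For ``$\supseteq$'': if $F$ is pseudocoherent and supported on $Z$, it is locally quasi-isomorphic to a bounded-above complex of finite free modules, and an induction over a finite affine cover together with a gluing argument produces, for every $n$, a triangle $E_n \to F \to D_n \to \Sigma E_n$ with $E_n \in \mathbf{D}^{\operatorname{perf}}_Z(X)$ and $D_n \in \mathbf{D}_{\operatorname{Qcoh},Z}(X)^{\leq -n}$, so $F$ lies in the closure of the compacts by \Cref{Definition closure of compacts}. For ``$\subseteq$'': restricting $F$ along an affine open $\operatorname{Spec}(R) \hookrightarrow X$ and using that restriction preserves perfect complexes and is compatible with the standard t-structures, $F|_{\operatorname{Spec}(R)}$ lies in the closure of the perfect complexes in $\mathbf{D}(R)$, which by the affine case is precisely the subcategory of pseudocoherent complexes; pseudocoherence being local, $F$ is pseudocoherent.

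\emph{Bounded objects and the noetherian case.} By the convention, $\mathbf{D}_{\operatorname{Qcoh},Z}(X)^b_c = \mathbf{D}_{\operatorname{Qcoh},Z}(X)^-_c \cap \mathcal{G}^{\perp}$, and for the standard t-structure $\mathcal{G}^{\perp}$ is the subcategory of objects with bounded-below cohomology. Since a pseudocoherent complex automatically has bounded-above cohomology, intersecting with the bounded-below objects yields exactly the pseudocoherent complexes with bounded cohomology, i.e.\ $\mathbf{D}^{p,b}_{\operatorname{Qcoh},Z}(X)$. When $X$ is noetherian, a standard fact of commutative algebra identifies pseudocoherent complexes with complexes having coherent (hence automatically bounded-above) cohomology, giving $\mathbf{D}^{p}_{\operatorname{Qcoh},Z}(X) = \mathbf{D}^{-}_{\operatorname{coh},Z}(X)$ and $\mathbf{D}^{p,b}_{\operatorname{Qcoh},Z}(X) = \mathbf{D}^{b}_{\operatorname{coh},Z}(X)$. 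The main obstacle is the affine base case — showing that the closure of the perfect complexes in $\mathbf{D}(R)$ under the standard metric is exactly the pseudocoherent complexes — and transporting it to the qcqs setting via descent, together with verifying that the standard t-structure lies in the preferred equivalence class; this is where approximability of $\mathbf{D}_{\operatorname{Qcoh},Z}(X)$ is genuinely used, and I would cite \cite{Neeman:2022} for these points rather than reprove them.
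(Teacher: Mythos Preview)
Your proposal is correct, but note that the paper does not actually give its own proof of this proposition: it is stated as a citation of \cite[page 281]{Neeman:2022} with no accompanying \texttt{proof} environment. Your sketch faithfully outlines the argument from that reference (compact generation via Thomason--Trobaugh, approximability of $\mathbf{D}_{\operatorname{Qcoh},Z}(X)$ to place the standard t-structure in the preferred equivalence class, the two-sided inclusion via local pseudocoherence and affine reduction, and the identification of $\mathsf{T}^b_c$ via $\mathcal{G}^\perp$), so there is nothing to compare against beyond the citation itself.
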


There is a noncommutative version of the above computation, which we state now.

\begin{proposition}\label{Proposition Closure of compacts DQcoh algebras}
    Let $X$ be a noetherian scheme, and $\mathcal{A}$ a coherent $\mathcal{O}_X$-algebra. Then, $\mathbf{D}_{\operatorname{Qcoh}}(\mathcal{A})$, has a single compact generator and the full subcategory of compact objects is given by $\mathbf{D}^{\operatorname{perf}}(\mathcal{A})$, which is the derived category of perfect complexes. The closure of compact is given by,
\begin{itemize}
    \item $\mathbf{D}_{\operatorname{Qcoh},Z}(\mathcal{A})^-_c = \mathbf{D}^{-}_{\operatorname{coh}}(\mathcal{A})$
    \item $\mathbf{D}_{\operatorname{Qcoh},Z}(\mathcal{A})^b_c = \mathbf{D}^{b}_{\operatorname{coh}}(\mathcal{A})$.
\end{itemize}
    with notation as in \Cref{Convention closure of compacts for approx/co-approx}.
\end{proposition}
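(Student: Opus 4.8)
The plan is to reduce everything to the commutative case, \Cref{Proposition Closure of compacts DQcoh schemes}, via the forgetful functor. Write $U\colon\mathbf{D}_{\operatorname{Qcoh}}(\mathcal{A})\to\mathbf{D}_{\operatorname{Qcoh}}(X)$; it is conservative, $t$-exact (indeed $t$-reflecting) for the standard $t$-structures, preserves coproducts, and has left adjoint $L\colonequals(-)\otimes^{\mathbb{L}}_{\mathcal{O}_X}\mathcal{A}$, which is exact, right $t$-exact, preserves coproducts (hence homotopy colimits) and sends $\mathbf{D}^{\operatorname{perf}}(X)$ into $\mathbf{D}^{\operatorname{perf}}(\mathcal{A})$ (a perfect $\mathcal{O}_X$-complex is locally a bounded complex of finite locally free modules, so tensoring with $\mathcal{A}$ is locally bounded with locally projective $\mathcal{A}$-module terms). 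First I would record the formal consequences: $L$ preserves compacts, $G_{\mathcal{A}}\colonequals L(G)$ is a compact generator of $\mathbf{D}_{\operatorname{Qcoh}}(\mathcal{A})$ for any compact generator $G$ of $\mathbf{D}_{\operatorname{Qcoh}}(X)$ (from $\operatorname{Hom}(\Sigma^i G_{\mathcal{A}},F)\cong\operatorname{Hom}(\Sigma^i G,UF)$), whence $\mathbf{D}_{\operatorname{Qcoh}}(\mathcal{A})^c=\mathbf{D}^{\operatorname{perf}}(\mathcal{A})$ by the standard thick-subcategory argument; since $G_{\mathcal{A}}$ is perfect, the standard $t$-structure lies in the preferred quasiequivalence class, so we may compute $\mathbf{D}_{\operatorname{Qcoh}}(\mathcal{A})^-_c$ and $\mathbf{D}_{\operatorname{Qcoh}}(\mathcal{A})^b_c$ with respect to it (\Cref{Convention closure of compacts for approx/co-approx}); and since $\mathcal{A}$ is coherent over the noetherian scheme $X$, a complex of quasicoherent $\mathcal{A}$-modules has coherent cohomology iff its image under $U$ does, so $\mathbf{D}^-_{\operatorname{coh}}(\mathcal{A})=U^{-1}\big(\mathbf{D}^-_{\operatorname{coh}}(X)\big)$ and likewise with bounded cohomology.

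One inclusion is immediate: if $F\in\overline{\mathbf{D}_{\operatorname{Qcoh}}(\mathcal{A})^c}$ then for each $n$ there is a triangle $E_n\to F\to D_n$ with $E_n\in\mathbf{D}^{\operatorname{perf}}(\mathcal{A})$ and $D_n\in\mathbf{D}_{\operatorname{Qcoh}}(\mathcal{A})^{\leq-n}$; each $E_n$ is bounded with coherent cohomology, so $H^i(F)\cong H^i(E_n)$ is coherent for $i>-n$ and vanishes for $i\gg0$, giving $F\in\mathbf{D}^-_{\operatorname{coh}}(\mathcal{A})$.

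For the reverse inclusion I would argue as follows. Let $F\in\mathbf{D}^-_{\operatorname{coh}}(\mathcal{A})$, so $UF\in\mathbf{D}^-_{\operatorname{coh}}(X)=\overline{\mathbf{D}_{\operatorname{Qcoh}}(X)^c}$ by \Cref{Proposition Closure of compacts DQcoh schemes}; by \Cref{Lemma approximating sequence for closure of compacts} pick a sequence $E'_1\to E'_2\to\cdots$ in $\mathbf{D}^{\operatorname{perf}}(X)$ with $\operatorname{hocolim}E'_i\cong UF$ and $\operatorname{Cone}(E'_i\to E'_{i+1})\in\mathbf{D}_{\operatorname{Qcoh}}(X)^{\leq-i}$. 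Applying $L$ and using right $t$-exactness and continuity of $L$, the sequence $LE'_i$ lies in $\mathbf{D}^{\operatorname{perf}}(\mathcal{A})$, has homotopy colimit $LUF$, and satisfies $\operatorname{Cone}(LE'_i\to LE'_{i+1})\in\mathbf{D}_{\operatorname{Qcoh}}(\mathcal{A})^{\leq-i}$; so $LUF\in\overline{\mathbf{D}_{\operatorname{Qcoh}}(\mathcal{A})^c}$ by \Cref{Lemma approximating sequence for closure of compacts} again. Now I would correct $LUF$ to $F$ using the counit $\varepsilon_F\colon LUF\to F$: on top cohomology it is the multiplication surjection $H^m(F)\otimes_{\mathcal{O}_X}\mathcal{A}\twoheadrightarrow H^m(F)$, where $m=\sup\{i:H^i(F)\neq0\}$, so $\operatorname{Cone}(\varepsilon_F)\in\mathbf{D}^-_{\operatorname{coh}}(\mathcal{A})$ has strictly smaller supremum. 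Iterating (set $C_0=F$, $C_{k+1}=\operatorname{Cone}(\varepsilon_{C_k})$, and splice the triangles by the octahedral axiom) produces maps $P_k\to F$ with $P_k\in\overline{\mathbf{D}_{\operatorname{Qcoh}}(\mathcal{A})^c}$, $\operatorname{Cone}(P_k\to F)\cong C_k\in\mathbf{D}_{\operatorname{Qcoh}}(\mathcal{A})^{\leq m-k}$ and $\operatorname{Cone}(P_k\to P_{k+1})\in\mathbf{D}_{\operatorname{Qcoh}}(\mathcal{A})^{\leq m-k}$. For a fixed $n$, choose $k$ with $m-k\leq-n$, write $P_k$ as $E\to P_k\to D'$ with $E\in\mathbf{D}^{\operatorname{perf}}(\mathcal{A})$ and $D'\in\mathbf{D}_{\operatorname{Qcoh}}(\mathcal{A})^{\leq-n}$, and note that the composite $E\to P_k\to F$ has cone in $D'\star C_k\subseteq\mathbf{D}_{\operatorname{Qcoh}}(\mathcal{A})^{\leq-n}$; thus $F\in\mathbf{D}^{\operatorname{perf}}(\mathcal{A})\star\mathbf{D}_{\operatorname{Qcoh}}(\mathcal{A})^{\leq-n}$ for all $n$, i.e.\ $F\in\mathbf{D}_{\operatorname{Qcoh}}(\mathcal{A})^-_c$. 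This gives $\mathbf{D}_{\operatorname{Qcoh}}(\mathcal{A})^-_c=\mathbf{D}^-_{\operatorname{coh}}(\mathcal{A})$. For $\mathbf{D}_{\operatorname{Qcoh}}(\mathcal{A})^b_c$, I would check $\mathcal{G}^{\perp}=\mathbf{D}^+_{\operatorname{Qcoh}}(\mathcal{A})$ by the same adjunction (reducing to the known fact over $X$ that $\operatorname{Hom}(\Sigma^j G,-)$ vanishes for $j\gg0$ exactly on bounded-below complexes), whence $\mathbf{D}_{\operatorname{Qcoh}}(\mathcal{A})^b_c=\mathbf{D}^-_{\operatorname{coh}}(\mathcal{A})\cap\mathbf{D}^+_{\operatorname{Qcoh}}(\mathcal{A})=\mathbf{D}^b_{\operatorname{coh}}(\mathcal{A})$.

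The hard part is the reverse inclusion: unlike over an affine base there are no global free $\mathcal{A}$-modules, so coherent $\mathcal{A}$-sheaves cannot be resolved directly by perfect complexes, and the whole point of the forgetful-functor device is to transport Neeman's perfect approximations from $X$ through $L=(-)\otimes^{\mathbb{L}}_{\mathcal{O}_X}\mathcal{A}$ — which is exact, continuous, right $t$-exact, and perfectness-preserving — and then absorb the error of the counit $LUF\to F$ by an iteration that sinks into the co-aisle. The bookkeeping in that iteration (keeping each $P_k$ inside $\overline{\mathbf{D}_{\operatorname{Qcoh}}(\mathcal{A})^c}$ and controlling the transition cones via the octahedral axiom) is the only genuinely delicate step; the preliminary identifications ($\mathbf{D}_{\operatorname{Qcoh}}(\mathcal{A})^c=\mathbf{D}^{\operatorname{perf}}(\mathcal{A})$, existence of a compact generator) are standard for coherent algebras over noetherian schemes and could instead be cited.
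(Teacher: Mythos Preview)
Your argument is correct and provides a self-contained route, whereas the paper's proof is a one-line citation of \cite[Proposition 4.2]{DeDeyn/Lank/ManaliRahul:2024a}. The reduction to the commutative case via the adjunction $L\dashv U$ is natural, and the iterative correction by the counit $\varepsilon_{C_k}\colon LUC_k\to C_k$ (each application dropping the cohomological supremum by one) is a clean substitute for the global projective resolutions that are unavailable over a non-affine base; what you are effectively doing is building a resolution of $F$ by objects of the form $P\otimes^{\mathbb{L}}_{\mathcal{O}_X}\mathcal{A}$ with $P$ perfect over $X$.

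One step deserves a sharper justification. You assert that ``since $G_{\mathcal{A}}$ is perfect, the standard $t$-structure lies in the preferred quasiequivalence class'', but boundedness of $G_{\mathcal{A}}$ alone only gives one containment, namely $\mathsf{T}_{G_{\mathcal{A}}}^{\leq -n}\subseteq\mathbf{D}_{\operatorname{Qcoh}}(\mathcal{A})^{\leq b-n}$ for suitable $b$. The reverse containment is what you actually need at the end (to pass from $F\in\mathbf{D}^{\operatorname{perf}}(\mathcal{A})\star\mathbf{D}_{\operatorname{Qcoh}}(\mathcal{A})^{\leq -n}$ to $F\in\mathsf{T}^-_c$), and it does not follow from perfectness alone. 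The fix is again the adjunction: $\operatorname{Hom}(\Sigma^i G_{\mathcal{A}},F)\cong\operatorname{Hom}(\Sigma^i G,UF)$ shows that $F\in\mathsf{T}_{G_{\mathcal{A}}}^{\geq 0}$ iff $UF\in\mathsf{T}_G^{\geq 0}$, and the latter coincides with $\mathbf{D}_{\operatorname{Qcoh}}(X)^{\geq 0}$ up to a uniform shift by the commutative case underlying \Cref{Proposition Closure of compacts DQcoh schemes}. Pulling back through the $t$-exact, $t$-reflecting $U$ then shows the two $t$-structures on $\mathbf{D}_{\operatorname{Qcoh}}(\mathcal{A})$ are genuinely equivalent, so your computation with the standard aisle is legitimate. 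With this point made explicit, the proof stands.
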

\begin{proof}
    This follows easily from \cite[Proposition 4.2]{DeDeyn/Lank/ManaliRahul:2024a}.
\end{proof}

We now consider a stacky example, which follows easily from a couple of results of \cite{Hall/Lamarche/Lank/Peng:2025} and \cite{DeDeyn/Lank/ManaliRahul/Peng:2025}.

\begin{proposition}\label{Proposition Closure of compacts DQcoh stacks}
    Let $\mathcal{X}$ be a noetherian algebraic stack which is concentrated, that is, the canonical morphism $\mathcal{X} \to \operatorname{Spec}(\mathbb{Z})$ is concentrated, see \cite[Definition 2.4]{Hall/Rydh:2017}. Further assume $\mathcal{X}$ satisfies one of the following two conditions,
    \begin{itemize}
        \item $\mathcal{X}$ has quasi-finite and separated diagonal.
        \item $\mathcal{X}$ is a DM stack of characteristic zero.
    \end{itemize}
    Then, $\mathbf{D}_{\operatorname{Qcoh}}(\mathcal{X})$ has a single compact generator and and the closure of compact is given by,
\begin{itemize}
    \item $\mathbf{D}_{\operatorname{Qcoh}}(\mathcal{X})^-_c = \mathbf{D}^{-}_{\operatorname{coh}}(\mathcal{X})$
    \item $\mathbf{D}_{\operatorname{Qcoh}}(\mathcal{X})^b_c = \mathbf{D}^{b}_{\operatorname{coh}}(\mathcal{X})$
\end{itemize}
    with notation as in \Cref{Convention closure of compacts for approx/co-approx}
\end{proposition}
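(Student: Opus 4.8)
The plan is to deduce everything from the cited works, matching their hypotheses to the two bullet points on $\mathcal{X}$. First I would record the inputs I need. Under either listed condition the concentrated noetherian stack $\mathcal{X}$ admits a single compact generator $G$ for $\mathbf{D}_{\operatorname{Qcoh}}(\mathcal{X})$ with $\mathbf{D}_{\operatorname{Qcoh}}(\mathcal{X})^c = \mathbf{D}^{\operatorname{perf}}(\mathcal{X}) = \langle G \rangle$; this is part of \cite{Hall/Lamarche/Lank/Peng:2025} (built on \cite{Hall/Rydh:2017}). Since $\mathcal{X}$ is noetherian, $\mathbf{D}_{\operatorname{Qcoh}}(\mathcal{X})$ carries its standard t-structure with heart $\operatorname{Qcoh}(\mathcal{X})$, and $G$, being perfect, lies in $\mathbf{D}^b_{\operatorname{coh}}(\mathcal{X})$, so $\operatorname{Hom}(G,\Sigma^i G) = 0$ for $i \gg 0$; thus Hypothesis (\textrm{I}) holds and the subcategories $\mathsf{T}^-_c$ and $\mathsf{T}^b_c$ of \Cref{Convention closure of compacts for approx/co-approx} are defined. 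Moreover, by the approximability results for $\mathbf{D}_{\operatorname{Qcoh}}(\mathcal{X})$ proved in \cite{DeDeyn/Lank/ManaliRahul/Peng:2025} (together with the scheme-theoretic precedent \cite{Neeman:2022}), the standard t-structure lies in the preferred quasiequivalence class (\Cref{Definition preferred equivalence class}), so by \Cref{Convention closure of compacts for approx/co-approx} and \Cref{Lemma approximating sequence for closure of compacts} I may compute $\mathsf{T}^-_c$ and $\mathsf{T}^b_c$ with respect to the standard t-structure.

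With this in place, the two displayed equalities are exactly the content of \cite{DeDeyn/Lank/ManaliRahul/Peng:2025, Hall/Lamarche/Lank/Peng:2025}, and are proved by the same argument as in \Cref{Proposition Closure of compacts DQcoh schemes}: an object $F$ lies in the closure of the compacts (for the standard t-structure) iff for every $n$ it fits in a triangle $E_n \to F \to D_n \to \Sigma E_n$ with $E_n$ perfect and $D_n \in \mathbf{D}_{\operatorname{Qcoh}}(\mathcal{X})^{\geq n}$, and on a noetherian stack this happens precisely when $F$ has coherent, bounded-above cohomology, i.e. $F \in \mathbf{D}^{-}_{\operatorname{coh}}(\mathcal{X})$ (one inclusion being the cited approximation result, the other the standard truncation argument). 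For the bounded statement, $\mathbf{D}_{\operatorname{Qcoh}}(\mathcal{X})^b_c = \overline{\mathsf{T}^c} \cap \mathcal{G}^{\perp}$, where $\mathcal{G}^{\perp} = \mathsf{T}^+$ is the subcategory of objects with bounded-below cohomology (for the generating sequence $\mathcal{G}^i = \{\Sigma^{-i}G\}$, see \Cref{Convention closure of compacts for approx/co-approx}); intersecting $\mathbf{D}^{-}_{\operatorname{coh}}(\mathcal{X})$ with it yields $\mathbf{D}^{b}_{\operatorname{coh}}(\mathcal{X})$.

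The only real obstacle is bookkeeping rather than mathematics: checking that the two conditions on $\mathcal{X}$ — quasi-finite and separated diagonal, respectively Deligne--Mumford of characteristic zero — are precisely those under which \cite{Hall/Lamarche/Lank/Peng:2025} gives single compact generation with compacts $\mathbf{D}^{\operatorname{perf}}(\mathcal{X})$, and under which \cite{DeDeyn/Lank/ManaliRahul/Peng:2025} gives the approximability and the computation of the closure of compacts, and in particular that concentratedness of $\mathcal{X} \to \operatorname{Spec}(\mathbb{Z})$ is invoked exactly where those theorems require it. Once the hypotheses are seen to line up, nothing beyond \Cref{Proposition Closure of compacts DQcoh schemes} is needed.
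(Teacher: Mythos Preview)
Your approach is essentially the paper's: invoke \cite{DeDeyn/Lank/ManaliRahul/Peng:2025} to place the standard t-structure in the preferred (quasi)equivalence class and \cite{Hall/Lamarche/Lank/Peng:2025} for the approximation triangles, then read off $\mathsf{T}^-_c$ and intersect with $\mathsf{T}^+$ for $\mathsf{T}^b_c$. One correction, however: in the t-structure setting of \Cref{Convention closure of compacts for approx/co-approx} the orthogonal metric is $\{\mathsf{T}^{\leq -n}\}$ (see \Cref{Example generating sequence approx/co-approx}), so the approximating triangles must have $D_n \in \mathbf{D}_{\operatorname{Qcoh}}(\mathcal{X})^{\leq -n}$, not $D_n \in \mathbf{D}_{\operatorname{Qcoh}}(\mathcal{X})^{\geq n}$; with the direction you wrote the conclusion that $F$ has bounded-above coherent cohomology does not follow, and indeed the cited approximation result in \cite{Hall/Lamarche/Lank/Peng:2025} produces $D_n$ in the aisle, not the coaisle.
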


\begin{proof}
    By \cite[Proposition 5.10]{DeDeyn/Lank/ManaliRahul/Peng:2025} the standard t-structure on $\mathbf{D}_{\operatorname{Qcoh}}(\mathcal{X})$ lies in the preferred equivalence class, see \Cref{Definition preferred equivalence class}. Then, the required result follows from \cite{Hall/Lamarche/Lank/Peng:2025} which proves that for any $F \in \mathbf{D}^{-}_{\operatorname{coh}}(\mathcal{X})$ and every integer $n$, there exists a triangle $E_n \to F \to D_n \to \Sigma E_n$ with $E_n \in \mathbf{D}^{\operatorname{perf}}(\mathcal{X})$ and $D_n \in \mathbf{D}_{\operatorname{Qcoh}}(\mathcal{X})^{\leq -n}$.
\end{proof}

\subsection*{Applications}

We now state the applications of the results proved in \Cref{Section Main Results} using the computation of the closure of compacts above done in the previous subsection. In what follows, for any triangulated subcategory $\mathsf{A}\subseteq \mathsf{T}^c$ for a compactly generated triangulated category $\mathsf{T}$, $\operatorname{Coprod}(\mathsf{A})$ would denote the localising subcategory of $\mathsf{T}$ generated by $\mathsf{A}$.

We begin with the following result for quasicompact, quasiseparated schemes. The corresponding statement for noetherian schemes is mentioned in the remark following it.
\begin{theorem}
    Let $X$ be a quasicompact and quasiseparated scheme with a closed subset $\mathsf{Z}$ such that $X\setminus \mathsf{Z}$ is quasicompact. Let $\langle \mathsf{A}, \mathsf{B\rangle}$ be a semiorthogonal decomposition on $\mathbf{D}_{Z}^{\operatorname{perf}}(X)$, which is the derived category of perfect complexes with cohomology supported on $Z$. Then,
    \begin{itemize}
        \item $\langle\operatorname{Coprod}(\mathsf{A}) \cap \mathbf{D}^{p}_{\operatorname{Qcoh},Z}(X),\operatorname{Coprod}(\mathsf{B}) \cap \mathbf{D}^{p}_{\operatorname{Qcoh},Z}(X) \rangle $ is a semiorthogonal decomposition on $\mathbf{D}^{p}_{\operatorname{Qcoh},Z}(X)$, see \Cref{Notation from Neeman}. 
    \end{itemize}
     where $\mathbf{D}^{p}_{\operatorname{Qcoh},Z}(X)$ denotes the derived category of pseudocoherent complexes supported on $Z$.

     If we further assume that $\mathsf{B}$ is an admissible subcategory of $\mathbf{D}_{Z}^{\operatorname{perf}}(X)$, then,
     \begin{itemize}
        \item $\langle\operatorname{Coprod}(\mathsf{A}) \cap \mathbf{D}^{p,b}_{\operatorname{Qcoh},Z}(X),\operatorname{Coprod}(\mathsf{B}) \cap \mathbf{D}^{p,b}_{\operatorname{Qcoh},Z}(X) \rangle $ is a semiorthogonal decomposition on $\mathbf{D}^{p,b}_{\operatorname{Qcoh},Z}(X)$. 
    \end{itemize}
    where $\mathbf{D}^{p,b}_{\operatorname{Qcoh},Z}(X)$ denotes the derived category of pseudocoherent complexes with bounded cohomology supported on $Z$.
\end{theorem}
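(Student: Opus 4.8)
The plan is to obtain both assertions as a direct application of \Cref{Introduction Theorem B} --- equivalently, of \Cref{Corollary admissible categories from Tc to T-c} together with \Cref{Corollary admissible subcategories from Tc to Tbc with pre-approximability} --- to the compactly generated triangulated category $\mathsf{T} \colonequals \mathbf{D}_{\operatorname{Qcoh},Z}(X)$, after reading off the relevant subcategories from \Cref{Proposition Closure of compacts DQcoh schemes}. That proposition (resting on Thomason--Neeman) records that, since $X\setminus Z$ is quasicompact, $\mathsf{T}$ has a single compact generator $G$, which we may take to be a perfect complex; that $\mathsf{T}^c = \mathbf{D}^{\operatorname{perf}}_Z(X)$; and that, computed with respect to the standard t-structure --- which lies in the preferred quasiequivalence class, see \Cref{Definition preferred equivalence class} --- one has $\mathsf{T}^-_c = \mathbf{D}^{p}_{\operatorname{Qcoh},Z}(X)$ and ${}^{\mathrm{I}}\mathsf{T}^b_c = \mathsf{T}^b_c = \mathbf{D}^{p,b}_{\operatorname{Qcoh},Z}(X)$. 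So the content of the argument reduces to checking the one standing hypothesis of \Cref{Introduction Theorem B}, namely that $\mathsf{T}$ satisfies Hypothesis \ref{Hypothesis I}.

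This verification is the only non-formal step, and the one that needs a little care since the paper's \Cref{Introduction Example A} only records Hypothesis \ref{Hypothesis I} for noetherian schemes. One wants $\HomT{G}{\Sigma^i G} = 0$ for $i \gg 0$. Because $G$ is perfect and $X$ is quasicompact, $G$ has globally bounded amplitude, hence lies in $\mathbf{D}^b_{\operatorname{Qcoh}}(X)$, and the same then holds for the derived endomorphism sheaf $\mathbb{R}\mathcal{H}om_{\mathcal{O}_X}(G,G)$, say with cohomology sheaves in degrees $\le b$. Since $X$ is quasicompact and quasiseparated it has finite cohomological dimension for quasicoherent sheaves, say $\le N$ --- a standard fact, reduced to a finite affine cover. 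The spectral sequence $H^p\big(X, \mathcal{E}xt^q_{\mathcal{O}_X}(G,G)\big) \Rightarrow \HomT{G}{\Sigma^{p+q} G}$ has only finitely many nonzero rows and columns, whence $\HomT{G}{\Sigma^i G} = 0$ for $i > N+b$, which is Hypothesis \ref{Hypothesis I}. The auxiliary condition $\HomT{\mathcal{G}^n}{\mathcal{R}_i}=0$ for $i \gg 0$ needed by the two corollaries is then automatic, exactly as in the proof of \Cref{Corollary admissible categories from Tc to T-c}, because $G$ is compact.

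Granting Hypothesis \ref{Hypothesis I}, \Cref{Introduction Theorem B} applied to the semiorthogonal decomposition $\langle \mathsf{A},\mathsf{B}\rangle$ of $\mathsf{T}^c = \mathbf{D}^{\operatorname{perf}}_Z(X)$ yields the semiorthogonal decomposition $\langle \operatorname{Coprod}(\mathsf{A}) \cap \mathsf{T}^-_c,\, \operatorname{Coprod}(\mathsf{B}) \cap \mathsf{T}^-_c\rangle$ of $\mathsf{T}^-_c$, with $\operatorname{Coprod}$ taken inside $\mathsf{T} = \mathbf{D}_{\operatorname{Qcoh},Z}(X)$; under the identification $\mathsf{T}^-_c = \mathbf{D}^{p}_{\operatorname{Qcoh},Z}(X)$ this is precisely the first claimed decomposition. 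When $\mathsf{B}$ is moreover admissible in $\mathsf{T}^c$, the second part of \Cref{Introduction Theorem B} gives the semiorthogonal decomposition $\langle \operatorname{Coprod}(\mathsf{A}) \cap {}^{\mathrm{I}}\mathsf{T}^b_c,\, \operatorname{Coprod}(\mathsf{B}) \cap {}^{\mathrm{I}}\mathsf{T}^b_c\rangle$ of ${}^{\mathrm{I}}\mathsf{T}^b_c = \mathbf{D}^{p,b}_{\operatorname{Qcoh},Z}(X)$, which is the second. I expect no obstacle beyond the cohomological-dimension bound above; everything else is a matter of matching the notation of \Cref{Section Main Results} with that of \Cref{Proposition Closure of compacts DQcoh schemes}.
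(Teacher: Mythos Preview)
Your proposal is correct and follows essentially the same route as the paper: apply \Cref{Corollary admissible categories from Tc to T-c} (and, for the second part, \Cref{Corollary admissible subcategories from Tc to Tbc with pre-approximability}) to $\mathsf{T}=\mathbf{D}_{\operatorname{Qcoh},Z}(X)$, reading off $\mathsf{T}^c$, $\mathsf{T}^-_c$ and $\mathsf{T}^b_c$ from \Cref{Proposition Closure of compacts DQcoh schemes}. The paper's own proof is a one-line citation of these same ingredients; your version is more explicit in that you spell out why Hypothesis~\ref{Hypothesis I} holds in the quasicompact quasiseparated (rather than just noetherian) setting, via boundedness of $\mathbb{R}\mathcal{H}om(G,G)$ and finite cohomological dimension of $X$---this is implicit in the paper through the reference to \cite{Neeman:2022}, where approximability of $\mathbf{D}_{\operatorname{Qcoh},Z}(X)$ is established and already entails that vanishing.
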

\begin{proof}
    This is immediate from \Cref{Corollary admissible categories from Tc to T-c} using \Cref{Proposition Closure of compacts DQcoh schemes}.
\end{proof}

We mention what the result says in the noetherian case now, as the categories involved might be more familiar to the reader.

\begin{remark}\label{Remark Main result for Perf of schemes}
    Let $X$ be a noetherian scheme with a closed subset $\mathsf{Z}$. Let $\langle \mathsf{A}, \mathsf{B\rangle}$ be a semiorthogonal decomposition on $\mathbf{D}_{Z}^{\operatorname{perf}}(X)$, which is the derived category of perfect complexes with cohomology supported on $Z$. Then,
    \begin{itemize}
        \item $\langle\operatorname{Coprod}(\mathsf{A}) \cap \mathbf{D}^{-}_{\operatorname{coh},Z}(X),\operatorname{Coprod}(\mathsf{B}) \cap \mathbf{D}^{-}_{\operatorname{coh},Z}(X)\rangle $ is a semiorthogonal decomposition on $\mathbf{D}^{-}_{\operatorname{coh},Z}(X)$, see \Cref{Notation from Neeman}. 
    \end{itemize}
     where $\mathbf{D}^{-}_{\operatorname{coh},Z}(X)$ denotes the derived category of sheaves with bounded above and coherent cohomology supported on $Z$.

     If we further assume that $\mathsf{B}$ is an admissible subcategory of $\mathbf{D}_{Z}^{\operatorname{perf}}(X)$, then,
     \begin{itemize}
        \item $\langle\operatorname{Coprod}(\mathsf{A}) \cap \mathbf{D}^{b}_{\operatorname{coh},Z}(X),\operatorname{Coprod}(\mathsf{B}) \cap \mathbf{D}^{b}_{\operatorname{coh},Z}(X) \rangle $ is a semiorthogonal decomposition on $\mathbf{D}^{b}_{\operatorname{coh},Z}(X)$. 
    \end{itemize}
    where $\mathbf{D}^{b}_{\operatorname{coh},Z}(X)$ denotes the derived category of sheaves with bounded and coherent cohomology supported on $Z$.
\end{remark}

\begin{theorem}\label{Theorem Main result for Dbcoh of schemes}
    Let $X$ be a noetherian finite-dimensional scheme such that each integral closed subscheme is J-0.  Let $\langle \mathsf{A}, \mathsf{B\rangle}$ be a semiorthogonal decomposition on $\mathbf{D}^b({\operatorname{coh}}(X))$. Then,
    \begin{itemize}
        \item  $ \langle \operatorname{Coprod}(\mathsf{A}) \cap \mathbf{D}^{+}(\operatorname{coh} X),\operatorname{Coprod}(\mathsf{B}) \cap \mathbf{D}^{+}(\operatorname{coh} X) \rangle$ is a semiorthogonal decomposition on $\mathbf{D}^{+}(\operatorname{coh} X)$.
    \end{itemize}
    
    If we further assume that $\mathsf{B}$ is an admissible subcategory of $\mathbf{D}^b({\operatorname{coh}}(X))$, then,
     \begin{itemize}
        \item $\langle\mathsf{A} \cap \mathbf{D}^{b}_{\operatorname{coh}}(\operatorname{Inj} X),\mathsf{B} \cap \mathbf{D}^{b}_{\operatorname{coh}}(\operatorname{Inj} X) \rangle $ defines a semiorthogonal decomposition on $\mathbf{D}^{b}_{\operatorname{coh}}(\operatorname{Inj} X)$.
    \end{itemize}
    where $\mathbf{D}^{b}_{\operatorname{coh}}(\operatorname{Inj} X) = \mathbf{D}^{b}_{\operatorname{coh}}(X) \cap \mathbf{K}^{b}(\operatorname{Inj} X)$ denotes the full subcategory of complexes with injective dimension in the bounded derived category of sheaves with coherent cohomology. 
\end{theorem}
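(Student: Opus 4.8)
The plan is to realise both assertions as instances of the abstract theorems of \Cref{Section Main Results}, applied to $\mathsf{T}\colonequals\mathbf{K}(\operatorname{Inj}X)$ equipped with the finite generating sequence $\mathcal{G}^n=\{\Sigma^n\hat G\}$ and the standard co-t-structure given by brutal truncation. First one assembles the input data. Since $X$ is noetherian, finite-dimensional and has each integral closed subscheme J-0, \Cref{Theorem Compact Generator for Kinj} furnishes a compact generator $\hat G$ with $\mathsf{T}^c\simeq\mathbf{D}^b(\operatorname{coh}X)$; \Cref{Theorem J-2 implies weak co-approximablility for schemes} shows that $\mathsf{T}$ is weakly co-approximable with the standard co-t-structure in the preferred equivalence class, and in particular that $\HomT{\hat G}{\Sigma^n\hat G}=0$ for $n\ll 0$, i.e.\ Hypothesis \ref{Hypothesis II} holds; and \Cref{Coroallary closure of compacts Kinj schemes and algebras} identifies $\mathsf{T}^+_c=\mathbf{D}^+(\operatorname{coh}X)$, $\mathsf{T}^b_c=\mathbf{D}^b_{\operatorname{coh}}(\operatorname{Inj}X)$, and $\mathsf{T}^b_c=\mathsf{T}^+_c\cap\mathcal{G}^\perp$. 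Throughout, $\operatorname{Coprod}$ is formed inside $\mathbf{K}(\operatorname{Inj}X)$.

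For the first statement, the semiorthogonal decomposition $\langle\mathsf{A},\mathsf{B}\rangle$ makes $\mathsf{B}$ right admissible in $\mathsf{T}^c$, hence gives a localisation sequence $\mathsf{B}\to\mathsf{T}^c\to\mathsf{B}^{\perp}_{\mathsf{T}^c}=\mathsf{A}$ whose outer functors are inclusions of strictly full subcategories. \Cref{Corollary admissible categories from Tc to T+c}, whose hypothesis is exactly Hypothesis \ref{Hypothesis II}, extends it to a localisation sequence $\operatorname{Coprod}(\mathsf{B})\cap\mathsf{T}^+_c\to\mathsf{T}^+_c\to\operatorname{Coprod}(\mathsf{A})\cap\mathsf{T}^+_c$; since every localisation sequence is in particular a semiorthogonal decomposition of its middle term (\Cref{Definition of localisation sequence}) and $\mathsf{T}^+_c=\mathbf{D}^+(\operatorname{coh}X)$, this is the desired $\langle\operatorname{Coprod}(\mathsf{A})\cap\mathbf{D}^+(\operatorname{coh}X),\operatorname{Coprod}(\mathsf{B})\cap\mathbf{D}^+(\operatorname{coh}X)\rangle$.

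For the second statement, assume in addition that $\mathsf{B}$ is admissible; then it is left admissible, so $\langle\mathsf{B},{}^{\perp}\mathsf{B}_{\mathsf{T}^c}\rangle$ is a semiorthogonal decomposition of $\mathsf{T}^c$. Feeding it to \Cref{Corollary admissable subcategories perpendicular} yields a semiorthogonal decomposition of $\mathcal{G}^\perp$ whose right admissible term is $\operatorname{Coprod}(\mathsf{B})\cap\mathcal{G}^\perp$ and whose other term is $\mathsf{B}^{\perp}_{\mathcal{G}^\perp}$; by \Cref{Theorem localisation on Tc gives recollement on T} one has $\mathsf{B}^{\perp}=\operatorname{Coprod}(\mathsf{A})$ inside $\mathbf{K}(\operatorname{Inj}X)$, so this reads $\langle\operatorname{Coprod}(\mathsf{A})\cap\mathcal{G}^\perp,\operatorname{Coprod}(\mathsf{B})\cap\mathcal{G}^\perp\rangle$. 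The first part already provides the semiorthogonal decomposition $\langle\operatorname{Coprod}(\mathsf{A})\cap\mathsf{T}^+_c,\operatorname{Coprod}(\mathsf{B})\cap\mathsf{T}^+_c\rangle$ of $\mathsf{T}^+_c$. Exactly as in the proof of \Cref{Theorem admissible categories and T^b_c}, these two decompositions are compatible along the inclusions and restrict to a semiorthogonal decomposition of $\mathsf{T}^b_c=\mathsf{T}^+_c\cap\mathcal{G}^\perp$, namely $\langle\operatorname{Coprod}(\mathsf{A})\cap\mathsf{T}^b_c,\operatorname{Coprod}(\mathsf{B})\cap\mathsf{T}^b_c\rangle$. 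Finally, since $\mathsf{T}^b_c\subseteq\mathsf{T}^c$ and $\operatorname{Coprod}(\mathsf{A})\cap\mathsf{T}^c=\mathsf{B}^{\perp}\cap\mathsf{T}^c=\mathsf{A}$, and symmetrically for $\mathsf{B}$, one may rewrite the components without $\operatorname{Coprod}$, obtaining the stated $\langle\mathsf{A}\cap\mathbf{D}^b_{\operatorname{coh}}(\operatorname{Inj}X),\mathsf{B}\cap\mathbf{D}^b_{\operatorname{coh}}(\operatorname{Inj}X)\rangle$.

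The abstract machinery carries essentially all of the content, so the hard part is the bookkeeping: checking that $\mathbf{K}(\operatorname{Inj}X)$ genuinely satisfies the hypotheses of \Cref{Corollary admissible categories from Tc to T+c} and \Cref{Corollary admissable subcategories perpendicular} — weak co-approximability forcing Hypothesis \ref{Hypothesis II} for $\hat G$, and the standard co-t-structure lying in the preferred quasiequivalence class, so that $\mathsf{T}^+_c$ and $\mathsf{T}^b_c$ have the explicit descriptions used — and keeping straight which term of each semiorthogonal decomposition is the aisle and which the coaisle, so that the order $\langle\operatorname{Coprod}(\mathsf{A})\cap-,\operatorname{Coprod}(\mathsf{B})\cap-\rangle$ emerges rather than its reverse. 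It is precisely here, in producing a decomposition with $\operatorname{Coprod}(\mathsf{B})$ in the right admissible slot, that admissibility of $\mathsf{B}$ (rather than the automatic right admissibility) is needed, via the auxiliary decomposition $\langle\mathsf{B},{}^{\perp}\mathsf{B}\rangle$.
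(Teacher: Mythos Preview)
Your proposal is correct and follows essentially the same approach as the paper. The paper's proof is the one-liner ``immediate from \Cref{Corollary admissible categories from Tc to T+c} using \Cref{Proposition closure of compacts for Kinj}'', and you have faithfully unpacked what that entails: verifying that $\mathbf{K}(\operatorname{Inj}X)$ satisfies Hypothesis~\ref{Hypothesis II} via weak co-approximability, identifying $\mathsf{T}^+_c$ and $\mathsf{T}^b_c$ via \Cref{Coroallary closure of compacts Kinj schemes and algebras}, and then invoking the abstract corollaries. For the second assertion you route through \Cref{Corollary admissable subcategories perpendicular} and then intersect with the first part, which is precisely the content of \Cref{Theorem admissible categories and T^b_c} and hence of \Cref{Corollary admissible subcategories from Tc to Tbc with co-approximability}; the paper's terse proof implicitly relies on the latter. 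Your final simplification $\operatorname{Coprod}(\mathsf{A})\cap\mathsf{T}^b_c=\mathsf{A}\cap\mathsf{T}^b_c$ via $\mathsf{T}^b_c\subseteq\mathsf{T}^c$ and $\operatorname{Coprod}(\mathsf{A})\cap\mathsf{T}^c=\mathsf{B}^\perp\cap\mathsf{T}^c=\mathsf{A}$ is a nice touch that the paper does not make explicit.
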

\begin{proof}
    This is immediate from \Cref{Corollary admissible categories from Tc to T+c} using \Cref{Proposition closure of compacts for Kinj}.
\end{proof}

\begin{theorem}\label{Theorem Main result for Dbcohop of schemes}
    Let $X$ be a noetherian finite-dimensional scheme such that each integral closed subscheme is J-0.  Let $\langle \mathsf{A}, \mathsf{B\rangle}$ be a semiorthogonal decomposition on $\mathbf{D}^b({\operatorname{coh}}(X))$. Then,
    \begin{itemize}
        \item  $ \langle \operatorname{Coprod}(\mathsf{A}) \cap \mathbf{D}^{-}(\operatorname{coh} X),\operatorname{Coprod}(\mathsf{B}) \cap \mathbf{D}^{-}(\operatorname{coh} X) \rangle$ is a semiorthogonal decomposition on $\mathbf{D}^{-}(\operatorname{coh} X)$.
    \end{itemize}
    
    If we further assume that $\mathsf{B}$ is an admissible subcategory of $\mathbf{D}^b({\operatorname{coh}}(X))$, then,
     \begin{itemize}
        \item $\langle \mathsf{A} \cap \mathbf{D}^{\operatorname{perf}}(X), \mathsf{B} \cap \mathbf{D}^{\operatorname{perf}}(X) \rangle $ is a semiorthogonal decomposition on $\mathbf{D}^{\operatorname{perf}}(X)$.
    \end{itemize}
\end{theorem}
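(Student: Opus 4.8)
The plan is to deduce both statements from the co-$t$-structure side of the machinery of \Cref{Section Main Results}, applied not to $\mathbf{D}_{\operatorname{Qcoh}}(X)$ but to the mock homotopy category of projectives $\mathsf{T} = \mathbf{K}_m(\operatorname{Proj} X)$, which has $\mathsf{T}^c$ \emph{anti}-equivalent to $\mathbf{D}^b(\operatorname{coh} X)$. Write $\Phi = U_{\lambda}(-)^{\circ}$ for the contravariant equivalence $\mathbf{D}^b(\operatorname{coh} X)^{\operatorname{op}} \xrightarrow{\sim} \mathsf{T}^c$ of \cite[Theorem 7.4]{Murfet:2008}. By \Cref{Theorem Compact Generator for Kinj} the object $\hat{G}$ is a classical generator of $\mathbf{D}^b(\operatorname{coh} X)$, so $\Phi(\hat G)$ is a single compact generator of $\mathsf{T}$; and since $\hat G$ is a bounded complex, $\operatorname{Hom}_{\mathbf{D}^b(\operatorname{coh} X)}(\hat G,\Sigma^{-i}\hat G)=0$ for $i\gg 0$ for purely cohomological reasons (there are no negative self-$\operatorname{Ext}$'s below the amplitude of $\hat G$), which is exactly the hypothesis $\operatorname{Hom}_{\mathsf{T}}(\Phi(\hat G),\Sigma^{-i}\Phi(\hat G))=0$ ($i\gg 0$) needed to invoke \Cref{Corollary admissible categories from Tc to T+c} and \Cref{Corollary admissible subcategories from Tc to Tbc with co-approximability} — notably with no regularity assumption on $X$. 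Under the standing J-0 hypothesis, \Cref{Proposition Kmproj is Co-approx} makes $\mathsf{T}$ weakly co-quasiapproximable with the relevant co-$t$-structure in the preferred quasiequivalence class, and \Cref{Corollary closure of compacts Kmproj schemes} identifies $\mathsf{T}^+_c = U_{\lambda}(\mathbf{D}^-(\operatorname{coh} X))^{\circ}$ and $\mathsf{T}^b_c = U_{\lambda}(\mathbf{D}^{\operatorname{perf}}(X))^{\circ}$, so $\Phi$ extends to a contravariant equivalence $\overline{\Phi}\colon \mathbf{D}^-(\operatorname{coh} X)^{\operatorname{op}} \xrightarrow{\sim} \mathsf{T}^+_c$ restricting over $\mathbf{D}^{\operatorname{perf}}(X)$ to one onto $\mathsf{T}^b_c$.

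For the first bullet: contravariance of $\Phi$ turns $\langle \mathsf{A},\mathsf{B}\rangle$ into the semiorthogonal decomposition $\langle \Phi(\mathsf{B}),\Phi(\mathsf{A})\rangle$ of $\mathsf{T}^c$ (the order reverses). Applying \Cref{Corollary admissible categories from Tc to T+c} (whose semiorthogonal form is recorded in \Cref{Introduction Theorem C}) yields $\langle \operatorname{Coprod}(\Phi(\mathsf{B}))\cap \mathsf{T}^+_c,\ \operatorname{Coprod}(\Phi(\mathsf{A}))\cap \mathsf{T}^+_c\rangle$ on $\mathsf{T}^+_c$, and transporting back along the contravariant $\overline{\Phi}$ reverses the order a second time, so the two reversals cancel and one gets exactly $\langle \operatorname{Coprod}(\mathsf{A})\cap\mathbf{D}^-(\operatorname{coh} X),\ \operatorname{Coprod}(\mathsf{B})\cap\mathbf{D}^-(\operatorname{coh} X)\rangle$ on $\mathbf{D}^-(\operatorname{coh} X)$, the $\operatorname{Coprod}$'s being read through the equivalence.

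For the second bullet, suppose $\mathsf{B}$ --- hence also $\mathsf{A}$, as is standard for two-term semiorthogonal decompositions --- is admissible in $\mathbf{D}^b(\operatorname{coh} X)$; then $\Phi(\mathsf{A})$ is admissible in $\mathsf{T}^c$, so by \Cref{Corollary admissible subcategories from Tc to Tbc with co-approximability} the category $\operatorname{Coprod}(\Phi(\mathsf{A}))\cap\mathsf{T}^b_c$ is right admissible in $\mathsf{T}^b_c$. Because $\mathsf{T}^b_c\subseteq\mathsf{T}^c$ and, by \Cref{Theorem localisation on Tc gives recollement on T}, $\operatorname{Coprod}(\Phi(\mathsf{A}))\cap\mathsf{T}^c=\Phi(\mathsf{A})$, this subcategory equals $\Phi(\mathsf{A}\cap\mathbf{D}^{\operatorname{perf}}(X))$; applying the contravariant $\overline{\Phi}$ converts ``right admissible'' into ``left admissible'', so $\mathsf{A}\cap\mathbf{D}^{\operatorname{perf}}(X)$ is left admissible in $\mathbf{D}^{\operatorname{perf}}(X)$ and hence $\langle \mathsf{A}\cap\mathbf{D}^{\operatorname{perf}}(X),\ {}^{\perp}(\mathsf{A}\cap\mathbf{D}^{\operatorname{perf}}(X))_{\mathbf{D}^{\operatorname{perf}}(X)}\rangle$ is a semiorthogonal decomposition of $\mathbf{D}^{\operatorname{perf}}(X)$. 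Comparing the decomposition triangle of a perfect complex in $\mathbf{D}^{\operatorname{perf}}(X)$ with its decomposition triangle in $\mathbf{D}^b(\operatorname{coh} X)$ and using uniqueness of such triangles identifies the second term with $\mathsf{B}\cap\mathbf{D}^{\operatorname{perf}}(X)$, which is the claim.

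The main obstacle is bookkeeping rather than anything deep: one must keep the contravariance of Murfet's equivalence straight so that the two order-reversals really do cancel and the stated order of the pieces comes out right, and one must check carefully that the abstract hypotheses genuinely hold for $\mathbf{K}_m(\operatorname{Proj} X)$ (the automatic vanishing $\operatorname{Hom}_{\mathbf{D}^b(\operatorname{coh} X)}(\hat G,\Sigma^{-i}\hat G)=0$ for $i\gg 0$ is precisely what supplies the ``$(\textrm{II})$''-type input without regularity). If one insists on reading $\operatorname{Coprod}(\mathsf{A})$ in the first bullet as the localising subcategory inside $\mathbf{D}_{\operatorname{Qcoh}}(X)$ rather than through $\overline{\Phi}$, there is in addition a small compatibility statement to verify, namely that Murfet's constructions commute with passage to localising subcategories; this is the only place where more than formal manipulation is needed.
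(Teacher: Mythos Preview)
Your proposal is correct and follows essentially the same route as the paper: the paper's own proof is the one-line ``immediate from \Cref{Corollary admissible categories from Tc to T+c} using \Cref{Corollary closure of compacts Kmproj schemes}'', i.e.\ exactly your strategy of working inside $\mathsf{T}=\mathbf{K}_m(\operatorname{Proj} X)$, identifying $\mathsf{T}^+_c$ and $\mathsf{T}^b_c$ via \Cref{Corollary closure of compacts Kmproj schemes}, and transporting along Murfet's contravariant equivalence. Your explicit handling of the double order-reversal and of the identity $\operatorname{Coprod}(\Phi(\mathsf{A}))\cap\mathsf{T}^b_c=\Phi(\mathsf{A}\cap\mathbf{D}^{\operatorname{perf}}(X))$ (using $\mathsf{T}^b_c\subseteq\mathsf{T}^c$ in this particular example) is precisely the unpacking the paper leaves implicit; your final caveat about whether $\operatorname{Coprod}(\mathsf{A})$ in the statement is to be read inside $\mathbf{D}_{\operatorname{Qcoh}}(X)$ or through $\overline{\Phi}$ is fair, and the paper does not address it either. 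One small note: both \Cref{Proposition Kmproj is Co-approx} and \Cref{Corollary closure of compacts Kmproj schemes} assume $X$ separated, which the theorem statement omits---this is a wrinkle in the paper rather than in your argument.
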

\begin{proof}
    This is immediate from \Cref{Corollary admissible categories from Tc to T+c} using \Cref{Corollary closure of compacts Kmproj schemes}.
\end{proof}

We now state the noncommutative analogues of some of the above results.

\begin{theorem}\label{Theorem Main result for Perf of algebras}
    Let $X$ be a noetherian scheme, and $\mathcal{A}$ a coherent $\mathcal{O}_X$-algebra. Let $\langle \mathsf{A}, \mathsf{B\rangle}$ be a semiorthogonal decomposition on $\mathbf{D}^{\operatorname{perf}}(\mathcal{A})$. Then,
    \begin{itemize}
        \item $\langle\operatorname{Coprod}(\mathsf{A}) \cap \mathbf{D}^{-}_{\operatorname{coh}}(\mathcal{A}),\operatorname{Coprod}(\mathsf{B}) \cap \mathbf{D}^{-}_{\operatorname{coh}}(\mathcal{A})\rangle $ is a semiorthogonal decomposition on $\mathbf{D}^{-}_{\operatorname{coh}}(\mathcal{A})$, see \Cref{Notation from Neeman}. 
    \end{itemize}

     If we further assume that $\mathsf{B}$ is an admissible subcategory of $\mathbf{D}^{\operatorname{perf}}(\mathcal{A})$, then,
     \begin{itemize}
        \item $\langle\operatorname{Coprod}(\mathsf{A}) \cap \mathbf{D}^{b}_{\operatorname{coh}}(\mathcal{A}),\operatorname{Coprod}(\mathsf{B}) \cap \mathbf{D}^{b}_{\operatorname{coh}}(\mathcal{A}) \rangle $ is a semiorthogonal decomposition on $\mathbf{D}^{b}_{\operatorname{coh}}(\mathcal{A})$. 
    \end{itemize}
\end{theorem}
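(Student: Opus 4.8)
The strategy is to realise $\mathbf{D}^{\operatorname{perf}}(\mathcal{A})$, $\mathbf{D}^{-}_{\operatorname{coh}}(\mathcal{A})$ and $\mathbf{D}^{b}_{\operatorname{coh}}(\mathcal{A})$ as $\mathsf{T}^c$, $\mathsf{T}^-_c$ and $\mathsf{T}^b_c$ for $\mathsf{T}=\mathbf{D}_{\operatorname{Qcoh}}(\mathcal{A})$, and then feed the given semiorthogonal decomposition into \Cref{Corollary admissible categories from Tc to T-c} and \Cref{Corollary admissible subcategories from Tc to Tbc with pre-approximability}. By \Cref{Proposition Closure of compacts DQcoh algebras}, $\mathsf{T}$ is compactly generated by a single object $G$, with $\mathsf{T}^c=\mathbf{D}^{\operatorname{perf}}(\mathcal{A})$, and (with respect to the standard $t$-structure, which lies in the preferred quasiequivalence class) $\mathsf{T}^-_c=\mathbf{D}^{-}_{\operatorname{coh}}(\mathcal{A})$ and $\mathsf{T}^b_c=\mathbf{D}^{b}_{\operatorname{coh}}(\mathcal{A})$. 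Since $G$ is perfect, $R\mathcal{H}om(G,G)$ is again perfect, hence bounded, and $R\Gamma(X,-)$ has finite cohomological amplitude on the noetherian scheme $X$; therefore $\operatorname{Hom}_{\mathsf{T}}(G,\Sigma^i G)=0$ for $i\gg 0$, i.e.\ $\mathsf{T}$ satisfies Hypothesis \ref{Hypothesis I} (this is exactly the entry for $\mathbf{D}_{\operatorname{Qcoh}}(\mathcal{A})$ in \Cref{Introduction Example A}).

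For the first assertion, note that since $\langle\mathsf{A},\mathsf{B}\rangle$ is a semiorthogonal decomposition of $\mathsf{T}^c$, the subcategory $\mathsf{B}$ is right admissible, so there is a localisation sequence $\mathsf{B}\to\mathsf{T}^c\to\mathsf{B}^{\perp}=\mathsf{A}$ with both outer terms included as strictly full subcategories. Applying \Cref{Corollary admissible categories from Tc to T-c} to it yields a localisation sequence $\operatorname{Coprod}(\mathsf{B})\cap\mathsf{T}^-_c\to\mathsf{T}^-_c\to\operatorname{Coprod}(\mathsf{A})\cap\mathsf{T}^-_c$, where the description of the two outer terms is the last sentence of \Cref{Theorem extending localisation sequence from Tc to its closure}. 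A localisation sequence is the same datum as a semiorthogonal decomposition in which the left term is right admissible, so this produces a semiorthogonal decomposition of $\mathsf{T}^-_c$ with pieces the right orthogonal of $\operatorname{Coprod}(\mathsf{B})\cap\mathsf{T}^-_c$ inside $\mathsf{T}^-_c$ and $\operatorname{Coprod}(\mathsf{B})\cap\mathsf{T}^-_c$ itself. The one point needing a word is that this right orthogonal coincides with $\operatorname{Coprod}(\mathsf{A})\cap\mathsf{T}^-_c$: this holds because $\operatorname{Hom}_{\mathsf{T}^c}(\mathsf{B},\mathsf{A})=0$ propagates to $\operatorname{Hom}_{\mathsf{T}}(\operatorname{Coprod}(\mathsf{B}),\operatorname{Coprod}(\mathsf{A}))=0$ (the source is built from compacts by shifts, coproducts and extensions, and vanishing against $\operatorname{Coprod}(\mathsf{A})$ is preserved by all three operations), together with the two-out-of-three argument for the triangle supplied by the localisation sequence. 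Rewriting $\mathsf{T}^-_c=\mathbf{D}^{-}_{\operatorname{coh}}(\mathcal{A})$ gives the claim.

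For the second assertion, assume in addition that $\mathsf{B}$ is admissible in $\mathsf{T}^c$; then $\mathsf{A}=\mathsf{B}^{\perp}$ is admissible as well, so both localisation sequences $\mathsf{B}\to\mathsf{T}^c\to\mathsf{A}$ and $\mathsf{A}\to\mathsf{T}^c\to\mathsf{B}$ are available, and by \Cref{Theorem extending localisation sequence from Tc to its closure} each extends to $\mathsf{T}^-_c$; hence $\operatorname{Coprod}(\mathsf{A})\cap\mathsf{T}^-_c$ (respectively $\operatorname{Coprod}(\mathsf{B})\cap\mathsf{T}^-_c$) is admissible in $\mathsf{T}^-_c$, its truncation triangle being the restriction of the recollement functors $i_*i^!$ and $j_*j^*$ on $\mathsf{T}$. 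On the other hand the generating sequence $\mathcal{G}^n=\{\Sigma^{-n}G\}$ is $\mathbb{N}$-compressed by \Cref{Theorem Examples of Compressed metrics}, so \Cref{Corollary admissable subcategories perpendicular}, applied to $\langle\mathsf{A},\mathsf{B}\rangle$ (and, since both pieces are admissible, to $\langle\mathsf{B},\mathsf{A}\rangle$), shows that the same functors $i_*i^!$, $j_*j^*$ also preserve $\mathcal{G}^{\perp}=\mathsf{T}^{+}$. Consequently the semiorthogonal decomposition of $\mathsf{T}^-_c$ found above restricts along $\mathsf{T}^b_c=\mathsf{T}^-_c\cap\mathcal{G}^{\perp}$ to a semiorthogonal decomposition $\langle\operatorname{Coprod}(\mathsf{A})\cap\mathsf{T}^b_c,\operatorname{Coprod}(\mathsf{B})\cap\mathsf{T}^b_c\rangle$ of $\mathsf{T}^b_c$; this compatibility is what is recorded, for the right-admissible half, in \Cref{Theorem admissible categories and T^b_c} and \Cref{Corollary admissible subcategories from Tc to Tbc with pre-approximability}. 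Substituting $\mathsf{T}^b_c=\mathbf{D}^{b}_{\operatorname{coh}}(\mathcal{A})$ finishes the proof. The only genuinely delicate step is this last one — checking that the truncation functors produced by \Cref{Corollary admissible categories from Tc to T-c} and by \Cref{Corollary admissable subcategories perpendicular} are restrictions of the same pair of functors on $\mathsf{T}$, so that the decompositions of $\mathsf{T}^-_c$ and of $\mathcal{G}^{\perp}$ glue along $\mathsf{T}^b_c$; everything else is bookkeeping with the paper's conventions or is already contained in \Cref{Proposition Closure of compacts DQcoh algebras}.
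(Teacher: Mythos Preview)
Your first part is correct and matches the paper's approach: identify $\mathsf{T}^c$, $\mathsf{T}^-_c$, $\mathsf{T}^b_c$ via \Cref{Proposition Closure of compacts DQcoh algebras} and apply \Cref{Corollary admissible categories from Tc to T-c}.

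There is a genuine gap in the second part. You assert that since $\mathsf{B}$ is admissible, $\mathsf{A}=\mathsf{B}^\perp$ is admissible as well; this is not true in general. Admissibility of $\mathsf{B}$ gives the two semiorthogonal decompositions $\langle\mathsf{A},\mathsf{B}\rangle$ and $\langle\mathsf{B},{}^\perp\mathsf{B}\rangle$, which makes $\mathsf{A}$ left admissible only. Consequently your claimed localisation sequence $\mathsf{A}\to\mathsf{T}^c\to\mathsf{B}$ and the semiorthogonal decomposition $\langle\mathsf{B},\mathsf{A}\rangle$ do not exist, and you cannot feed them into \Cref{Theorem extending localisation sequence from Tc to its closure} or \Cref{Corollary admissable subcategories perpendicular}.

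The repair is simple and stays within your strategy. Apply \Cref{Corollary admissible subcategories from Tc to Tbc with pre-approximability} directly with the admissible subcategory $\mathsf{B}$ (not $\mathsf{A}$): this gives that $\operatorname{Coprod}(\mathsf{B})\cap\mathsf{T}^b_c$ is right admissible in $\mathsf{T}^b_c$. Its right orthogonal inside $\mathsf{T}^b_c$ is $\operatorname{Coprod}(\mathsf{B})^\perp\cap\mathsf{T}^b_c=\mathsf{B}^\perp\cap\mathsf{T}^b_c=\operatorname{Coprod}(\mathsf{A})\cap\mathsf{T}^b_c$, the last equality coming from \Cref{Theorem localisation on Tc gives recollement on T}. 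If you prefer to argue via \Cref{Corollary admissable subcategories perpendicular} as you attempted, apply it to the legitimate decomposition $\langle\mathsf{B},{}^\perp\mathsf{B}\rangle$ rather than to $\langle\mathsf{B},\mathsf{A}\rangle$; since $({}^\perp\mathsf{B})^\perp=\operatorname{Coprod}(\mathsf{B})$ and $\mathsf{B}^\perp=\operatorname{Coprod}(\mathsf{A})$, this produces exactly the semiorthogonal decomposition $\langle\operatorname{Coprod}(\mathsf{A})\cap\mathcal{G}^\perp,\operatorname{Coprod}(\mathsf{B})\cap\mathcal{G}^\perp\rangle$ on $\mathcal{G}^\perp$, which then glues with the first part as you describe.
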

\begin{proof}
    Immediate from \Cref{Corollary admissible categories from Tc to T-c} using \Cref{Proposition Closure of compacts DQcoh algebras}.
\end{proof}

\begin{theorem}\label{Theorem Main result for Dbcoh of algebras}
    Let $X$ be a noetherian finite-dimensional J-2 scheme, and $\mathcal{A}$ a coherent $\mathcal{O}_X$-algebra. Let $\langle \mathsf{A}, \mathsf{B\rangle}$ be a semiorthogonal decomposition on $\mathbf{D}^b({\operatorname{coh}}(\mathcal{A}))$. Then,
    \begin{itemize}
        \item  $ \langle \operatorname{Coprod}(\mathsf{A}) \cap \mathbf{D}^{+}(\operatorname{coh} \mathcal{A}),\operatorname{Coprod}(\mathsf{B}) \cap \mathbf{D}^{+}(\operatorname{coh} \mathcal{A}) \rangle$ is a semiorthogonal decomposition on $\mathbf{D}^{+}(\operatorname{coh} \mathcal{A})$.
    \end{itemize}
    
    If we further assume that $\mathsf{B}$ is an admissible subcategory of $\mathbf{D}^b({\operatorname{coh}}(\mathcal{A}))$, then,
     \begin{itemize}
        \item $\langle\operatorname{Coprod}(\mathsf{A}) \cap \mathbf{D}^{b}_{\operatorname{coh}}(\operatorname{Inj} \mathcal{A}),\operatorname{Coprod}(\mathsf{B}) \cap \mathbf{D}^{b}_{\operatorname{coh}}(\operatorname{Inj} \mathcal{A}) \rangle $ is a semiorthogonal decomposition on $\mathbf{D}^{b}_{\operatorname{coh}}(\operatorname{Inj} \mathcal{A})$.
    \end{itemize}
    where $\mathbf{D}^{b}_{\operatorname{coh}}(\operatorname{Inj} \mathcal{A}) = \mathbf{D}^{b}_{\operatorname{coh}}(\mathcal{A}) \cap \mathbf{K}^{b}(\operatorname{Inj} \mathcal{A})$ denotes the full subcategory of complexes with injective dimension in the bounded derived category of sheaves with coherent cohomology. 
\end{theorem}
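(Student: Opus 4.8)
The plan is to imitate the proof of \Cref{Theorem Main result for Dbcoh of schemes}, applying the abstract results of \Cref{Section Main Results} to $\mathsf{T}=\mathbf{K}(\operatorname{Inj}\mathcal{A})$ together with the noncommutative inputs of \Cref{Section examples of co-approximability}. Recall that $\mathsf{T}^c\cong\mathbf{D}^b(\operatorname{coh}\mathcal{A})$ by \cite[Proposition 2.3]{Krause:2004}, that $\mathsf{T}$ has a single compact generator $G$, and that by \Cref{Theorem J-2 implies weak co-approximablility for algebras} the category $\mathbf{K}(\operatorname{Inj}\mathcal{A})$ is weakly co-approximable with its standard co-t-structure in the preferred equivalence class; combined with \Cref{Coroallary closure of compacts Kinj schemes and algebras} this identifies $\mathsf{T}^+_c=\mathbf{D}^+(\operatorname{coh}\mathcal{A})$ and $\mathsf{T}^b_c=\mathbf{D}^b_{\operatorname{coh}}(\operatorname{Inj}\mathcal{A})$, where closures of compacts are taken for the co-t-structure generating sequence $\mathcal{G}^n=\{\Sigma^nG\}$. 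The Hom-vanishing hypothesis $\HomT{G}{\Sigma^nG}=0$ for $n\ll0$ needed by \Cref{Corollary admissible categories from Tc to T+c} and \Cref{Corollary admissible subcategories from Tc to Tbc with co-approximability} holds here since $G$ is a bounded complex of coherent sheaves and these Hom-groups are computed in $\mathbf{D}(\operatorname{Qcoh}\mathcal{A})$ via \Cref{Lemma similar to Lemma 2.1 Krause:2004}.

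For the first assertion I would convert the semiorthogonal decomposition $\langle\mathsf{A},\mathsf{B}\rangle$ on $\mathsf{T}^c$ into a localisation sequence: since $\mathsf{B}$ is right admissible in $\mathbf{D}^b(\operatorname{coh}\mathcal{A})$, its inclusion is an aisle, so $\mathsf{B}\xrightarrow{i_*}\mathsf{T}^c\xrightarrow{j^*}\mathsf{B}^{\perp}$ is a localisation sequence with $\mathsf{B}^{\perp}=\mathsf{A}$ and with right adjoint $j_*$ equal to the inclusion of $\mathsf{A}$. Feeding this into \Cref{Corollary admissible categories from Tc to T+c} (with the roles of the two subcategories there taken by $\mathsf{B}$ and $\mathsf{A}$) yields a localisation sequence $(\mathsf{T}_{\mathsf{B}})^+_c\xrightarrow{i_*}\mathsf{T}^+_c\xrightarrow{j^*}(\mathsf{T}_{\mathsf{A}})^+_c$ with $\mathsf{T}_{\mathsf{B}}=\operatorname{Coprod}(\mathsf{B})$ and $\mathsf{T}_{\mathsf{A}}=\operatorname{Coprod}(\mathsf{A})$, where by \Cref{Theorem extending localisation sequence from Tc to its closure} one has $(\mathsf{T}_{\mathsf{B}})^+_c=\operatorname{Coprod}(\mathsf{B})\cap\mathbf{D}^+(\operatorname{coh}\mathcal{A})$ and $(\mathsf{T}_{\mathsf{A}})^+_c=\operatorname{Coprod}(\mathsf{A})\cap\mathbf{D}^+(\operatorname{coh}\mathcal{A})$. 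Since a localisation sequence $\mathsf{U}\xrightarrow{i_*}\mathsf{S}\xrightarrow{j^*}\mathsf{V}$ gives rise to the semiorthogonal decomposition $\langle j_*\mathsf{V},i_*\mathsf{U}\rangle$ of $\mathsf{S}$, this is precisely the claimed decomposition $\langle\operatorname{Coprod}(\mathsf{A})\cap\mathbf{D}^+(\operatorname{coh}\mathcal{A}),\operatorname{Coprod}(\mathsf{B})\cap\mathbf{D}^+(\operatorname{coh}\mathcal{A})\rangle$ of $\mathbf{D}^+(\operatorname{coh}\mathcal{A})$.

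For the second assertion I would invoke the extra hypothesis that $\mathsf{B}$ is admissible in $\mathbf{D}^b(\operatorname{coh}\mathcal{A})$. Applying \Cref{Corollary admissible subcategories from Tc to Tbc with co-approximability} to $\mathsf{B}$ shows that $\operatorname{Coprod}(\mathsf{B})\cap\mathsf{T}^b_c$ is right admissible in $\mathsf{T}^b_c=\mathbf{D}^b_{\operatorname{coh}}(\operatorname{Inj}\mathcal{A})$, hence $\langle(\operatorname{Coprod}(\mathsf{B})\cap\mathsf{T}^b_c)^{\perp}_{\mathsf{T}^b_c},\,\operatorname{Coprod}(\mathsf{B})\cap\mathsf{T}^b_c\rangle$ is a semiorthogonal decomposition of $\mathsf{T}^b_c$, and it remains only to identify the first term with $\operatorname{Coprod}(\mathsf{A})\cap\mathsf{T}^b_c$. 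For this I would combine $\mathsf{B}^{\perp}_{\mathcal{G}^{\perp}}=\operatorname{Coprod}(\mathsf{A})\cap\mathcal{G}^{\perp}$ from \Cref{Corollary admissable subcategories perpendicular} (with $\mathcal{G}^{\perp}=\mathbf{K}(\operatorname{Inj}\mathcal{A})^-$) with the description of the complementary piece on $\overline{\mathsf{T}^c}=\mathsf{T}^+_c$ from the first assertion, noting that these two right-admissible pieces are compatible upon intersecting with $\mathsf{T}^b_c=\mathsf{T}^+_c\cap\mathcal{G}^{\perp}$; this compatibility is exactly the step carried out inside the proof of \Cref{Theorem admissible categories and T^b_c}.

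Essentially everything here is bookkeeping: the substantive inputs (weak co-approximability of $\mathbf{K}(\operatorname{Inj}\mathcal{A})$, the closure-of-compacts computation, and the abstract extension theorems) are already available. The one point requiring genuine care, and the main potential obstacle, is the final identification in the second assertion --- checking that the right-admissible subcategory produced inside $\mathbf{D}^b_{\operatorname{coh}}(\operatorname{Inj}\mathcal{A})$ has precisely the advertised complement $\operatorname{Coprod}(\mathsf{A})\cap\mathbf{D}^b_{\operatorname{coh}}(\operatorname{Inj}\mathcal{A})$, i.e.\ that the descriptions of the left-hand pieces coming from $\mathsf{T}^+_c$ and from $\mathsf{T}^-$ restrict coherently and simultaneously to $\mathsf{T}^b_c$ --- which is exactly where the admissibility of $\mathsf{B}$ and the internal bookkeeping of \Cref{Theorem admissible categories and T^b_c} are used.
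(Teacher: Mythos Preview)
Your proposal is correct and follows essentially the same route as the paper: apply the abstract results of \Cref{Section Main Results} to $\mathsf{T}=\mathbf{K}(\operatorname{Inj}\mathcal{A})$, using the weak co-approximability from \Cref{Theorem J-2 implies weak co-approximablility for algebras} and the closure-of-compacts identifications from \Cref{Coroallary closure of compacts Kinj schemes and algebras}. The paper's own proof is a one-liner citing only \Cref{Corollary admissible categories from Tc to T+c} and \Cref{Proposition closure of compacts for Kinj}; you are in fact more careful in also explicitly invoking \Cref{Corollary admissible subcategories from Tc to Tbc with co-approximability} for the second assertion and in spelling out the identification of the complementary piece inside $\mathsf{T}^b_c$, which the paper leaves implicit.
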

\begin{proof}
    This is immediate from \Cref{Corollary admissible categories from Tc to T+c} using \Cref{Proposition closure of compacts for Kinj}.
\end{proof}

Finally, we get the following result for stacks.

\begin{theorem}\label{Theorem Main result for Perf of Stacks}
    Let $\mathcal{X}$ be a noetherian algebraic stack which is concentrated, that is, the canonical morphism $\mathcal{X} \to \operatorname{Spec}(\mathbb{Z})$ is concentrated, see \cite[Definition 2.4]{Hall/Rydh:2017}. Further assume $\mathcal{X}$ satisfies one of the following two conditions,
    \begin{itemize}
        \item $\mathcal{X}$ has quasi-finite and separated diagonal.
        \item $\mathcal{X}$ is a DM stack of characteristic zero.
    \end{itemize}
    
    Let $\langle \mathsf{A}, \mathsf{B\rangle}$ be a semiorthogonal decomposition on $\mathbf{D}^{\operatorname{perf}}(\mathcal{X})$. Then,
    \begin{itemize}
        \item $\langle\operatorname{Coprod}(\mathsf{A}) \cap \mathbf{D}^{-}_{\operatorname{coh}}(\mathcal{X}),\operatorname{Coprod}(\mathsf{B}) \cap \mathbf{D}^{-}_{\operatorname{coh}}(\mathcal{X})\rangle $ is a semiorthogonal decomposition on $\mathbf{D}^{-}_{\operatorname{coh}}(\mathcal{X})$, see \Cref{Notation from Neeman}. 
    \end{itemize}

     If we further assume that $\mathsf{B}$ is an admissible subcategory of $\mathbf{D}^{\operatorname{perf}}(\mathcal{X})$, then,
     \begin{itemize}
        \item $\langle\operatorname{Coprod}(\mathsf{A}) \cap \mathbf{D}^{b}_{\operatorname{coh}}(\mathcal{X}),\operatorname{Coprod}(\mathsf{B}) \cap \mathbf{D}^{b}_{\operatorname{coh}}(\mathcal{X}) \rangle $ is a semiorthogonal decomposition on $\mathbf{D}^{b}_{\operatorname{coh}}(\mathcal{X})$. 
    \end{itemize}
\end{theorem}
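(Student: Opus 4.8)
The plan is to read both assertions off the abstract machinery of \Cref{Section Main Results} once the closure of compacts of $\mathsf{T} \colonequals \mathbf{D}_{\operatorname{Qcoh}}(\mathcal{X})$ is known. By \Cref{Proposition Closure of compacts DQcoh stacks}, $\mathsf{T}$ is compactly generated by a single compact object $G$, one has $\mathsf{T}^c = \mathbf{D}^{\operatorname{perf}}(\mathcal{X})$, the standard t-structure on $\mathsf{T}$ lies in the preferred equivalence class (in particular the preferred quasiequivalence class, see \Cref{Definition preferred equivalence class}), and, computed with respect to it, $\mathsf{T}^-_c = \mathbf{D}^{-}_{\operatorname{coh}}(\mathcal{X})$ and $\mathsf{T}^b_c = \mathbf{D}^{b}_{\operatorname{coh}}(\mathcal{X})$, with notation as in \Cref{Convention closure of compacts for approx/co-approx}.

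Before invoking the abstract results I would check the one hypothesis they require, namely that $\mathsf{T}$ satisfies Hypothesis (\textrm{I}), i.e.\ $\HomT{G}{\Sigma^i G} = 0$ for $i \gg 0$. Since $G$ is perfect on the concentrated noetherian stack $\mathcal{X}$, the complex $\mathbb{R}\mathrm{Hom}_{\mathcal{X}}(G,G)$ is again perfect, hence has bounded coherent cohomology, while $\mathbb{R}\Gamma(\mathcal{X},-)$ has finite cohomological dimension because $\mathcal{X}\to\operatorname{Spec}(\mathbb{Z})$ is concentrated; therefore $\HomT{G}{\Sigma^{i}G} = H^{i}\mathbb{R}\Gamma\big(\mathcal{X},\mathbb{R}\mathrm{Hom}_{\mathcal{X}}(G,G)\big)$ vanishes for $i \gg 0$. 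This is the same mechanism behind \Cref{Introduction Example A}.

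With these two facts in place, both bullets are an instance of \Cref{Introduction Theorem B} applied to $\mathsf{T}$ and the semiorthogonal decomposition $\langle \mathsf{A}, \mathsf{B}\rangle$ on $\mathsf{T}^c = \mathbf{D}^{\operatorname{perf}}(\mathcal{X})$. Concretely, for the first bullet I would apply \Cref{Corollary admissible categories from Tc to T-c} with the $\mathbb{N}$-compressed generating sequence $\mathcal{G}^n = \{\Sigma^{-n}G\}$ of \Cref{Theorem Examples of Compressed metrics}; this yields a localisation sequence on $\mathsf{T}^-_c$ whose outer terms are $\operatorname{Coprod}(\mathsf{A}) \cap \mathsf{T}^-_c$ and $\operatorname{Coprod}(\mathsf{B}) \cap \mathsf{T}^-_c$ (these identifications being part of \Cref{Theorem extending localisation sequence from Tc to its closure}), and a localisation sequence with triangulated outer terms is exactly a semiorthogonal decomposition, see \Cref{Definition of localisation sequence}. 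Substituting $\mathsf{T}^-_c = \mathbf{D}^{-}_{\operatorname{coh}}(\mathcal{X})$ gives the first claim. For the second bullet, under the extra hypothesis that $\mathsf{B}$ is admissible in $\mathsf{T}^c$, I would invoke \Cref{Theorem admissible categories and T^b_c} (equivalently \Cref{Corollary admissible subcategories from Tc to Tbc with pre-approximability}), again with $\mathcal{G}^n = \{\Sigma^{-n}G\}$, to obtain a right admissible subcategory $\operatorname{Coprod}(\mathsf{A}) \cap \mathsf{T}^b_c$ of $\mathsf{T}^b_c$ with complement $\operatorname{Coprod}(\mathsf{B}) \cap \mathsf{T}^b_c$, compatibly with the first part; substituting $\mathsf{T}^b_c = \mathbf{D}^{b}_{\operatorname{coh}}(\mathcal{X})$ gives the second claim.

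I do not anticipate a genuine obstacle: the argument is pure assembly of the results of \Cref{Section Main Results}, and the only input peculiar to stacks — that $\mathbf{D}_{\operatorname{Qcoh}}(\mathcal{X})$ has a single compact generator, that its standard t-structure lies in the preferred equivalence class, and the ensuing computation of $\mathsf{T}^-_c$ and $\mathsf{T}^b_c$ — has already been isolated in \Cref{Proposition Closure of compacts DQcoh stacks}, which itself rests on \cite{Hall/Lamarche/Lank/Peng:2025, DeDeyn/Lank/ManaliRahul/Peng:2025}. The only points that deserve a little care are the bookkeeping of which of $\mathsf{A}$, $\mathsf{B}$ occupies which slot when matching $\langle\mathsf{A},\mathsf{B}\rangle$ to the conventions of \Cref{Corollary admissible categories from Tc to T-c}, and confirming that the "$\mathsf{T}^b_c$" produced by \Cref{Theorem admissible categories and T^b_c} is the one computed against a t-structure in the preferred quasiequivalence class — which is exactly why the preferred-equivalence-class statement in \Cref{Proposition Closure of compacts DQcoh stacks} is needed.
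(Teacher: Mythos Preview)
Your approach is correct and essentially identical to the paper's: the paper's one-line proof simply cites \Cref{Corollary admissible categories from Tc to T-c} together with \Cref{Proposition Closure of compacts DQcoh stacks}, and you have merely unpacked this (and implicitly \Cref{Corollary admissible subcategories from Tc to Tbc with pre-approximability} for the second bullet) with the extra care of verifying Hypothesis~(\textrm{I}) directly. The one slip is exactly the bookkeeping you flagged: since it is $\mathsf{B}$ that is assumed admissible, applying \Cref{Corollary admissible subcategories from Tc to Tbc with pre-approximability} gives that $\operatorname{Coprod}(\mathsf{B})\cap\mathsf{T}^b_c$ (not $\operatorname{Coprod}(\mathsf{A})\cap\mathsf{T}^b_c$) is the right admissible piece, with right orthogonal $\operatorname{Coprod}(\mathsf{A})\cap\mathsf{T}^b_c$, yielding the stated decomposition $\langle \operatorname{Coprod}(\mathsf{A})\cap\mathsf{T}^b_c,\operatorname{Coprod}(\mathsf{B})\cap\mathsf{T}^b_c\rangle$.
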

\begin{proof}
    This is immediate from \Cref{Corollary admissible categories from Tc to T-c} using \Cref{Proposition Closure of compacts DQcoh stacks}.
\end{proof}

\appendix
\section{Co-approximability for noncommutative coherent algebras}
In this appendix, we will prove some technical results, which are used to prove the co-approximability of the homotopy category of injectives for a coherent algebra over a scheme, see \Cref{Theorem J-2 implies weak co-approximablility for algebras}. We begin with a definition.

\begin{definition}
    A \emph{Noether algebra} is a pair $(X,\mathcal{A})$ where $X$ is a noetherian scheme and $\mathcal{A}$ is a coherent $\mathcal{O}_X$-algebra. We will denote the structure map by $\pi : \mathcal{O}_X \to \mathcal{A}$. The abelian category of quasicoherent (resp.\ coherent) $\mathcal{A}$-modules is denoted by $\operatorname{Qcoh} \mathcal{A}$ (resp.\ $\operatorname{coh} \mathcal{A}$). The full subcategory of injective quasicoherent $\mathcal{A}$-modules is denoted by $\operatorname{Inj} \mathcal{A}$.
\end{definition}

The co-approximability result will follow from \Cref{Proposition weak co-approx for Kinj} using the following theorem. We prove this theorem by a sequence of lemma, analogous to the proof of \Cref{Lemma on approximating on J-2 schemes}.
\begin{theorem}\label{Lemma on approximating on J-2 algebras}
    Let $(X,\mathcal{A})$ be a Noether algebra for a finite dimensional J-2 scheme $X$. Let $\mathsf{T}$ be any triangulated subcategory of $\mathbf{D}(\operatorname{Qcoh} \mathcal{A})$. Then, there is an object $\hat{G}$ in $ \mathbf{D}^{b}(\operatorname{coh} \mathcal{A})$ such that, 
    \begin{enumerate}
        \item There exists an integer $N \geq 0$ such that $\mathsf{T} \cap \operatorname{Qcoh} \mathcal{A} \subseteq \overline{\langle \hat{G} \rangle}^{[-N,N]}$, see \Cref{Notation from Neeman}.
        \item There exist integers $N_{p,q} \geq 0$ for each pair of integers $p \leq q $ such that 
        \[\mathsf{T} \cap \mathbf{D}(\operatorname{Qcoh} \mathcal{A})^{\geq p} \cap \mathbf{D}(\operatorname{Qcoh} \mathcal{A})^{\leq q} \subseteq \overline{\langle \hat{G} \rangle}^{[-N_{p,q},N_{p,q}]}\]
        see \Cref{Notation from Neeman}.
    \end{enumerate}
\end{theorem}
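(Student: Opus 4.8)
The plan is to mimic the proof structure of \Cref{Lemma on approximating on J-2 schemes}, but with the extra bookkeeping required to pass from the scheme $X$ to the Noether algebra $(X,\mathcal{A})$. As in \Cref{Remark on approximating on J-2 schemes}, by an induction on $q-p$ using the canonical truncation triangles it suffices to prove part $(1)$ for $\mathsf{T} = \mathbf{D}(\operatorname{Qcoh}\mathcal{A})$; part $(2)$ follows formally. The essential new phenomenon is that $\mathcal{A}$ need not have finite global dimension even when $X$ does, so we cannot argue by resolutions on a regular open locus in $\mathcal{A}$ itself. Instead I would exploit the fact that for a coherent $\mathcal{A}$-module $F$, the pushforward $\pi_* F$ is a quasicoherent $\mathcal{O}_X$-module, and that $\pi$ is an affine morphism; more importantly, over a regular affine open $U = \operatorname{Spec}(R) \subseteq \operatorname{reg} X$, the algebra $\mathcal{A}|_U = \widetilde{A}$ for a finite $R$-algebra $A$, and although $A$ may have infinite global dimension, any finitely generated $A$-module has finite $R$-projective dimension bounded by $\dim X$. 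The point is that the generator $\hat G$ we build will be an $\mathcal{A}$-module (roughly $\mathcal{A}\otimes_{\mathcal{O}_X}\hat G_X$ together with corrections supported on closed subschemes), and we need control of arbitrary $\mathcal{A}$-modules by $\hat G$ in the $\overline{\langle -\rangle}^{[\bullet,\bullet]}$ sense, not control over $\mathbf{D}^{\operatorname{perf}}(\mathcal{A})$.

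The key steps, in order, are the following. First I would set up the analogue of \Cref{Lemma 1 on approximating on J-2 schemes}: if $i : Z \hookrightarrow X$ is a closed immersion and the conclusion holds for the Noether algebra $(Z,\mathcal{A}|_Z)$ (more precisely $(Z, \mathcal{A}\otimes_{\mathcal{O}_X}\mathcal{O}_Z)$, or the scheme-theoretic restriction), then it holds for $(X,\mathcal{A})$ with $\mathsf{T} = \mathbf{D}_Z(\operatorname{Qcoh}\mathcal{A})$, by the same $\mathcal{I}$-adic filtration argument applied to an $\mathcal{A}$-module $F$ supported on $Z$: the successive quotients $F_{n+1}/F_n$ are $\mathcal{A}\otimes\mathcal{O}_X/\mathcal{I}$-modules, hence come from the closed fibre, and $F = \hocolim F_n$. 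Second I would prove the analogue of \Cref{Lemma 2 on approximating on J-2 schemes} (gluing two closed subschemes covering $X$), which is essentially formal given the first lemma and the $\mathbb{R}j_*j^*$ triangle, since $\mathbb{R}j_*$ and $j^*$ are computed on the underlying topological space and interact with $\pi_*$ harmlessly. Third, the main induction on $\dim X$: reduce to $X$ integral (via $X_{\operatorname{red}}$ and irreducible components using the first two lemmas), pick an affine open $U = \operatorname{Spec}(R) \subseteq \operatorname{reg} X$, and for $F \in \operatorname{Qcoh}\mathcal{A}$ analyse the triangle $F' \to F \to \mathbb{R}j_*j^*F$. On $U$ we have $j^*F$ an $A$-module with $A$ finite over the regular ring $R$ of dimension $\le n$, so $j^*F$ has $R$-projective dimension $\le n$; since $A$ is finite over $R$, $A$ itself lies in $\overline{\langle R\rangle}^{[-n,n]}$ as an $R$-module, and then $j^*F$, being built from copies of $R$ in $[-n,n]$ via the $R$-resolution, lies in $\overline{\langle A\rangle}^{[-n,n]}$ as an $A$-module — here one uses that $\overline{\langle -\rangle}$ for $A$-modules only requires closure under the triangulated/coproduct operations, and $j^*F \otimes_R^{\mathbb{L}}$ of a finite free $R$-resolution is a complex of finite free $A$-modules. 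Then $\mathbb{R}j_*j^*F \in \mathbb{R}j_*(\overline{\langle j^*\mathcal{A}\rangle}^{[-n,n]}) \subseteq \overline{\langle \mathbb{R}j_*j^*\mathcal{A}\rangle}^{[-n,n]}$, and the cone $F'$ is supported on $Z = X \setminus U$ with cohomology in a bounded range $[\![0,t]\!]$ (using \cite[Proposition 3.9.2]{Lipman:2009}). Fourth, deal with $\mathbb{R}j_*j^*\mathcal{A}$ itself: the triangle $Q \to \mathcal{A} \to \mathbb{R}j_*j^*\mathcal{A}$ has $Q$ supported on $Z$ in a bounded range, and $\mathcal{A}\in \langle G\rangle^{[-C,C]}$ for a compact generator $G$ of $\mathbf{D}^{\operatorname{perf}}(\mathcal{A})$ and some $C$ (since $\mathcal{A}$ is a perfect complex over itself, even just the free module). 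Finally, set $\hat G = i_*\hat H \oplus G$ where $\hat H$ comes from $(Z_{\operatorname{red}}, \mathcal{A}|_{Z_{\operatorname{red}}})$ by the inductive hypothesis (valid since $\dim Z < \dim X$), and invoke the closed-immersion lemma to absorb the terms $\mathbf{D}_Z(\operatorname{Qcoh}\mathcal{A})^{\ge p}\cap \mathbf{D}_Z(\operatorname{Qcoh}\mathcal{A})^{\le q}$ into $\overline{\langle\hat G\rangle}^{[-L_{p,q},L_{p,q}]}$, concluding the proof.

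The main obstacle, I expect, is the third step: justifying cleanly that $j^*F$, a quasicoherent $\mathcal{A}|_U$-module over the regular base $U$, lies in $\overline{\langle j^*\mathcal{A}\rangle}^{[-n,n]}$ inside $\mathbf{D}(\operatorname{Qcoh}\mathcal{A}|_U)$ — i.e. transferring the finite $R$-projective dimension of the underlying $\mathcal{O}_U$-module into a statement about the $\mathcal{A}|_U$-linear derived category with a shift-window independent of $F$. The honest way to do this is: $A$ is module-finite over $R$, $R$ has global dimension $\le n$, so $A$ as an $R$-module has a finite free resolution of length $\le n$, giving $A \in \langle R\rangle^{[-n,0]}$ hence $A \in \overline{\langle R\rangle}^{[-n,0]}$; then for any $A$-module $M$, resolving $M$ as an $R$-module (length $\le n$) and noting $M \cong M\otimes_A A$, one produces $M$ from $A$-modules of the form $A\otimes_R(\text{finite free }R) = \text{finite free }A$ placed in degrees $[-n,0]$, so $M \in \overline{\langle A\rangle}^{[-n,0]}$ — the subtlety being that one must work with actual complexes, not just cohomology, which is fine since $R$ is regular so the resolution is a genuine finite complex of projectives. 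Everything else is a faithful transcription of the scheme case, with $\pi_*$ (exact, since $\pi$ is affine) used to move homological and boundedness statements between $\operatorname{Qcoh}\mathcal{A}$ and $\operatorname{Qcoh}\mathcal{O}_X$ whenever convenient. I would state and prove the two lemmas (\Cref{Lemma 1 on approximating on J-2 algebras} and \Cref{Lemma 2 on approximating on J-2 algebras} say) explicitly before giving the proof of \Cref{Lemma on approximating on J-2 algebras}, exactly paralleling the scheme development, and then derive \Cref{Theorem on approximating on J-2 algebra} as the corollary feeding \Cref{Proposition weak co-approx for Kinj}.
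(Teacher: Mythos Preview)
Your proposal follows the paper's scaffolding closely, and the reductions (the closed-immersion lemma, the two-closed-subschemes lemma, reduction to $X$ integral, induction on $\dim X$) are the same. The gap is exactly where you flagged it: the third step, where you claim $j^*F \in \overline{\langle j^*\mathcal{A}\rangle}^{[-n,n]}$ inside $\mathbf{D}(\operatorname{Qcoh}\mathcal{A}|_U)$, does not go through. Finite $R$-projective dimension is a statement in the $R$-linear derived category and does not transfer to a bounded shift-window in the $A$-linear derived category. Concretely, take $R = k$ a field and $A = k[x]/(x^2)$: then $n = 0$ and every $A$-module has $R$-projective dimension $0$, so your argument would give $k \in \overline{\langle A\rangle}^{[0,0]}$; but $k$ is not projective over $A$, and in fact $k \notin \overline{\langle A\rangle}^{[-N,N]}$ for any $N$, since applying $(-)\otimes_A^{\mathbb{L}} k$ would force $k\otimes_A^{\mathbb{L}} k$ to have bounded cohomology, contradicting $\operatorname{Tor}_i^A(k,k) = k$ for all $i \ge 0$. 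The line ``$j^*F\otimes_R^{\mathbb{L}}$ of a finite free $R$-resolution is a complex of finite free $A$-modules'' produces $A\otimes_R^{\mathbb{L}} M$, not $M$; the quasi-isomorphism $P_\bullet \to M$ from an $R$-free resolution is only $R$-linear.

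The paper fixes precisely this step by invoking a genuinely noncommutative structure theorem, \cite[Proposition 4.10]{Elagin/Lunts/Schnurer:2020}: on a suitable non-empty affine open $U$ there is a nilpotent two-sided ideal $I \subseteq \mathcal{A}(U)$ with $\mathcal{A}(U)/I \cong A_1 \times \cdots \times A_r$, each $A_i$ Azumaya over its centre $Z(A_i)$, and each $Z(A_i)$ regular. The $I$-adic filtration (your first lemma, applied internally on $U$) reduces to $\mathcal{A}(U)/I$, and then \cite[Theorems 1.8 and 2.1]{Auslander/Goldman:1960} give each $A_i$ finite global dimension $\dim Z(A_i)$, so every $\mathcal{A}(U)/I$-module has a bounded free resolution and the inclusion $j^*F \in \overline{\langle j^*\mathcal{A}\rangle}^{[-N,N]}$ holds with a uniform $N$. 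The same obstruction explains the base case $\dim X = 0$, which the paper handles by passing to $A/J(A)$. Once this step is replaced, the remainder of your outline matches the paper.
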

\begin{remark}\label{Remark on approximating on J-2 algebras}
    It is clear that it is enough to show \Cref{Lemma on approximating on J-2 algebras} for $\mathsf{T} = \mathbf{D}(\operatorname{Qcoh} \mathcal{A})$.
    Further, note that \Cref{Lemma on approximating on J-2 algebras}$(1) \implies (2)$. This can be shown by an induction on $(q-p)$ as follows. Suppose $F \in \mathbf{D}(\operatorname{Qcoh} \mathcal{A})^{\geq p} \cap \mathbf{D}(\operatorname{Qcoh} \mathcal{A})^{\leq q}$ for integers $p$ and $q$ such that $q-p \geq 1$. Then, consider the triangle $\tau^{\leq q-1}F \to F \to \tau^{\geq q}F \to \Sigma \tau^{\leq q-1}F$ with $\tau^{\leq q-1}F \in \mathbf{D}(\operatorname{Qcoh} \mathcal{A})^{\geq p} \cap \mathbf{D}(\operatorname{Qcoh} \mathcal{A})^{\leq q-1}$ and $\tau^{\geq q}F \in \mathbf{D}(\operatorname{Qcoh} \mathcal{A})^{\geq q} \cap \mathbf{D}(\operatorname{Qcoh} \mathcal{A})^{\leq q}$ given by the canonical truncation. But, 
    \[\mathbf{D}(\operatorname{Qcoh} \mathcal{A})^{\geq q} \cap \mathbf{D}(\operatorname{Qcoh} \mathcal{A})^{\leq q} = \Sigma^{-q} (\mathbf{D}(\operatorname{Qcoh} \mathcal{A})^{\geq 0} \cap \mathbf{D}(\operatorname{Qcoh} \mathcal{A})^{\leq 0})\]
    so this shows the inductive step.
\end{remark}

\begin{lemma}\label{Lemma 1 on approximating on J-2 algebras}
    Let $(X,\mathcal{A})$ be Noether algebra.
     If $i : Z \to X$ is a closed immersion such that the conclusion of \Cref{Lemma on approximating on J-2 algebras} holds for $(Z,i^\ast(\mathcal{A}))$, then the conclusion of \Cref{Lemma on approximating on J-2 algebras} holds for $(X,\mathcal{A})$ with $\mathsf{T} = \mathbf{D}_{Z}(\operatorname{Qcoh} \mathcal{A})$.
\end{lemma}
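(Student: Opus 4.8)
The plan is to follow the proof of \Cref{Lemma 1 on approximating on J-2 schemes} essentially verbatim, the only extra input being that the structure map $\pi:\mathcal{O}_X\to\mathcal{A}$ of a Noether algebra lands in the centre of $\mathcal{A}$, so that the $\mathcal{I}$-power torsion filtration used there is automatically a filtration by $\mathcal{A}$-submodules. By \Cref{Remark on approximating on J-2 algebras} it is enough to prove part (1) for $\mathsf{T}=\mathbf{D}_Z(\operatorname{Qcoh}\mathcal{A})$ (the complexes whose $\mathcal{O}_X$-module cohomology is supported on $Z$), i.e.\ to exhibit $\hat G\in\mathbf{D}^b(\operatorname{coh}\mathcal{A})$ and $N\ge0$ with $\operatorname{Qcoh}\mathcal{A}\cap\mathbf{D}_Z(\operatorname{Qcoh}\mathcal{A})\subseteq\overline{\langle\hat G\rangle}^{[-N,N]}$.

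Let $\mathcal{I}\subseteq\mathcal{O}_X$ be the ideal sheaf of $i:Z\to X$. For a quasicoherent $\mathcal{A}$-module $F$ set $F_n\colonequals\{s\in F:\mathcal{I}^n s=0\}$; since $\pi(\mathcal{O}_X)$ is central in $\mathcal{A}$, $F_n$ is an $\mathcal{A}$-submodule, and we obtain an increasing chain $0=F_0\subseteq F_1\subseteq F_2\subseteq\cdots\subseteq F$ of $\mathcal{A}$-modules. If $F$ is supported on $Z$ this chain is exhaustive: writing $F=\colim F_\alpha$ as the filtered colimit of its coherent $\mathcal{A}$-submodules, each $F_\alpha$ is coherent over $\mathcal{O}_X$ (as $\mathcal{A}$ is) and supported on $Z$, hence killed by some power $\mathcal{I}^{n_\alpha}$, so $F_\alpha\subseteq F_{n_\alpha}$ and $F=\colim_n F_n$. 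By \cite[Remark 2.2]{Bkstedt/Neeman:1993} this gives $F\cong\hocolim F_n$ in $\mathbf{D}(\operatorname{Qcoh}\mathcal{A})$. Each successive quotient $F_{n+1}/F_n$ is annihilated by $\mathcal{I}$, hence is a module over $\mathcal{A}/\mathcal{I}\mathcal{A}\cong i_*(i^*\mathcal{A})$; thus $F_{n+1}/F_n\in i_*(\operatorname{Qcoh} i^*\mathcal{A})$. The short exact sequences $0\to F_n\to F_{n+1}\to F_{n+1}/F_n\to 0$ give triangles $F_n\to F_{n+1}\to F_{n+1}/F_n\to\Sigma F_n$, so $F_{n+1}\in F_n\star(F_{n+1}/F_n)$, and combining this with $F\cong\hocolim F_n$ we conclude $F\in\overline{\langle i_*(\operatorname{Qcoh} i^*\mathcal{A})\rangle}^{[-1,1]}$.

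Finally I would invoke the hypothesis for $(Z,i^*\mathcal{A})$: there are $\widetilde G\in\mathbf{D}^b(\operatorname{coh} i^*\mathcal{A})$ and $N\ge1$ with $\operatorname{Qcoh} i^*\mathcal{A}\subseteq\overline{\langle\widetilde G\rangle}^{[-N,N]}$. Since $i:Z\to X$ is an affine closed immersion, $i_*$ is an exact, coproduct-preserving triangulated functor, so it sends $\overline{\langle\widetilde G\rangle}^{[-N,N]}$ into $\overline{\langle i_*\widetilde G\rangle}^{[-N,N]}$; setting $\hat G\colonequals i_*\widetilde G\in\mathbf{D}^b(\operatorname{coh}\mathcal{A})$ and using $\overline{\langle\overline{\langle\hat G\rangle}^{[-N,N]}\rangle}^{[-1,1]}\subseteq\overline{\langle\hat G\rangle}^{[-N-1,N+1]}$, we obtain $\operatorname{Qcoh}\mathcal{A}\cap\mathbf{D}_Z(\operatorname{Qcoh}\mathcal{A})\subseteq\overline{\langle\hat G\rangle}^{[-N-1,N+1]}$. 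This establishes part (1) with the integer $N+1$, and part (2) then follows from \Cref{Remark on approximating on J-2 algebras}. The only genuinely new points compared with the scheme case---none of them a real obstacle---are the centrality of $\pi(\mathcal{O}_X)$ in $\mathcal{A}$ (which keeps the torsion filtration $\mathcal{A}$-linear), the identification $\mathcal{A}/\mathcal{I}\mathcal{A}\cong i_*(i^*\mathcal{A})$ of $\mathcal{O}_X$-algebras, and the noetherian reduction to coherent submodules showing the filtration is exhaustive; everything else is formal.
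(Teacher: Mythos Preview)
Your proposal is correct and is exactly the proof the paper intends: the paper's own proof reads in its entirety ``Same as proof of \Cref{Lemma 1 on approximating on J-2 schemes}'', and you have faithfully transcribed that argument in the Noether-algebra setting, correctly flagging the only genuinely new points (centrality of $\pi(\mathcal{O}_X)$ so the $\mathcal{I}$-torsion filtration is by $\mathcal{A}$-submodules, the identification $\mathcal{A}/\mathcal{I}\mathcal{A}\cong i_*(i^*\mathcal{A})$, and the noetherian reduction showing exhaustiveness).
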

\begin{proof}
    Same as proof of \Cref{Lemma 1 on approximating on J-2 schemes}.
\end{proof}

\begin{lemma}\label{Lemma 2 on approximating on J-2 algebras}
    Let $(X,\mathcal{A})$ be a Noether algebra, and let $i_1 : Z_1\to X$ and $i_2 :  Z_2 \to X$ be closed subschemes such that $X = Z_1 \cup Z_2$ topologically. If the conclusion of \Cref{Lemma on approximating on J-2 algebras} holds for $(Z_1,i_1^{\ast}(\mathcal{A}))$ and $(Z_2,i_2^{\ast}(\mathcal{A}))$, then it holds for $(X,\mathcal{A})$.
\end{lemma}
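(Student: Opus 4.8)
The plan is to imitate, essentially line for line, the proof of \Cref{Lemma 2 on approximating on J-2 schemes}; the key observation is that every geometric input used there only concerns the underlying $\mathcal{O}_X$-module structure and so survives the passage to quasicoherent $\mathcal{A}$-modules. By \Cref{Lemma 1 on approximating on J-2 algebras} applied to the two closed immersions, the conclusion of \Cref{Lemma on approximating on J-2 algebras} already holds for $(X,\mathcal{A})$ with $\mathsf{T} = \mathbf{D}_{Z_1}(\operatorname{Qcoh}\mathcal{A})$ and with $\mathsf{T} = \mathbf{D}_{Z_2}(\operatorname{Qcoh}\mathcal{A})$, using the hypotheses on $(Z_1,i_1^\ast\mathcal{A})$ and $(Z_2,i_2^\ast\mathcal{A})$. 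By \Cref{Remark on approximating on J-2 algebras} it then suffices to establish \Cref{Lemma on approximating on J-2 algebras}(1) for $\mathsf{T} = \mathbf{D}(\operatorname{Qcoh}\mathcal{A})$.

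For that, write $U_1 = X\setminus Z_1$, $U_2 = X\setminus Z_2$ with open immersions $j_k\colon U_k\to X$ and restricted algebras $\mathcal{A}_k = j_k^\ast\mathcal{A}$; since the $j_k$ are flat, $j_k^\ast$ and $\mathbb{R}j_{k,\ast}$ lift to the module categories and are intertwined with the corresponding functors on $\operatorname{Qcoh}$ by the (exact, conservative) forgetful functor. Given $F\in\operatorname{Qcoh}\mathcal{A}$, I would use the unit triangle $F'\to F\to\mathbb{R}j_{1,\ast}j_1^\ast F\to\Sigma F'$, note that $j_1^\ast F'\cong 0$ so $F'\in\mathbf{D}_{Z_1}(\operatorname{Qcoh}\mathcal{A})$, invoke \cite[Proposition 3.9.2]{Lipman:2009} on the underlying complex to get an integer $t\ge 0$ (independent of $F$) with $\mathbb{R}j_{1,\ast}j_1^\ast F$ of cohomological amplitude in $[0,t-1]$, and flat base change along $j_2$ to see $j_2^\ast\mathbb{R}j_{1,\ast}\mathbf{D}(\operatorname{Qcoh}\mathcal{A}_1)=0$. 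This yields
\[
\operatorname{Qcoh}\mathcal{A}=\big(\mathbf{D}_{Z_1}(\operatorname{Qcoh}\mathcal{A})^{\ge 0}\cap\mathbf{D}_{Z_1}(\operatorname{Qcoh}\mathcal{A})^{\le t}\big)\star\big(\mathbf{D}_{Z_2}(\operatorname{Qcoh}\mathcal{A})^{\ge 0}\cap\mathbf{D}_{Z_2}(\operatorname{Qcoh}\mathcal{A})^{\le t-1}\big),
\]
after which the case $\mathsf{T}=\mathbf{D}_{Z_k}(\operatorname{Qcoh}\mathcal{A})$ (in its form (2) via \Cref{Remark on approximating on J-2 algebras}) lets me take $\hat{G}$ to be the direct sum of the two resulting generators and bound $\operatorname{Qcoh}\mathcal{A}\subseteq\overline{\langle\hat{G}\rangle}^{[-N,N]}$ for a suitable $N$, see \Cref{Notation from Neeman}.

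The only delicate point---and hence the step I expect to be the main obstacle---is the bookkeeping verifying that $j_k^\ast$ and $\mathbb{R}j_{k,\ast}$ really do restrict to $\operatorname{Qcoh}\mathcal{A}$ and $\operatorname{Qcoh}\mathcal{A}_k$ (so that the triangles above live in $\mathbf{D}(\operatorname{Qcoh}\mathcal{A})$) and that the boundedness and base-change statements for $\mathcal{O}_X$-modules are inherited; once one records that the forgetful functor $\operatorname{Qcoh}\mathcal{A}\to\operatorname{Qcoh} X$ is exact, conservative, and commutes with $j_k^\ast$ and $\mathbb{R}j_{k,\ast}$ (using $j_k^\ast\mathcal{A}=\mathcal{A}_k$ and that $j_{k,\ast}$ of an $\mathcal{A}_k$-module carries a natural $\mathcal{A}$-action), the argument is word for word the one in \Cref{Lemma 2 on approximating on J-2 schemes}, so the proof can legitimately be written as ``same as the proof of \Cref{Lemma 2 on approximating on J-2 schemes}.''
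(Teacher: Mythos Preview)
Your proposal is correct and matches the paper's own proof exactly: the paper simply writes ``Same as proof of \Cref{Lemma 2 on approximating on J-2 schemes},'' and you have spelled out precisely that argument together with the bookkeeping (exactness and conservativity of the forgetful functor, compatibility with $j_k^\ast$ and $\mathbb{R}j_{k,\ast}$) needed to justify the transport to $\mathcal{A}$-modules.
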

\begin{proof}
    Same as proof of \Cref{Lemma 2 on approximating on J-2 schemes}.
\end{proof}
Now we can prove \Cref{Lemma on approximating on J-2 algebras}.
\begin{proof}[Proof of \Cref{Lemma on approximating on J-2 algebras}]
    Let $(X,\mathcal{A})$ be a Noether algebra, where $X$ is a noetherian finite-dimensional J-2 scheme .
    We start by observing some reductions we can do. First of all, by \Cref{Remark on approximating on J-2 algebras}, it is enough to prove \Cref{Lemma on approximating on J-2 algebras}(1) for $\mathsf{T} = \mathbf{D}(\operatorname{Qcoh} \mathcal{A})$. Further, we can assume $X$ is irreducible using \Cref{Lemma 2 on approximating on J-2 algebras}, and that $X$ is reduced by \Cref{Lemma 1 on approximating on J-2 algebras}. So we can assume that the scheme $X$ is reduced and irreducible, that is, the scheme is integral.

    We now proceed by induction on the dimension of the scheme $X$. For the base case, let $X$ be a integral scheme such that $\dim(X)=0$. Then $X$ is just $\operatorname{Spec}(k)$ for a field $k$ and $\mathcal{A} = \widetilde{A}$ for a finite-dimensional $k$-algebra $A$. Then, by \Cref{Lemma 1 on approximating on J-2 algebras}, we can replace $A$ by $A/J(A)$, where $J(A)$ is the Jacobson radical. As this algebra is semisimple, the result holds.  

    Now, we assume the result is known for dimension smaller than dim$(X)$.
    By \cite[Proposition 4.10]{Elagin/Lunts/Schnurer:2020}, there exists a non-empty affine open set $U = \operatorname{Spec}(R)$, a two sided ideal $I \subseteq \mathcal{A}(U)$ and $\mathcal{O}_{X}(U)$ algebras $A_1, \cdots, A_r$ for some $r \geq 1$, such that each $A_i$ is an Azumaya algebra over its center $Z(A_i)$, each center $Z(A_i)$ is a regular ring, and there is an isomorphism $\mathcal{A}(U)/I \cong A_1 \times \cdots \times A_r$. Let the corresponding open immersion be $j : U \to X$. Let $F \in \operatorname{Qcoh} \mathcal{A}$. Then, the restriction to the open set $U$, $j^*F \in \operatorname{Qcoh} (j^\ast(\mathcal{A})) \cong \operatorname{Mod}(\mathcal{A}(U))$. Now, as $I$ is a nilpotent ideal, there is some $n \geq 1$ such that $I^n = 0$. This gives us a filtration $0 = I^n M \subseteq I^{n-1}M \subseteq \cdots \subseteq IM \subseteq M$. The corresponding triangles allow us to replace the algebra $\mathcal{A}(U)$ by the algebra $A = \mathcal{A}(U)/I$.

    Now, $\operatorname{Mod}(A) = \operatorname{Mod}(A_1) \times \cdots \times \operatorname{Mod}(A_r)$. As $Z(A_i)$ is a regular ring of finite Krull dimension, let $N = \operatorname{max}\{\dim(Z(A_i)) 
    : 1 \leq i \leq r\}$. Then, the global dimension of each $A_i$ is less than or equal to $N$ by \cite[Theorems 1.8 and 2.1]{Auslander/Goldman:1960}. So, $j^*F$ must have a projective resolution of length less than or equal to $N$. This gives us that $j^*F \in \overline{\langle A \rangle}^{[- n, n]}_n = \overline{\langle j^*\mathcal{A} \rangle}^{[- n, n]}$.
    
    Consider the triangle $F' \to F \to \mathbb{R}j_*j^*F \to \Sigma F'$, where the second map is the unit of adjunction. Note that when restricted to $U$, we get that the unit of adjunction is an isomorphism, and so $j^*(F') \cong 0$. Let $Z=X-U$. Then, $F' \in \mathbf{D}_{Z}(\operatorname{Qcoh} \mathcal{A}) $. By \cite[Proposition 3.9.2]{Lipman:2009}, there exists $t \geq 0$  such that $\mathbb{R}j_*j^*F \in \mathbf{D}(\operatorname{Qcoh} \mathcal{A})^{\geq 0} \cap \mathbf{D}(\operatorname{Qcoh} \mathcal{A})^{\leq t-1}$. So, as $F' \in (\Sigma^{-1} \mathbb{R}j_*j^*F) \star F $ we get that, 
    \[F' \in \mathbf{D}_{Z}(\operatorname{Qcoh} \mathcal{A})^{\geq 0} \cap \mathbf{D}_{Z}(\operatorname{Qcoh} \mathcal{A})^{\leq t} \]
    This gives us that,
    \[F \in \big(\mathbf{D}_{Z}(\operatorname{Qcoh} \mathcal{A})^{\geq 0} \cap \mathbf{D}_{Z}(\operatorname{Qcoh} \mathcal{A})^{\leq t} \big)\star \overline{\langle \mathbb{R}j_*j^*\mathcal{\mathcal{A}} \rangle}^{[- n, n]}\]
    as $F \in F' \star \mathbb{R}j_*j^*F$, and as $\mathbb{R}j_*j^*F \in \mathbb{R}j_*\big(\overline{\langle j^*\mathcal{\mathcal{A}} \rangle}^{[- n, n]}\big) \subseteq \overline{\langle \mathbb{R}j_*j^*\mathcal{\mathcal{A}} \rangle}^{[- n, n]}$ by the previous paragraph. Note that the integer $t$ can be chosen independent of the choice of $F$ in $\operatorname{Qcoh} \mathcal{A}$.

    Consider the triangle $Q \to \mathcal{A} \to \mathbb{R}j_*j^*\mathcal{A} \to \Sigma Q$ in $\mathbf{D}(\operatorname{Qcoh} \mathcal{A})$ coming from the unit of adjunction. Note that when restricted to $U$, we get that the unit of adjunction is an isomorphism, and so $j^*(Q) \cong 0$. Therefore, $\Sigma Q \in \mathbf{D}_{Z}(\operatorname{Qcoh} \mathcal{A})^{\geq -1} \cap \mathbf{D}_{Z}(\operatorname{Qcoh} \mathcal{A})^{\leq t}$. Further, $\mathcal{\mathcal{A}} \in \langle G \rangle^{[-C,C]}$ for a compact generator $G \in \mathbf{D}^{\operatorname{perf}}(\mathcal{A})$ and some positive integer $C$. So, $\mathbb{R}j_*j^*\mathcal{\mathcal{A}} \in \langle G \rangle^{[-C,C]} \star \big(\mathbf{D}_{Z}(\operatorname{Qcoh} \mathcal{A})^{\geq -1} \cap \mathbf{D}_{Z}(\operatorname{Qcoh} \mathcal{A})^{\leq t}\big)$. 

    Now, let $i : Z \to X$ be the closed immersion where $Z$ is given the reduced induced closed subscheme structure. First of all, note that as $\dim(Z) < \dim(X)$, $(Z,i^\ast (\mathcal{A}))$ satisfies the conclusion of \Cref{Lemma on approximating on J-2 algebras}. We define $\hat{G} = i_* \hat{H} \oplus G$, where $\hat{H} \in \mathbf{D}^b_{\operatorname{coh}}(i^\ast(\mathcal{A}))$ is the object coming from $(Z,i^\ast (\mathcal{A}))$ satisfying \Cref{Lemma on approximating on J-2 algebras}. As $F \in \big(\mathbf{D}_{Z}(\operatorname{Qcoh} \mathcal{A})^{\geq 0} \cap \mathbf{D}_{Z}(\operatorname{Qcoh} \mathcal{A})^{\leq t} \big)\star \overline{\langle \mathbb{R}j_*j^*\mathcal{A} \rangle}^{[- n, n]}$ and $\mathbb{R}j_*j^*\mathcal{A} \in \overline{\langle \mathcal{G} \rangle}^{[-C,C]} \star \big(\mathbf{D}_{Z}(\operatorname{Qcoh} \mathcal{A})^{\geq -1} \cap \mathbf{D}_{Z}(\operatorname{Qcoh} \mathcal{A})^{\leq t}\big)$ from the above paragraph, we would be done if we can show that for all $p \leq q$, there exists an integer $L_{p,q}$ such that,
    \[\mathbf{D}_{Z}(\operatorname{Qcoh} \mathcal{A})^{\geq p} \cap \mathbf{D}_{Z}(\operatorname{Qcoh} \mathcal{A})^{\leq q} \subseteq \overline{\langle \hat{G} \rangle}^{[-L_{p,q},L_{p,q}]}\]
    But, as the conclusion of \Cref{Lemma on approximating on J-2 algebras} holds for $(Z,i^\ast (\mathcal{A}))$, we get this from \Cref{Lemma 1 on approximating on J-2 algebras}, completing the proof.
\end{proof}


\begin{corollary}\label{Theorem on approximating on J-2 algebra}
    Let $(X,\mathcal{A})$ be a Noether algebra with $X$ a finite-dimensional J-2 scheme. Then, there exists an object $\hat{G} \in \mathbf{D}^{b}(\operatorname{coh} \mathcal{A})$ and a positive integer $N$ such that $\mathbf{D}(\operatorname{Qcoh} \mathcal{A})^{\geq 0} = \overline{\langle \hat{G} \rangle}^{[-N,N]} \star \mathbf{D}(\operatorname{Qcoh} \mathcal{A})^{\geq 1}$, see \Cref{Notation from Neeman}.
\end{corollary}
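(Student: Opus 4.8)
The plan is to deduce this immediately from \Cref{Lemma on approximating on J-2 algebras}, mimicking verbatim the proof of its commutative counterpart \Cref{Theorem on approximating on J-2 schemes}. Fix the object $\hat{G} \in \mathbf{D}^{b}(\operatorname{coh} \mathcal{A})$ and the integer $N \geq 0$ produced by \Cref{Lemma on approximating on J-2 algebras}, so that in particular $\operatorname{Qcoh} \mathcal{A} = \mathbf{D}(\operatorname{Qcoh}\mathcal{A}) \cap \operatorname{Qcoh}\mathcal{A} \subseteq \overline{\langle \hat{G} \rangle}^{[-N,N]}$, see \Cref{Notation from Neeman}. Now take an arbitrary $F \in \mathbf{D}(\operatorname{Qcoh} \mathcal{A})^{\geq 0}$ and use the standard t-structure on $\mathbf{D}(\operatorname{Qcoh}\mathcal{A})$ to form the canonical truncation triangle $\tau^{\leq 0}F \to F \to \tau^{\geq 1}F \to \Sigma \tau^{\leq 0}F$. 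Since $F$ has cohomology concentrated in non-negative degrees, $\tau^{\leq 0}F$ is a single quasicoherent $\mathcal{A}$-module sitting in degree $0$, hence an object of $\operatorname{Qcoh}\mathcal{A} \subseteq \overline{\langle \hat{G} \rangle}^{[-N,N]}$, while $\tau^{\geq 1}F \in \mathbf{D}(\operatorname{Qcoh}\mathcal{A})^{\geq 1}$ by construction. This triangle therefore exhibits $F$ as lying in $\overline{\langle \hat{G} \rangle}^{[-N,N]} \star \mathbf{D}(\operatorname{Qcoh}\mathcal{A})^{\geq 1}$, which is the required (nontrivial) inclusion; as in the scheme case I would not dwell on the reverse inclusion, since it is the displayed inclusion that feeds into \Cref{Proposition weak co-approx for Kinj}.

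There is essentially no obstacle at this stage: all the genuine work sits in \Cref{Lemma on approximating on J-2 algebras} and its supporting \Cref{Lemma 1 on approximating on J-2 algebras} and \Cref{Lemma 2 on approximating on J-2 algebras}, which form the noncommutative analogue of the dévissage argument behind \Cref{Lemma on approximating on J-2 schemes} --- reduction along closed subschemes and along a topological cover, passage to the reduced and then integral case, and an induction on $\dim X$ whose inductive step exploits that over a suitable dense affine open $U = \operatorname{Spec} R$ the algebra $\mathcal{A}(U)$, after killing its nilpotent ideal, splits as a finite product of Azumaya algebras over regular rings and hence has finite global dimension. Granting that theorem, the only thing one has to notice here is the elementary observation that the bottom truncation of an object of $\mathbf{D}(\operatorname{Qcoh}\mathcal{A})^{\geq 0}$ is a bona fide quasicoherent $\mathcal{A}$-module, which is immediate. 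So the proof of \Cref{Theorem on approximating on J-2 algebra} is a one-line reduction, exactly parallel to \Cref{Theorem on approximating on J-2 schemes}.
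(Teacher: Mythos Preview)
Your proposal is correct and follows essentially the same approach as the paper: both invoke \Cref{Lemma on approximating on J-2 algebras} to get $\operatorname{Qcoh}\mathcal{A} \subseteq \overline{\langle \hat{G} \rangle}^{[-N,N]}$, then apply the standard truncation triangle $\tau^{\leq 0}F \to F \to \tau^{\geq 1}F$ to an arbitrary $F \in \mathbf{D}(\operatorname{Qcoh}\mathcal{A})^{\geq 0}$. Your additional commentary on where the real work lies is accurate but not part of the paper's proof itself.
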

\begin{proof}
    Let $F \in \mathbf{D}(\operatorname{Qcoh} \mathcal{A})^{\geq 0}$. Then, we have the triangle $F^{\leq 0} \to F \to F^{\geq 1} \to \Sigma F^{\leq 0}$ we get from the standard t-structure on $\mathbf{D}(\operatorname{Qcoh} \mathcal{A})$. So, we have that $F^{\leq 0} \in \operatorname{Qcoh} \mathcal{A}$ and $F^{\geq 1} \in \mathbf{D}(\operatorname{Qcoh} \mathcal{A})^{\geq 1}$. But, by \Cref{Lemma on approximating on J-2 algebras}, there exists $N \geq 0$ such that $\operatorname{Qcoh} \mathcal{A} \subseteq \overline{\langle \hat{G} \rangle}^{[-N,N]}$. So, $F \in \overline{\langle \hat{G} \rangle}^{[-N,N]} \star \mathbf{D}(\operatorname{Qcoh} \mathcal{A})^{\geq 1}$, which is what we needed to show.
\end{proof}

\bibliographystyle{alpha}
\bibliography{mainbib}

\end{document}